\numberwithin{equation}{section}
\theoremstyle{plain}
\newtheorem{theorem}{Theorem}[section]
\newtheorem{proposition}[theorem]{Proposition}
\newtheorem{corollary}[theorem]{Corollary}
\newtheorem{lemma}[theorem]{Lemma}
\theoremstyle{remark}
\newtheorem{remark}[theorem]{Remark}
\theoremstyle{definition}
\newtheorem{definition}[theorem]{Definition}
\DeclarePairedDelimiter{\inn}{\langle}{\rangle}
\DeclarePairedDelimiter{\abs}{\lvert}{\rvert}
\DeclarePairedDelimiter{\norm}{\lVert}{\rVert}
\newcommand{\ess}{\operatorname{ess}}
\newcommand{\spann}{\operatorname{span}}
\newcommand{\diva}{\operatorname{div}}
\newcommand{\Haus}{\mathcal{H}}
\newcommand{\Ha}{\mathbf{H}}
\newcommand{\Ric}{\operatorname{Ric}}
\newcommand{\Cp}{\mathcal{P}}
\newcommand{\Pe}{\mathcal{P}_f}
\newcommand{\G}{\mathcal{G}_f}
\newcommand{\E}{\mathcal{E}}
\newcommand{\Efrak}{\mathfrak{E}}
\newcommand{\C}{\mathbb{C}}
\newcommand{\R}{\mathbb{R}}
\title[Free energy minimizers with radial densities]{Free energy minimizers with radial densities: classification and quantitative Stability}
\author{Shrey Aryan\textsuperscript{1} and Lauro Silini\textsuperscript{2}}
\address{\textsuperscript{1}Massachusetts Institute of Technology (MIT), Cambridge, MA, USA\\ \textsuperscript{2}Institute of Science and Technology Austria (ISTA), Klosterneuburg, Austria}
\email{\textsuperscript{1}shrey183(at)mit.edu, \textsuperscript{2}lauro.silini(at)ist.ac.at}
\begin{document}


\begin{abstract}
We study the isoperimetric problem with a potential energy $g$ in $\mathbb{R}^n$ weighted by a radial density $f$ and analyze the geometric properties of minimizers. Notably, we construct two counterexamples demonstrating that, in contrast to the classical isoperimetric case $g = 0$, the condition $\ln(f)'' + g' \geq 0$ does not generally guarantee the global optimality of centered spheres. However, we demonstrate that centered spheres are globally optimal when both $f$ and $g$ are monotone. Additionally, we strengthen this result by deriving a sharp quantitative stability inequality.
\end{abstract}

\maketitle

\section{Introduction}

In this paper, we explore the geometric properties of liquid drops in equilibrium under the influence of an external potential field. This classical problem has attracted significant attention and has been extensively investigated in the Euclidean setting, see for instance \cite{guido-goldman,figalli2011shape,wente1980symmetry,tamanini1984sphericity,mccann1998equilibrium,herring1951some,gonzalez1980existence,finn1980sessile,finn2012equilibrium,avron1983equilibrium,philippis2015regularity,indrei2024equilibrium,indrei2023minimizingfreeenergy}. 
Our long-term goal is to establish similar results in the general Riemannian setting and as a first step, we begin by studying this problem by weighting the Lebesgue measure on $\R^n$ with a density $f=e^\psi$, where $\psi$ is radially symmetric with respect to the origin. This allows us to obtain an elementary model of a Riemannian manifold in normal coordinates. 

More precisely, we are interested in minimizers of the following functional
\begin{equation}\label{eq:def_energy}
    \E(F):=\Pe(F)+\G(F):=\int_{\partial^*F}f\,d\Haus^{n-1}+\int_{F}gf\,dx,
\end{equation}
where $d\Haus^{n-1}$ is the $(n-1)$-dimensional Hausdorff measure, $-\nabla g$ is the potential field (for instance gravity), $dx$ is the Lebesgue measure on $\R^n$, and $F$ is a set of finite perimeter with reduced boundary $\partial^*F$ (we refer to \cite{Maggi} for more details). The operator $\Pe$ represents a non-autonomous isotropic interface surface energy, and $\G$ denotes the bulk potential resulting from the external field. In particular, liquid drops at equilibrium solve
\begin{equation}\label{eq:optimization_problem}\tag{$\ast$}
    \inf\Bigl\{\E(F)=\Pe(F)+\G(F):\abs{F}_f=v\Bigr\},\qquad \abs{F}_f:=\int_{F}f\,dx, 
\end{equation}
where $v>0$ is fixed, and the infimum is taken over the class of finite perimeter sets sharing the same weighted volume. To respect the symmetry of the underlying weighted space, we assume that $g$ is also radially symmetric about the origin. When examining the geometric properties of minimizers, two central questions naturally arise:
\begin{itemize}
    \item[--] \emph{Under which conditions on $f$ and $g$ can one explicitly describe minimizers of \eqref{eq:optimization_problem}?}
    \item[--] \emph{Does proximity to the minimum imply proximity to a minimizer?}
\end{itemize}
While minimizers exist under mild assumptions, the first question is largely unresolved due to the delicate interaction between $\Pe$ and $\G$. To address this we first completely classify a rich class of radial and isotropic energies for which spheres centered at the origin, $\partial B_R$, uniquely solve \eqref{eq:optimization_problem}. A necessary condition on $f=e^\psi$ and $g$ can be derived by analyzing the first and second variation of the energy $\E$:
\begin{equation*}\label{eq:stability_cond_intro}
\text{$\partial B_R$ is stationary and stable }\iff \psi''(R)+g'(R)\geq 0,
\end{equation*}
where \emph{stationary and stable} means that $\partial B_R$ locally minimizes $\E$ up to the second order\footnote{Note that the crucial condition is to be stable since, as we will see in Section \ref{sec:existence_variations}, all centered spheres are automatically stationary if $f,g$ are radial.}. When $g=0$, this condition reduces to the log-convexity of $f$, that is $\psi''(R)\geq 0.$ Brakke famously conjectured that when the dimension $n\geq 2$, then the log-convexity of $f$ is not only necessary but also sufficient for $\partial B_R$ to be minimizers of the weighted isoperimetric problem. This conjecture stimulated considerable work in the last two decades \cite{bayle2003proprietes,rosales2008isoperimetric,morgan-pratelli2013, kolesnikov2011isoperimetric,mcgillivray2018isoperimetric,li2023class,figalli2013isoperimetric}, culminating in its resolution by Chambers in \cite{chambers}. Motivated by this, we are led to consider the following question:
\begin{itemize}
    \item [Q1)]Is $\psi''+g'>0$ also a sufficient condition for $\partial B_R$ to be \emph{global and unique} minimizers of $\E$?
\end{itemize}
For $n\geq 2$ we consider the strict inequality as opposed to $\psi''+g'\geq 0$ in order to avoid non-uniqueness as explained further in Section \ref{subsec:counterexamples} (also about this see \cite[Theorem 1.2]{chambers}). We demonstrate that this holds under the additional assumption that 
$f$ and $g$ are monotone, but it fails in general, as illustrated by two counterexamples in Section \ref{subsec:counterexamples}. These results provide a complete answer to $Q1).$

Building on the existence and uniqueness of minimizers, we turn to the question of almost-rigidity. Specifically, we ask
\begin{itemize}
    \item [Q2)]Under which conditions on $\psi$ and $g$ does the energy gap $\E(E)-\E(B_R)$ provide a control on the $L^1$-distance of $E$ to $B_R$ quantitatively?
\end{itemize}
This question, rooted in the seminal work of Fuglede \cite{fuglede1989stability}, has a rich history in the study of isoperimetric problems. Rigorous results addressing similar questions have been established in various contexts, including the Euclidean setting \cite{fusco2008sharp,cicalese2012selection,fusco2014strong,fusco2015quantitative,cicalese2013best,hall1992quantitative}, Riemannian manifolds \cite{bogelein2015sharp,bogelein2017quantitative,silini2023quantitative,chodosh2022riemannian,figalli2013sharp,carron1996stabilite}, anisotropic energies \cite{figalli2010mass,palmer1998stability,esposito2005quantitative,neumayer2016strong}, and weighted settings \cite{fusco-manna,cinti2022sharp,cianchi2011isoperimetric,indreicones}.

Lastly, we note that the weighted isoperimetric problem has also been studied in curved spaces \cite{morgan2005manifolds,bakry2006diffusions,bakry1996levy,corwin2006differential,gromov1986isoperimetric,bongiovanni2018isoperimetry,howe2015log,mcgillivray2017weighted,scheuer2019locally,silini2024approaching}, highlighting its broad applicability. Furthermore, when $g\neq 0$, weighted isoperimetric problems, featuring distinct weights for perimeter and volume, have been instrumental in the recent resolution of the Stable Bernstein Conjecture for minimal surfaces (see for instance \cite{antonelli2024new,mazet2024stable} for further details).

\subsection{Main results}
Let $r=\abs{x}$ denote the distance coordinate from the origin. For the sake of readability, we will identify radial functions with their profile, meaning that if $h:\R^n\to\R$ is radial, then $h(x)$ is identified with $h(r)$, $\frac{\partial h}{\partial r}$ with $h'(r)$, $\frac{\partial^2 h}{\partial r^2}$ with $h''(r)$, and so on. This applies also in the one dimensional case, in which $f$ and $g$ have to be considered even. We will always denote the Euclidean ball of radius $R$ centered at $x_0$ with $B_R(x_0)$, and we denote with $B_R$ the ball centered at the origin $B_R(0)$.

It will be convenient to introduce the following terminology concerning the conditions we will impose on $f=e^\psi$ and $g$. 
\begin{definition}\label{def:admissible_weights}
    We say that a pair functions $\psi,g:\R\to [0,+\infty)$  are \emph{admissible weights} if the following conditions hold true:
    \begin{enumerate}
        \item $\psi\in C^2(\R)$, $g\in C^1(\R)$, and $\lim_{r\to+\infty} (\psi(r)+g(r))=+\infty$,
        \item $\psi'(0)=0$,
        \item $\psi''(r)+g'(r)\geq 0$, for all $r\geq 0$.
    \end{enumerate}
    We say $\psi,g$ are \emph{strictly admissible weights} if they are admissible and
    \begin{enumerate}   
        \item[(3')] $\psi''(r)+g'(r)>0$, for all $r>0$.
    \end{enumerate}
    We say $\psi,g$ are $\kappa$-\emph{uniformly admissible weights} if they are admissible and if there exists a constant $\kappa>0$ such that
    \begin{enumerate}
        \item[(3'')] $\psi''(r)+g'(r)\geq\kappa>0$, for all $r\geq 0$.
    \end{enumerate}
    
\end{definition}
As we will see in Section \ref{sec:existence_variations}, conditions (1) and (2) ensure the existence of minimizers together with the regularity needed to make sense of conditions (3), (3'), and (3''), which are all variations of increasing strength of the stability requirement for centered spheres discussed before.

Our first result completely addresses Q1) in the one-dimensional setting under the assumptions summarized in Definition \ref{def:admissible_weights}.

\begin{theorem}\label{thm:one_dim}
    Let $n=1$ and $\psi,g$ be radial admissible weights. Then, centered intervals minimize \eqref{eq:optimization_problem} for any volume if and only if $\min_{r\geq 0}\psi(r)=\psi(0)$. Furthermore, if $\psi,g$ are \emph{strictly} admissible and $\min_{r\geq 0}\psi(r)=\psi(0)$, then centered intervals \emph{uniquely} solve \eqref{eq:optimization_problem}.
\end{theorem}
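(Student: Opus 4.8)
The plan is to establish both implications by reducing, through the one--dimensional structure of sets of finite perimeter, to finite unions of bounded open intervals (sets with $\mathcal{P}(E)=+\infty$ need not be considered, since $f=e^{\psi}\geq 1$ gives $\Pe(E)\geq\mathcal{P}(E)$, and in $\R$ every finite--perimeter set agrees up to a null set with a finite union of intervals). Write $V(r):=\int_0^r f\,ds$, an odd increasing $C^2$--diffeomorphism of $\R$ because $f\geq 1$ and $\psi+g\to+\infty$; thus $\abs{B_r}_f=2V(r)$, and for volume $v$ the centred competitor is $B_R$ with $2V(R)=v$. Put $p(r):=\psi'(r)+g(r)$; condition (3) says exactly that $p$ is non--decreasing on $[0,+\infty)$, and $p(0)=g(0)\geq 0$, so $p\geq 0$ there. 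For the necessity direction I would argue by contraposition: if $\psi(r_0)<\psi(0)$ for some $r_0>0$, then the interval of weighted volume $v$ centred at $r_0$ has energy $\to 2e^{\psi(r_0)}$ as $v\to 0$, whereas $\E(B_R)\to 2e^{\psi(0)}$, so for all small volumes centred intervals fail to be minimal. Henceforth assume $\min_{r\geq 0}\psi=\psi(0)$, equivalently $f(x)\geq f(0)$ for every $x$.

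The key device is a \emph{shrinking identity}. Given an interval $I=(a,b)$ with $0\leq a<b$ and $\abs{I}_f=v$, shrink it from the right through $I_{v'}:=(a,V^{-1}(V(a)+v'))$, $v'\in[0,v]$; differentiating gives $\frac{d}{dv'}\E(I_{v'})=p\bigl(V^{-1}(V(a)+v')\bigr)$, and $I_0$ degenerates to the point $\{a\}$ of energy $2f(a)$, so
\[
\E(I)=2f(a)+\int_0^v p\bigl(V^{-1}(V(a)+v')\bigr)\,dv',\qquad \E(B_R)=2f(0)+\int_0^v p\bigl(V^{-1}(v'/2)\bigr)\,dv',
\]
the second coming from the analogous symmetric shrinking of $B_R$ to the origin. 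Since $f(a)\geq f(0)$ and $V(a)+v'\geq v'/2\geq 0$ while $p$ is non--decreasing on $[0,+\infty)$, subtracting yields $\E(I)\geq\E(B_R)$, and by reflection the same holds for intervals in $(-\infty,0]$. For an interval $I$ \emph{containing} the origin I would instead parametrise the intervals of weighted volume $v$ by $s\mapsto(V^{-1}(s),V^{-1}(s+v))$, let $\mathcal{J}(s)$ be the energy of this interval, and compute $\mathcal{J}'(s)=\psi'(a)+\psi'(b)+g(b)-g(a)$ and $\mathcal{J}''(s)=\frac{\psi''(a)-g'(a)}{f(a)}+\frac{\psi''(b)+g'(b)}{f(b)}$ with $a=V^{-1}(s)$, $b=V^{-1}(s+v)$; on $[-v,0]$ one has $a\leq 0\leq b$, and using the oddness of $g'$ together with condition (3) both summands are $\geq 0$, so $\mathcal{J}$ is convex there (strictly, under (3$'$)). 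Evenness of $\psi,g$ makes $s=-v/2$ a critical point with $\mathcal{J}(-v/2)=\E(B_R)$, so $B_R$ minimises $\mathcal{J}$ on $[-v,0]$, i.e.\ among all intervals through $0$. Hence $B_R$ is, under (3$'$) uniquely, the energy--minimal single interval of weighted volume $v$.

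For disconnected competitors I would induct on the number $k$ of components, using that in $\R$ the energy is additive over components and $\E((c,0))+\E((0,d))=\E((c,d))+2f(0)$. If $k\geq 2$, then either two adjacent components lie on the same side of $0$, in which case enlarging the inner one outward to absorb the outer one's weighted volume (and deleting the outer one) strictly decreases $\E$ by the shrinking identity and monotonicity of $p$, lowering $k$; or at most one component meets $0$, in which case collapsing the two components closest to $0$ onto intervals abutting $0$ fuses them into one interval, strictly decreasing $\E$ by at least $2f(0)$ and lowering $k$. Iterating reduces to the single--interval case and proves that centred intervals minimise $(\ast)$ whenever $\min_{r\geq 0}\psi=\psi(0)$; combining the strict decreases above with the strict convexity of $\mathcal{J}$ and strict monotonicity of $p$ under (3$'$) then forces uniqueness.

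The step I expect to be the main obstacle is controlling disconnected and off--centre competitors \emph{simultaneously} in $\Pe$ and $\G$: since $g$ need not be radially monotone, one cannot minimise the two terms of $\E=\Pe+\G$ separately under the volume constraint. The shrinking identity is precisely what unlocks this, encoding $\E(I)$ through the single scalar $p=\psi'+g$ whose monotonicity on $[0,+\infty)$ is exactly admissibility, and it also makes transparent why ``$\min\psi=\psi(0)$'', i.e.\ $f(a)\geq f(0)$, is the sharp hypothesis allowing the endpoint term $2f(a)$ to be traded against $2f(0)$.
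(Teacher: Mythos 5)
Your proof is correct and follows essentially the same route as the paper's. The paper works with $K(r)=\int_0^r(\psi''+g')\,d\tau$ rather than your $p=\psi'+g$, but since $K=p-g(0)$ the two are the same up to an additive constant, and your shrinking identity $\E((a,b))=2f(a)+\int_0^v p(V^{-1}(V(a)+v'))\,dv'$ is exactly the paper's formula $\E((a,b))=2f(a)+\int_{F(a)}^{F(a)+v}K(H(w))\,dw+g(0)v$; the paper compares a general finite union to a single interval abutting the origin in one step via a rearrangement inequality (cited from Indrei) and then proves convexity of the energy in a symmetric sliding parameter $\lambda$, whereas you do the reduction by iteratively merging adjacent components (a slightly more elementary but equivalent argument) and parametrize the sliding by $s\mapsto(V^{-1}(s),V^{-1}(s+v))$ instead of $\lambda\mapsto(−H(\lambda v),H((1-\lambda)v))$, which gives the same convexity statement with $\mathcal J''$ expressible directly through $\psi''\pm g'$. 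The necessity direction via small--volume competitors concentrating near a point where $\psi<\psi(0)$ is the same as the paper's Proposition~\ref{prop:thmonedir}.
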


In dimension one and when $\psi\equiv 0$, Indrei 
showed in a recent work \cite{indrei2025one} an analogous result only requiring $g$ to have convex sub-level sets, employing among others tools coming from the theory of optimal transportation. Note that Theorem \ref{thm:one_dim} is sharp, in the sense that uniqueness can fail when $\psi,g$ are merely admissible weights, as the elementary example $\psi\equiv 0$ and $g(x)=0$ when $\abs{x}\leq 1$, $g(x)=(x-1)^2$ when $\abs{x}>1$, clarifies. This result shows that even in dimension one the condition $\psi''+g'\geq 0$ has to be coupled with an extra assumption in order to answer Q1). This result does not contradict the isoperimetric strict log-convex situation ($g\equiv 0$, $\psi''>0$), since in that case, $\psi$ has automatically a minimum at the origin.

Via a calibration technique introduced by Kolesnikov and Zhdanov in \cite{kolesnikov2011isoperimetric}, we show that Q2) always holds in the large volume regime assuming 3'').
\begin{theorem}\label{thm:large_vol}Let $\psi,g$ be $\kappa$-uniform admissible weights. Then, in $\R^n$, $n\geq 2$, centered spheres with radius greater that $\sqrt{\frac{n+2}{\kappa}}$ uniquely minimize \eqref{eq:optimization_problem}.
\end{theorem}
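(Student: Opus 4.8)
The plan is to certify the optimality of $\partial B_R$ by constructing a \emph{calibration}. Write $r=\abs{x}$, let $\lambda:=\tfrac{n-1}{R}+\psi'(R)+g(R)$ be the Lagrange multiplier of the stationary sphere $\partial B_R$, and look for a radial Lipschitz field $X(x)=\xi(r)\tfrac{x}{r}$, with $\xi(0)=0$ and $\xi(R)=1$, such that $\abs{X}\le1$ on $\R^n$ with $\{\abs{X}=1\}=\partial B_R$, and such that
\[
\diva(fX)=f\Bigl(\xi'+\bigl(\tfrac{n-1}{r}+\psi'\bigr)\xi\Bigr)\ \le\ (\lambda-g)f\ \text{ in }B_R,\qquad \diva(fX)\ \ge\ (\lambda-g)f\ \text{ in }\R^n\setminus\overline{B_R}.
\]
Granting this, for any competitor $F$ with $\abs{F}_f=\abs{B_R}_f$ I estimate, using $\abs{X}\le1$ on $\partial^*F$, the Gauss--Green formula for $fX$ on $F$ and on $B_R$, the one-sided bounds above on $F\setminus B_R$ and on $B_R\setminus F$, and the identity $\abs{F\setminus B_R}_f=\abs{B_R\setminus F}_f$,
\[
\E(F)\ \ge\ \int_{\partial^*F}f\,(X\cdot\nu_F)\,d\Haus^{n-1}+\G(F)\ =\ \int_F\bigl(\diva(fX)+gf\bigr)\,dx\ \ge\ \int_{B_R}\bigl(\diva(fX)+gf\bigr)\,dx\ =\ \E(B_R),
\]
the last equality being the Gauss--Green formula together with $X\cdot\nu=1$ on $\partial B_R$. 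Equality forces $X=\nu_F$ $\Haus^{n-1}$-a.e.\ on $\partial^*F$, hence $\partial^*F\subset\{\abs{X}=1\}=\partial B_R$, i.e.\ $F=B_R$ up to a null set, which yields uniqueness.

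Everything thus reduces to the construction of $\xi$. Near $r=R$ I solve the linear first-order ODE $\xi'+(\tfrac{n-1}{r}+\psi')\xi=\lambda-g$ with $\xi(R)=1$; the choice of $\lambda$ forces $\xi'(R)=0$, and differentiating once more
\[
\xi''(R)=\tfrac{n-1}{R^2}-\bigl(\psi''(R)+g'(R)\bigr)\ \le\ \tfrac{n-1}{R^2}-\kappa,
\]
which is negative as soon as $R>\sqrt{(n-1)/\kappa}$, so $r=R$ is a strict maximum of $\xi$ with value $1$. Multiplying the ODE by $r^{n-1}e^{\psi(r)}$ and integrating gives the identity
\[
\Pe(B_r)\bigl(1-\xi(r)\bigr)=\int_R^r\Pe(B_s)\bigl(\beta(s)-\beta(R)\bigr)\,ds,\qquad \beta(s):=\tfrac{n-1}{s}+\psi'(s)+g(s),
\]
and since $\beta'(s)=\psi''(s)+g'(s)-\tfrac{n-1}{s^2}\ge\kappa-\tfrac{n-1}{s^2}>0$ for $s>\sqrt{(n-1)/\kappa}$, the right-hand side is nonnegative for all $r>R$: thus $\xi\le1$ and the exterior bound holds (with equality) on $(R,\infty)$. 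The far-field requirement $\xi\ge-1$ can be secured either by freezing $\xi\equiv-1$ once the solution reaches $-1$ --- the bound then reading $g\ge\lambda+\tfrac{n-1}{r}+\psi'$, which is true for large $r$ because $\psi'+g\ge\kappa r$ --- or, since by Section \ref{sec:existence_variations} all competitors of energy at most $\E(B_R)$ lie in a fixed ball, by only imposing the above conditions inside that ball.

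The delicate region is $(0,R)$. The equality solution of the ODE \emph{cannot} be continued up to the origin: the identity above shows $\xi(r)\to\pm\infty$ as $r\to0^+$ unless $R$ happens to be a critical point of $r\mapsto\E(B_r)/\abs{B_r}_f$. Instead I would run the equality solution inwards from $r=R$ --- where $\xi$ first drops below $1$, reaches a minimum at some $r_2\in(0,R)$, and then grows again --- and glue it, at the radius $r_1<r_2$ where it returns to $0$, to the trivial profile $\xi\equiv0$ on $(0,r_1)$ (or to a short affine ramp if $\xi(0)=0$ is wanted with a prescribed slope). On $(0,r_1)$ the interior bound reduces to $g\le\lambda$; a spike of $g$ inside $B_R$ does not break this, because $\psi''+g'\ge\kappa$ forces a compensating convexity of $\psi$, hence a large $\psi'$, precisely on the descending part of the spike, which is exactly what the one-sided bound needs. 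Carrying out this estimate --- together with the requirement that the inward equality solution not fall below $-1$ before reaching $r_1$ --- uniformly over all admissible $\psi,g$ with $\psi''+g'\ge\kappa$ is where the threshold should improve from $\sqrt{(n-1)/\kappa}$ to the sharp $\sqrt{(n+2)/\kappa}$.

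The hard part will be exactly this last verification: controlling the inward ODE solution and its gluing on $(0,R)$, uniformly in $\psi$ and $g$, in the presence of the $\tfrac{n-1}{r}$ singularity of $\beta$ and of possible interior spikes of $g$. Once the calibration field $X$ is in hand, applying the displayed inequality to a minimizer (which exists and is bounded by Section \ref{sec:existence_variations}) forces it to coincide with $B_R$, completing the proof.
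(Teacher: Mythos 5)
Your calibration framework is sound as a scheme: with a radial field $X=\xi(r)\tfrac{x}{r}$ satisfying $\abs{X}\le1$, $X\cdot\nu=1$ on $\partial B_R$, $\diva(fX)\le(\lambda-g)f$ in $B_R$, and $\diva(fX)\ge(\lambda-g)f$ outside, the displayed chain of inequalities is correct and indeed handles the potential term exactly. But the argument stops precisely where the work begins. The construction of $\xi$ on $(0,R)$ — running the equality ODE inward, gluing to $\xi\equiv 0$, keeping $\abs{\xi}\le 1$, and verifying the one-sided bounds uniformly in $\psi,g$ in the presence of the $\tfrac{n-1}{r}$ singularity and of possible interior spikes of $g$ — is the whole content of the theorem, and you explicitly defer it as ``the hard part.'' As written, this is not a proof. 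Moreover your local analysis at $r=R$ only produces the threshold $\sqrt{(n-1)/\kappa}$, and you acknowledge you do not know how $\sqrt{(n+2)/\kappa}$ would emerge from your scheme; so even the form of the statement is not reproduced.

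The paper's proof avoids all of this by following Kolesnikov--Zhdanov. After rescaling to $\kappa=1$, it fixes a single explicit cubic profile
\[
\ell(r)=\tfrac{3}{2\sqrt{n+2}}\Bigl(r-\tfrac{r^3}{3(n+2)}\Bigr)\ \text{on}\ [0,\sqrt{n+2}],\qquad \ell\equiv1\ \text{on}\ [\sqrt{n+2},\infty),
\]
independent of $\psi$ and $g$, sets $X=\ell(r)\tfrac{x}{r}$, and bounds $\E(F)\ge\int_F hf\,dx$ with $h=\ell'+\ell\bigl(\psi'+g+\tfrac{n-1}{r}\bigr)$. The only two pointwise facts needed are $h\ge0$ and $h'\ge0$, which reduce to $\psi'+g\ge r$ and $\psi''+g'\ge1$ — immediate consequences of $\kappa$-uniform admissibility together with $\psi'(0)=0$, $g\ge0$. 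Since $h$ is radially increasing, the level sets $\{h\le t\}$ are centered balls, and Lemma~\ref{lem:levelset} converts the bound into the isoperimetric conclusion. No ODE is solved, no Lagrange multiplier is introduced, and the delicate gluing near the origin disappears; the cubic $\ell$ (whose critical radius is $\sqrt{n+2}$) is exactly where the exponent $n+2$ comes from. I would suggest redoing your attempt with this template: a fixed, weight-independent profile $\ell$ combined with the rearrangement lemma is far more tractable than the competitor-dependent sub/super-solution $\xi$ you set up.
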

Note that the above result, when $\psi \equiv 0$, was established in \cite[Theorem 2.1]{indrei2023minimizingfreeenergy}. The next proposition shows that, in general, Q1) does not hold even in the more restrictive class of $\kappa$-admissible weights for which $\psi$ is a minimum at the origin in contrast with the one-dimensional case of Theorem \ref{thm:one_dim}.

\begin{proposition}\label{prop:counter_examples}
    In $\R^n$, $n\geq 2$, there exist $\kappa$-uniformly admissible weights $\psi,g$ such that $\min_{r\geq 0}\psi(r)=\psi(0)$, but centered spheres do not always minimize \eqref{eq:optimization_problem}. Moreover, one can construct $\psi$ and $g$ so that either one of them is strictly monotone increasing.
\end{proposition}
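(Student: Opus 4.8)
The plan is to construct explicit counterexamples by choosing $\psi$ and $g$ carefully and then exhibiting a competitor set that beats the centered ball. The guiding intuition is that even when $\psi$ has its minimum at the origin, the weighted perimeter $\Pe$ may ``want'' the drop to sit in a region where $f=e^\psi$ is small, while $\G$ may be nearly flat there, so the trade-off can favor an off-center configuration. The key degree of freedom is that admissibility only constrains $\psi''+g'$, so I can make $\psi$ increasing very slowly on a large annulus (hence $f$ there only marginally larger than $f(0)$ while still having its minimum at $0$) and simultaneously let $g$ absorb the curvature budget: pick $g'$ large and positive on a small set near the origin (so $\G$ strongly penalizes placing mass near $0$) and $g'\approx 0$ on the annulus, with $\psi''+g'\geq\kappa$ everywhere. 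Then a ball, or more cleverly an annular/spherical-shell-like region, placed far from the origin can have smaller total energy than $B_R$.

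Concretely, I would first reduce to a one-parameter family: fix $n\ge 2$, and for a large parameter $L$ define $g$ so that $\G(B_R)$ is of order $v\cdot g(0)$-ish but $\G$ evaluated on a translated ball $B_\rho(x_0)$ with $|x_0|\sim L$ is much smaller, exploiting that $g$ is (by construction) essentially constant and small out there. Meanwhile choose $\psi$ with $\psi(0)=\min\psi$, $\psi'(0)=0$, and $\psi$ growing like a tiny multiple of $\log$ or a slowly increasing function so that $f$ on the far annulus exceeds $f(0)$ by only a $(1+o(1))$ factor; this keeps $\Pe$ of the far competitor within a controlled factor of $\Pe(B_R)$. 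The arithmetic then reduces to showing
\[
\Pe(B_\rho(x_0))+\G(B_\rho(x_0))<\Pe(B_R)+\G(B_R)
\]
for suitable radii matching the weighted-volume constraint $|B_\rho(x_0)|_f=|B_R|_f=v$, which becomes a finite computation once the profiles are pinned down. For the ``moreover'' clause I would run the construction in two variants: in the first, take $g$ strictly increasing by adding a slowly-growing strictly monotone tail to it while keeping $\psi''+g'\ge\kappa$ (the $\psi''$ term can be negative on the annulus to compensate but $\psi$ must still have its min at $0$ — so I'd put the concavity of $\psi$ where it doesn't destroy the minimum, e.g. let $\psi'$ rise then stay constant); in the second, take $\psi$ strictly increasing (e.g. a genuine slowly-increasing profile with $\psi'(0)=0$, $\psi'>0$ for $r>0$) and let $g$ carry whatever is needed, even allowing $g$ to be non-monotone.

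The main obstacle will be the simultaneous satisfaction of all of (1), (2), (3$''$) together with ``$\min\psi=\psi(0)$'' while still making the energy comparison strict. The tension is quantitative: condition (3$''$) forces a total ``curvature budget'' $\int_0^r(\psi''+g')\ge \kappa r$, so either $\psi'$ or $g'$ must grow linearly on average; if $g'$ does the work, then $g$ grows quadratically and eventually penalizes the far competitor too, bounding how far out $x_0$ can go and hence how much can be saved; if $\psi'$ does the work, then $f$ on the far annulus is exponentially large and kills the perimeter comparison. The resolution is that the comparison only needs to win on a \emph{bounded} range of volumes $v$ (the statement says ``do not always minimize,'' not ``never''), so I only need one well-chosen $v$ where a moderate displacement $|x_0|$ beats the center; there I can keep $g$ from having grown too much and $\psi$ from having grown too much, and a careful but elementary estimate closes the gap. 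I expect the construction to be cleanest by first writing down the desired competitor and volume, then reverse-engineering $\psi,g$ on finitely many intervals with explicit affine/quadratic pieces, smoothing at the end, and finally checking (3$''$) and the strict energy inequality by direct substitution.
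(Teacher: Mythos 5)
Your overall strategy --- build explicit piecewise profiles, exhibit an off-center ball that beats the centered one for a single well-chosen volume, and worry about the curvature budget $\int_0^r(\psi''+g')\geq\kappa r$ --- matches the paper's. But there is a sign confusion at the heart of your mechanism. You write that taking $g'$ large and positive near the origin makes $\G$ ``strongly penalize placing mass near $0$,'' and that on the far annulus $g'\approx 0$, so $\G$ is ``nearly flat there.'' If $g'>0$ near the origin, then $g$ is smallest at the origin and larger on the annulus; since $\G(F)=\int_F gf\,dx$, this \emph{rewards} mass near the origin and \emph{penalizes} the far competitor. So under your set-up the bulk term works against you, not for you, and your perimeter mechanism (``$\psi$ increasing very slowly on the annulus, $f$ only marginally above $f(0)$'') does not by itself give the competitor a perimeter advantage, because the centered ball's boundary also sits where $f\approx f(0)$. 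As proposed, neither term produces the needed strict gap.

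The paper's two constructions use different mechanisms that you have not identified. For the $g$-monotone case, $\psi$ is \emph{not} slowly increasing: it rises steeply to a plateau $M$, then dips back down to a lower shelf $M/2$ and stays there; the centered ball of the right volume has its boundary on the $M$-plateau, while a translated ball of equal weighted volume sits entirely on the $M/2$-shelf, so $\Pe(B')\ll \Pe(B)$, and $g$ (built to cancel only the negative parts of $\psi''$) costs only a small $\G$ penalty which is swamped by the perimeter gain. For the $\psi$-monotone case the key trick is to take $g(r)=g(0)-\psi'(r)$ on the relevant range, which makes $\psi''+g'\equiv 0$ automatically and, crucially, makes $g$ \emph{decrease} exactly where $\psi'$ increases; placing a small ball just past the flat core of $\psi$ then wins at second order because both the bulk term and the Euclidean-perimeter shrinkage favor it, and the verification is a genuinely delicate second-order expansion, not ``a finite computation once the profiles are pinned down.'' Finally, you have not anticipated the strategy for upgrading to $\kappa$-\emph{uniform} admissibility: the paper first achieves merely admissible weights with a strict energy gap, then adds a small linear term $\delta r$ to $g$ (or a small quadratic term to $\psi$) and checks that the gap survives for $\delta$ small; building the $\kappa$ budget in from the start, as you suggest, is exactly what creates the tension you describe, and the paper avoids it by perturbing afterwards. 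Without these ingredients the proposal does not close.
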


The construction of the counterexamples suggests that the monotonicity of the two densities is needed as an additional assumption to approach Q1). In fact, we are able to show the following classification argument.

\begin{theorem}\label{thm:uniqueness}
    Let $\psi,g$  be monotone increasing strictly admissible weights of class $C^3(\R)$. Then, in $\R^n$, the centered spheres uniquely minimize \eqref{eq:optimization_problem} for all volumes.
\end{theorem}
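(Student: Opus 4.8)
The plan is to combine a symmetrization/rearrangement argument with the stability condition (3'), using the monotonicity of both weights in an essential way. First I would reduce to the case where the minimizer $E$ is a bounded set of finite perimeter that is a local minimizer, using the existence and regularity theory quoted from Section \ref{sec:existence_variations}; in particular $\partial^* E$ is smooth away from a small singular set, and $E$ satisfies the Euler--Lagrange equation $H_f = \lambda - g$ on $\partial^* E$ for some Lagrange multiplier $\lambda$, where $H_f$ denotes the weighted mean curvature $H - \psi'\,\langle \nu, \partial_r\rangle$ (here $\nu$ is the outer normal). The heart of the argument is to show that any such stationary $E$ with the same weighted volume as some $B_R$ must in fact be $B_R$; since $\psi'' + g' > 0$ for $r > 0$ makes centered spheres strictly stable, once we know $E$ is a centered sphere the matching of weighted volumes pins down the radius and gives uniqueness.

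To force $E$ to be radial I would argue as follows. Let $u(r) := \Haus^{n-1}(\partial^* E \cap \partial B_r)$ (or rather work with the weighted slices $\int_{\partial B_r} \mathbf{1}_E\, f\, d\Haus^{n-1}$), and let $B^*$ be the centered ball with $|B^*|_f = |E|_f$. The claim is $\Pe(B^*) \le \Pe(E)$ with equality iff $E = B^*$, and simultaneously $\G(B^*) \le \G(E)$, so that $\E(B^*) \le \E(E)$. For the volume term this is exactly where $g$ monotone increasing enters: replacing $E$ by the centered ball of the same weighted volume can only decrease $\int_E g f\, dx$, since we are moving mass of $E$ toward the origin where $g$ is smallest — this is a weighted Hardy--Littlewood-type rearrangement inequality, and equality forces $E$ (up to measure zero) to be a centered ball. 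For the perimeter term, the idea is to run a spherically symmetric decreasing rearrangement adapted to the density $f = e^\psi$: since $\psi$ is monotone increasing, Chambers' theorem (the resolution of Brakke's conjecture for log-convex $f$, cited as \cite{chambers}) applies, giving that centered spheres solve the \emph{purely} weighted isoperimetric problem $\inf\{\Pe(F) : |F|_f = v\}$; hence $\Pe(B^*) \le \Pe(E)$. Combining, $\E(B^*) \le \E(E) = \inf \eqref{eq:optimization_problem} \le \E(B^*)$, so all inequalities are equalities; the equality case of the volume rearrangement then forces $E = B^*$, and then $B^* = B_R$ by the volume constraint.

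The delicate point — and I expect this to be the main obstacle — is the equality analysis: a priori Chambers' theorem gives $\Pe(B^*) \le \Pe(E)$ but does \emph{not} by itself characterize equality (it addresses optimality of centered spheres but uniqueness there can fail, e.g. when $\psi$ is constant on an interval, cf. the discussion after Theorem \ref{thm:one_dim} and \cite[Theorem 1.2]{chambers}). This is precisely why the hypothesis uses \emph{strictly} admissible weights and, crucially, requires $g$ increasing rather than just $f$: the strict monotonicity/strict convexity in (3') should be leveraged through the equality case of the \emph{volume} rearrangement inequality, which is robust and does give $E = B^*$ outright. So the logical structure is: Chambers gives the perimeter inequality (not necessarily strict), $g$-monotonicity gives the volume inequality \emph{with} a clean equality case, and together they close the argument. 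I would also need the $C^3$ regularity hypothesis to justify the Euler--Lagrange manipulations and, if a second-variation step is used as a fallback to rule out degenerate equality configurations, to make the stability computation of Section \ref{sec:existence_variations} rigorous; the strict stability $\psi'' + g' > 0$ then upgrades "$E$ is a centered sphere that is a critical point" to "$E = B_R$ and nothing else competes," completing the proof.
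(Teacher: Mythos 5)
Your plan hinges on the decomposition $\E(B^\ast)\le\E(E)$ via two \emph{separate} inequalities $\Pe(B^\ast)\le\Pe(E)$ and $\G(B^\ast)\le\G(E)$, but the first of these cannot be obtained from Chambers' theorem under the stated hypotheses. Chambers' resolution of Brakke's conjecture requires $f=e^\psi$ to be \emph{log-convex}, i.e.\ $\psi''\ge 0$; the hypotheses of the theorem give only that $\psi$ is increasing and that $\psi''+g'>0$, which allows $\psi''<0$ on large sets as long as $g'$ compensates. Monotonicity of $\psi$ is far weaker than convexity of $\psi$, and in fact the counterexample of Proposition \ref{prop:g_monotone} constructs an increasing-then-decreasing $\psi$ with non-convex pieces precisely to show that the perimeter functional $\Pe$ by itself is \emph{not} minimized by centered balls even though $\psi''+g'\ge\kappa>0$. (That example has $\psi$ non-monotone, but it is easy to make a monotone, non-convex $\psi$ with $\psi''+g'>0$ for which centered spheres are not $\Pe$-optimal.) So the perimeter half of the split fails, and the whole argument collapses: the two terms $\Pe$ and $\G$ cannot be decoupled, as the failure of Q1) without joint monotonicity demonstrates that their interaction is the heart of the matter.

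There is also a secondary problem in the equality analysis you rely on for closure. The bathtub inequality $\G(B^\ast)\le\G(E)$ (Lemma \ref{lem:levelset} with $\mu=f\,dx$ and $h=g$) is correct, but its equality case is \emph{not} automatically rigid: the hypotheses permit $g$ to be constant on an interval $[0,a]$ (strict admissibility only requires $\psi''+g'>0$, so $g'\equiv 0$ is fine where $\psi''>0$), in which case any set $E\subset B_a$ with $|E|_f=|B^\ast|_f$ realizes equality. So even assuming the perimeter inequality, you cannot conclude $E=B^\ast$ from the $\G$-equality alone. The paper avoids both issues by using spherical symmetrization $E\mapsto E^\star_\xi$ (Proposition \ref{prop:spherical_symm}), which preserves $|\cdot|_f$ and $\G$ exactly while decreasing $\Pe$, and then running a Chambers-style ODE comparison argument on the generating curve $\gamma$ of the symmetrized minimizer, using the \emph{full} weighted mean curvature equation $\kappa+(n-2)\lambda+\psi'\frac{\gamma\cdot\nu}{|\gamma|}+g(|\gamma|)=\text{const}$ (Lemma \ref{lem:regularity_gamma}). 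The key comparison lemmas (Lemmas \ref{lem:mean-convexity}, \ref{lem:prop-kappa(0)}, \ref{lem:lower curve curves more}) all exploit the combined quantity $\psi''+g'$ and the monotonicity of both $\psi'$ and $g$ simultaneously; this is where the joint monotonicity hypothesis actually enters, rather than through any attempt to reduce to a known isoperimetric result for $f$ alone.
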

As a direct corollary, we obtain an isoperimetric-type inequality involving the energy and the volume of any finite perimeter set.
\begin{corollary}\label{cor:isop_ineq}
    Let $\psi,g$  be monotone increasing strictly admissible weights of class $C^3(\R)$, $\Phi(r):=\abs{B_r}_f$, and $\tilde F(r):=\E(B_r)$. Then, the function $\Efrak:=\tilde F\circ\Phi^{-1}$ is well defined, satisfies
    \begin{equation}\label{eq:isop_profile}
    \Efrak'(\Phi(r))=g(r)+\psi'(r)+\frac{n-1}{r},\qquad \Efrak''(\Phi(r))=\frac{r^2(g'(r)+\psi''(r))-n+1}{n\omega_nr^{n+1}e^{\psi(r)}},
    \end{equation}
    and 
    \begin{equation}\label{eq:isoperimetric_ineq}
        \E(F)\geq \Efrak(\abs{F}_f),
    \end{equation}
    for all measurable sets $F\subset \R^n$, with equality if and only if $E$ is a centered ball up to a negligible set.
\end{corollary}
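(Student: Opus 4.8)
The plan is to read this off from Theorem~\ref{thm:uniqueness} after a short computation of the radial profile. First I would check that $\Efrak$ is well defined: since $f=e^{\psi}>0$, the map $\Phi(r)=\abs{B_r}_f=n\omega_n\int_0^r s^{n-1}e^{\psi(s)}\,ds$ is strictly increasing on $[0,+\infty)$ with $\Phi'(r)=n\omega_n r^{n-1}e^{\psi(r)}$, and $\psi\geq 0$ forces $\Phi(r)\geq\omega_n r^n\to+\infty$; hence $\Phi$ is a homeomorphism of $[0,+\infty)$ onto itself (and a $C^3$-diffeomorphism on $(0,+\infty)$), so $\Efrak:=\tilde F\circ\Phi^{-1}$ makes sense on $[0,+\infty)$.

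Next I would establish \eqref{eq:isop_profile} by differentiation. Writing $\tilde F(r)=\E(B_r)=n\omega_n r^{n-1}e^{\psi(r)}+n\omega_n\int_0^r g(s)s^{n-1}e^{\psi(s)}\,ds$, a direct computation gives $\tilde F'(r)=\Phi'(r)\bigl(\tfrac{n-1}{r}+\psi'(r)+g(r)\bigr)$; differentiating the identity $\tilde F(r)=\Efrak(\Phi(r))$ and cancelling $\Phi'(r)\neq 0$ yields the first formula. Differentiating that relation once more in $r$ and dividing again by $\Phi'(r)$ gives $\Efrak''(\Phi(r))=\bigl(\psi''(r)+g'(r)-\tfrac{n-1}{r^2}\bigr)/\Phi'(r)$, which simplifies to the stated expression. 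I would note in passing that $\psi\in C^2$ and $g\in C^1$ already make $\Efrak$ of class $C^2$ on $(0,+\infty)$, the $C^3$-assumption being needed only to apply Theorem~\ref{thm:uniqueness}, and that for $n\geq 2$ the profile $\Efrak$ is in general not differentiable at $0$ (the term $\tfrac{n-1}{r}$ blows up), which is why the formulas are phrased along $\Phi(r)$ with $r>0$.

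Finally, for the inequality I would fix a measurable $F\subset\R^n$. If $\Pe(F)=+\infty$ or $\abs{F}_f=+\infty$ it holds trivially, since $\E(F)\geq\G(F)\geq 0$ while $\tilde F(r)\geq\Pe(B_r)\to+\infty$; otherwise set $R:=\Phi^{-1}(\abs{F}_f)$, so that $B_R$ has the same weighted volume as $F$. Theorem~\ref{thm:uniqueness} says $B_R$ is the unique minimizer of \eqref{eq:optimization_problem} at that volume, hence $\E(F)\geq\E(B_R)=\tilde F(R)=\Efrak(\abs{F}_f)$, with equality precisely when $F$ is itself a minimizer --- that is, by the uniqueness statement (which, like $\Pe$ and $\abs{\cdot}_f$, ignores modifications on a negligible set), when $F$ agrees with $B_R$ up to a set of measure zero, i.e.\ $F$ is a centered ball.

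I do not anticipate a genuine obstacle: the entire analytic content rests on Theorem~\ref{thm:uniqueness}, and what remains is the elementary chain-rule identity of the second step together with the routine handling of the degenerate volumes $\abs{F}_f\in\{0,+\infty\}$ and the non-smooth point $r=0$ when $n\geq 2$.
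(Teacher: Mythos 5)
Your proposal is correct and follows essentially the same route as the paper: well-definedness of $\Efrak$ from the strict monotonicity of $\Phi$, the chain-rule computation of $\Efrak'$ and $\Efrak''$ from the explicit formulas for $\tilde F$ and $\Phi$, and the inequality \eqref{eq:isoperimetric_ineq} with its equality case read off directly from Theorem~\ref{thm:uniqueness}. Your extra remarks (surjectivity of $\Phi$ onto $[0,+\infty)$, the degenerate cases $\abs{F}_f\in\{0,+\infty\}$, that the $C^3$ hypothesis is only needed for the citation of Theorem~\ref{thm:uniqueness}, and the blow-up at $r=0$) are sound but not part of the paper's terser argument.
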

\begin{remark}
    Observe that equation \eqref{eq:isop_profile} combined with the admissibility condition of Definition \ref{def:admissible_weights} implies that the energy profile
    \[
    \Efrak(v)=\min\{\E(F):\abs{F}_f=v\}=\tilde F(\Phi^{-1}(v)),
    \]
    is strictly concave when $v$ is small enough. In contrast with the unweighted case, this may fail in the large volume regime. It suffices to consider for example $\kappa$-uniformly admissible weights, in which $\Efrak$ becomes strictly convex for volumes larger than $\Phi\Bigl(\sqrt{\kappa^{-1}(n-1)}\Bigr)$. See also \cite[Corollary 2.6]{indrei2023minimizingfreeenergy} in the case when $\psi\equiv 0.$
\end{remark}

Finally, we succeeded in answering Q2) in the context of Theorem \ref{thm:uniqueness}, showing the following quantitative stability result.
\begin{theorem}\label{thm:stability}Let $\psi,g$  be monotone increasing strictly admissible weights of class $C^3(\R)$, and $R>0$. Then, there exists a constant $c=c(R,\psi,g,n)>0$ such that for all measurable set $E$  with $\abs{E}_f=\abs{B_R}$,
    \begin{equation}
        \E(E)-\E(B_R)\geq c\abs{E\triangle B_R}_f^2.
    \end{equation}
\end{theorem}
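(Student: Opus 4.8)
The plan is to run a two-scale (``selection plus compactness'' à la Cicalese--Leonardi) argument reducing the problem to a Fuglede-type estimate for nearly-spherical sets, combined with the strict concavity/convexity information on the energy profile $\Efrak$ coming from Corollary \ref{cor:isop_ineq}. First I would show the \emph{reduction to nearly-spherical sets}: suppose the inequality fails, so there is a sequence $E_k$ with $\abs{E_k}_f=\abs{B_R}$ and $\E(E_k)-\E(B_R)\leq \tfrac1k\abs{E_k\triangle B_R}_f^2$. By Theorem \ref{thm:uniqueness} centered balls are the unique minimizers, so (using the existence/lower-semicontinuity machinery of Section \ref{sec:existence_variations} and the coercivity coming from $\psi+g\to+\infty$) any such sequence is compact in $L^1_{loc}$ and converges to $B_R$, forcing $\varepsilon_k:=\abs{E_k\triangle B_R}_f\to 0$. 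One then replaces $E_k$ by a minimizer $E_k'$ of the penalized functional $\E(F)+\Lambda\bigl|\,\abs{F}_f-\abs{B_R}\,\bigr|+\mu\bigl(\abs{F\triangle B_R}_f-\varepsilon_k\bigr)^2$ for suitable constants; standard $\Lambda$-minimizer regularity theory (Almgren, De Giorgi; see \cite{Maggi}) gives uniform $C^{1,\alpha}$ bounds, hence $E_k'$ is a nearly-spherical set: $\partial E_k'=\{(1+u_k(\theta))R\theta:\theta\in S^{n-1}\}$ with $\|u_k\|_{C^1}\to 0$, and it suffices to prove the quadratic bound for such sets.

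Next I would carry out the \emph{Taylor expansion of $\E$ along nearly-spherical perturbations}. Writing $E_u$ for the set with radial graph $(1+u)R$, the weighted volume constraint $\abs{E_u}_f=\abs{B_R}$ pins down the average of $u$ to second order, and one expands
\begin{equation*}
\E(E_u)-\E(B_R)=Q_R(u)+o(\|u\|_{H^1}^2),
\end{equation*}
where $Q_R$ is the second variation quadratic form computed in Section \ref{sec:existence_variations}. Schematically, on $S^{n-1}$,
\begin{equation*}
Q_R(u)=\tfrac12 R^{n-1}e^{\psi(R)}\int_{S^{n-1}}\Bigl(\abs{\nabla_{S^{n-1}}u}^2-(n-1)u^2+R\bigl(\psi''(R)+g'(R)\bigr)u^2\Bigr)\,d\theta,
\end{equation*}
up to the harmless lower-order terms produced by differentiating $f=e^\psi$ and $g$. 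Decomposing $u=\sum_{k\geq 0}u_k$ into spherical harmonics, the $k\geq 2$ modes are controlled by the spectral gap $\lambda_2=2n$ of $-\Delta_{S^{n-1}}$ (giving $\abs{\nabla u_k}^2-(n-1)u_k^2\geq (n+1)u_k^2$), the $k=1$ (translation) modes are handled by the barycenter: because $\psi,g$ are \emph{strictly} admissible the centered ball is the \emph{strict} second-variation minimizer even among translates, so one gains a positive constant times $\int u_1^2$ — this is exactly where monotonicity/strict admissibility is used and corresponds to the convexity of $r\mapsto\E(B_r(x_0))$ in the center $x_0$; and the $k=0$ mode is $O(\varepsilon_k^2)$ by the volume constraint but is absorbed since the constraint already kills it to leading order. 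Hence $Q_R(u)\geq c_0\|u\|_{L^2(S^{n-1})}^2\geq c_1\abs{E_u\triangle B_R}_f^2$, because for nearly-spherical sets $\abs{E_u\triangle B_R}_f\sim R^n e^{\psi(R)}\int_{S^{n-1}}\abs{u}\leq C\|u\|_{L^2}$. Combining with the reduction step contradicts $\E(E_k)-\E(B_R)\leq\tfrac1k\varepsilon_k^2$, proving the theorem.

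The main obstacle I anticipate is \emph{establishing strict positivity of $Q_R$ on the $k=1$ modes with a constant depending only on $R,\psi,g,n$}. Unlike the classical isoperimetric case, translating a centered ball changes both $\Pe$ and $\G$, and one must show that the combined second variation in the translation direction is strictly positive; the natural route is to prove that $x_0\mapsto\E(B_R(x_0))$ has a strict local (indeed global, by Theorem \ref{thm:uniqueness}) minimum at $x_0=0$ with positive-definite Hessian, which should follow from differentiating twice and invoking $\psi''+g'>0$ together with monotonicity of $\psi,g$ to control the sign — but this computation is delicate and is the crux. A secondary technical point is making the replacement of $E_k$ by a regular $\Lambda$-minimizer rigorous while preserving both the volume constraint and the value of the $L^1_f$-deficit $\varepsilon_k$ up to controlled errors, for which one follows the by-now standard penalization scheme and its regularity theory, adapted to the weight $f=e^\psi$ (which is smooth and bounded above and below on compact sets, so causes no essential difficulty).
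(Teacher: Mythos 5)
Your proposal follows essentially the same two-step strategy as the paper: a selection-principle/penalization reduction to nearly-spherical sets, followed by a Fuglede-type expansion decomposed into spherical harmonics. The paper's penalized functional $\mathcal{J}_{\Lambda_1,\Lambda_2,\alpha}(E)=\E(E)+\Lambda_1\bigl||E|_f-|B_R|_f\bigr|+\Lambda_2\bigl||E\triangle B_R|_f-\alpha\bigr|$ uses a \emph{linear} asymmetry penalty rather than your quadratic one; this is not a substantive difference, but the linear form lets the paper run direct calibration arguments (Lemma \ref{lem:calibration}) to show $B_R$ minimizes the penalized energy and to establish the comparison $\tfrac{\varepsilon_k}{2}\leq|F_k\triangle B_R|_f$ and $\bigl||F_k|_f-|B_R|_f\bigr|\lesssim\varepsilon_k^2$ that close the contradiction. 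You also correctly identified the heart of the matter: in the spherical-harmonics decomposition the degree-$1$ modes make the unweighted part $|\nabla u_1|^2-(n-1)u_1^2$ vanish, and what saves the day is the strict lower bound $R^2(\psi''(R)+g'(R))>0$, precisely where strict admissibility enters (and monotonicity of $\psi,g$ is what makes the auxiliary conditions \eqref{eq:cond_on_R_1}--\eqref{eq:cond_on_R_2} automatic). One small correction to your schematic quadratic form: the weighted term should carry $R^2(\psi''+g')$, not $R(\psi''+g')$ (and the overall prefactor is $R^{n-3}e^{\psi(R)}$ after converting the $\partial B_R$ integral to $S^{n-1}$), but this does not affect the argument. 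Finally, note the paper does not rely on an abstract ``$Q_R(u)+o(\|u\|_{H^1}^2)$'' Taylor expansion but instead derives explicit one-sided lower bounds for $\Pe$, $\G$, and the volume constraint as in \eqref{eq:expansion_V}--\eqref{eq:expansion_P}; this sidesteps the issue of rigorously bounding the remainder and is slightly cleaner than invoking the second variation of Lemma \ref{lem:second_variation}, though both routes can be made to work.
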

Note that the quadratic exponent in the above stability estimate is sharp, which for instance can be seen by evaluating the energy functional on ellipsoids \cite{fusco2015quantitative}. The same result was been established when $\psi\equiv 0$ in \cite[Theorem 2.1]{indrei2023minimizingfreeenergy}.
\subsection{Structure of the paper}
We now provide a brief overview of the paper's structure. Section \ref{sec:existence_variations} begins by addressing the existence, regularity, and boundedness of minimizers as well as the monotonicity of the isoperimetric profile. In Section \ref{sec:one-d}, we focus on the one-dimensional problem and prove Theorem \ref{thm:one_dim}. Section \ref{sec:large-volume} is dedicated to proving Theorem \ref{thm:large_vol}, followed by Section \ref{subsec:counterexamples} where we construct two counterexamples that prove Proposition \ref{prop:counter_examples}. In Section \ref{sec:optimal-sets} we characterize the minimizers of \eqref{eq:optimization_problem} and prove Theorem \ref{thm:uniqueness}. Finally, in Section \ref{sec:stability} we conclude with quantitative stability results including a proof of Theorem \ref{thm:stability}.

\subsection*{Acknowledgments}
The authors express their gratitude to Emanuel Indrei for his insightful discussions and valuable feedback.

\section{Necessary and sufficient conditions on the weights}\label{sec:nec-suf-weights}
\subsection{Existence, regularity, first and second variations, and optimal profile}\label{sec:existence_variations}
We start by stating the result concerning the existence of sets minimizing \eqref{eq:optimization_problem}. The assumptions on the weights at this level are pretty mild. The proof, which follows the lines of Theorem 3.3 in \cite{morgan-pratelli2013}, is based on the observation that if a minimizing sequence has volume far from the origin (that is, escaping to infinity), then it must also have an arbitrarily large radial and tangential perimeter, contradicting optimality in the limit.
\begin{theorem}\label{thm:existence}
    Let $f,g:\R^n\to [0,+\infty)$ radial functions such that $f$ is lower semi-continuous and $g$ is a locally bounded Borel function. Then, as long as $f$ or $g$ diverge to infinity, for every volume $v>0$ there exists a set of finite perimeter $F\subset \R^n$ satisfying $\abs{F}_f=v$ and minimizing \eqref{eq:optimization_problem}.
\end{theorem}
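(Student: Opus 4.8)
The plan is to run the direct method of the calculus of variations; the only non-routine point is to rule out loss of mass at infinity, for which I would adapt the argument of \cite[Theorem~3.3]{morgan-pratelli2013} to the presence of the potential term $\G$. Let $\E_0$ denote the infimum in \eqref{eq:optimization_problem}, which is finite since a centered ball of weighted volume $v$ is an admissible competitor with finite energy ($f$ is radial and finite-valued, $g$ is locally bounded, and $f$ is locally integrable). The region $\{f=0\}$ being irrelevant to $\Pe$, to $\G$, and to $\abs{\cdot}_f$, I may assume $f>0$. Fix a minimizing sequence $(F_k)$ with $\abs{F_k}_f=v$ and $\E(F_k)\to\E_0$, and set $\Lambda:=\E_0+1$, so that $\E(F_k)\le\Lambda$ for $k$ large; as $f,g\ge0$, this yields $\Pe(F_k)\le\Lambda$ and $\G(F_k)\le\Lambda$. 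On any ball $B_\rho$ the positive lower semicontinuous function $f$ attains a positive minimum, so the classical (unweighted) perimeter of $F_k$ relative to $B_\rho$ is at most $\Lambda/\min_{\overline B_\rho}f$; hence $(\mathbf 1_{F_k})$ is bounded in $BV(B_\rho)$ for every $\rho$, and a diagonal argument together with the compactness theorem for sets of finite perimeter produces a set $F$ of locally finite perimeter and a non-relabelled subsequence with $F_k\to F$ in $L^1_{\mathrm{loc}}(\R^n)$ and $\mathbf 1_{F_k}\to\mathbf 1_F$ almost everywhere.

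Next I would record the standard semicontinuity inequalities. Lower semicontinuity of the weighted perimeter under $L^1_{\mathrm{loc}}$-convergence for lower semicontinuous densities is classical (see e.g.\ \cite{Maggi}), so $\Pe(F)\le\liminf_k\Pe(F_k)$; Fatou's lemma applied to the nonnegative integrands $gf\mathbf 1_{F_k}$ and $f\mathbf 1_{F_k}$ gives $\G(F)\le\liminf_k\G(F_k)$ and $\abs{F}_f\le\liminf_k\abs{F_k}_f=v$. Consequently $\E(F)\le\E_0$, and $F$ is a minimizer as soon as no weighted volume escapes to infinity, i.e.\ as soon as $\abs{F}_f=v$.

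The crux is therefore to exclude $\abs{F}_f=v_1<v$; put $\delta:=v-v_1>0$. If $g$ diverges at infinity, Chebyshev's inequality applied to $\G(F_k)\le\Lambda$ gives $\abs{F_k\setminus B_\rho}_f\le\Lambda/\inf_{\abs{x}>\rho}g$, which tends to $0$ uniformly in $k$ as $\rho\to\infty$; combined with $\abs{F_k\cap B_\rho}_f\to\abs{F\cap B_\rho}_f$ this forces $v\le v_1$, a contradiction. If instead $f$ diverges at infinity, I would argue by a cut-and-compare scheme on the sequence itself. Fixing a large $L$, the coarea identity $\int_\rho^{\rho+L}\bigl(\int_{\partial B_s\cap F_k}f\,d\Haus^{n-1}\bigr)\,ds=\abs{F_k\cap(B_{\rho+L}\setminus B_\rho)}_f\le v$ and a pigeonhole argument select, for each $k$, a radius $s_k\in[\rho,\rho+L]$ at which the slice satisfies $\int_{\partial B_{s_k}\cap F_k}f\,d\Haus^{n-1}\le C/L$ (with $C=C(v)$) and $\partial^*F_k$ carries no $\Haus^{n-1}$-mass on $\partial B_{s_k}$; since $s\mapsto\abs{F_k\setminus\overline B_s}_f$ is nonincreasing and $\abs{F_k\setminus\overline B_{\rho+L}}_f\to v-\abs{F\cap B_{\rho+L}}_f\ge\delta$, the exterior piece carries weighted volume $t_k:=\abs{F_k\setminus\overline B_{s_k}}_f\ge\delta/2$ once $k$ is large. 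Splitting the energy across $\partial B_{s_k}$ and reattaching the slice to each half gives $\E(F_k)\ge\E(F_k\cap B_{s_k})+\E(F_k\setminus\overline B_{s_k})-2C/L\ge\Pe(F_k\setminus\overline B_{s_k})-2C/L$, using $\E\ge0$. Now $F_k\setminus\overline B_{s_k}$ lies in $\R^n\setminus\overline B_\rho$ and has weighted volume $\ge\delta/2$; since $f(r)\to\infty$, a slicing argument in the radial variable combined with the Euclidean and spherical isoperimetric inequalities shows that $\inf\{\Pe(E):E\subset\R^n\setminus\overline B_\rho,\ \abs{E}_f\ge\delta/2\}\to\infty$ as $\rho\to\infty$ — this is essentially the quantitative heart of \cite[Theorem~3.3]{morgan-pratelli2013}: such a set either spreads over far-away spheres, whose areas diverge (large radial perimeter), or concentrates near a few radii, where holding a fixed weighted volume against the diverging $f$ is expensive (large tangential perimeter). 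Hence, choosing $\rho$ large enough that this infimum exceeds $\Lambda+2C/L$, we obtain $\E(F_k)>\Lambda$ for all large $k$, contradicting $\E(F_k)\to\E_0$. In both cases $\abs{F}_f=v$, and $F$ minimizes \eqref{eq:optimization_problem}.

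The routine part is the direct method: compactness, semicontinuity, and the bookkeeping of the slice in the cut-and-compare step, only mildly complicated by the extra (nonnegative) potential term $\G$. The one genuinely non-trivial ingredient — and thus the main obstacle — is the last estimate, that a set lying outside a large ball and carrying a fixed weighted volume must have weighted perimeter tending to infinity; this is precisely where the radial structure of $f$ and its divergence enter, and it is established by the coarea-plus-isoperimetry analysis of Morgan--Pratelli.
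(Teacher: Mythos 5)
Your argument is correct and follows the same overall route as the paper (direct method, then rule out escape of weighted volume to infinity via coarea/isoperimetric slicing, following Morgan--Pratelli), but the way you exclude mass escape is structured differently, and it is worth comparing. The paper derives a \emph{single} lower bound for the energy of the tail: writing $a(r):=\Haus^{n-1}(E_k\cap\partial B_r)$, it combines the tangential estimate $\Pe(E_k)\geq c_n\varepsilon\bigl(\max_{r\geq R}a(r)\bigr)^{-1/(n-1)}$ (coarea plus spherical isoperimetric inequality) with the radial estimate $\Pe(E_k)\geq\inf_{r\geq R}f(r)\cdot\max_{r\geq R}a(r)$ to get
\begin{equation*}
\E(E_k)\;\geq\;\varepsilon\inf_{r\geq R}g(r)\;+\;c_n^{\frac{n-1}{n}}\varepsilon^{\frac{n-1}{n}}\inf_{r\geq R}f(r)^{1/n},
\end{equation*}
which blows up as $R\to\infty$ under \emph{either} hypothesis, dispatching both cases in one stroke. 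You instead split into cases: for $g\to\infty$ you use a Chebyshev bound $\abs{F_k\setminus B_\rho}_f\leq\Lambda/\inf_{\abs{x}>\rho}g$, which is a genuine simplification for that case since it needs no isoperimetric input at all; for $f\to\infty$ you cut at a good slice $s_k$ and invoke the Morgan--Pratelli divergence of $\inf\{\Pe(E):E\subset\R^n\setminus\overline B_\rho,\ \abs{E}_f\geq\delta/2\}$, which is precisely the estimate the paper re-derives. The cut step at $s_k$ in your $f$-case is, incidentally, not needed: since $\Pe(F_k)\geq\Pe(F_k;\R^n\setminus B_\rho)$ and $\G\geq 0$ always, you can lower-bound $\Pe(F_k)$ directly by the weighted perimeter of the tail as the paper does, and skip the pigeonhole choice of $s_k$ and the reattachment error $2C/L$ entirely. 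Both approaches are sound; the paper's is more economical, while yours isolates the $g$-case with a very cheap argument and relies on the cited Morgan--Pratelli estimate as a black box for the $f$-case rather than re-deriving it.
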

\begin{proof}
We start with existence. Fix any volume $v>0$, and let $(E_k)$ be a minimizing sequence on finite perimeter sets with weighted volume $v$ such that $\lim_{k\to+\infty}\E(E_k)=\inf\{\E(F):\abs{F}_f=v\}$. 
By classical compactness argument \cite{morgan2016geometric,Maggi} splitting the sequence $(E_k)$ into a bounded and a diverging part, one can prove that $E_k$ converges up to a subsequence to a set $E_\infty$ of volume $\abs{E_\infty}_f=v$ provided
\[
\lim_{R\to+\infty}\liminf_{k\to+\infty}\abs{E_k\setminus B_R}=0,
\]
that is, we do not lose volume at infinity. By the lower-semi-continuity, we conclude that $E$ is optimal. Suppose by contradiction that there exists $\varepsilon>0$ such that for every $R>0$ there exists $k(R)\geq 0$ realizing (up to relabeling)
\begin{equation}
    \abs{E_k\setminus B_R}_f\geq\varepsilon,\quad\text{for all } k>k(R).
\end{equation}
We deduce by the coarea formula \cite{Maggi,vol1967spaces,ambrosio2000functions} and the isoperimetric inequality on the sphere that
\begin{align*}
    \Pe(E_k)&\geq\int_R^{+\infty}\Haus^{n-2}(\partial E_k\cap\partial B_r)f(r)\,dr\\
    &\geq c_n\int_R^{+\infty}\Haus^{n-1}(E_k\cap\partial B_r)^{1-\frac{1}{n-1}}f(r)\,dr\\
    &\geq c_n\varepsilon\Bigl(\max_{r\geq R}\Haus^{n-1}(E_k\cap\partial B_r)\Bigr)^{-\frac1{n-1}},
\end{align*}
for some dimensional constant $c_n>0$. As a consequence, we have that
\begin{align*}
\Pe(E_k)^{\frac n{n-1}}&=\Pe(E_k)\Pe(E_k)^{\frac1{n-1}}\geq \Pe(E_k)\inf_{r\geq R}f(r)^{\frac 1{n-1}}\Bigl(\max_{r\geq R}\Haus^{n-1}(E_k\cap \partial B_r)\Bigr)^{\frac1{n-1}}\\
&\geq c_n\varepsilon\inf_{r\geq R}f(r)^{\frac 1{n-1}}.
\end{align*}
We finally deduce that
\begin{align*}
    \E(E_k)\geq \varepsilon\inf_{r\geq R} g(r)+c_n^{\frac{n-1}{n}}\varepsilon^{\frac{n-1}{n}}\inf_{r\geq R}f(r)^{\frac 1n},
\end{align*}
which leads to a contradiction if the right-hand side can be made arbitrarily large independently of $\varepsilon>0$. This is the case if either $f$ or $g$ diverge to infinity.
\end{proof}
The next proposition shows that under quite general growth hypothesis on the weights, the larger the volume, the larger the energy. The proof, which builds on Theorem 4.3 \cite{morgan-pratelli2013}, takes advantage of an additional integration by parts peculiar to the structure $\E$ in order to balance $\Pe$ with $\G$ when estimating the energy gap.
\begin{proposition}\label{prop:increasing_profile} For $n\geq 2$, let $\psi\in C^2$ and $g\in C^1$ be two radial, positive weights such that
\begin{equation}
    \int_0^r \psi''(\tau)+g'(\tau)\,d\tau>\frac{n-1}{r},\quad \forall r>0,
\end{equation}
and $\psi'(0)=0$. Then, the energy profile associated to Problem \eqref{eq:optimization_problem} with weights $f=e^\psi$ and $g$ defined as
    \begin{equation}\label{eq:controlled_osc}
        \mathfrak{E}:v\mapsto\inf\Bigl\{\E(F):\abs{F}_f=v\Bigr\},
    \end{equation}
    satisfies for every fixed volumes $0<v'<v$
    \begin{equation}
    \frac{\mathfrak{E}(v)-\mathfrak{E}(v')}{v-v'}\geq g(0).
    \end{equation}
    Furthermore, the above inequality is strict if a solution to \eqref{eq:optimization_problem} exists for the volume $v$.
\end{proposition}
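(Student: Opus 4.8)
The plan is to fix an arbitrary set of finite perimeter $F$ with $\abs{F}_f=v$ and, for any $v'\in(0,v)$, to produce a competitor of weighted volume $v'$ whose energy drops by at least $g(0)(v-v')$; applying this to energy‑minimizing sequences for $v$ will give the inequality after taking infima, and applying it to an actual minimizer (when one exists) will give the strict version. The competitor is the radial truncation $F_R:=F\cap B_R$, where $R>0$ is chosen so that $\abs{F_R}_f=v'$. Such an $R$ exists because $R\mapsto\abs{F\cap B_R}_f$ is continuous and nondecreasing from $0$ to $v$; moreover for $\Haus^1$‑a.e. $R$ the set $F_R$ has finite perimeter and the slicing identity $\Pe(F_R)=\Pe(F;B_R)+\int_{F^{(1)}\cap\partial B_R}e^{\psi(R)}\,d\Haus^{n-1}$ holds (here $F^{(1)}$ denotes the points of density $1$ of $F$), so we may take $R$ among these good radii; the measure‑zero set of target volumes for which the level set of $R\mapsto\abs{F\cap B_R}_f$ is a single bad radius is dealt with by approximation together with the lower semicontinuity of the profile, which follows from the compactness argument in the proof of Theorem~\ref{thm:existence}.

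With this choice, additivity of the bulk term and the slicing identity give
\[
\E(F_R)-\E(F)=\int_{F^{(1)}\cap\partial B_R}e^{\psi(R)}\,d\Haus^{n-1}-\Pe(F;\R^n\setminus\overline{B_R})-\int_{F\setminus B_R}g\,e^{\psi}\,dx.
\]
The heart of the argument — the integration by parts peculiar to the structure of $\E$ — is to control the spurious cap $\int_{F^{(1)}\cap\partial B_R}e^{\psi(R)}\,d\Haus^{n-1}$ by applying the divergence theorem on $F\setminus B_R$ to the radial vector field $X(x)=e^{\psi(\abs{x})}\tfrac{x}{\abs{x}}$, which is $C^1$ away from the origin (a region $F\setminus B_R$ avoids) and satisfies $\diva X=e^{\psi(r)}\bigl(\psi'(r)+\tfrac{n-1}{r}\bigr)$ with $r=\abs{x}$. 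Since the outer unit normal of $F\setminus B_R$ equals $\nu_F$ on $\partial^*F\setminus\overline{B_R}$ and $-x/\abs{x}$ on $F^{(1)}\cap\partial B_R$, this produces
\[
\int_{F^{(1)}\cap\partial B_R}e^{\psi(R)}\,d\Haus^{n-1}=\int_{\partial^*F\setminus\overline{B_R}}e^{\psi}\,\tfrac{x\cdot\nu_F}{\abs{x}}\,d\Haus^{n-1}-\int_{F\setminus B_R}e^{\psi}\Bigl(\tfrac{n-1}{r}+\psi'(r)\Bigr)\,dx,
\]
and since $x\cdot\nu_F/\abs{x}\le 1$ the first term is at most $\Pe(F;\R^n\setminus\overline{B_R})$, whence
\[
\E(F_R)-\E(F)\le-\int_{F\setminus B_R}e^{\psi(r)}\Bigl(\tfrac{n-1}{r}+\psi'(r)+g(r)\Bigr)\,dx.
\]

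It remains to feed in the hypothesis. Using $\psi'(0)=0$, the assumption $\int_0^r(\psi''+g')>\tfrac{n-1}{r}$ reads $\psi'(r)+g(r)-g(0)>\tfrac{n-1}{r}$ for all $r>0$, hence $\tfrac{n-1}{r}+\psi'(r)+g(r)>g(0)+\tfrac{2(n-1)}{r}>g(0)$ for every $r>0$ when $n\ge 2$. Therefore $\E(F_R)-\E(F)\le-g(0)\abs{F\setminus B_R}_f=-g(0)(v-v')$, with strict inequality whenever $\abs{F\setminus B_R}>0$, i.e. whenever $v'<v$. Taking the infimum of $\E(F_R)$ over all $F$ with $\abs{F}_f=v$ yields $\mathfrak{E}(v')\le\mathfrak{E}(v)-g(0)(v-v')$, which is the claimed bound; and if a minimizer $E$ of volume $v$ exists, applying the strict estimate to $F=E$ gives $\mathfrak{E}(v')\le\E(E_R)<\E(E)-g(0)(v-v')=\mathfrak{E}(v)-g(0)(v-v')$. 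The main obstacle is selecting the right deformation and vector field: the naive homothety $F\mapsto tF$ produces first‑variation boundary integrals weighted by $x\cdot\nu$ of indefinite sign, from which no pointwise estimate can be extracted; by contrast, truncation by a centered ball together with the field $e^{\psi}\,x/\abs{x}$ — rather than $e^{\psi}x$, the distinction being exactly the mismatch between the $(n-1)$‑homogeneity of perimeter and the $n$‑homogeneity of volume — is precisely what converts the unwanted cap into a quantity dominated by $\Pe(F;\R^n\setminus\overline{B_R})$ and isolates the combination $\tfrac{n-1}{r}+\psi'+g$ that the hypothesis controls. The only other points requiring attention are the finite‑perimeter slicing bookkeeping and the passage from a.e. volume to every volume via lower semicontinuity of $\mathfrak{E}$.
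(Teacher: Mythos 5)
Your proof is correct and follows essentially the same route as the paper. The competitor is the same (truncate by a centered ball at the radius matching the target volume), the key vector field is the same (you apply the divergence theorem to $X=e^{\psi}\,x/|x|$ on $F\setminus B_R$, while the paper integrates $\int_E f'\,dx$ by parts against $\nabla r$, which is exactly $\operatorname{div}(e^{\psi}\,x/r)$), and the final lower bound of the integrand by $g(0)+2(n-1)/r>g(0)$ via the hypothesis (rewritten as $\psi'(r)+g(r)-g(0)>(n-1)/r$, equivalently $K(r)>(n-1)/r$) is identical. The only cosmetic difference is bookkeeping: the paper first rewrites $\E$ in the ``anisotropic'' form $\int_{\partial^*E}(1-x\cdot\nu/r)f+\int_E(K+g(0)+(n-1)r^{-1})f$ and then subtracts the two competitors (using that the cap term vanishes since $x\cdot\nu/r=1$ there), whereas you compute $\E(F_R)-\E(F)$ directly and absorb the cap by the divergence theorem; both reduce to the same identity. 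Your treatment of why the strict pointwise bound only yields a non-strict profile inequality in general, but becomes strict when a minimizer exists, matches the paper's conclusion precisely.
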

\begin{remark}
   Notice that condition \eqref{eq:controlled_osc} is satisfied if $\psi$, $g$ are admissible in the sense of Definition \ref{def:admissible_weights}, and therefore combining this result with Theorem \ref{thm:existence} we deduce that $\mathfrak E$ is strictly monotonically increasing in this case. It is however interesting to notice that on the energy level, one can also consider weights for which $\psi''+g'<0$. 
\end{remark}
\begin{proof}
    It is convenient to adopt an anisotropic point of view of Problem \eqref{eq:optimization_problem} in order to estimate the growth of $\mathfrak E$. Let $E\subset\R^n$ be any bounded set of finite perimeter. Integrating by parts we can express the following integral as follows
    \begin{align*}
        \int_{E}f'\,dx&=\int_E \nabla f\cdot\nabla r\,dx=\int_{\partial ^*E}f\partial_\nu r\,d\Haus^{n-1}-\int_E f\Delta r\,d\Haus^{n-1}\\
        &=\int_{\partial ^*E}f\frac{x\cdot\nu}{r}\,d\Haus^{n-1}-\int_E (n-1)r^{-1}f\,dx.
    \end{align*}
    Notice that the above integral converges since $n\geq 2$. We can express
    \[
    g(r)=g(0)-\psi'(r)+K(r),
    \]
    where $K(r)=\int_0^r \psi''+g'\,d\tau$. This implies
    \begin{equation}\label{eq:anisotropic_view}
    \begin{split}
        \E(E)&=\int_{\partial^*E}f\,d\Haus^{n-1}+\int_{E}gf\,dx\\
        &=\int_{\partial^*E}f\,d\Haus^{n-1}+\int_{E}(g(0)-\psi'+K)f\,dx\\
        &=\int_{\partial^*E}\Bigl(1-\frac{x\cdot\nu}{r}\Bigr)f\,d\Haus^{n-1}+\int_{E}(K+g(0)+(n-1)r^{-1})f\,dx.
    \end{split}
    \end{equation}
    Fix now two volumes $0<v'<v$ and a small constant $\delta>0$, and let $E$ be $\delta$-close to be a minimizer of \eqref{eq:optimization_problem} for the volume $v$, meaning that $\E(E)\leq \mathfrak E(v)+\delta$. Let $R>0$ be such that $\abs{E\cap B_R}_f=v'$. Then, on one side
    \[
    \mathfrak E(v')\leq\E(E\cap B_R),
    \]
    and on the other thanks to Equation \eqref{eq:anisotropic_view}, we can estimate
    \begin{align*}
        \E(E)-\E(E\cap B_R)&-\int_{E\setminus B_R}(K+g(0)+(n-1)r^{-1})f\,dx\\
        &=\int_{\partial^*E\setminus B_R}\Bigl(1-\frac{x\cdot\nu}{r}\Bigr)f\,d\Haus^{n-1}-\int_{E\cap \partial B_R}\Bigl(1-\frac{x\cdot\nu}{r}\Bigr)f\,d\Haus^{n-1}\\
        &=\int_{\partial^*E\setminus B_R}\Bigl(1-\frac{x\cdot\nu}{r}\Bigr)f\,d\Haus^{n-1}\geq 0.
    \end{align*}
    We deduce, thanks to condition \eqref{eq:controlled_osc} ensuring $K(r)-(n-1)r^{-1}>0$, that
    \begin{equation}
        \mathfrak E(v)-\mathfrak E(v')\geq \E(E)-\E(E\cap B_R)-\delta> g(0)(v-v')-\delta,
    \end{equation}
    concluding the proof by arbitrariness of $\delta>0$. Finally, if there exists $E$ globally minimizing \eqref{eq:optimization_problem}, the above argument works with $\delta=0$, proving the strict inequality in this case.
\end{proof}
As a consequence, we obtain boundedness of minimizers.
\begin{theorem}\label{thm:boundedness}
    Let $\psi,g$ be admissible weights in Definition \ref{def:admissible_weights}. Then, sets minimizing \eqref{eq:optimization_problem} are bounded.
\end{theorem}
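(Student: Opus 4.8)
The plan is a truncation argument in the spirit of \cite[Theorem~4.3]{morgan-pratelli2013}, adapted so that the bulk term $\G$ is absorbed. Let $E$ minimize \eqref{eq:optimization_problem} for a volume $v$, and suppose, for contradiction, that $E$ is not essentially bounded. Since $\psi\ge 0$ one has $f=e^{\psi}\ge 1$, hence $\abs{E}\le\abs{E}_f=v<\infty$; therefore $m(r):=\abs{E\setminus B_r}_f$ is absolutely continuous, non-increasing, tends to $0$ as $r\to+\infty$, and essential unboundedness means $m(r)>0$ for every $r>0$. For a.e.\ $r>0$ the slice $E\cap B_r$ has finite perimeter, $\abs{E\cap B_r}_f=v-m(r)$, and the coarea formula yields $f(r)\Haus^{n-1}(E^{(1)}\cap\partial B_r)=-m'(r)$ together with the decomposition
\[
\E(E)=\E(E\cap B_r)+\E(E\setminus B_r)+2m'(r).
\]

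The key point is to restore the volume deficit $m(r)$ at a cost that is \emph{linear} in $m(r)$. Using the regularity of minimizers of $\E$, fix once and for all a small relatively open $C^1$ portion $\Sigma\subset\partial^*E$ lying inside a fixed ball $B_{r_0}$. For $r\ge r_0+1$ one has $\Sigma\subset\partial^*(E\cap B_r)$, and pushing $E\cap B_r$ into its exterior side across $\Sigma$ by a small normal height $h(r)=O(m(r))$ produces a finite-perimeter set $\widetilde E_r$ with $\abs{\widetilde E_r}_f=v$; since displacing a \emph{fixed} patch of boundary changes the weighted perimeter and the weighted potential only by $O(h)$, with constants depending solely on $\Sigma$ and the (finite) suprema of $f$, $\abs{\nabla f}$ and $g$ on a neighbourhood of $\Sigma$, we obtain $\E(\widetilde E_r)\le\E(E\cap B_r)+C_1\,m(r)$ for a fixed $C_1>0$. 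Minimality $\E(E)\le\E(\widetilde E_r)$, together with the decomposition above, then gives for a.e.\ $r\ge r_0+1$
\[
\E(E\setminus B_r)\;\le\;-2m'(r)+C_1\,m(r).
\]

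For the matching lower bound I would run the coarea formula and the isoperimetric inequality on spheres exactly as in the proof of Theorem~\ref{thm:existence}, now applied to $E\setminus B_r$ with threshold $m(r)$ at radius $r$; since $f\ge 1$ this gives $\E(E\setminus B_r)\ge\Pe(E\setminus B_r)\ge c_0\,m(r)^{\frac{n-1}{n}}$ with a purely dimensional $c_0=c_0(n)>0$. Comparing with the previous inequality and using $m(r)\to 0$ (so that $C_1 m(r)\le\tfrac{c_0}{2}m(r)^{(n-1)/n}$ for $r$ large) leaves, for a.e.\ $r\ge R_0$,
\[
-m'(r)\;\ge\;\frac{c_0}{4}\,m(r)^{\frac{n-1}{n}}.
\]
Since $\tfrac{n-1}{n}<1$, this is incompatible with $m(r)>0$ for all $r$: indeed $\bigl(m(r)^{1/n}\bigr)'\le-\tfrac{c_0}{4n}$ a.e.\ as long as $m>0$, so $m(r)^{1/n}$ would become negative at a finite radius. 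Hence $E$ is essentially bounded. (In dimension $n=1$ the assertion is trivial, since $f\ge 1$ already forbids unbounded intervals, and hence any finite-perimeter set of finite weighted volume is bounded.)

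The delicate step is the restoration of volume at \emph{linear} cost: a separate small ball would only yield the weaker bound $O(m(r)^{(n-1)/n})$, which is of the same order as the tail energy and hence inconclusive, whereas thickening a fixed smooth patch of $\partial^*E$ makes the restoration negligible against $\Pe(E\setminus B_r)\asymp m(r)^{(n-1)/n}$. Equally essential is that $C_1$ be independent of $r$, which is why $\Sigma$ — and all geometric quantities entering the estimate — must be confined to a fixed bounded region; this is exactly where the local boundedness of $f$ is used. (The case $\psi\equiv 0$ of this theorem is treated in \cite{indrei2023minimizingfreeenergy}.)
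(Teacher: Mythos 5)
Your proposal is correct and follows the same overall strategy as the paper's proof: truncate at $B_r$, restore the lost weighted volume at a cost that is linear in the deficit $m(r)$, lower-bound the tail energy by $c\,m(r)^{(n-1)/n}$, and integrate the resulting differential inequality $-m'(r)\gtrsim m(r)^{(n-1)/n}$. There is, however, one genuinely different (and slightly more elementary) sub-step, and one delicate point worth flagging.

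The more elementary sub-step is your lower bound. The paper does not bound $\E(E\setminus B_r)$ directly; instead it first invokes Proposition~\ref{prop:increasing_profile} (the monotone energy profile) to get $G(r)+P(r)\ge -V'(r)$, then combines this with the spherical isoperimetric inequality to derive a differential inequality for $P+G$, and integrates it in $r$ to obtain $(P(r)+G(r))^{n/(n-1)}\ge c_n V(r)$. You bypass Proposition~\ref{prop:increasing_profile} entirely: observing that $\psi\ge 0$ forces $f\ge 1$, you run the coarea/spherical-isoperimetric computation of Theorem~\ref{thm:existence} directly on $E\setminus B_r$ to get $\Pe(E\setminus B_r)\ge c_n\,m(r)^{(n-1)/n}$, and discard the nonnegative potential term $\G(E\setminus B_r)$. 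This is cleaner and makes boundedness independent of the profile monotonicity; the paper's route, on the other hand, avoids any normalization assumption on $f$, which makes it more robust if one later drops $\psi\ge 0$.

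The delicate point is the volume restoration. The paper uses the abstract volume-fixing variation (Lemma~17.21 in \cite{Maggi}), which works for an arbitrary set of finite perimeter and requires no regularity of $\partial^*E$. You instead fix a $C^1$ patch $\Sigma\subset\partial^*E$ and push it outward, invoking ``the regularity of minimizers of $\E$.'' In the paper's logical order, the regularity Theorem~\ref{thm:regularity} is stated and proved \emph{after} boundedness and its proof opens with ``By the boundedness of the minimizer $E\subset B_{R_0}$,'' so as written your argument is circular. This is repairable — the local $\omega$-minimality and hence $C^{1,\gamma}$ partial regularity of $\partial^*E$ in a fixed ball do not actually require global boundedness, and alternatively one can replace the patch construction with the density-point volume-fixing lemma — but you should either invoke that local version explicitly or switch to the Maggi lemma, rather than cite the global regularity theorem.
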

\begin{proof}
Let $E$ be an optimal set for a given arbitrary volume. In the following, we will denote with $\Haus^d_f:=f\Haus^d$ the $d$-dimensional Hausdorff measure weighted by $f$. For $r>0$ define the following functions:
\[
    P(r):=\Haus_f^{n-1}(\partial E\setminus B_r),\quad V(r):=\abs{E\setminus B_r}_f,\quad G(r):=\G(E\setminus B_r).
\]
Notice that for almost every $r>0$, we have that
\[
P'(r)=-\Haus^{n-2}_f(\partial E\cap\partial B_r),\quad V'(r)=-\Haus_f^{n-1}(E\cap\partial B_r),\quad G'(r)=g(r)V'(r),
\]
and
\[
\lim_{r\to+\infty}\bigl( G(r)+V(r)\bigr)=0,
\]
since $\abs{E}_f+\G(E)<\infty$. By Proposition \ref{prop:increasing_profile}, we have that $\E(E)\geq\E(E\cap B_r)$, so in particular
\[
\Pe(E)+\G(E)\geq\G(E)-G(r)+\Pe(E)-P(r)+\Haus^{d-1}_f(E\cap\partial B_r),
\]
implying that
\begin{equation}\label{eq:estimate_growth_GP}
    G(r)+P(r)\geq -V'(r).
\end{equation}
The isoperimetric inequality on the sphere, already used in Theorem \ref{thm:existence}, implies (for $r>0$ big enough) that
\[
\Haus^{n-2}_f(\partial E\cap\partial B_r)\geq c_n\Haus_f^{n-1}(E\cap\partial B_r)^{\frac{n-2}{n-1}},
\]
which combined with \eqref{eq:estimate_growth_GP} and $G'(r)=g(r)V'(r)\leq 0$ gives
\begin{equation}
    -(P'(r)+G'(r))\geq -c_nV'(r)\Bigl(c_n(G(r)+P(r))\Bigr)^{-\frac 1{n-1}}.
\end{equation}
Integrating from $r$ to infinity, we get the estimate
\begin{equation}\label{eq:estimate_growth_GP2}
    (P(r)+G(r))^{\frac n{n-1}}\geq c_nV(r).
\end{equation}
Let $\varepsilon>0$ be small enough, and set $r_0>0$ so that $V(r)<\varepsilon$ for all $r>r_0$. Performing a volume compensation argument Lemma 17.21 \cite{Maggi}, one can perturb $E$ inside $B_{r_0/2}$ obtaining a set $E_\varepsilon$ such that $F:=E_\varepsilon\cap B_{r_0}$ satisfies
\[
\abs{F}_f=\abs{E}_f,\quad \abs{\E(E_\varepsilon)-\E(E)}<C\varepsilon,
\]
for some constant $C=C(n,E,r_0,f,g)>0$. From
\[
\E(F)=\G(E_\varepsilon)-G(r)+\Pe(E_\varepsilon)-P(r)-V'(r)\leq \E(E)+C\varepsilon-(P(r)+G(r))-V'(r),
\]
combined with the optimality of $E$ and Equation \eqref{eq:estimate_growth_GP2}, we get that for all $r>r_0$
\begin{equation}
    -V'(r)\geq -C\varepsilon+G(r)+P(r)\geq C'\varepsilon^{\frac{n-1}{n}},
\end{equation}
provided $\varepsilon>0$ is small enough. Since this lower bound is uniform in $r>r_0$, it implies in particular that there exists a finite radius $r'>0$ such that $V(r')=0$, implying that $E$ is bounded up to a negligible set, as wished.
\end{proof}
We are now ready to discuss the regularity of optimal sets.
\begin{theorem}\label{thm:regularity}
    Let $\psi,g$ be admissible weights of class $C^{k, \alpha}$ and $C^{k+1,\alpha}$ respectively for some $k \geq 1$ and $\alpha \in(0,1]$. Then the boundary of any set of finite perimeter minimizing \eqref{eq:optimization_problem} is of class $C^{k+1, \alpha}$ away from for a singular set of Hausdorff dimension at most $n-8$.  
\end{theorem}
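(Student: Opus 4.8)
The regularity statement is a consequence of the general theory of almost-minimizers (or $(\Lambda,r_0)$-minimizers) of the perimeter functional, once one reinterprets the weighted, potential-constrained problem in the classical framework. The plan is as follows.

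First I would reduce to the unweighted perimeter functional. Since $\psi,g$ are continuous and the minimizer $E$ is bounded (Theorem \ref{thm:boundedness}), there is a bounded ball $B$ with $E\subset B$; on $B$ the density $f=e^\psi$ satisfies $0<\lambda\le f\le \Lambda_0<\infty$, and $|g|\le M$. I would then use a volume-fixing variation (as in Lemma 17.21 of \cite{Maggi}, already invoked in the proof of Theorem \ref{thm:boundedness}) to convert the volume-constrained minimizer into an \emph{unconstrained} minimizer of $\E$ plus a volume penalization: there exist constants $\Lambda,r_0>0$ such that, for every competitor $F$ with $F\triangle E\subset\subset B_{r_0}(x)$ for any $x$, one has
\begin{equation*}
\Pe(E)\le \Pe(F)+\bigl(\G(F)-\G(E)\bigr)+\Lambda\,\bigl|F\triangle E\bigr|.
\end{equation*}
Using $\lambda\le f\le\Lambda_0$ this gives, for a new constant $\Lambda'$,
\begin{equation*}
P(E;B_\rho(x))\le P(F;B_\rho(x))+\Lambda'\,|F\triangle E|\le P(F;B_\rho(x))+\Lambda'\,\omega_n\rho^n,
\end{equation*}
i.e. $E$ is a $(\Lambda',r_0)$-perimeter minimizer in the sense of \cite[Chapter 21]{Maggi} (the bulk term $\G$ is absorbed because $g$ is bounded, so $|\G(F)-\G(E)|\le \Lambda_0 M\,|F\triangle E|$). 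By the regularity theory for $(\Lambda,r_0)$-minimizers (De Giorgi, Tamanini; see \cite[Theorem 21.8]{Maggi}), $\partial^* E$ is a $C^{1,\gamma}$ hypersurface for every $\gamma\in(0,1/2)$, and the singular set $\partial E\setminus\partial^*E$ has Hausdorff dimension at most $n-8$ (empty if $n\le 7$), by the dimension-reduction argument of Federer together with the classification of singular minimizing cones.

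Second, I would bootstrap from $C^{1,\gamma}$ to $C^{k+1,\alpha}$ using the Euler--Lagrange equation. On the regular part, $E$ is stationary for $\E$ under volume-preserving variations, so the weighted mean curvature equation holds:
\begin{equation*}
H_{\partial E}+\partial_\nu\psi+g=\mu\qquad\text{on }\partial^*E,
\end{equation*}
for a Lagrange multiplier $\mu\in\R$ (this is exactly the first-variation identity derived in Section \ref{sec:existence_variations}). Writing $\partial^*E$ locally as a graph of a function $u$ over a hyperplane, $H_{\partial E}$ is a second-order quasilinear elliptic operator in $u$ with smooth (indeed analytic) coefficients, while $\partial_\nu\psi(x)=\nabla\psi\cdot\nu$ and $g(x)=g(|x|)$ are, as functions of the graph point, of class $C^{k,\alpha}$ given the hypotheses $\psi\in C^{k+1,\alpha}$... wait — the hypothesis is $\psi\in C^{k,\alpha}$ and $g\in C^{k+1,\alpha}$; since the curvature operator already loses one derivative, the controlling regularity of the right-hand side as a function on the graph is $C^{k-1,\alpha}$ from $\nabla\psi$ and $C^{k,\alpha}$ from $g$ composed with the $C^{1,\gamma}$ graph — so one Schauder step upgrades $u\in C^{1,\gamma}$ to $u\in C^{2,\alpha}$, hence $\partial E\in C^{2,\alpha}$, and then, the graph being now $C^{2,\alpha}$, the composed terms $\nabla\psi\circ u$ and $g\circ u$ improve, and iterating Schauder estimates \cite[Chapter 6]{gilbarg2001elliptic} yields $u\in C^{k+1,\alpha}$. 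This is the standard elliptic bootstrap and I would present it in a couple of lines rather than in full.

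The \textbf{main obstacle}, and the only genuinely delicate point, is the bookkeeping in the two reductions above: verifying uniformly (over all the small balls entering the almost-minimality definition) that the volume-fixing perturbation is admissible with constants independent of the center, and that the potential term $\G$ really does contribute only a first-order $O(|F\triangle E|)$ error so that it can be folded into $\Lambda$. Both follow from boundedness of $E$ (Theorem \ref{thm:boundedness}) and local boundedness of $g$, but they must be stated carefully; once $E$ is known to be a $(\Lambda,r_0)$-minimizer, everything else is a citation to \cite{Maggi} and a routine Schauder iteration. I expect the proof to be short, essentially a reduction plus two standard invocations.
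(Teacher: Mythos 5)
Your overall plan—reduce the constrained, weighted problem to an almost‑minimizer of the (unweighted) perimeter, invoke De Giorgi–Tamanini regularity to get a $C^{1,\gamma}$ reduced boundary together with the $n-8$ singular‑set bound, then bootstrap to $C^{k+1,\alpha}$ via Schauder estimates applied to the constant weighted mean curvature equation—is the same as the paper's (which follows Lemma~5.4 of \cite{fusco-manna} to obtain $\omega$-minimality and then appeals to Remark~21.9 and Theorem~27.5 of \cite{Maggi}).

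However, there is a genuine gap in the step where you pass from the weighted perimeter inequality to $(\Lambda,r_0)$-minimality of the unweighted perimeter. From $\Pe(E;B_\rho(x))\le \Pe(F;B_\rho(x))+\Lambda\,|E\triangle F|$, the global two-sided bound $\lambda\le f\le\Lambda_0$ only yields
\[
P(E;B_\rho(x))\;\le\;\frac{\Lambda_0}{\lambda}\,P(F;B_\rho(x))+\frac{\Lambda}{\lambda}\,|E\triangle F|,
\]
which carries a multiplicative factor $\Lambda_0/\lambda>1$ in front of $P(F;B_\rho(x))$, and this is not of the required almost‑minimality form: a constant multiplicative defect does not imply almost‑minimality and does not trigger the $\varepsilon$-regularity theorem. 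The way to remove it—and this is exactly what the paper does—is to replace the global bounds by the \emph{local} ones $\min_{\overline{B_\rho(x)}} f$ and $\max_{\overline{B_\rho(x)}} f$ and then exploit the Lipschitz oscillation bound $\operatorname{osc}_{B_\rho(x)} e^\psi\le C\rho$ (coming from $\psi\in C^1$ on a compact set) together with the upper perimeter density estimate $P(E;B_\rho(x))\le C\rho^{n-1}$. Only then does the multiplicative error become $(1+O(\rho))$ and get folded into an additive $C\rho^n$, giving $\omega$-minimality. Without the oscillation argument and the density bound your reduction does not go through as written; the rest of your argument (absorbing $\G$ into an $O(|E\triangle F|)$ error, the Lagrange‑multiplier penalization, the Schauder bootstrap) is fine and matches the paper.
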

\begin{proof}
It suffices to argue when $k=1.$ Higher order regularity follows from Schauder estimates (cf. Prop 3.3 in \cite{morgan-regularity}). By the boundedness of the minimizer $E \subset B_{R_0}$ and $g\in L^{\infty}_{\operatorname{loc}}$ we immediately deduce that
\begin{align}
\min _{z \in \overline{B_\rho}(x)} e^{\psi(|z|)} \mathcal{P}\left(E ; B_\rho(x)\right) & \leq \mathcal{P}_f\left(E ; B_\rho(x)\right) \leq \mathcal{P}_f\left(F, B_\rho(x)\right)+C_0 \rho^n \\
& \leq \max _{z \in \overline{B_\rho}(x)} e^{\psi(|z|)} \mathcal{P}\left(F, B_\rho(x)\right)+C_0 \rho^n    
\end{align}
where $F$ is a set of finite perimeter, $E\Delta F \subset B_\rho(x)\subset B_{2R_0}$ and the constant $C_0$ depends only on $R_0$, $g$ and $n.$ Now arguing as in the proof of Lemma 5.4 in \cite{fusco-manna} we can conclude that $E$ is an $\omega$-minimizer of the perimeter in $B_{2R_0}$ since
\begin{align}
\mathcal{P}\left(E, B_\rho(x)\right) \leq C \rho^{n-1} \text{ and }\operatorname{osc}_{z \in B_\rho(x)} e^{\psi(|z|)} \leq C \rho.
\end{align}
for some constants depending on $R_0.$ Thus from Remark 21.9 in \cite{maggi-book} we deduce that $\partial^* E$ is $C^{1,\gamma}$ for $\gamma \in (0,1)$ and is $\mathcal{H}^{n-1}$ equivalent to $\partial E$. Note that this is stronger than being $(\Lambda,r_0)$-minimizer as defined in Chap. 21 in \cite{Maggi}. Then arguing as in the proof of Theorem 27.5 in \cite{Maggi} we see that $g\in C^{2,\alpha}$ and $\psi \in C^{1,\alpha}$ implies that $\partial^* E$ is $C^{2,\beta}$ surface for $\beta \in (0,1).$ Finally, the Hausdorff dimension bound on the singular set follows from Theorem 28.1 in \cite{Maggi}.
\end{proof}
After establishing the existence, regularity, and boundedness of solutions, we are ready to discuss the geometric properties of stable and stationary surfaces. To do so, we start by introducing the right notion of mean curvature associated with the first variation of $\E$.
\begin{definition}\label{def:weighted_mean_curvature}
    Let $E\subset \R^n$ be a domain whose boundary has unaveraged mean curvature $H$ and normal $\nu$. For functions $\psi,g:\R^n\to\R$ (not necessarily radial) we call
    \begin{equation}\label{eq:weighted_mean_curvature}
    \Ha:=H+\nabla\psi\cdot\nu+g,
    \end{equation}
    the \emph{weighted mean curvature} of $E$.
\end{definition}
The properties arising from the first variations of the energy with respect to volume-preserving deformations are summarized in the following Lemma.
\begin{lemma}\label{lem:first_variations}
Let $X$ be a vector field defined in a neighborhood of $E\subset \R^n$, and let $\nu$ be the unit normal of $\partial E$, $u=X\cdot \nu$, and $(\phi_t)_{t\in(-\varepsilon,\varepsilon)}$ be the flow of $X$. Then
\begin{equation}\label{eq:first_volume}
\abs{\phi_t(E)}_f=\abs{E}_f+t\int_{\partial E}uf\,d\Haus^{n-1}+o(t),
\end{equation}
and
\begin{equation}\label{eq:first_evergy}
\E(\phi_t(E))=\E(E)+t\int_{\partial E}(H+\nabla \psi\cdot \nu+g) uf\,d\Haus^{n-1}+o(t).
\end{equation}
In particular, if $E$ a critical point of \eqref{eq:optimization_problem}, then there exists a Lagrange multiplier $c\in\R$ such that on $\partial E$
\[
\Ha=H+\nabla \psi\cdot\nu+g=c,
\]
that is, $E$ has constant weighted mean curvature.
\end{lemma}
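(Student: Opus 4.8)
The plan is to compute the first variation of each term in $\E$ along the flow $(\phi_t)$ of $X$ using standard formulas for the first variation of weighted volume and weighted perimeter. First I would recall that for the unweighted volume, $\frac{d}{dt}\big|_{t=0}\abs{\phi_t(E)} = \int_{\partial E} u\,d\Haus^{n-1}$, and more generally for a density $f$, differentiating $\abs{\phi_t(E)}_f = \int_{\phi_t(E)} f\,dx = \int_E f(\phi_t(y))\,\mathrm{Jac}\,\phi_t(y)\,dy$ at $t=0$ gives $\int_E \big(\nabla f\cdot X + f\,\diva X\big)\,dx = \int_E \diva(fX)\,dx = \int_{\partial E} f\,(X\cdot\nu)\,d\Haus^{n-1}$ by the divergence theorem, which is exactly \eqref{eq:first_volume}. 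For the weighted perimeter $\Pe(\phi_t(E)) = \int_{\partial\phi_t(E)} f\,d\Haus^{n-1}$, I would use the first variation formula for the area functional with density: writing the tangential divergence $\diva_{\partial E}$, one has $\frac{d}{dt}\big|_{t=0}\Pe(\phi_t(E)) = \int_{\partial E}\big(\diva_{\partial E}X + (\nabla\psi\cdot\nu)\,(X\cdot\nu)\big)f\,d\Haus^{n-1}$, where the first term comes from the classical first variation of area and the second from differentiating the density along the normal direction. Decomposing $X = X^\top + u\nu$ and integrating by parts on the closed manifold $\partial E$ (or using that $\partial E$ may be taken without boundary since $E$ is bounded by Theorem~\ref{thm:boundedness}), the tangential part contributes $\int_{\partial E} H u\,f\,d\Haus^{n-1}$ plus a tangential-divergence term $\int_{\partial E}\diva_{\partial E}(fX^\top)\,d\Haus^{n-1}=0$, so altogether $\frac{d}{dt}\big|_{t=0}\Pe(\phi_t(E)) = \int_{\partial E}(H + \nabla\psi\cdot\nu)\,u\,f\,d\Haus^{n-1}$.

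Next I would handle the potential term $\G(\phi_t(E)) = \int_{\phi_t(E)} gf\,dx$; by the same computation as for the weighted volume but with $f$ replaced by $gf$, its derivative at $t=0$ is $\int_{\partial E} g\,u\,f\,d\Haus^{n-1}$ (note $gf$ is only assumed $C^1$, which suffices to apply the divergence theorem on the bounded set $E$). Summing the perimeter and potential contributions yields $\frac{d}{dt}\big|_{t=0}\E(\phi_t(E)) = \int_{\partial E}(H + \nabla\psi\cdot\nu + g)\,u\,f\,d\Haus^{n-1}$, which together with the Taylor expansion $\E(\phi_t(E)) = \E(E) + t\,\frac{d}{dt}\big|_{t=0}\E(\phi_t(E)) + o(t)$ gives \eqref{eq:first_evergy}. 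The regularity needed to make these derivatives and the integration by parts rigorous — namely $\partial E$ being at least $C^2$ away from a small singular set — is furnished by Theorem~\ref{thm:regularity} under the admissibility hypotheses, and the singular set, having Hausdorff dimension at most $n-8$, does not affect any of the integrals.

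Finally, for the Lagrange multiplier statement: if $E$ is a critical point of \eqref{eq:optimization_problem}, then for every $X$ with $\int_{\partial E} u f\,d\Haus^{n-1} = 0$ (the volume-preserving condition coming from \eqref{eq:first_volume}) we must have $\int_{\partial E}\Ha\,u\,f\,d\Haus^{n-1} = 0$. Since $u = X\cdot\nu$ ranges over a dense subset of mean-zero functions (with respect to $f\,d\Haus^{n-1}$) as $X$ varies, the function $\Ha$ must be $f\,d\Haus^{n-1}$-orthogonal to all such $u$, hence constant; calling the constant $c$ gives $\Ha = H + \nabla\psi\cdot\nu + g = c$ on $\partial E$, i.e. $E$ has constant weighted mean curvature. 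The main obstacle here is not conceptual but bookkeeping: getting the sign and normalization conventions for $H$ (unaveraged mean curvature, orientation of $\nu$) consistent across the area first-variation formula and Definition~\ref{def:weighted_mean_curvature}, and ensuring the tangential integration by parts is justified given only the regularity of Theorem~\ref{thm:regularity}; both are routine once the conventions are pinned down.
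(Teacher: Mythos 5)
Your argument is correct and takes essentially the same approach the paper relies on — the paper's proof of this lemma is just a pointer to the references cited there (Bayle, Maggi, Rosales--Ca\~nete) for the weighted first variation of the perimeter, together with the remark that the weighted volume variation is elementary; your write-up is simply the expanded version of that. One small bookkeeping slip: in your intermediate first-variation formula for $\tfrac{d}{dt}\big\vert_{t=0}\Pe(\phi_t(E))$ the density contribution should be the full term $\nabla f\cdot X = f\,\nabla\psi\cdot X$ rather than only its normal component $f(\nabla\psi\cdot\nu)(X\cdot\nu)$; the omitted tangential piece $f\,\nabla\psi\cdot X^\top$ is exactly what combines with $f\diva_{\partial E}X^\top$ to give the total tangential divergence $\diva_{\partial E}(fX^\top)$ that you then correctly integrate to zero, so the final expression is unaffected.
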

\begin{proof}
See Chapter 3 in ~\cite{bayle2003proprietes}, Chapter 17.3 in ~\cite{Maggi}, and Section 3 in ~\cite{rosales2008isoperimetric} where the first variation of the perimeter functional is treated. Our result follows by adding the elementary first variation of the weighted volume.
\end{proof}
The second-order variations of the energy take the following form.
\begin{lemma}\label{lem:second_variation}Under the same assumptions of Lemma, \ref{lem:first_variations}, suppose additionally that $E$ has constant weighted mean curvature. Then
\begin{equation}\label{eq:second_variation}
\begin{split}
Q(u,u)&:=\frac{d^2}{dt^2}\Big\vert_{t=0}\Bigl(\E(\phi_t(E))-\Ha\abs{\phi_t(E)}_f\Bigr)\\
&=-\int_{\partial E}\Bigl(\Delta u+\nabla\psi\cdot\nabla u+(\abs{\sigma}^2-\nabla^2\psi(\nu,\nu)-\nabla g\cdot\nu)u\Bigr)uf\,d\Haus^{n-1},
\end{split}
\end{equation}
where $\sigma$ is the second fundamental form of $\partial E$, and the operators applied to $u$ are to be considered with respect to the induced metric on $\partial E$. Furthermore, $E$ is stable (that is minimizes $\E$ up to the second order under volume constraint) if and only if $Q\geq 0$ for every variation infinitesimally preserving the volume, that is 
\[
\abs{\phi_t(E)}_f=\abs{E}_f+o(t),
\]
for all $t\in(-\varepsilon,\varepsilon)$.
\end{lemma}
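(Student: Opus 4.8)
The plan is to reduce the computation to a normal variation and then differentiate twice the first variations of Lemma~\ref{lem:first_variations}. Since $E$ has constant weighted mean curvature equal to the Lagrange multiplier $c=\Ha$, the functional
\[
\mathcal J(F):=\E(F)-\Ha\,\abs{F}_f=\int_{\partial^*F}f\,d\Haus^{n-1}+\int_F(g-\Ha)f\,dx
\]
has, by \eqref{eq:first_volume}--\eqref{eq:first_evergy}, $\frac{d}{dt}\big|_{t=0}\mathcal J(\phi_t(E))=\int_{\partial E}(\Ha-\Ha)\,uf\,d\Haus^{n-1}=0$ for \emph{every} deformation $(\phi_t)$; that is, $E$ is an \emph{unconstrained} critical point of $\mathcal J$. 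Consequently $Q(u,u)=\frac{d^2}{dt^2}\big|_{t=0}\mathcal J(\phi_t(E))$ is a quadratic form depending only on the normal speed $u=X\cdot\nu$ on $\partial E$ (the tangential part of $X$ and the acceleration of the flow enter solely through the first variation, which vanishes), so we may compute it along the normal variation $\partial_t\phi_t=u\,\nu_t$.

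Along such a variation one has, on $\partial\phi_t(E)$ and with all differential operators taken with respect to the induced metric, the classical evolution identities (recall $\Ric\equiv0$ in $\R^n$)
\[
\partial_t\nu_t\big|_{0}=-\nabla u,\qquad \partial_tH_t\big|_{0}=-\Delta u-\abs{\sigma}^2u,\qquad \partial_t\bigl(d\Haus^{n-1}\bigr)\big|_{0}=Hu\,d\Haus^{n-1},
\]
together with $\partial_t(h\circ\phi_t)|_0=u\,\partial_\nu h$ for a function $h$ defined near $\partial E$. Differentiating once more the first variations $\tfrac{d}{dt}\Pe(\phi_t(E))=\int_{\partial\phi_t(E)}(H_t+\partial_{\nu_t}\psi)uf\,d\Haus^{n-1}$, $\tfrac{d}{dt}\G(\phi_t(E))=\int_{\partial\phi_t(E)}guf\,d\Haus^{n-1}$ and $\tfrac{d}{dt}\abs{\phi_t(E)}_f=\int_{\partial\phi_t(E)}uf\,d\Haus^{n-1}$, and using $\nabla f=f\nabla\psi$ (tangential gradients) together with $\nabla^2 f(\nu,\nu)=f\bigl((\partial_\nu\psi)^2+\nabla^2\psi(\nu,\nu)\bigr)$, one obtains $Q(u,u)=\Pe''(0)+\G''(0)-\Ha\,(\abs{E}_f)''(0)$, in which every term quadratic in the zeroth-order quantities $H,\partial_\nu\psi,g$ collects — using $\Ha=H+\partial_\nu\psi+g$ on $\partial E$ — into
\[
\int_{\partial E}(H+\partial_\nu\psi)\bigl((H+\partial_\nu\psi)+g-\Ha\bigr)u^2 f\,d\Haus^{n-1}=0,
\]
so that what remains is precisely \eqref{eq:second_variation}. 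The one delicate point is this bookkeeping of the zeroth/first/second-order contributions and the check that the extrinsic terms cancel against the multiplier term; everything else is a routine differentiation. (Alternatively, one may quote the second variation of the weighted perimeter for general normal deformations from \cite{bayle2003proprietes,rosales2008isoperimetric} and add the elementary second variations of $\G$ and of $\abs{\cdot}_f$.)

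For the stability statement, by \eqref{eq:first_volume} a deformation satisfies $\abs{\phi_t(E)}_f=\abs{E}_f+o(t)$ exactly when its normal speed lies in $\mathcal V:=\{u:\int_{\partial E}uf\,d\Haus^{n-1}=0\}$, and every $u\in\mathcal V$ arises as the $t=0$ normal speed of some deformation keeping $\abs{\phi_t(E)}_f$ \emph{exactly} constant: fix $v$ with $\int_{\partial E}vf\neq0$ and apply the implicit function theorem to the two-parameter flow of $u\nu$ and $v\nu$. Along any such volume-preserving deformation $\E(\phi_t(E))=\mathcal J(\phi_t(E))+\Ha\abs{E}_f$, whence $\frac{d^2}{dt^2}\big|_{0}\E(\phi_t(E))=Q(u,u)$ with $u\in\mathcal V$. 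Therefore, if $E$ is stable then $Q(u,u)\ge0$ for every $u\in\mathcal V$; and conversely, if $Q\ge0$ on $\mathcal V$ then $\frac{d^2}{dt^2}\big|_{0}\E(\phi_t(E))\ge0$ along every volume-preserving deformation, i.e.\ $E$ is stable. This is the asserted equivalence.
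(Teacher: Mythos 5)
Your proposal is correct, and the structure is sound; the paper itself does not carry out this computation but simply defers to the references \cite{barbosa2012stability} and \cite{bayle2003proprietes}, so you are providing the derivation those sources contain rather than taking a genuinely different route. The key structural observation — that because $E$ has constant weighted mean curvature, it is an \emph{unconstrained} critical point of $\mathcal J=\E-\Ha\abs{\cdot}_f$, so the second variation is a well-defined quadratic form in the normal speed $u=X\cdot\nu$ alone — is exactly the right one. A slightly cleaner way to execute the middle step, which sidesteps the bookkeeping you flag, is to differentiate $\frac{d}{dt}\mathcal J(\phi_t(E))=\int_{\partial\phi_t(E)}(H_t+\partial_{\nu_t}\psi+g\circ\phi_t-\Ha)\,uf\,d\Haus^{n-1}$ once more and observe that the integrand vanishes identically on $\partial E$ at $t=0$; hence the only surviving contribution is $\int_{\partial E}\partial_t(H_t+\partial_{\nu_t}\psi+g\circ\phi_t)|_0\,uf\,d\Haus^{n-1}$, and the three evolution identities you list together with $\partial_t(\partial_{\nu_t}\psi)|_0=u\nabla^2\psi(\nu,\nu)-\nabla\psi\cdot\nabla u$ and $\partial_t(g\circ\phi_t)|_0=u\,\nabla g\cdot\nu$ immediately give \eqref{eq:second_variation} without ever having to track or cancel the terms quadratic in $H,\partial_\nu\psi,g$. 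Your stability argument (identifying the admissible cone with $\int_{\partial E}uf\,d\Haus^{n-1}=0$ via \eqref{eq:first_volume}, and realizing each such $u$ by an exactly volume-preserving flow through the implicit function theorem) is the standard one and is correct.
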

\begin{proof}
As is the proof of Lemma \ref{lem:first_variations}, we refer to \cite{barbosa2012stability}, Section 3.4.6 \cite{bayle2003proprietes}, and Proposition 2.10 in \cite{bayle2003proprietes}.
\end{proof}
\begin{remark}
    Equations \eqref{eq:first_volume}, \eqref{eq:first_evergy}, and \eqref{eq:second_variation} hold when the ambient space is a general weighted manifold $(M,f=e^\psi)$. The correct way to express the second variation is $Q(u,u)=-\inn{Lu,u}_{L^2(\partial E)}$, with $L$ the Jacobian operator 
    \[
    L:=\Delta_f+\abs{\sigma}^2+\Ric_f(\nu,\nu)-\nabla g\cdot\nu,
    \]
    where $\sigma$ the second fundamental form of $\partial E$, $\Ric_f:=\Ric-\nabla^2\psi$ the Bakri-\'Emery Ricci tensor, and $\Delta_f:=f^{-1}\diva(f\nabla)$ the drifted Laplacian associated to $M$. We refer the interested reader to \cite{bayle2003proprietes,morgan2005manifolds,rosales2021stable,castro2014free}.
\end{remark}
\begin{proposition}
    Suppose that $\psi,g$ are radially symmetric with respect to the same point $y\in\R^n$. Then, any sphere centered in $y$ and of radius $R>0$ has constant weighted mean curvature. Moreover, it is stable in the sense of Lemma \ref{lem:second_variation} if and only if
    \begin{equation}\label{eq:stability_of_centered_spheres}
    \psi''(R)+g'(R)\geq 0,
    \end{equation}
    holds true.
\end{proposition}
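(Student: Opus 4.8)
The plan is to reduce everything to an explicit computation on the round sphere together with the spectral gap of its Laplacian. Without loss of generality I take $y=0$. For the constant weighted mean curvature claim, note that $\partial B_R$ has (unaveraged) mean curvature $H\equiv (n-1)/R$ with respect to the outward normal $\nu = x/r$; since $\psi$ and $g$ are radial, their restrictions to $\partial B_R$ satisfy $\nabla\psi\cdot\nu = \psi'(R)$ and $g = g(R)$, both constant on $\partial B_R$. Hence $\Ha = (n-1)/R + \psi'(R) + g(R)$ is constant, so $\partial B_R$ is in particular a critical point of \eqref{eq:optimization_problem} by Lemma \ref{lem:first_variations}, and Lemma \ref{lem:second_variation} applies.

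For stability I would evaluate each term of the quadratic form $Q$ on $\partial B_R$. The radiality of $\psi$ means its restriction to $\partial B_R$ is constant, so the tangential gradient $\nabla\psi$ vanishes there and the drift term $\nabla\psi\cdot\nabla u$ drops out; moreover the weight $f = e^{\psi(R)}$ factors out of the integral, and the infinitesimal volume constraint $\int_{\partial B_R}uf\,d\Haus^{n-1}=0$ reduces to $\int_{\partial B_R}u\,d\Haus^{n-1}=0$. A direct computation for radial functions gives $\nabla^2\psi(\nu,\nu) = \psi''(R)$ and $\nabla g\cdot\nu = g'(R)$, while $\abs{\sigma}^2 = (n-1)/R^2$ for the sphere. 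Integrating $-\int(\Delta u)u$ by parts (no boundary term since $\partial B_R$ is closed) then yields
\[
Q(u,u) = e^{\psi(R)}\int_{\partial B_R}\Bigl(\abs{\nabla u}^2 - \Bigl(\tfrac{n-1}{R^2} - \psi''(R) - g'(R)\Bigr)u^2\Bigr)\,d\Haus^{n-1}.
\]

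To conclude I would invoke the fact that the first nonzero eigenvalue of $-\Delta$ on the sphere of radius $R$ is $\lambda_1 = (n-1)/R^2$, attained exactly on the restrictions of linear functions. For any admissible $u$ (mean zero), the Poincar\'e inequality $\int_{\partial B_R}\abs{\nabla u}^2 \geq \tfrac{n-1}{R^2}\int_{\partial B_R}u^2$ gives $Q(u,u)\geq e^{\psi(R)}(\psi''(R)+g'(R))\int_{\partial B_R}u^2$, so \eqref{eq:stability_of_centered_spheres} implies $Q\geq 0$, i.e.\ stability. Conversely, if $\psi''(R)+g'(R)<0$, testing with a first eigenfunction, e.g.\ $u = x_1$ restricted to $\partial B_R$, which is mean zero and saturates the Poincar\'e inequality, gives $Q(u,u) = e^{\psi(R)}(\psi''(R)+g'(R))\int_{\partial B_R}u^2 < 0$, so $\partial B_R$ is unstable. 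I do not expect a genuine obstacle here: the argument is bookkeeping of the geometric quantities on $\partial B_R$ plus the sphere's spectral gap, and the only point requiring mild care is that it is precisely the radiality of $\psi$ that kills the drift term and trivializes the volume constraint.
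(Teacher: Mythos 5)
Your proposal is correct and takes essentially the same route as the paper: compute $\Ha$ directly, observe that radiality kills the drift term, reduces the weighted volume constraint to the unweighted one, and turns $Q$ into the unweighted second variation plus $f(R)(\psi''(R)+g'(R))\int u^2$; then one direction follows from the sphere's Poincar\'e inequality (the paper cites this as the known positivity of the unweighted second variation) and the other from testing on a first spherical harmonic (the paper's $w=\xi\cdot\nu$ is, up to the factor $1/R$, your $x_1\vert_{\partial B_R}$). The only difference is that you unfold the spectral-gap argument explicitly where the paper invokes it as a known fact.
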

\begin{proof}We follow the argument of Lemma 3.2 in~\cite{rosales2008isoperimetric}, including here the adapted proof for the sake of completeness. Without loss of generality set $y=0$. Let $\partial B_R$ be a sphere of radius $R>0$ centered at the origin. Then, by Definition \ref{def:weighted_mean_curvature} we have that its weighted mean curvature is equal to
    \[
    \Ha=\frac{n-1}{R}+\psi'(R)+g(R),
    \]
    which is constant. By Lemma \ref{lem:second_variation} we can compute for any volume preserving variation $u$ 
\begin{equation}\label{eq:variation_on_sphere}
    Q(u,u)=-f(R)\int_{\partial B_R}(\Delta u+\abs{\sigma}^2u)u\,d\Haus^{n-1}+f(R)\Bigl(\psi''(R)+g'(R)\Bigr)\int_{\partial B_R}u^2\,d\Haus^{n-1}.
\end{equation}
Now, notice that in this particular case any perturbation $u$ preserves the weighted volume $\abs{B_R}_f$ if and only if it preserves the unweighted volume $\abs{B_R}$ in the sense of Equation \eqref{eq:first_volume}, since
\[
\int_{\partial B_R}u f\,d\Haus^{n-1}=f(R)\int_{\partial B_R}u\,d\Haus^{n-1}.
\]
Hence, if $\psi''(R)+g'(R)\geq 0$, then $Q$ is bounded from below by the unweighted second variation of $B_R$, which we know to be positive. Conversely, for any $\xi\in S^{n-1}$ the variation $w=\xi\cdot \nu$ represents a translation in the $\xi$ direction, and
\[
Q(w,w)=f(R)(\psi''(R)+g'(R))\int_{\partial B_R}w^2\,d\Haus^{n-1},
\]
proving that if $\partial B_R$ is stable, then $\psi''(R)+g'(R)\geq 0$.
\end{proof}
\subsection{In one dimension}\label{sec:one-d}In this section we prove Theorem \ref{thm:one_dim}, completely addressing Problem \eqref{eq:optimization_problem} in $\R$.

Let $\psi,g$ be admissible weights in Definition \ref{def:admissible_weights}. It will be convenient in the sequel to introduce the function
\begin{equation}\label{eq:kappa}
\kappa(x):=\psi''(x)+g'(x),
\end{equation}
which by assumption defines a positive continuous function for every $x\geq 0$. In the one-dimensional case, an interval $(a,b)$ has weighted volume and energy
\[
\abs{(a,b)}_f=\int_a^b f\,dx,\quad \E((a,b))=f(a)+f(b)+\int_a^b g(\abs{x})f(\abs{x})\,dx.
\]
Integrating Equation \eqref{eq:kappa} we get
\[
g(x)=g(0)+\int_0^x\kappa(t)\,dt-\psi'(x).
\]
Set $K(x):=\int_0^x\kappa(t)\,dt$, and notice that by assumption $K$ is monotone increasing in $[0,+\infty)$. Similarly, introduce the maps
\[
F(x):=\int_0^xf\,dt,\quad H(v):=F^{-1}(v).
\]
Notice that the map $H:[0,F(+\infty))\to[0,+\infty)$ is well defined since $F$ is itself monotone increasing. Taking advantage of the explicit form of $g$ in terms of $K$ and $\psi'$, we can explicitly compute the energy for a general interval.
\begin{lemma}
    Let $(a,b)\subset(0,+\infty)$ be an interval of volume $\abs{(a,b)}_f=v$. Then,
    \[
    \E((a,b))=2f(a)+\int_{F(a)}^{F(a)+v}K(H(w))\,dw+g(0)v.
    \]
    Let $(-a,b)\subset$ be an interval containing the origin, and set $v^+:=\abs{(0,a)}_f$ and $v^-:=\abs{(0,b)}_f$. Then,
    \[
    \E((-a,b))=2f(0)+\int_{0}^{v^+}K(H(w))\,dw+\int_{0}^{v^-}K(H(w))\,dw+g(0)(v^++v^-).
    \]
\end{lemma}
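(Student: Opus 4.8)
The plan is to compute the energy of an arbitrary interval by using the explicit formula $g(x) = g(0) + K(x) - \psi'(x)$ and then changing variables via the volume coordinate $w = F(x)$. First I would treat the case $(a,b) \subset (0,+\infty)$. The boundary term contributes $f(a) + f(b)$; to handle $f(b)$ I would write $f(b) = f(a) + \int_a^b f'(x)\,dx = f(a) + \int_a^b \psi'(x) f(x)\,dx$, using $f = e^\psi$ so $f' = \psi' f$. The bulk term is $\int_a^b g(x) f(x)\,dx = g(0)\int_a^b f\,dx + \int_a^b K(x) f(x)\,dx - \int_a^b \psi'(x) f(x)\,dx$. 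Adding these, the two $\int_a^b \psi' f\,dx$ terms cancel exactly, leaving
\[
\E((a,b)) = 2f(a) + g(0) v + \int_a^b K(x) f(x)\,dx.
\]
Finally I would change variables $w = F(x)$, so $dw = f(x)\,dx$ and $x = H(w) = F^{-1}(w)$, and the limits become $w = F(a)$ and $w = F(b) = F(a) + v$ (the latter because $\abs{(a,b)}_f = F(b) - F(a) = v$). This gives the first displayed identity.

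For the interval $(-a,b)$ containing the origin, the key point is that everything is expressed in terms of the profiles of $f$ and $g$ on $[0,+\infty)$ (the functions being even), so I would split the integral at $0$. The two boundary contributions are $f(a) + f(b)$, each of which I rewrite as $f(0) + \int_0^a \psi' f$ and $f(0) + \int_0^b \psi' f$ respectively. The bulk term splits as $\int_0^a g f + \int_0^b g f$, and on each piece the same cancellation of the $\psi' f$ terms against the boundary contribution occurs, using $g = g(0) + K - \psi'$ on $[0,+\infty)$. One is left with
\[
\E((-a,b)) = 2f(0) + g(0)(v^+ + v^-) + \int_0^a K(x) f(x)\,dx + \int_0^b K(x) f(x)\,dx,
\]
and then the change of variables $w = F(x)$ on each integral — noting $F(0) = 0$, $F(a) = v^+$, $F(b) = v^-$ by definition of $v^\pm$ — yields the second identity.

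There is no serious obstacle here; the computation is elementary once one notices the algebraic miracle that the $\psi'$ appearing in $g$ (through the relation $g = g(0) + K - \psi'$) is precisely what is needed to absorb the logarithmic-derivative terms coming from differentiating $f = e^\psi$ at the endpoints. The only points requiring a word of care are: (i) the change of variables is legitimate because $F$ is a strictly increasing $C^1$ diffeomorphism from $[0,+\infty)$ onto $[0, F(+\infty))$, with $F' = f > 0$, so $H = F^{-1}$ is well defined and the substitution is valid; and (ii) in the second case one must keep track of the orientation, splitting $(-a,b)$ as $(-a,0) \cup (0,b)$ and using evenness of $f$ and $g$ to reduce both pieces to integrals over subintervals of $[0,+\infty)$. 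Both are routine, so the lemma follows.
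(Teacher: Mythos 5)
Your proof is correct and follows essentially the same route as the paper's: express $g = g(0) + K - \psi'$, observe that the $\psi' f = f'$ contribution from the bulk exactly absorbs the boundary difference $f(b)-f(a)$, and then substitute $w = F(x)$. The only cosmetic difference is in the second part: the paper writes $\E((-a,b)) = \E((0,a)) + \E((0,b)) - 2f(0)$ and reuses the first identity with left endpoint $0$, while you redo the cancellation by splitting the integral at the origin directly; these are the same computation.
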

\begin{proof}
    This is a direct computation: in the first situation
    \begin{align*}
        \E((a,b))&=f(a)+f(b)+\int_a^bg(\abs{x})f\,dx\\
        &=f(a)+f(b)+\int_a^b(g(0)+K(x)-\psi')e^{\psi}\,dx\\
        &=f(a)+f(b)+g(0)v+\int_a^bKf\,dx-(f(b)-f(a))\\
        &=2f(a)+g(0)v+\int_a^b Kf\,dx.
    \end{align*}
    Now, since $\abs{(a,b)}_f=v$ we get immediately that $F(b)-F(a)=v$, so that
    \begin{align*}
    \int_a^bKf\,dx&=\int_{H(F(a))}^{H(F(a)+v)}Kf\,dx=\int_{F(a)}^{F(a)+v}K(H(w))f(H(w))H'(w)\,dw\\
    &=\int_{F(a)}^{F(a)+v}K(H(w))F'(H(w))H'(w)\,dw\\
    &=\int_{F(a)}^{F(a)+v}K(H(w))\,dw.
    \end{align*}
    as wished. To compute the energy of the interval containing the origin $(-a,b)$ we simply notice that
    \begin{align*}
    \E((-a,b))&=\E((0,a))+\E((0,b))-2f(0)\\
    &=2f(0)+\int_{0}^{v^+}K(H(w))\,dw+\int_{0}^{v^-}K(H(w))\,dw+g(0)(v^++v^-),
    \end{align*}
    taking advantage of the computation we performed before.
\end{proof}
We start by proving one implication in Theorem \ref{thm:one_dim}.
\begin{proposition}\label{prop:oneD}
    Let $n=1$ and $\psi,g$ be admissible weights so that $\min_{x\geq 0}\psi(x)=\psi(0)$. Then, centered symmetric intervals minimize \eqref{eq:optimization_problem}. If moreover $\psi,g$ are \emph{strictly} admissible, then centered intervals \emph{uniquely} minimize \eqref{eq:optimization_problem}.
\end{proposition}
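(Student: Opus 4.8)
The plan is to reduce the $n=1$ problem to a one-variable calculus exercise via the energy formulas of the preceding Lemma, and then exploit convexity of the map $w\mapsto K(H(w))$ together with the hypothesis $\min_{x\ge 0}\psi(x)=\psi(0)$. First I would fix a volume $v>0$ and recall that, by the existence theorem and boundedness, a minimizer exists and is a finite union of intervals; by a standard cut-and-paste (merging two disjoint intervals into one strictly lowers the boundary term $\sum f(\text{endpoints})$ without changing volume, using $f>0$), an optimal set must be a single interval $I$. So it suffices to compare the two possible shapes: an interval $(a,b)\subset(0,+\infty)$ (or symmetrically in $(-\infty,0)$) of volume $v$, versus the centered interval $(-\alpha,\beta)$ with $\int_0^\alpha f=\int_0^\beta f=v/2$, i.e. $v^+=v^-=v/2$ in the Lemma's notation.

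The key computation is to differentiate, using the Lemma, the energy of the non-centered interval $(a,b)$ as a function of $a\ge 0$ along the constraint $F(b)-F(a)=v$. From
\[
\E((a,b))=2f(a)+\int_{F(a)}^{F(a)+v}K(H(w))\,dw+g(0)v,
\]
and since $F'(a)=f(a)$, $H'=1/f\circ H$, one gets
\[
\frac{d}{da}\E((a,b))=2f'(a)+f(a)\bigl(K(H(F(a)+v))-K(H(F(a)))\bigr)=2f'(a)+f(a)\bigl(K(b)-K(a)\bigr).
\]
Now $K$ is monotone increasing (this is admissibility condition (3)), so $K(b)-K(a)\ge 0$ since $b>a\ge 0$; and $f'(a)=\psi'(a)f(a)\ge 0$ because $\psi$ has its minimum at the origin, hence $\psi'\ge 0$ on $[0,+\infty)$. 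Therefore $\frac{d}{da}\E((a,b))\ge 0$, so among all intervals in $[0,+\infty)$ of fixed volume the energy is minimized in the limit $a\to 0^+$, i.e. by the half-line-anchored interval $(0,c)$ with $F(c)=v$. It remains to compare $(0,c)$, which has energy $2f(0)+\int_0^v K(H(w))\,dw+g(0)v$, with the centered interval, which has energy $2f(0)+2\int_0^{v/2}K(H(w))\,dw+g(0)v$. Both have the same boundary constant $2f(0)$ and the same $g(0)v$ term, so the comparison reduces to showing
\[
2\int_0^{v/2}K(H(w))\,dw\le \int_0^{v}K(H(w))\,dw,\qquad\text{i.e.}\qquad \int_0^{v/2}K(H(w))\,dw\le \int_{v/2}^{v}K(H(w))\,dw,
\]
which holds because $w\mapsto K(H(w))$ is nondecreasing: $K$ is increasing and $H=F^{-1}$ is increasing, so $K\circ H$ is increasing, hence its average over $[v/2,v]$ dominates its average over $[0,v/2]$. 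This proves centered intervals minimize \eqref{eq:optimization_problem}.

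For the uniqueness statement under strict admissibility, note that condition (3') gives $K$ \emph{strictly} increasing on $(0,+\infty)$, hence $K\circ H$ strictly increasing, so the last displayed inequality is strict whenever $v/2>0$ and the interiors of $[0,v/2]$ and $[v/2,v]$ meet a point where $H(w)>0$ — which always happens — forcing any non-centered single interval in $[0,+\infty)$ (or its reflection) to have strictly larger energy than the centered one; combined with the cut-and-paste step (which is strict as soon as the optimal set is disconnected, again using $f>0$), every optimizer must be a centered interval. I expect the main obstacle to be the bookkeeping in the reduction to a single interval and in handling intervals that straddle the origin but are not centered: for such an interval $(-a,b)$ one applies the Lemma's second formula and must argue that shifting toward symmetry (decreasing $\max(v^+,v^-)$ while keeping $v^++v^-=v$) decreases energy, which again follows from the monotonicity of $w\mapsto K(H(w))$ exactly as above — so the whole argument is really one convexity/monotonicity fact applied twice, and the delicate part is merely organizing the case distinctions cleanly.
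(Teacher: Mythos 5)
There is a genuine gap. The central step in your reduction argues that $\tfrac{d}{da}\,\E\bigl((a,b)\bigr)=2f'(a)+f(a)\bigl(K(b)-K(a)\bigr)\ge 0$ by claiming that ``$\psi$ has its minimum at the origin, hence $\psi'\ge 0$ on $[0,+\infty)$.'' This implication is false: $\min_{x\ge 0}\psi(x)=\psi(0)$ forces $\psi(x)\ge\psi(0)$ for all $x\ge 0$ but says nothing about the sign of $\psi'$ away from the origin (think of a $\psi$ with a local max at some $x_1>0$ followed by a local min at $x_2>x_1$ with $\psi(x_2)>\psi(0)$; this is entirely compatible with admissibility, since condition (3) only constrains $\psi''+g'$, not $\psi''$ alone). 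So you have not shown the derivative is nonnegative, and the monotonicity-in-$a$ argument does not go through. The fix is exactly what the paper does: skip the derivative and compare globally, $\E((a,b))-\E((0,c))=2\bigl(f(a)-f(0)\bigr)+\int_{F(a)}^{F(a)+v}K\circ H-\int_0^v K\circ H$. The first term is $\ge 0$ directly from $f(a)\ge f(0)$ (which \emph{is} the minimum hypothesis), and the second is $\ge 0$ because $K\circ H$ is nondecreasing and the integration window has simply been translated to the right. No monotonicity of $\psi$ is needed.

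A secondary, smaller issue is the ``cut-and-paste'' reduction to a single interval. Merging two disjoint intervals while preserving weighted volume does not simply delete two boundary terms $f(b_1)+f(a_2)$: the merged interval has different endpoints, so the new boundary term must be compared with the old one, and that comparison is not immediate from $f>0$. The paper sidesteps this by treating a general finite union of intervals in one stroke: it associates to $\Omega$ the candidate interval $J=[-H(v^-),H(v^+)]$ with the same left/right weighted volumes and shows $\E(J)\le\E(\Omega)$ via the rearrangement inequality \eqref{eq:monotone_KoH} together with $f(a_j)\ge f(0)$. You should also make sure, in the case of an interval straddling the origin, that the convexity of $\lambda\mapsto\E(J_\lambda)$ (not just monotonicity of $K\circ H$) is invoked to get optimality of the symmetric split and strict inequality off $\lambda=1/2$ in the strictly admissible case, as done in the paper via the explicit computation of $\Psi''(\lambda)$.
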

\begin{proof}
    Let $\psi,g$ be admissible weights. Let $\Omega$ be a generic finite union of intervals. Then, $\Omega$ can be expressed in the form
    \[
    \Omega=[-b_{-M},-a_{-M}]\cup\dots\cup[-b_{-1},-a_{-1}]\cup [a_1,b_1]\cup\dots\cup[a_N,b_N],
    \]
    where $a_j,b_j\geq 0$, all the intervals are supposed to be mutually disjoint except the central one $[-b_{-1},-a_{-1}]\cup [a_1,b_1]$ that form an interval containing the origin in case $-a_{-1}=a_1=0$. Let $v=\abs{\Omega}_f$ and $v_j$ the respective volumes of the intervals constituting $\Omega$, so that $v_{-M}+\dots+v_{N}=v$. Set $v^-:=\abs{\Omega\cap (-\infty,0)}_f$ and $v^+:=\abs{\Omega\cap (0,+\infty)}_f$. We claim that the interval
    \[
    J:=[-H(v^-),H(v^+)],
    \]
    satisfies $\E(J)\leq\E(\Omega)$. The fact that $\abs{J}_f=\abs{\Omega}_f$ is clear by definition of $H$, $v^-$, and $v^+$. Now,
    \begin{align*}
        \E(\Omega)=\sum_{j=-M,j\neq 0}^N\Bigl(2f(a_j)+\int_{F(a_j)}^{F(a_j)+v_j}K(H(w))\,dw\Bigr)-2f(0)\delta_{\{a_{-1}=a_1=0\}}+g(0)v,
    \end{align*}
    and
    \[
    \E(J)=2f(0)+\int_0^{v^-}K(H(w))\,dw+\int_0^{v^+}K(H(w))\,dw+g(0)v.
    \]
    Since $K\circ H$ is monotone increasing, it follows that (see \cite[Claim 2]{indrei2025one})
    \begin{equation}\label{eq:monotone_KoH}
    \sum_{j=-M,j\neq 0}^N\int_{F(a_j)}^{F(a_j)+v_j}K(H(w))\,dw\geq\int_0^{v^-}K(H(w))\,dw+\int_0^{v^+}K(H(w))\,dw.
    \end{equation}
    By assumption $f(a_j)\geq f(0)$ for all $a_j$, showing that $\E(J)\leq\E(\Omega)$, as claimed. We prove that among all intervals of the same volume $v$ containing the origin, the symmetric one is a minimizer of the energy. For that, define for every $\lambda\in[0,1]$ the interval
    \[
    J_\lambda:=[H(\lambda v),H((1-\lambda)v)].
    \]
    The fact that $\E(J_{1/2})\leq\E(J_\lambda)$ for all $\lambda\neq 1/2$, follows form observing that the map $\Psi:\lambda\mapsto \E(J_\lambda)$ is symmetric $\Psi(\lambda)=\Psi(1-\lambda)$ and convex. Symmetry is immediate noticing that
    \[
    \Psi(\lambda)=2f(0)+\int_0^{v}K(H(\lambda w))\lambda+K(H((1-\lambda) w))(1-\lambda)\,dw+g(0)v.
    \]
    Now, setting $K\circ H=h$ for simplicity of exposition, we can compute
    \begin{align*}
        \frac{d\Psi}{d\lambda}&=\frac{d}{d\lambda}\int_0^v h(\lambda w)\lambda+h((1-\lambda) w)(1-\lambda)\,dw\\
        &=\int_0^vh'(\lambda w)\lambda w+h(\lambda w)-h'((1-\lambda)w)(1-\lambda)w-h((1-\lambda)w)\,dw\\
        &=\int_0^v\frac{d}{dw}(h(\lambda w)w)-\frac{d}{dw}(h((1-\lambda)w)w)\,dw\\
        &=h(\lambda v)v-h((1-\lambda)v)v,
    \end{align*}
    and so
    \begin{equation}\label{eq:double_derivative_psi}
    \frac{d^2\Psi}{d\lambda^2}=h'(\lambda v)v^2+h'((1-\lambda)v)v^2\geq 0,
    \end{equation}
    proving that
    \[
    \E(\Omega)\geq\E(J)\geq\E(J_{1/2}),
    \]
    as wished. If moreover $\psi,g$ are \emph{strictly} admissible weights, then $h=K\circ H$ is \emph{strictly} monotone increasing. As a consequence, one gets strict inequality in \eqref{eq:monotone_KoH} unless $\Omega$ is an interval containing the origin, and similarly from \eqref{eq:double_derivative_psi} we deduce $\E(J)>\E(J_{1/2})$ unless $\Omega=J_{1/2}$, showing uniqueness of minimizers in this case.
 \end{proof}
The next proposition will imply together with Proposition \ref{prop:oneD} the missing implication of Theorem \ref{thm:one_dim}. The failure of the optimality of centered intervals relies on the fact that in the small volume regime, optimizers tend to concentrate around global minima of the weight $e^\psi$, since for small volumes the weighted perimeter $\Pe$ is the dominating term in the energy $\E$.
\begin{proposition}\label{prop:thmonedir}
    Let $n=1$ and $\psi,g$ be admissible weights. Suppose there exists $x_0>0$ such that $\psi(x_0)<\psi(0)$. Then, there exists $v_0>0$ so that the interval of volume $v\in(0,v_0)$ with left endpoint in 
    $x_0$ has strictly less energy than the centered symmetric interval of the same volume.
\end{proposition}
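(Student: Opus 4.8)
The plan is to read off the energies of the two competing configurations from the preceding Lemma and then let the volume tend to zero, exploiting that in the small-volume regime the endpoint (perimeter) contribution $2f(\cdot)$ is the dominant term, and that $f=e^{\psi}$ is strictly smaller at $x_0$ than at the origin.

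First I would fix the competitors. Since $f$ is even and positive, $F$ is a strictly increasing homeomorphism of $[0,+\infty)$ onto $[0,F(+\infty))$, so the centered symmetric interval of weighted volume $v$ is $J_v^{\mathrm{sym}}=[-H(v/2),H(v/2)]$, and by the second formula of the Lemma
\[
\E\bigl(J_v^{\mathrm{sym}}\bigr)=2f(0)+2\int_0^{v/2}K(H(w))\,dw+g(0)\,v.
\]
For $v<F(+\infty)-F(x_0)$ (a strictly positive quantity because $f>0$ on $(x_0,+\infty)$), the interval of volume $v$ with left endpoint at $x_0$ is $J_v^{x_0}=\bigl(x_0,\,H(F(x_0)+v)\bigr)$, and by the first formula of the Lemma
\[
\E\bigl(J_v^{x_0}\bigr)=2f(x_0)+\int_{F(x_0)}^{F(x_0)+v}K(H(w))\,dw+g(0)\,v.
\]
Subtracting, the $g(0)v$ terms cancel and one is left with
\[
\E\bigl(J_v^{x_0}\bigr)-\E\bigl(J_v^{\mathrm{sym}}\bigr)=2\bigl(f(x_0)-f(0)\bigr)+R(v),\qquad R(v):=\int_{F(x_0)}^{F(x_0)+v}K(H(w))\,dw-2\int_0^{v/2}K(H(w))\,dw.
\]

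The next step is to control $R(v)$ as $v\to0^+$. Since $\psi\in C^2$ and $g\in C^1$, the function $\kappa=\psi''+g'$ is continuous, hence $K$ is continuous, and $H=F^{-1}$ is continuous on $[0,F(+\infty))$; consequently $K\circ H$ is bounded, say by a constant $C$, on the compact interval $[0,F(x_0)+v_1]$ for any fixed $v_1\in(0,F(+\infty)-F(x_0))$. Since both domains of integration in $R(v)$ lie inside $[0,F(x_0)+v]$, this gives $|R(v)|\le 2Cv$ for all $v\le v_1$, so $R(v)\to0$. On the other hand, the hypothesis $\psi(x_0)<\psi(0)$ makes $2\bigl(f(x_0)-f(0)\bigr)=2\bigl(e^{\psi(x_0)}-e^{\psi(0)}\bigr)=:-2\delta$ a fixed negative constant. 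Choosing $v_0\in(0,v_1]$ small enough that $|R(v)|<\delta$ for all $v\in(0,v_0)$, we conclude $\E(J_v^{x_0})-\E(J_v^{\mathrm{sym}})\le-\delta<0$ for every such $v$, which is the claim.

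The argument is essentially routine once the Lemma's formulas are available; the only point needing a little care — and the closest thing to an obstacle here — is to certify that the bulk integrals are $o(1)$ as $v\to0$, i.e.\ that $K\circ H$ does not blow up near $w=0$ or near $w=F(x_0)$. This requires no growth or integrability assumption on the weights beyond admissibility, because one only ever evaluates $K\circ H$ on a fixed compact neighbourhood of $\{0\}\cup\{F(x_0)\}$ inside $[0,F(+\infty))$, where continuity alone suffices.
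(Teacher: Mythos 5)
Your proof is correct and follows the same route as the paper: it compares the same two competitors $J_v^{\mathrm{sym}}$ and $J_v^{x_0}$, uses the same energy formulas from the preceding Lemma, and concludes by letting $v\to 0^+$ so that the negative endpoint gap $2(f(x_0)-f(0))$ dominates the vanishing bulk term. The only difference is that you spell out the continuity/compactness argument that justifies $R(v)\to 0$, whereas the paper compresses this into the phrase ``by continuity''; the substance is identical.
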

\begin{proof}
    Fix any volume $v\in(0,v_0)$, with $v_0$ yet to define, and let $J_0=[-H(v/2),H(v/2)]$ and $J_1=[x_0,H(F(x_0)+v)]$ be intervals, so that $\abs{J_0}_f=\abs{J_1}_f=v$.
    Then
    \begin{align*}
        \E(J_1)-\E(J_0)=2\Bigl(f(x_0)-f(0)\Bigr)+\Bigl(\int_{F(x_0)}^{F(x_0)+v}K(H(w))\,dw-2\int_{0}^{v/2}K(H(w))\,dw\Bigr).
    \end{align*}
    By continuity, there exists $v_0$ small enough so that
    \[
    \Bigl(\int_{F(x_0)}^{F(x_0)+v}K(H(w))\,dw-2\int_{0}^{v/2}K(H(w))\,dw\Bigr)<-(f(x_0)-f(0)),
    \]
    showing
    \[
    \E(J_1)-\E(J_0)<f(x_0)-f(0)<0,
    \]
    as wished.
\end{proof}
\begin{remark}\label{rmk:unique_min}
This is true also in arbitrary dimensions: in $\R^2$ one can check that a ball centered in $(r_0,0)$ with volume $v\ll 1$ has energy $\E=2\sqrt{\pi f(r_0)v}+O(v)$.
\end{remark}

\begin{proof}[Proof of Theorem \ref{thm:one_dim}]
    Combine Proposition \ref{prop:oneD} with Propositon \ref{prop:thmonedir}. 
\end{proof}

\subsection{Large volume regime}\label{sec:large-volume}For now on, we will consider $n\geq 2$.

We prove that centered balls uniquely minimize \eqref{eq:optimization_problem} if $v>0$ is big enough and $\psi$, $g$ are $\kappa$-uniformly admissible in the sense of Definition \ref{def:admissible_weights}. The proof, which follows the lines of \cite{kolesnikov2011isoperimetric}, is based on the following lemma, and a suitable calibration argument.
\begin{lemma}\label{lem:levelset}
    Let $\mu$ a Borel measure on $\R^n$ and $h:\R^n\to[0,+\infty)$ a measurable function so that setting $C_t=\{x:h(x)\leq t\}$, the map $t\mapsto\mu(C_t)$ is continuous and strictly increasing from 0 to $\mu(\R^n)$. Then, for every Borel set $A\subset\R^n$ one has that
    \[
    \int_A h \,d\mu\geq\int_{C_t}h\,d\mu,
    \]
    where $t>0$ is such that $\mu(A)=\mu(C_t)$.
\end{lemma}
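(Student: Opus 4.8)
The plan is to invoke the classical \emph{bathtub principle}: the sublevel set $C_t=\{h\le t\}$ collects exactly the region where $h$ is smallest, so among all sets of a prescribed $\mu$-measure it should minimize $\int h\,d\mu$. First I would observe that the hypothesis that $t\mapsto\mu(C_t)$ is continuous and strictly increasing from $0$ to $\mu(\R^n)$ guarantees, by the intermediate value theorem, that for any Borel set $A$ with $0<\mu(A)<\mu(\R^n)$ there is a $t>0$ with $\mu(C_t)=\mu(A)$ (the degenerate cases $\mu(A)\in\{0,\mu(\R^n)\}$ being handled directly, e.g.\ $C_0$ is $\mu$-null by strict monotonicity from $0$); note $C_t$ is $\mu$-measurable since $h$ is measurable, so all the integrals below are well defined in $[0,+\infty]$.

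Next I would decompose both sets along their common part, $A=(A\cap C_t)\cup(A\setminus C_t)$ and $C_t=(A\cap C_t)\cup(C_t\setminus A)$, these unions being disjoint. Since $\mu(A)=\mu(C_t)$, cancelling the common piece gives $\mu(A\setminus C_t)=\mu(C_t\setminus A)$ (if $\mu(A\cap C_t)=+\infty$ both integrals are infinite on that piece and there is nothing to prove). The two pointwise facts that drive the estimate are $h>t$ on $A\setminus C_t$ (because $x\notin C_t$ forces $h(x)>t$) and $h\le t$ on $C_t\setminus A$. Hence
\[
\int_{A\setminus C_t}h\,d\mu\;\ge\;t\,\mu(A\setminus C_t)\;=\;t\,\mu(C_t\setminus A)\;\ge\;\int_{C_t\setminus A}h\,d\mu .
\]
Adding the nonnegative quantity $\int_{A\cap C_t}h\,d\mu$ to both ends and using additivity of the integral over the two disjoint decompositions yields $\int_A h\,d\mu\ge\int_{C_t}h\,d\mu$, which is the claim.

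I do not expect a genuine obstacle here; the only delicate points are bookkeeping ones, and they are harmless precisely because $h\ge 0$, so every integral sits in $[0,+\infty]$ and no cancellation can occur: one must check that the additivity step is legitimate even when some pieces carry infinite measure or infinite integral, and that the displayed chain of inequalities is not vacuous (if $\mu(A\setminus C_t)=0$ then $A$ and $C_t$ coincide up to a $\mu$-null set and equality holds). I would also note that strict monotonicity of $t\mapsto\mu(C_t)$ is what makes the threshold $t$ unique, although for the inequality only its existence is used.
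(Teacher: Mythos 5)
Your proof is correct and follows essentially the same route as the paper: decompose $A$ and $C_t$ along their common intersection, use $\mu(A\setminus C_t)=\mu(C_t\setminus A)$, and compare $h>t$ on $A\setminus C_t$ against $h\le t$ on $C_t\setminus A$. The extra bookkeeping you add about degenerate cases and infinite values is harmless and does not change the argument.
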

\begin{proof}
    We refer to Lemma 6.4 in~\cite{kolesnikov2011isoperimetric}. The proof is elementary:
    \begin{align*}
    \int_A h\,d\mu&=\int_{A\cap C_t} h\,d\mu+\int_{A\setminus C_t} h\,d\mu\geq \int_{A\cap C_t} h\,d\mu+t\mu(A\setminus C_t)=\int_{A\cap C_t} h\,d\mu+t\mu(C_t\setminus A)\\
    &\geq \int_{C_t}h\,d\mu,
    \end{align*}
    where the identity $\mu(A\setminus C_t)=\mu(C_t\setminus A)$ follows form the assumption $\mu(A)=\mu(C_t).$
\end{proof}
\begin{proof}[Proof of Theorem \ref{thm:large_vol}]Let $X=\ell(r)\frac xr$ be a radial vector field, where $\ell:[0,+\infty)\to[0,1]$ is some $C^2$ increasing function yet to define. Then, for any set $F$  of finite perimeter we can estimate
    \begin{align*}
        \E(F)&=\int_{\partial^* F}f\,d\Haus^{n-1}+\int_F fg \,dx\geq \int_{\partial^*F}fX\cdot\nu\,d\Haus^{n-1}+\int_F \ell fg\,dx\\
        &=\int_F\diva(fX)+\ell fg\,dx=\int_F f\Bigl(\ell '+\ell(\psi'+\frac{n-1}{r})\Bigr)+\ell fg\,dx\\
        &=\int_F f\Bigl(\ell '+\ell\Bigl(\psi'+g+\frac{n-1}{r}\Bigr)\Bigr)\,dx=\int_F hf\,dx,
    \end{align*}
    where $h:=\ell '+\ell(\psi'+g+\frac{n-1}{r})$. If we can find peculiar radius $r^*>0$ and function $\ell$ such that $\ell(r)=1$ for every $r\geq r^*$ and $h\geq 0$, $h'\geq 0$, we can conclude thanks to Lemma \ref{lem:levelset} that centered geodesic balls of radius $r\geq r^*$ are optimal, simply by noticing that the above inequality is, in fact, an equality in this particular case. Suppose first $\kappa=1$. To find a suitable $\ell$, notice that
    \[
    h=\ell '+\ell\Bigl(\psi'+g+\frac{n-1}{r}\Bigr)\geq\ell'+\ell r+\frac{n-1}{r}\geq 0
    \]
    and
    \[
    h'=\ell''+\Bigl(\ell\frac{n-1}{r}\Bigr)'+\ell'(\psi'+g)+\ell(\psi''+g')\geq\ell''+\Bigl(\ell\frac{n-1}{r}\Bigr)'+\ell'r+\ell.
    \]
    One can check that the function
    \[
    \ell(r)=\begin{cases}\frac3{2\sqrt{n+2}}\Bigl(r-\frac{r^3}{3(n+2)}\Bigr),&0<r\leq \sqrt{n+2},\\1,& r>\sqrt{n+2}.\end{cases}
    \]
   is suitable (see Corollary 6.8 in~\cite{kolesnikov2011isoperimetric}). The case $\psi''+g'\geq\kappa>0$ for $\kappa>0$ arbitrary can be reduced to the case $\kappa=1$ via the dilation $y=\lambda x$, with $\lambda=\sqrt{\kappa}$. In fact, setting $s=\abs{y}=\lambda r$, $\tilde F=\lambda F$, $\tilde f(s)=f(s/\lambda)$, and $\tilde g(s)=g(s/\lambda)/\lambda$, one has that $\lambda^{n-1}\E(F)=\tilde \E(\tilde F)$ and $\lambda^n\abs{F}_f=\abs{\tilde F}_{\tilde f}$, where $\tilde \E$ and $\abs{\cdot}_{\tilde f}$ are the functionals associated to $\tilde f$ and $\tilde g$, and writing $\tilde f=e^{\tilde \psi}$ one gets
    \[
    \tilde\psi''+\tilde g'=\lambda^{-2}\bigl(\psi''+g'\bigr)(s/\lambda)\geq 1.
    \]
    Hence, in the general case, one can choose $r^*=\sqrt{\frac{n+2}{\kappa}}$.
\end{proof}

\subsection{Counter-examples in higher dimensions}\label{subsec:counterexamples}
In this section we prove that in dimension higher or equal to two, Theorem \ref{thm:one_dim} fails to be true, meaning that, under the admissible conditions of Definition \ref{def:admissible_weights}, it is not sufficient for $\psi$ to have a global minimum at the origin to ensure minimality of centered balls for all volumes. The difficulty in constructing counterexamples relies on the fact that for small volumes minimizers of \eqref{eq:optimization_problem} will, in fact, concentrate around the origin (see Remark \ref{rmk:unique_min}) making the analysis difficult, and for large volumes, we know by Theorem \ref{thm:large_vol} proved in the previous section that spheres are in fact optimal. Hence, we are bound to search for counterexamples in the intermediate volume regime, in which the two terms composing the energy $\E=\Pe+\G$ have the same importance, and hence they have to be treated carefully in order to take advantage of their interactions.

Our goal is to construct $\kappa$-uniformly admissible weights so that $\psi$ has a unique minimum in zero but for certain volumes centered spheres are not optimal. More precisely, we split the proof of Proposition \ref{prop:counter_examples} into two propositions: in Proposition \ref{prop:g_monotone} we impose $g'\geq 0$, and in Proposition \ref{prop:psi_monotone} we impose $\psi'\geq 0$.

\begin{proposition}\label{prop:g_monotone}
    In $\R^n$, $n\geq 2$, there exist $\kappa$-uniformly admissible weights $\psi$, $g$ so that $g'\geq 0$, and $\psi$ has a unique minimum at the origin, for which centered balls do not always minimize \eqref{eq:optimization_problem}.
\end{proposition}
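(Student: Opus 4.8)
The plan is to exhibit $\kappa$-uniformly admissible weights $\psi,g$ with $g'\ge 0$ and $\min_{r\ge0}\psi=\psi(0)$ attained only at the origin, together with a single intermediate volume for which a centered ball is strictly beaten by an off-center ball. Two observations dictate the shape of the construction. First, since we are forced to take $g'\ge 0$, the profile $\psi$ \emph{must} be non-monotone: if $\psi$ were also monotone increasing, then (mollifying to make it $C^3$, and using that $\psi''+g'\ge\kappa>0$ makes it strictly admissible) Theorem~\ref{thm:uniqueness} would force centered balls to minimize for every volume, leaving no counterexample. Second, the counterexample has to live in an \emph{intermediate} volume regime: for $R>\sqrt{(n+2)/\kappa}$ centered spheres are optimal by Theorem~\ref{thm:large_vol}, while for small volumes the optimizer concentrates around the origin --- where $\psi$ attains its minimum --- as in Remark~\ref{rmk:unique_min} and the mechanism behind Proposition~\ref{prop:thmonedir}, so the centered ball is again essentially optimal. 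Hence one must exploit the genuine competition between $\Pe$ and $\G$ in a window where neither term dominates.

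Concretely, I would fix a small $\kappa>0$ (which we are free to choose) and build $\psi$ piecewise: a short convex core on $[0,a]$ with $\psi(0)=\psi'(0)=0$ and $\psi''\ge\kappa$, so that $\psi$ is positive and tiny on $(0,a]$; a \emph{gentle} bump on which $\psi$ first increases to a value $\beta$ and then decreases to a value $\alpha$ with $0<\alpha<\beta$ and $\beta-\alpha$ small; a wide flat ``valley'' on which $\psi\equiv\alpha$; and an outer part on which $\psi+g\to+\infty$. For $g$ I would take the minimal non-decreasing $C^1$ function with $g(0)=0$ obeying $\psi''+g'\ge\kappa$, i.e.\ $g'=\max\{0,\kappa-\psi''\}$: thus $g\equiv 0$ on the core, $g$ increases across the bump to a value $\gamma_1$ controlled by the total drop of $\psi'$ there, and $g'=\kappa$ on the valley. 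After mollifying the corners (with a harmless adjustment of $\kappa$), $\psi,g$ are $\kappa$-uniformly admissible, $g'\ge 0$, and $\min_{r\ge0}\psi=\psi(0)$ only at the origin (all other values of $\psi$ are bounded below by a positive constant). Now fix an intermediate volume $v$, let $B_R$ be the centered ball with $\abs{B_R}_f=v$ and $R$ in the valley, and compare $\E(B_R)$ with $\E(B_\rho(p))$, where $\abs{B_\rho(p)}_f=v$ and $p$ lies in the valley. Two effects compete. On the perimeter side the centered ball is penalized, because it is forced to carry the low-density core near the origin: this makes its Euclidean radius strictly larger than $\rho$ while both boundaries sit at density $\approx e^\alpha$, so $\Pe(B_R)>\Pe(B_\rho(p))$ (cf.\ the energy computation for off-center balls in Remark~\ref{rmk:unique_min}). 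On the potential side the off-center ball is penalized, because $g$ is increasing and $B_\rho(p)$ lies farther from the origin than the core of $B_R$, which contributes $g\equiv 0$; so $\G(B_\rho(p))>\G(B_R)$. The point is to tune the parameters --- $\kappa$ small, the bump gentle so $\gamma_1$ is small, the valley wide, and $v$ well chosen --- so that the perimeter gain beats the potential loss and $\E(B_\rho(p))<\E(B_R)$, whence $B_R$ is not a minimizer for the volume $v$. (The companion statement in which $\psi'$ rather than $g$ is monotone, used for the rest of Proposition~\ref{prop:counter_examples}, is proved separately in Proposition~\ref{prop:psi_monotone} by a symmetric but genuinely different construction.)

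The hard part is precisely this balancing act. In the intermediate window the two quantities are of the same order --- schematically, the perimeter gain behaves like $D/R$ and the potential loss like $\kappa\, v\, R$, where $D$ measures the density deficit of $B_R$ produced by the core --- and trying to shrink one of them tends to shrink the other as well (for instance, a steeper rise of $\psi$ enlarges $D$ but also forces $\gamma_1$ up), so the estimates must be made with explicit constants, not just orders of magnitude. One must then check that the range of admissible $v$ --- strictly between the small-volume and large-volume regimes, and large compared with the size of the bump so that the approximations are valid --- is non-empty for the constructed weights. A final, routine point is to verify that all these estimates survive the mollification of the piecewise weights.
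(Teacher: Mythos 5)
Your plan correctly identifies the two structural constraints (non-monotone $\psi$, intermediate volume window), and the outer shell of the argument — non-monotone profile plus a linear correction to $g$ at the end to get $\kappa$-uniform admissibility — matches the paper. But your concrete construction and estimate differ from the paper's in a way that creates a real gap, because it relies on a much weaker mechanism for the perimeter gain.

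In your setup the centered ball $B_R$ has its boundary in the \emph{valley}, so both $\partial B_R$ and $\partial B_\rho(p)$ sit at the same density $e^\alpha$, and the perimeter gain comes only from the second-order effect $\Pe(B_R)-\Pe(B_\rho(p))\sim (n-1)D/R$, with $D$ the core density deficit. The paper does something stronger: it builds $\psi$ with a sharp rise on $[0,\varepsilon]$ to a \emph{high} plateau $\psi\equiv M$ on $[\varepsilon,L]$, a gentle drop to a \emph{low} plateau $\psi\equiv M/2$ on $[L',L'']$, and chooses the volume $v$ so that the centered ball $B$ has radius $\tau\in(\varepsilon,L)$ — that is, $\partial B$ sits where the density is $e^M$, while the off-center ball $B'$ sits entirely where the density is $e^{M/2}$. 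Because the weighted volumes match, $\tau^n e^M\approx (\tau')^n e^{M/2}$, and plugging into the perimeters one gets the \emph{first-order} gap $\Pe(B)-\Pe(B')>\omega_n^{1/n}v^{(n-1)/n}M$, proportional to the perimeter itself with a factor of $M$. Against this the potential loss is controlled by $\sim Me^M\varepsilon^{n-1}$, with $\varepsilon$ the core thickness and $h=L'-L$ the drop width, both of which can be sent to extremes independently of $v$ and $M$. The balancing then becomes a triviality (take $\varepsilon$ small), whereas in your scheme the perimeter gain $D/R$ and the potential loss $\kappa v R\sim\kappa R^{n+1}$ are of the same rough size and must be compared with explicit constants — which is precisely the step you flag as "the hard part" and leave undone.

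Two further concrete issues with the blind construction. (1) The sign of $D$ is not established: the core on $[0,a]$ contributes a deficit $\sim(e^\alpha-1)\omega_n a^n$, but the bump where $\psi>\alpha$ contributes a \emph{surplus}, and the $r^{n-1}$ Jacobian weights the bump (farther out) more heavily; you must choose parameters so the deficit wins, and this is not automatic just because "$\beta-\alpha$ is small" — it also depends on how slowly $\psi$ climbs from $0$ to $\alpha$. (2) Even granting $D>0$, the requirement $D\gtrsim\kappa R^{n+2}$ fights against the requirement $a$ small (short core) and $R$ large enough to be in the valley. The paper's construction avoids both issues by not relying on a deficit-to-radius conversion at all: the gain is built into the density ratio at the two boundaries.

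So the proposal is not wrong in spirit but it is missing the key idea — place the centered ball's boundary on a high-density plateau and the competitor on a low-density plateau — that makes the perimeter gain robust, and the estimates it would require are substantially harder than the ones in the paper.
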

\begin{proof}
    We first construct $\psi$ and $g$ as admissible weights in the sense of Definition \ref{def:admissible_weights}, that is $\psi''+g'\geq 0$. Fix $M>0$ and fix some volume $v>0$ and lengths $0<L<L'<L''$ so that $v<\min\{\omega_n e^ML^n/2,\omega_ne^{M/2}(L'-L'')^n\}$. Let $\varepsilon>0$ be small enough in terms of $v$, in this situation it will be sufficient that
    \begin{equation}\label{eq:small_epsilon}
    \varepsilon^{n-1}\leq \frac14e^{-M}\omega_n^{\frac{1-n}n}v^{\frac{n-1}{n}},
    \end{equation}
    and suppose that $h=L'-L$ is big enough in terms of $v$ and $\varepsilon$, say 
    \begin{equation}\label{eq:big_h}
    h\geq\frac{v}{\omega_n}e^{-M}\varepsilon^{1-n}.
    \end{equation}
    \begin{figure}[htbp]
\centering
\includegraphics[scale=1]{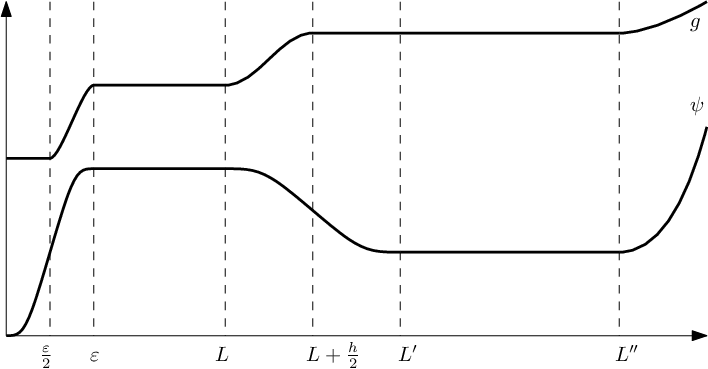}
\caption{First construction of $\psi$ and $g$.}
\label{fig:psi}
\end{figure}

    We construct $\psi$ as in Figure \ref{fig:psi}, that is
    \begin{itemize}
        \item[--] $ \psi(0)=\psi'(0)=0$,
        \item[--] $\psi=M$ on $[\varepsilon,L]$, and $\psi={M/2}$ on $[L',L'']$,
        \item[--] $0\leq\psi'(r)\leq 2M/\varepsilon$ on $[0,\varepsilon]$,
        \item[--] $0\leq-\psi'(r)\leq M/h$ in $[L,L']$,
        \item[--] $\psi$ convex in $[0,\varepsilon/2]$, $[L+h/2,L']$, and $[L'',+\infty)$,
        \item[--]There exists $c>0$ such that $\psi''>c>0$ in $[0,\varepsilon/4]$
        \item[--] $\psi$ concave in $[\varepsilon/2,\varepsilon]$ and $[L,L+h/2]$.
    \end{itemize}
    We then set for some constant $C>0$
    \[
    g(r):=C-\int_0^r\min\{\psi''(t),0\}\,dt,
    \]
    for $r\in[0,L'']$ and $g'(r)=1$. Clearly, for $r\leq L''$ one has that
    \[
    g'(r)=-\min\{\psi''(r),0\}+\min\{\psi''(0),0\}\geq -\psi''(r),
    \]
    so that $\psi''+g'\geq 0$ in $[0,L'']$, proving that $\psi,g$ are admissible weights. Moreover, taking $C>0$ big enough, we can enforce $g>0$ everywhere. Call $B_\varepsilon$ the ball centered at the origin with radius $\varepsilon>0$. Notice that with the choice of the constants we did at the beginning, the ball $B$ centered at the origin with weighted volume $v$ has radius $\tau>0$ strictly between $\varepsilon$ and $L$, since
    \[
    \abs{B_\varepsilon}_f<\omega_ne^M\varepsilon^n\leq 4^{-\frac n{n-1}}e^{-\frac M{n-1}}v<v,
    \]
    and
    \[
    \abs{B_L}_f\geq \omega_n e^M(L^n-\varepsilon^n)>2v-\omega_n e^M\varepsilon^n\geq \Bigl(2-e^{-\frac{M}{n-1}}4^{-\frac{n}{n-1}}\Bigr)v>v.
    \]
    Moreover, the ball $B'$ distant $L'$ from the origin and with radius $\tau'=(e^{-M/2}v/\omega_n)^{1/n}$ has the same weighted volume as $B$ and is completely contained in the annulus $\{\abs{x}\in[L',L'']\}$. We show that
    \[
    \E(B')<\E(B).
    \]
    By definition
    \begin{align*}
        \E(B)-\E(B')&=\Pe(B)-\Pe(B')+\G(B)-\G(B')\\
        &=n\omega_n(e^M\tau^{n-1}-e^{M/2}(\tau')^{n-1})+\G(B)-\G(B').
    \end{align*}
    Since $v=\abs{B}_f<\omega_ne^M\tau^n$, we obtain that
    \[
        \Pe(B)-\Pe(B')>n\omega_n^{\frac 1n}v^{\frac {n-1}{n}}(e^{\frac{M}{n}}-e^{\frac{M}{2n}})\geq \omega_n^{\frac1n}v^{\frac{n-1}{n}}M,
    \]
  since $e^x-e^{\frac x2}\geq\frac x2$ for all $x\in\R$. Recalling that in $[0,\varepsilon]$ we imposed $\psi'\leq 2M/\varepsilon$, we have that for $r\in[0,\varepsilon]$ one has that
  \begin{align*}
  g(r)-g(\varepsilon)&=C-\int_0^r\min\{\psi'',0\}\,dt-C+\int_0^\varepsilon\min\{\psi'',0\}\,dt\\
  &=\int_{\max\{r,\varepsilon/2\}}^\varepsilon\min\{\psi'',0\}\,dt\\
  &=\psi'(\varepsilon)-\psi'(\max\{r,\varepsilon/2\})\geq-\frac{2M}{\varepsilon}.
  \end{align*}
  Similarly, in $[L',L'']$ we can estimate
  \begin{align*}
      g(r)&=g(L')=g(\varepsilon)-\int_L^{L+h/2}\psi''\,dt=g(\varepsilon)-\psi'(L+h/2)+\psi'(L)\\
      &\leq g(\varepsilon)+\frac{M}{h},
  \end{align*}
  since we assumed $0\leq-\psi'(r)\leq M/h$ in $[L,L']$. We are ready to compute the gap between the potential energies of $B$ and $B'$:
\begin{align*}
\G(B)-\G(B')&\geq \int_B gf\,dx-\int_{B'} gf\,dx\\
&\geq \int_B g(\varepsilon)f\,dx+\int_{B_\varepsilon}(g-g(\varepsilon))f\,dx-\Bigl(g(\varepsilon)+\frac Mh\Bigr)v\\
&=\int_{B_\varepsilon}(g-g(\varepsilon))f\,dx-\frac{M}{h}v\\
&\geq -\frac{2M}{\varepsilon}\abs{B_\varepsilon}_f-\frac{M}{h}v\\
&\geq -2Me^M\varepsilon^{n-1}\omega_n-\frac{M}{h}v\\
&\geq -3Me^M\varepsilon^{n-1}\omega_n,
\end{align*}
where in the last line we took advantage of the bound on $h$ in Equation \eqref{eq:big_h}.
Hence, by combining the estimates, we finally get that
    \begin{equation}\label{eq:gap_counter1}
        \E(B)-\E(B')>\omega_n^{\frac1n}v^{\frac{n-1}{n}}M-3\omega_nMe^M\varepsilon^{n-1},
    \end{equation}
    which is strictly greater than zero provided
    \[
    \varepsilon^{n-1}<\frac14e^{-M}\omega_n^{\frac{1-n}{n}}v^{\frac{n-1}{n}},
    \]
    as we imposed in Equation \eqref{eq:small_epsilon}, proving that $B'$ has strictly less energy than the centered ball $B$, as wished. 
    
    We are left to argue that one can in fact construct $\psi,g$ not only admissible weights but $\kappa$-uniformly admissible, meaning that there exists $\kappa>0$ such that $\psi''+g'\geq \kappa$ everywhere, as in Definition \ref{def:admissible_weights}. This is achievable via adding a small enough linear term to $g$ (the same can be achieved by adding a small enough quadratic term to $\psi$, but it will change also the volumes of $B$ and $B'$, making the analysis more involved). Fix $\delta>0$, and let $\alpha(r)$ be any monotone, smooth function such that $\alpha'(0)=0$, $\alpha(r)=\delta r$ for $r\geq \varepsilon/4$. Then, letting $\tilde g:=g+\alpha$, we have clearly that $\tilde g$ is monotone and
    \[
    \psi''+\tilde g\geq\kappa>0
    \]
    with $\kappa:=\min\{\min\{\psi''(r):r\in[0,\varepsilon/4]\},\delta\}\geq\min\{c,\delta\}$. Hence, $\psi$, $\tilde g$ are $\kappa$-uniformly admissible weights. Now, we estimate the error we make in computing the potential term as
    \begin{align*}
    \Bigl(\int_B\tilde gf \,dx-\int_{B'}\tilde gf\,dx\Bigr)&-(\G(B)-\G(B'))=\int_B\alpha f \,dx-\int_{B'}\alpha f\,dx\geq -\delta L''v.
    \end{align*}
    Hence
    \begin{align*}
        \Bigl(\int_B\tilde gf \,dx-\int_{B'}\tilde gf\,dx\Bigr)&+(\Pe(B)-\Pe(B'))\geq \E(B)-\E(B')-\delta L'' v\\
        &\geq \frac{1}{4}\omega^{\frac1n}v^{\frac{n-1}{n}}M-\delta L'' v,
    \end{align*}
    which is positive provided $\delta>0$ is small enough, completing the proof.
\end{proof}
We construct now a second counter-example which prescribes $\psi$ monotone increasing.
\begin{proposition}\label{prop:psi_monotone}
    In $\R^n$, $n\geq 2$, there exist $\kappa$-uniformly admissible weights $\psi$, $g$ so that $\psi'\geq 0$, and $\psi$ has a unique minimum at the origin, for which centered balls do not always minimize \eqref{eq:optimization_problem}.
\end{proposition}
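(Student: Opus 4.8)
The plan is to mirror the construction of Proposition~\ref{prop:g_monotone} with the roles of the two weights exchanged: since $\psi$ must now be increasing, the non‑monotone behaviour has to be carried by $g$. I would fix a large radius $L$, then a tiny $\kappa>0$, then a tiny $\Delta>0$, and set $g\equiv M$ and $\psi''\equiv\kappa$ on $[0,L]$ (so $\psi(r)=\kappa r^2/2$), let $g$ drop linearly by $\Delta$ on a short interval $[L,L+h]$ with $\psi''\equiv\kappa+\Delta/h$ there, and put $g\equiv M-\Delta$, $\psi''\equiv\kappa$ on $[L+h,\infty)$, with $M>\Delta$ so that $g>0$. Then $\psi''+g'\equiv\kappa$ throughout, $\psi'(0)=0$, $\psi'>0$ on $(0,\infty)$ so $\psi$ is strictly increasing with minimum at the origin, and $\psi\to\infty$ (hence $\psi+g\to\infty$); mollifying the corners yields $C^\infty$ profiles with $\psi''+g'\ge\kappa/2$, i.e. a pair of uniformly admissible weights of the required type. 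Taking $L$ so large that $\kappa L^2\ll\Delta$ makes $\psi$ effectively $0$ on $[0,L]$ and keeps all $e^{\kappa(\cdot)^2/2}$ factors harmless, while $\kappa L^2\ll1$ keeps the relevant volumes far below the (enormous) threshold of Theorem~\ref{thm:large_vol}.

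Next I would fix a moderate volume $v<\omega_nL^n$ and compare the centered ball $B_\rho$ of weighted volume $v$ (so $\rho\approx(v/\omega_n)^{1/n}<L$, $\G(B_\rho)=Mv$, $\Pe(B_\rho)\approx n\omega_n\rho^{n-1}$) with the Euclidean ball $B'$ of the same weighted volume whose \emph{center} sits at distance $L$ from the origin, so $B'$ straddles the jump surface $\{|x|=L\}$ of $g$. Because $L\gg\rho\ge$ (radius of $B'$), this surface is essentially flat inside $B'$ and cuts it into two near‑half‑balls, the inner one in $\{g\equiv M\}$ and the outer in $\{g\equiv M-\Delta\}$, whence $\G(B_\rho)-\G(B')=\Delta\,|B'\cap\{|x|>L\}|_f$. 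Expanding everything to first order in $\Delta$ — using $\psi(|x|)\approx\Delta(|x|-L)_+$ on $B'$, $|x|\approx L+y_1$ in coordinates centered at the center of $B'$, and the moment identities $\int_{B_R^+}y_1\,dy=\omega_{n-1}R^{n+1}/(n+1)$ and $\int_{S_R^+}y_1\,d\Haus^{n-1}=\omega_{n-1}R^n$ for half‑balls and half‑spheres — I expect $\Pe(B')-\Pe(B_\rho)=\tfrac{2\omega_{n-1}}{n+1}\Delta\rho^n+o(\Delta)$ and $\G(B_\rho)-\G(B')=\tfrac{\omega_n}{2}\Delta\rho^n+o(\Delta)$, so that
\begin{equation*}
\E(B')-\E(B_\rho)=\Bigl(\frac{2\omega_{n-1}}{n+1}-\frac{\omega_n}{2}\Bigr)\Delta\rho^n+o(\Delta).
\end{equation*}
The bracket is negative exactly when $\omega_n/\omega_{n-1}>4/(n+1)$, which holds for every $n\ge2$ (directly $\pi/2>4/3$ for $n=2$ and $4/3>1$ for $n=3$, and for large $n$ because $\omega_n/\omega_{n-1}=\sqrt\pi\,\Gamma(\tfrac{n+1}2)/\Gamma(\tfrac n2+1)\sim\sqrt{2\pi/n}\gg4/n$). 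Thus, choosing $\kappa$ small and then $\Delta$ small, $\E(B')<\E(B_\rho)$, so centered balls fail to minimize \eqref{eq:optimization_problem} at volume $v$.

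The hard part is exactly this cancellation of the leading terms. Unlike in Proposition~\ref{prop:g_monotone}, where the competitor gains at the perimeter level ($\sim v^{(n-1)/n}$) while the potential discrepancy is a negligible $O(\varepsilon^{n-1})$, here the potential saving and the extra perimeter are both of order $\Delta v$, and the constraint $\psi''+g'\ge\kappa$ ties the size of the drop in $g$ to the growth rate of $f=e^\psi$ immediately past it — which is exactly what sinks the naive competitor sitting entirely beyond the drop. The decisive point is to let $B'$ \emph{straddle} the drop: its near half then lies in the cheap region $\{\psi\approx0\}$, so the only perimeter penalty is the second‑moment term $\tfrac{2\omega_{n-1}}{n+1}\Delta\rho^n$, which the dimensional inequality above makes strictly smaller than the potential saving $\tfrac{\omega_n}{2}\Delta\rho^n$. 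The bulk of the write‑up will therefore be quantitative bookkeeping: bounding the $o(\Delta)$ remainders and the $O(\kappa L^2)$ corrections uniformly (this is where the order ``$L$ huge, then $\kappa$ tiny, then $\Delta$ tiny'' matters), checking admissibility after mollification, and, if one insists on the clean bound $\psi''+g'\ge\kappa$ instead of $\ge\kappa/2$, adding afterwards a small linear term to $g$ (or quadratic to $\psi$), which perturbs the energies by $O(\delta)$ without affecting the strict inequality.
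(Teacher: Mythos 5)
Your construction is correct, and it takes a genuinely different route from the paper. In the paper's proof, the competitor ball $B'$ has radius $R<1$, is centered at coordinate $\varepsilon^{-1}+R$ on the $e_1$-axis, and so sits entirely on the far side of the threshold radius $\varepsilon^{-1}$, tangent to it at one point; the density $\psi$ is locally \emph{quadratic} there, $\psi\approx\varepsilon\hat x_1^2$, and the whole computation collapses to the positivity of $\frac{n-1}{R}\int_{B'}\hat x_1^2-\int_{\partial B'}\hat x_1^2+2\int_{B'}\hat x_1=\frac{n-1}{n+2}\omega_nR^{n+1}>0$. In your construction $\psi$ has instead a \emph{kink} (linear growth $\Delta(r-L)_+$ matching a step of size $\Delta$ in $g$), and the competitor $B'$ \emph{straddles} the interface $\{|x|=L\}$; the leading order in $\Delta$ reduces to the clean dimensional inequality
\begin{equation*}
\frac{\omega_n}{\omega_{n-1}}>\frac{4}{n+1},
\end{equation*}
which drives the strict energy gap. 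Both arguments have the same logical skeleton (couple $g$ to $\psi$ so that $\psi''+g'$ is exactly constant, then perturb to make it uniformly positive, and expand both $\Pe$ and $\G$ to the first nontrivial order), but yours isolates the decisive competition between the half-sphere first moment ($\omega_{n-1}\rho^n$ in the perimeter penalty, together with the volume-compensation shrinkage $\rho-\rho'$) and the half-ball volume ($\tfrac{\omega_n}{2}\rho^n$ in the potential saving) in a geometrically transparent way, whereas the paper's hinges on the sign of $n-1$.

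Two small points worth tightening in a full write-up. First, the quantifier order as stated (``fix $L$, then $\kappa$, then $\Delta$'') is slightly inconsistent with the later requirement $\kappa L^2\ll\Delta$; the correct hierarchy is to fix $\rho$ and $L\gg\rho$, then choose $\Delta$ small enough to beat the $o(\Delta)$ remainders, then $h\ll\rho$ and finally $\kappa$ small enough that $\kappa L\ll\Delta$ (so $e^{\psi(L)}=1+o(\Delta)$ and $L\ll\sqrt{(n+2)/\kappa}$). Second, the verification of $\omega_n/\omega_{n-1}>4/(n+1)$ can be made uniform without asymptotics: since $\omega_{n+1}/\omega_{n-1}=2\pi/(n+1)$, the inequality is equivalent to $\omega_{n+1}/\omega_n<\pi/2$, and the ratio $\omega_{m}/\omega_{m-1}$ is strictly decreasing with $\omega_2/\omega_1=\pi/2$, so it holds strictly for every $n\geq2$.
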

\begin{proof}
\begin{figure}[htbp]
\centering
\includegraphics[scale=1]{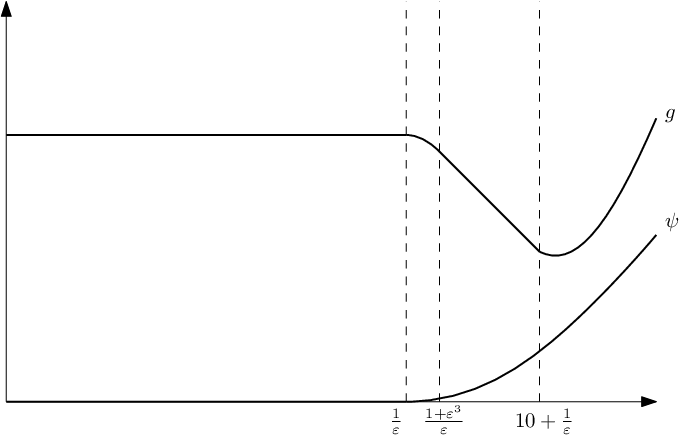}
\caption{Construction of $\psi$.}
\label{fig:psi2}
\end{figure}
 It is enough to construct $\psi$ and $g$ to be admissible weights, since arguing exactly as the end of the proof of Proposition \ref{prop:g_monotone}, it is sufficient to add a small enough linear term to $g$ in order ensure $\kappa$-uniformly admissibility.
 
 We construct $\psi$ as in the Figure  \ref{fig:psi2}: for $\varepsilon>0$ small enough, we set $\psi(r)=0$ for $r\in[0,\varepsilon^{-1}]$, $\psi(r)=\varepsilon(r-\varepsilon^{-1})^2$ for $r>\varepsilon^{-1}+\varepsilon^2$, and in $r\in[\varepsilon^{-1},\varepsilon^{-1}+\varepsilon^2]$ we set $\psi$ to be any convex function connecting the two prescribed parts in a $C^2$-fashion. For $g(0)$ big  enough, we prescribe $g(r):=g(0)-\psi'(r)$ for $r\in [0,10+\varepsilon^{-1}]$, and $g(r)\to+\infty$  for $r$ outside this interval. Doing so, we ensure $g>0$ and $\psi''+g'\geq 0$. Select an arbitrary basis $(x_1,\dots,x_n)$ of $\R^n$, and let $B'$ a ball of radius $R<1$ whose center lies in the $x_1$-axis, at coordinate $\varepsilon^{-1}+R$. Let $B$ be the centered ball with the same weighted volume: $\abs{B}_f=\abs{B'}_f$, and let $\tau>0$ be its radius. We prove that for every $R<1$ there exists $\varepsilon>0$ such that
\[
\E(B)-\E(B')>0.
\]
First of all, we notice that the Euclidean perimeter $P$ of $B'$ is strictly smaller than $\Pe(B)=P(B)$, since the density is increasing, and hence its Euclidean volume $V$ is decreasing. More precisely, since $e^x\geq 1+x$ for every $x\in\R$, we get that
\[
V(B)=\abs{B}_f=\abs{B'}_f=\int_{B'}e^\psi\,dx\geq V(B')+\int_{B'}\psi\,dx
\]
and therefore
\begin{equation}\label{eq:brutal_bound}
P(B')=n\omega_n^{\frac1n}V(B')^{\frac n{n-1}}\leq n\omega_n^{\frac1n} \Bigl(V(B)-\int_{B'}\psi\,dx\Bigr)^{\frac{n-1}{n}}.
\end{equation}
Letting $\lambda:=\frac{P(B')}{P(B)}\in(0,1)$, we notice that
\begin{align*}
P(B)-P(B')&=(1-\lambda)P(B)=(1-\lambda)P(B)^{\frac{n}{n-1}}P(B)^{-\frac{1}{n-1}}\\
&=\frac{1-\lambda}{1-\lambda^{\frac n{n-1}}}(1-\lambda^{\frac{n}{n-1}})P(B)^{\frac{n}{n-1}}P(B)^{-\frac{1}{n-1}}\\
&=\frac{1-\lambda}{1-\lambda^{\frac n{n-1}}}(P(B)^{\frac{n}{n-1}}-P(B')^{\frac{n}{n-1}})P(B)^{-\frac{1}{n-1}}\\
&>\frac{n-1}{n}P(B)^{-\frac{1}{n-1}}(P(B)^{\frac{n}{n-1}}-P(B')^{\frac{n}{n-1}}),
\end{align*}
which combined with Equation \eqref{eq:brutal_bound} gives us the estimate
\begin{equation}\label{eq:counter_bound_per}
P(B)-P(B')>\frac{n-1}{n}P(B)^{-\frac{1}{n-1}}n^{\frac n{n-1}}\omega_n^{\frac1{n-1}}\int_{B'}\psi\,dx=\frac{n-1}{\tau}\int_{B'}\psi\,dx.
\end{equation}
The error we have committed by replacing $\Pe(B')$ with  $P(B')$ can be estimated as
\begin{align*}
\Pe(B')-P(B')&=\int_{\partial B'}(e^\psi-1)\,d\Haus^{n-1}=\int_{\partial B'}\psi\,d\Haus^{n-1}+\int_{\partial B'}(e^\psi-1-\psi)\,d\Haus^{n-1}\\
&\leq \int_{\partial B'}\psi\,d\Haus^{n-1}+n\omega_nR^{n-1}\sum_{k=2}^{+\infty}\frac{(4R^2\varepsilon)^k}{k!},
\end{align*}
and the potential energy gap is equal to
\[
\G(B)-\G(B')=\int_{B}g(0)\,dx-\int_{B'}(g(0)-\psi')e^\psi\,dx=\int_{B'}\psi'e^\psi\,dx.
\]
Combining the above estimates we get that
\begin{equation}\label{eq:combine_eq_1}
    \E(B)-\E(B')>\frac{n-1}{\tau}\int_{B'}\psi\,dx-\int_{\partial B'}\psi\,d\Haus^{n-1}+\int_{B'}\psi'e^\psi\,dx+o(\varepsilon),
\end{equation}
where here, and in the sequel, we will assume that $o(\varepsilon)$ can be dependent on $R$ (that has been fixed once and for all at the beginning of the argument). Now, we take advantage of the fact that $B'$ was placed far away from the origin so that integrating along $r$ reduces to integrating along $x_1>0$ up to making a reasonably small error. In fact, setting $\hat x_1:=x_1-\varepsilon^{-1}$, we notice that
\[
\hat x_1\leq\Bigl((\hat x_1+\varepsilon^{-1})^2+\sum_{i=2}^n x_i^2\Bigr)^{1/2}-\varepsilon^{-1}=r-\varepsilon^{-1},
\]
and in $B'$
\begin{align*}
r-\varepsilon^{-1}=(\hat x_1+\varepsilon^{-1})\Bigl(1+\frac{\sum_{i=2}^n x_i^2}{(\hat x_1+\varepsilon^{-1})^2}\Bigr)^{1/2}-\varepsilon^{-1}\leq \hat x_1+\frac{1}{2(\hat x_1+\varepsilon^{-1})}\leq \hat x_1+\frac\varepsilon 2,
\end{align*}
where we took advantage of the inequality $\sqrt{1+a^2}\leq 1+a^2/2$, and that $R<1$ by assumption. To bound $\tau$ in terms of $R$, we can estimate from above the volume of $B'$ as
\[
\omega_n\tau^n=\abs{B}_f=\abs{B'}_f\leq e^{4\varepsilon R^2}\omega_n R^n\leq e^{4\varepsilon}\omega_n R^n,
\]
so that
\[
0\leq \tau-R\leq R(e^{4\varepsilon/n}-1)\leq \frac{8\varepsilon}{n},
\]
for $\varepsilon$ small, so that
\[
0\leq\frac\varepsilon R-\frac\varepsilon\tau\leq\frac{8\varepsilon^2}{nR\tau}\leq\frac{8\varepsilon^2}{nR^2}=o(\varepsilon).
\]
By construction, $\psi=\varepsilon(r-\varepsilon^{-1})^2+o(\varepsilon)$ for $r\geq \varepsilon^{-1}$. This allows us to conclude that
\begin{equation}\label{eq:combine_eq_2}
    \E(B)-\E(B')\geq \varepsilon\Bigl(\frac{n-1}{R}\int_{B'}\hat x_1^2\,dx-\int_{\partial B'}\hat x_1^2\,d\Haus^{n-1}+2\int_{B'}\hat x_1\,dx\Bigr)+o(\varepsilon).
\end{equation}
It suffices now to prove that the quantity
\[
\frac{n-1}{R}\int_{B'}\hat x_1^2\,dx-\int_{\partial B'}\hat x_1^2\,d\Haus^{n-1}+2\int_{B'}\hat x_1\,dx,
\]
is strictly positive, independently of $\varepsilon$ and for any fixed $R<1$. This can be explicitly showed: let $(r,\varphi_1,\dots,\varphi_{n-1})\in\R_+\times (0,\pi)^{n-2}\times(0,2\pi)$ denote the usual hyperspherical coordinates of $\R^n$ centered in $B'$ and so that $\hat x_1=r\cos(\varphi_1)+R$. Then, since $r\cos(\varphi_1)$ has zero mean on $B'$ and $\partial B'$, we get that
\[
\int_{B'}\hat x_1\,dx=R^{n+1}\omega_n,
\]
and from $\hat x_1^2=r^2\cos^2(\varphi_1)+R^2+2Rr\cos(\varphi_1),$ we obtain
\begin{align*}
\int_{B'}\hat x_1^2\,dx&=\int_{S^{n-1}}\int_0^Rr^{n-1}(r^2\cos^2(\varphi_1)+R^2)\sin^{n-2}(\varphi_1)\dots \sin(\varphi_{n-2})\,dr\,d\varphi_1\dots\,d\varphi_{n-1}\\
&=\frac{R^{n+2}}{n+2}\int_{S^{n-1}}\cos^2(\varphi_1)\,d\Haus^{n-1}+R^{n+2}\omega_n,
\end{align*}
and similarly
\begin{align*}
\int_{\partial B'}\hat x_1^2\,d\Haus^{n-1}&=\int_{S^{n-1}}R^{n-1}(R^2\cos^2(\varphi_1)+R^2)\sin^{n-2}(\varphi_1)\dots \sin(\varphi_{n-2})\,d\varphi_1\dots\,d\varphi_{n-1}\\
&=R^{n+1}\int_{S^{n-1}}\cos^2(\varphi_1)\,d\Haus^{n-1}+nR^{n+1}\omega_n.
\end{align*}
Explicitly, integrating by parts we get that
\begin{align*}
\int_{S^{n-1}}\cos^2(\varphi_1)\,d\Haus^{n-1}&=\int_{S^{n-1}}\cos^2(\varphi_1)\sin^{n-2}(\varphi_1)\dots \sin(\varphi_{n-2})\,d\varphi_1\dots\,d\varphi_{n-1}\\
&=\frac{1}{n-1}\int_{S^{n-1}}\sin^{n}(\varphi_1)\dots \sin(\varphi_{n-2})\,d\varphi_1\dots\,d\varphi_{n-1}\\
&=\frac{1}{n}\int_{S^{n-1}}\sin^{n-2}(\varphi_1)\dots \sin(\varphi_{n-2})\,d\varphi_1\dots\,d\varphi_{n-1}\\
&=\omega_n.
\end{align*}
The proof is completed since
\begin{align*}
\frac{n-1}{R}&\int_{B'}\hat x_1^2\,dx-\int_{\partial B'}\hat x_1^2\,d\Haus^{n-1}+2\int_{B'}\hat x_1\,dx\\
&=\frac{n-1}{n+2}R^{n+1}\omega_n+(n-1)R^{n+1}\omega_n-(n+1)R^{n+1}\omega_n+2R^{n+1}\omega_n\\
&=\frac{n-1}{n+2}R^{n+1}\omega_n>0,
\end{align*}
for every $n\geq 2$.
\end{proof}

\section{Characterization of optimal sets}\label{sec:optimal-sets}

The proof Theorem \ref{thm:uniqueness}, which is an adaptation of the beautiful argument by Chambers \cite{chambers}, consists of two fundamental steps. In the first one, taking advantage of the symmetry of the weights, we decrease the class of competitors to the family of spherically symmetric domains. Thanks to the qualitative properties of optimal sets explained in Section \ref{sec:existence_variations}, we deduce key regularity properties of the curve $\gamma$ describing a spherically symmetrized set minimizing \eqref{eq:optimization_problem}, together with a particular second order ordinary differential equation solved by $\gamma$ inherited from the constancy of the weighted mean curvature in the sense of Definition \ref{def:weighted_mean_curvature}. The second step consists of a delicate analysis of the profile curve $\gamma$. Arguing by contradiction, we decompose it into three fundamental subsequent curves: an upper curve, a lower curve, and a curl curve. The existence and finiteness of the upper curve follow from choosing the starting point of $\gamma$ to be the furthest point from the origin combined with the boundedness of optimal sets. A comparison argument shows that the lower curve has to curve faster than the upper curve. As a consequence the lower curve cannot cross the axis of symmetrization, otherwise, the angle of incidence would be greater than $\frac\pi2$, contradicting the optimality of $E$. Finally, the curl curve behavior follows from mean convexity, leading to a contradiction since a profile $\gamma$ that curls cannot describe an embedded, spherically symmetrized set.

Remarkably, this whole argument only takes advantage of the \emph{sationarity} and \emph{qualitative} properties of optimal sets to conclude a full characterization of minimizers of \eqref{eq:optimization_problem}.

\subsection{Spherical symmetrization and terminology}
Fix now and for all $\psi,g$ to be monotonically increasing, strictly admissible weights of class $C^3$ in the sense of Definition \ref{def:admissible_weights}. For an arbitrary volume $v>0$, let $E$ be a minimizer of \eqref{eq:optimization_problem} of volume $\abs{E}_f=v$, whose existence and boundedness is ensured by Theorem \ref{thm:existence}. Moreover, recall that Theorem \ref{thm:regularity} implies that the boundary of $E$ is of class $C^{3,\alpha}$, for all $\alpha\in(0,1)$, away from a singular set of Hausdorff dimension $n-8$. We introduce now the concept of spherical symmetrization.
\begin{definition}\label{def:spherical_symm}
    Let $F\subset\R^n$ be a bounded set of finite perimeter, and $\xi\in S^{n-1}$.  Define $\tau(r):=\Haus^{n-1}(F\cap\partial B_r)$ and let $\eta:[0,\pi]\to [0,n\omega_n]$ be the function associating to every $s\in(0,\pi)$ the area of the spherical cap in $S^{n-1}$ of spherical radius $s$. We call
    \begin{equation}\label{eq:spherical_symm}
        F^\star_\xi:=\Bigl\{x\in\R^n\setminus\{0\}: \arccos\Bigl(\frac{x\cdot \xi}{\abs{x}} \Bigr)< \eta^{-1}(\abs{x}^{1-n}\tau(\abs{x}))\Bigr\},
    \end{equation}
    the \emph{spherical symmetrization of $F$ in the direction $\xi$}.
\end{definition}
\begin{proposition}\label{prop:spherical_symm} For every $\xi\in S^{n-1}$ the spherical symmetrization in the direction $\xi$ of every bounded set of finite perimeter $F\subset\R^n$ is a well defined set of finite perimeter. Moreover, for  every $f,g$ positive Borel functions radially symmetric with respect to the the origin
\[
\abs{F^\star_\xi}_f=\abs{F}_f,\quad \G(F^\star_\xi)=\G(F),\quad\Pe(F^\star_\xi)\leq\Pe(F_\xi).
\]
In particular, $\E(F^\star_\xi)\leq \E(F)$.
\end{proposition}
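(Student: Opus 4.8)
The plan is to treat the three assertions in increasing order of difficulty; the volume and potential identities are immediate from the radial symmetry of $f$ and $g$, and the only genuinely delicate point is the perimeter inequality.

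\emph{Measurability, volume and potential.} The radial distribution $r\mapsto\tau(r)=\Haus^{n-1}(F\cap\partial B_r)$ is Borel by Fubini's theorem, and $\eta\colon[0,\pi]\to[0,n\omega_n]$ (being $s\mapsto\Haus^{n-2}(S^{n-2})\int_0^s\sin^{n-2}t\,dt$) is a continuous strictly increasing bijection, so that $\eta^{-1}$ is continuous and the right-hand side of \eqref{eq:spherical_symm} defines a Borel set; its finite perimeter will come out of the argument below. For any radial Borel function $h\ge 0$ the coarea formula applied to the $1$-Lipschitz map $x\mapsto\abs{x}$ gives
\[
\int_{F}h\,dx=\int_0^{+\infty}\Bigl(\int_{F\cap\partial B_r}h\,d\Haus^{n-1}\Bigr)dr=\int_0^{+\infty}h(r)\,\tau(r)\,dr,
\]
because $h\equiv h(r)$ on $\partial B_r$. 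Since by construction $F^\star_\xi\cap\partial B_r$ is, for a.e.\ $r>0$, the spherical cap in $\partial B_r$ of $\Haus^{n-1}$-measure $r^{n-1}\eta\bigl(\eta^{-1}(r^{1-n}\tau(r))\bigr)=\tau(r)$, the very same identity holds for $F^\star_\xi$. Taking $h=f$ yields $\abs{F^\star_\xi}_f=\abs{F}_f$ and $h=gf$ yields $\G(F^\star_\xi)=\G(F)$.

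\emph{Perimeter.} I would reduce the inequality $\Pe(F^\star_\xi)\le\Pe(F)$ to the classical \textbf{unweighted} statement, \emph{localized in the radial variable}: $\Haus^{n-1}\bigl(\partial^*F^\star_\xi\cap\{a<\abs{x}<b\}\bigr)\le\Haus^{n-1}\bigl(\partial^*F\cap\{a<\abs{x}<b\}\bigr)$ for all $0\le a<b$ (which in particular yields $\Pe(F^\star_\xi)<\infty$). Granting this, the pushforwards of $\Haus^{n-1}\llcorner\partial^*F$ and $\Haus^{n-1}\llcorner\partial^*F^\star_\xi$ under $x\mapsto\abs{x}$ are finite Borel measures on $(0,\infty)$ whose values on every interval are ordered, hence ordered as measures; integrating the continuous radial weight $f$ against them gives exactly $\Pe(F^\star_\xi)\le\Pe(F)$, and then $\E(F^\star_\xi)\le\E(F)$ follows from $\G(F^\star_\xi)=\G(F)$. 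For the localized unweighted inequality I would use polarization (two-point rearrangement): a polarization of $F$ with respect to a hyperplane through the origin whose positive side contains $\xi$ preserves $\tau(\cdot)$ (hence the radial-weighted volume and potential) and does not increase the perimeter of $F$ inside any spherical annulus — a short reflection argument — and a suitable countable dense family of such polarizations drives $F$ to $F^\star_\xi$ in $L^1$ (this reduces, on each sphere $\partial B_r$, to the classical convergence of iterated two-point rearrangements on $S^{n-1}$ to the cap rearrangement, integrated in $r$ by dominated convergence); lower semicontinuity of the weighted perimeter then concludes. An equally valid alternative is the direct slicing argument à la Steiner symmetrization in the cylindrical coordinates $(y,\omega)\in(\R^2\setminus\{0\})\times S^{n-2}$ adapted to the axis $\R\xi$, combining Fubini in $\omega$, the isoperimetric inequality on each sphere $\partial B_r$ (whose extremal slice is precisely $F^\star_\xi\cap\partial B_r$), and the planar triangle inequality applied sheet by sheet (cf.\ \cite[Ch.~14]{Maggi}, \cite{morgan2016geometric}).

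The routine parts aside, the one real obstacle is the measure-theoretic bookkeeping behind the localized unweighted perimeter inequality: either verifying that a polarization with respect to a hyperplane through the origin does not increase the perimeter inside each shell $\{a<\abs{x}<b\}$ and that iterated polarizations converge to $F^\star_\xi$ with enough uniformity in $r$, or, in the slicing approach, controlling the "sheets" of $\partial^*F$ through the warped coordinates. Both are classical in the symmetrization literature, so an alternative is simply to invoke the known fact that spherical symmetrization does not increase perimeter and to add the (radial) weight via the pushforward argument above; we will, however, include the details for completeness.
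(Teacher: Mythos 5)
Your proposal is correct, and it supplies a proof where the paper itself only cites the classical references (Burago--Zalgaller \S 9.2, Morgan's remark on Steiner symmetrization, and Morgan--Pratelli's Proposition~6.2). Since the paper does not record an argument, there is nothing literal to compare against; but your sketch is in the same spirit as the second route you mention (slicing à la Steiner along the axis $\R\xi$), which is exactly what those references carry out.

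Two remarks on the added value and on the one point that deserves emphasis. First, the reduction of the \emph{weighted} perimeter inequality to a \emph{localized unweighted} one via a pushforward of $\Haus^{n-1}\llcorner\partial^{*}F$ under $x\mapsto\abs{x}$ is clean and worth spelling out, because it is exactly what lets one handle a general positive radial $f$ without assuming continuity or monotonicity: if two finite Borel measures on $(0,\infty)$ are ordered on every interval then the CDFs of their difference are nondecreasing, so the ordering holds on all Borel sets and one may integrate any nonnegative Borel radial weight. This makes the weight step rigorous rather than heuristic. Second, the polarization route you offer as an alternative is genuinely different from the slicing proof implicit in the paper's citations: it replaces the isoperimetric inequality on each $\partial B_r$ and the sheet-by-sheet triangle inequality with the elementary fact that a two-point rearrangement with respect to a hyperplane through the origin maps each sphere and each annulus to itself, preserves $\tau$, and does not increase perimeter in any $\{a<\abs{x}<b\}$; combined with the $L^1$-convergence of iterated polarizations to the cap rearrangement on each sphere (Baernstein--Taylor type) and lower semicontinuity of the weighted perimeter, this gives a self-contained proof. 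What it buys is that one never has to justify the decomposition of $\partial^{*}F$ into ``sheets,'' which is the one technical headache in the direct slicing argument; what it costs is that one must verify the uniformity needed to pass from per-sphere convergence to $L^1(\R^n)$ convergence, which you correctly flag but handle by dominated convergence since all sets sit inside a fixed bounded region. No gap, and a genuinely more detailed account than the paper's.
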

\begin{proof}
    Spherical symmetrization is a well-known rearrangement technique whose properties derive essentially from the proof of the classical Steiner symmetrization by planes \cite{steiner1838einfache,kawohl1986isoperimetric}. We refer to Section 9.2 in~\cite{burago2013geometric},
    Remark 4 in~\cite{morgan2011steiner}, and
    Proposition 6.2 in~\cite{morgan-pratelli2013}.
\end{proof}
Take an arbitrary orthonormal basis of $\{e_1,\dots,e_n\}$ of $\R^n$, and set $\xi=e_1$. Thanks to Proposition \ref{prop:spherical_symm}, $E^\star_\xi$ is also a minimizer of \eqref{eq:optimization_problem}, and hence enjoys the same regularity properties of Theorem \ref{thm:regularity}. Set
\[
R^\star:=\ess\sup\Bigl\{\abs{x}\in E^\star_\xi\Bigr\},
\]
and $E_K\subset E^\star_\xi$ to be the connected component containing $R^\star e_1$ in its closure. Let the continuous curve
\begin{equation}\label{eq:def_gamma}
\gamma:[-\beta,\beta]\to \spann\{e_1,e_2\}=\R^2,
\end{equation}
be the arc-length parametrization of $\partial\{E_K\cap\spann\{e_1,e_2\}\}$ so that $\gamma(0)=(R^\star,0)$. Before stating the properties of $\gamma$, we need to introduce three objects that will be crucial in the comparison arguments we will perform in the second step of the proof on Theorem \ref{thm:uniqueness}.
\begin{definition}\label{def:comparison_circles}
    For $s\in(-\beta,\beta)$ we define $C_s$ to be the oriented (possibly degenerated) circle tangent to $\gamma(s)$ with center on $e_1$ with coordinate $F(s)$, radius $R(s)$, and curvature $\lambda(s)$. We define $A_s$ to be the oriented osculating (possibly degenerated) circle at $\gamma(s)$, with $\alpha(t)$ its arc-length parametrization with normal $n(t)$ and $\alpha(\tilde s)=\gamma(s)$. Finally, we let $\tilde C_t$ be the oriented (possibly degenerated) circle tangent to $\alpha(t)$ with center on $e_1$ of coordinate $\tilde F(t)$, radius $\tilde R(t)$, and curvature $\tilde\lambda(s)$.
\end{definition}
Let $\nu(s):=\dot\gamma^\perp(s)=(-\dot\gamma_2(s),\dot\gamma_1(s))$ and $\kappa(s)$ the normal and the curvature of $\gamma$ at $s\in(-\beta,\beta)$, respectively. Then the mean curvature of $\gamma$ can be explicitly computed as follows:
\begin{lemma}\label{lem:regularity_gamma}The curve $\gamma=(\gamma_1,\gamma_2)$ is a simple closed curve of class $C^{3,\alpha}$ in $(-\beta,\beta)$, and it is axially symmetric, that is $\gamma_2(s)=-\gamma_2(-s)$ for every $s\in[0,\beta)$. Moreover, $\gamma$ solves
\begin{equation}\label{eq:ODE_gamma}
    \Ha_\gamma:=\kappa(t)+(n-2)\lambda(t)+\psi'(\abs{\gamma(t)})\frac{\gamma(t)\cdot\nu(t)}{\abs{\gamma(t)}}+g(\abs{\gamma(t)})=\text{constant},
\end{equation}
in $(-\beta,\beta)$.
\end{lemma}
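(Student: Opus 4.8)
The plan is to establish Lemma \ref{lem:regularity_gamma} in three parts: (i) regularity and simplicity of $\gamma$, (ii) the axial symmetry, and (iii) the mean curvature ODE \eqref{eq:ODE_gamma}.

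\textbf{Regularity and simplicity.} First I would use that $E^\star_\xi$ is a minimizer of \eqref{eq:optimization_problem} (Proposition \ref{prop:spherical_symm}) and therefore, by Theorem \ref{thm:regularity} applied with $k=2$ (the weights are $C^3$, hence $\psi\in C^{2,\alpha}$ and $g\in C^{3,\alpha}$ for every $\alpha\in(0,1)$ locally — or at least $C^{2,\alpha}$, $C^{3,\alpha}$ up to lowering $\alpha$), its reduced boundary is $C^{3,\alpha}$ away from a singular set of Hausdorff dimension $\leq n-8$. Since $E^\star_\xi$ is spherically symmetrized about the $e_1$-axis, its boundary is invariant under the rotations of $\R^n$ fixing that axis, so the singular set is likewise rotationally invariant; a set of dimension $\leq n-8$ invariant under an $SO(n-1)$-action fixing a line must actually be empty for $n\leq 9$ and, in general, its slice by $\spann\{e_1,e_2\}$ is empty because intersecting with that $2$-plane would force the singular set to be empty or $0$-dimensional on the axis, which is then excluded by symmetry (an isolated singular point on the axis of a smooth symmetrized hypersurface is removable). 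Hence the slice $\partial(E_K\cap\spann\{e_1,e_2\})$ is a $C^{3,\alpha}$ one-dimensional embedded submanifold of $\R^2$. Since $E_K$ is a bounded connected component, its boundary curve is a simple closed $C^{3,\alpha}$ curve, so its arc-length parametrization $\gamma:[-\beta,\beta]\to\R^2$ with $\gamma(0)=(R^\star,0)$ is well defined, of class $C^{3,\alpha}$ on $(-\beta,\beta)$, with $\gamma(-\beta)=\gamma(\beta)$.

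\textbf{Axial symmetry.} By construction, $E^\star_\xi$ (hence $E_K$) is symmetric under the reflection $\rho:x_2\mapsto -x_2$ fixing the $x_1$-axis — indeed the defining condition \eqref{eq:spherical_symm} only involves $|x|$ and $x\cdot\xi/|x|=x_1/|x|$, which are $\rho$-invariant. Therefore $\rho(E_K)=E_K$ and $\rho\circ\gamma$ is again an arc-length parametrization of the same curve with $\rho(\gamma(0))=(R^\star,0)=\gamma(0)$. Because $R^\star e_1$ is the essential supremum of $|x|$ on $E^\star_\xi$, at $s=0$ the curve has outward normal $-e_1$ and the two branches $s>0$ and $s<0$ leave $\gamma(0)$ in opposite $e_2$-directions; matching $\rho\circ\gamma$ with the reversed reparametrization $s\mapsto\gamma(-s)$ gives $\gamma_1(s)=\gamma_1(-s)$ and $\gamma_2(s)=-\gamma_2(-s)$ for all $s\in[0,\beta)$, which is the claimed symmetry.

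\textbf{The mean curvature ODE.} Finally I would compute the weighted mean curvature $\Ha$ from Definition \ref{def:weighted_mean_curvature} for the hypersurface $\partial E_K$ at a point $x$ lying in $\spann\{e_1,e_2\}$, say $x=\gamma(t)$. Because the surface is rotationally symmetric about the $e_1$-axis, its $n-1$ principal curvatures at $x$ split into the curvature $\kappa(t)$ of the profile curve $\gamma$ (the ``meridional'' direction) together with $n-2$ copies of the curvature of the circle of latitude through $x$, which is exactly the curvature $\lambda(t)$ of the comparison circle $C_t$ of Definition \ref{def:comparison_circles} (the circle through $\gamma(t)$, tangent to $\gamma$, centered on the $e_1$-axis); this is the standard surface-of-revolution principal curvature computation. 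Hence the unaveraged mean curvature is $H=\kappa(t)+(n-2)\lambda(t)$. The drift term is $\nabla\psi\cdot\nu = \psi'(|x|)\,\nabla r\cdot\nu = \psi'(|\gamma(t)|)\,\frac{\gamma(t)\cdot\nu(t)}{|\gamma(t)|}$, and the potential term is $g(|x|)=g(|\gamma(t)|)$. By Lemma \ref{lem:first_variations}, since $E^\star_\xi$ is a critical point, $\Ha\equiv c$ is constant on the whole of $\partial E^\star_\xi$, in particular on the slice, giving \eqref{eq:ODE_gamma} with $\Ha_\gamma=c$ constant on $(-\beta,\beta)$.

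\textbf{Main obstacle.} The delicate point is part (i): justifying that the singular set does not intersect $\spann\{e_1,e_2\}$ and that $E_K$ is bounded with a genuinely closed simple profile curve — i.e. that the rotational symmetry forces the codimension-$7$ singular set to avoid the $2$-plane and that no self-intersection or cusp occurs at $\gamma(0)$ despite it being a chosen extremal point. Once $\gamma$ is known to be a $C^{3,\alpha}$ embedded closed curve, the symmetry and the ODE are routine: the symmetry is a direct consequence of the $\rho$-invariance of \eqref{eq:spherical_symm}, and the ODE is the surface-of-revolution form of the constant-weighted-mean-curvature condition from Lemma \ref{lem:first_variations}.
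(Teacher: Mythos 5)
Your overall structure mirrors the paper's argument — symmetry from Definition \ref{def:spherical_symm}, the ODE \eqref{eq:ODE_gamma} as the profile-curve expression of the constant weighted mean curvature condition from Lemma \ref{lem:first_variations}, and regularity of $\gamma$ off the axis by noticing that an off-axis singularity would be swept out by the $SO(n-1)$-orbit into a singular set of dimension $\geq n-2$, contradicting the $n-8$ bound from Theorem \ref{thm:regularity}. The symmetry and ODE parts are fine.

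The gap is in how you handle the points of $\gamma$ that lie on the $e_1$-axis, namely $\gamma(0)$ and $\gamma(\pm\beta)$. Having pushed the singular set onto the axis, you assert that ``an isolated singular point on the axis of a smooth symmetrized hypersurface is removable'' and that for $n\leq 9$ the singular set ``must actually be empty.'' Neither claim is justified and neither is true as stated: axis points are fixed by the $SO(n-1)$-action, so symmetry alone imposes no dimensional obstruction there, and for $n\geq 8$ a finite (hence $0$-dimensional) singular set on the axis is entirely consistent with the bound $\dim\leq n-8$. In fact singular points of area-minimizing hypersurfaces in $\R^8$ are genuinely isolated and not removable in general, so ``isolated $\Rightarrow$ removable'' needs an argument. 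The paper supplies the missing ingredient: at $\gamma(0)=(R^\star,0)$ the set $E_K$ lies inside $\overline{B_{R^\star}}$, so every tangent cone to $\partial E^\star_\xi$ at that point is contained in a closed half-space; a standard regularity result for almost-minimizers (which the paper cites via \cite{morgan-regularity}) says that a tangent cone lying in a half-space must be a hyperplane, forcing the point to be regular. The same reasoning applies at $\gamma(\pm\beta)$ (the other axis point is the closest boundary point in $E_K$ to the axis along that slice, and again lies on the boundary of a convex barrier). Without this half-space tangent-cone argument you have no control on the axis points, which is precisely the delicate case.

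A smaller remark: to invoke Theorem \ref{thm:regularity} for $C^{3,\alpha}$ boundary regularity with $k=2$ one needs $\psi\in C^{2,\alpha}$ and $g\in C^{3,\alpha}$; you wrote $\psi\in C^{2,\alpha}$, $g\in C^{3,\alpha}$ but the Theorem \ref{thm:uniqueness} hypothesis is only $\psi,g\in C^3$, so strictly one gets $C^{3,\alpha'}$ for some $\alpha'$ depending on a modulus of continuity rather than an arbitrary $\alpha\in(0,1)$. This matches what the paper uses, but it is worth phrasing precisely.
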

\begin{proof}
    The axial symmetry of $\gamma$ follows directly from Definition \ref{def:spherical_symm}. Theorem \ref{thm:regularity} and Proposition \ref{prop:spherical_symm} imply that the boundary of $E^\star_\xi$ is of class $C^{3,\alpha}$ outside a singular set of Hausdorff dimension at most $n-8$. By symmetry, a singularity of $\gamma$ in $(0,\beta)$ implies a singularity of dimension at least $n-2$ on $\partial E^\star_\xi$, which is a contradiction. Moreover, thanks to the regularity properties of points whose tangent cone lies in a half-space, we infer that $\gamma$ is also smooth at the origin (we refer to \cite{morgan-regularity} for details). Equation \eqref{eq:ODE_gamma} is nothing else that the weighted mean curvature $\Ha$ of $\partial E^\star_\xi$ expressed in terms of $\gamma$, which we know to be constant in virtue of Lemma \ref{lem:first_variations}.
\end{proof}
For convenience, we will split Equation \eqref{eq:ODE_gamma} in two parts $\Ha_\gamma=H_0+H_1$, where
\begin{equation}\label{eq:def_H_0_H_1}
    H_0:=\kappa+(n-2)\lambda,\qquad H_1:=\gamma'(\abs{\gamma})\frac{\gamma\cdot\nu}{\abs{\gamma}}+g(\abs{\gamma}),
\end{equation}
so that $H_0$ represents the unweighted mean curvature of $K$. Furthermore, we set
\begin{equation}\label{eq:def_tilde_H_0}
    \tilde H_1(t):=\gamma'(\abs{\alpha})\frac{\alpha\cdot n}{\abs{\alpha}}+g(\abs{\alpha}).
\end{equation}
The following elementary lemma clarifies why Definition \ref{def:comparison_circles} is appropriate to perform comparison arguments, in the sense that $C_s$, $A_s$, and $\tilde C_t$ are a good approximation of the mean curvature $\Ha_\gamma$ at $s$.
\begin{lemma}\label{lem:good_approx_by_circles}Let $s\in(-\beta,\beta)$. Then, the following identities hold at $s$: $\dot\alpha(\tilde s)=\dot\gamma(s)$, $\ddot\alpha(\tilde s)=\ddot\gamma(s)$, $\tilde F'(\tilde s)=F'(s)$, $\tilde R'(\tilde s)=R'(s)$, and $\tilde\lambda'(\tilde s)=\lambda'(s)$.
\end{lemma}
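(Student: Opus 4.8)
The plan is to verify the five claimed identities in order, using only the definitions of the three families of circles in Definition \ref{def:comparison_circles} together with the fact that the osculating circle $A_s$ agrees with $\gamma$ to second order. The first two identities are essentially the definition of the osculating circle: by construction $\alpha$ is the arc-length parametrization of $A_s$, the osculating circle at $\gamma(s)$, and $\alpha(\tilde s) = \gamma(s)$; since an osculating circle has, by definition, contact of order two with the curve, we get $\dot\alpha(\tilde s) = \dot\gamma(s)$ (same unit tangent) and $\ddot\alpha(\tilde s) = \ddot\gamma(s)$ (same curvature vector, i.e. $\kappa(s)$ equals the signed curvature of $A_s$ at $\tilde s$). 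This should be a one-line remark.

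The remaining three identities require differentiating the quantities $F$, $R$, $\lambda$ (resp. $\tilde F$, $\tilde R$, $\tilde\lambda$) in the arc-length parameter. The idea is that $C_s$ is the unique circle centered on the $e_1$-axis tangent to $\gamma$ at $\gamma(s)$, and $\tilde C_t$ is the unique circle centered on the $e_1$-axis tangent to $\alpha$ at $\alpha(t)$. I would write down the explicit formulas: if $\gamma(s) = (\gamma_1(s),\gamma_2(s))$ with unit normal $\nu(s) = (-\dot\gamma_2(s),\dot\gamma_1(s))$, then the center $(F(s),0)$ of $C_s$ is the intersection of the normal line $\gamma(s) + \R\,\nu(s)$ with the $e_1$-axis, which gives $F(s) = \gamma_1(s) + \gamma_2(s)\,\dot\gamma_1(s)/\dot\gamma_2(s)$ (on the locus where $\dot\gamma_2(s) \neq 0$; the degenerate cases are handled by continuity), $R(s) = |\gamma(s) - (F(s),0)|$, and $\lambda(s) = \pm 1/R(s)$. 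These are smooth functions of $(\gamma(s),\dot\gamma(s))$ alone — they do not involve $\ddot\gamma$. Therefore $F'(s)$, $R'(s)$, $\lambda'(s)$ are smooth functions of $(\gamma(s),\dot\gamma(s),\ddot\gamma(s))$. The analogous formulas for $\tilde C_t$ express $\tilde F(t)$, $\tilde R(t)$, $\tilde\lambda(t)$ as the \emph{same} functions of $(\alpha(t),\dot\alpha(t))$, hence $\tilde F'(\tilde s)$, $\tilde R'(\tilde s)$, $\tilde\lambda'(\tilde s)$ are the same functions of $(\alpha(\tilde s),\dot\alpha(\tilde s),\ddot\alpha(\tilde s))$. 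Since by the first two identities $(\alpha(\tilde s),\dot\alpha(\tilde s),\ddot\alpha(\tilde s)) = (\gamma(s),\dot\gamma(s),\ddot\gamma(s))$, the three derivative identities $\tilde F'(\tilde s) = F'(s)$, $\tilde R'(\tilde s) = R'(s)$, $\tilde\lambda'(\tilde s) = \lambda'(s)$ follow immediately.

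The only genuinely delicate point — and the one I would flag as the main obstacle — is bookkeeping the chain rule correctly, since $\alpha$ and $\gamma$ use \emph{different} arc-length parameters: a priori $t$ near $\tilde s$ and $s$ near the given point are related by a reparametrization, and one must make sure that ``$'$'' means $d/dt$ for the tilde-quantities and $d/ds$ for the untilde-quantities consistently. The resolution is that both are arc-length parametrizations and $\dot\alpha(\tilde s) = \dot\gamma(s)$ with $\ddot\alpha(\tilde s)=\ddot\gamma(s)$ forces the reparametrization map to have derivative $1$ and second derivative $0$ at the point, so to the order we differentiate (first derivatives of second-order quantities) the two parametrizations are interchangeable and no correction terms appear. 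One should also note explicitly that the formulas for $F,R,\lambda$ extend smoothly through the degenerate configurations (when $C_s$ is a line, i.e. $\gamma(s)$ and $\nu(s)$ are such that the normal is parallel to the $e_1$-axis) by passing to the appropriate chart on the space of oriented circles-or-lines, e.g. parametrizing by signed curvature $\lambda$ and a base point rather than by the center $F$; with that chart all three quantities and their first derivatives are smooth functions of the $2$-jet of the curve, and the argument goes through verbatim.
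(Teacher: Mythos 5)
Your proposal is correct and follows essentially the same route as the paper, which disposes of the lemma in one sentence by appealing to Definition~\ref{def:comparison_circles}: $A_s$ has second-order contact with $\gamma$ at $\gamma(s)$, so the 2-jets $(\gamma(s),\dot\gamma(s),\ddot\gamma(s))$ and $(\alpha(\tilde s),\dot\alpha(\tilde s),\ddot\alpha(\tilde s))$ coincide, and since $F,R,\lambda$ (resp.\ $\tilde F,\tilde R,\tilde\lambda$) are determined by the 1-jet of the underlying curve, their first derivatives are determined by the 2-jet, hence agree. One small remark: the ``reparametrization between $t$ and $s$'' you flag as the delicate point is a non-issue --- $\gamma$ and $\alpha$ are different curves meeting at one point, so there is no reparametrization map connecting their arc-length parameters; the clean justification is exactly the 2-jet argument you give immediately afterwards (same smooth functional of the 2-jet, same 2-jet). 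Also, with the paper's convention $\nu=(-\dot\gamma_2,\dot\gamma_1)$, the center formula should read $F(s)=\gamma_1(s)+\gamma_2(s)\,\dot\gamma_2(s)/\dot\gamma_1(s)$, with the degeneracy occurring where $\dot\gamma_1=0$; this is only a typo and does not affect the argument.
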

\begin{proof}This Lemma follows from Definition \ref{def:comparison_circles}, since $C_s$ and $A_s$ approximate $\gamma$ in $s$ up to second and third order, respectively.
\end{proof}

We conclude this preliminary part by proving a lemma stating the two main ingredients to obtain contradictions in the proof of Theorem \ref{thm:uniqueness}.
\begin{lemma}\label{lem:tangent restrict}
One has that
\begin{equation}\label{eq:tangent_restriction}
    \gamma(s) \cdot \dot\gamma(s) \leq 0,\qquad\forall s\in(0,\beta),
\end{equation}
and if $\lim_{s\to\beta^+}\dot\gamma(s)$ exists, then
\begin{equation}\label{eq:no_apple_points}
    \lim_{s\to\beta^+}\dot\gamma(s)\cdot e_1\geq 0.
\end{equation}
\end{lemma}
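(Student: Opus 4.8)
The plan is to prove the two inequalities separately, both exploiting the fact that $R^\star$ was chosen to be the essential supremum of $|x|$ over $E^\star_\xi$ and that $\gamma$ starts at $\gamma(0)=(R^\star,0)$, the furthest point of the connected component $E_K$ from the origin.

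For \eqref{eq:tangent_restriction}, I would argue as follows. Consider the function $s\mapsto |\gamma(s)|^2$ on $[0,\beta)$. Its derivative is $2\,\gamma(s)\cdot\dot\gamma(s)$, so the claim is exactly that $|\gamma|^2$ is non-increasing on $(0,\beta)$, i.e.\ that $\gamma$ never moves further from the origin than its starting radius $R^\star$. Since $\gamma(0)=(R^\star,0)$ realizes the maximal distance $R^\star$ from the origin among points of $E_K$ (hence among points of $\partial E_K$, in particular along $\gamma$), the quantity $|\gamma(s)|$ attains its maximum at $s=0$. Being a $C^{3,\alpha}$ curve with an interior maximum of $|\gamma|$ at $s=0$, the first-order condition gives $\gamma(0)\cdot\dot\gamma(0)=0$, and I must upgrade this to the global monotonicity statement on all of $(0,\beta)$. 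The clean way is: if $\gamma(s_0)\cdot\dot\gamma(s_0)>0$ for some $s_0\in(0,\beta)$, then $|\gamma|$ is strictly increasing near $s_0$, and following the curve either forward or backward from $s_0$ one would reach a point with $|\gamma|>R^\star$ — here one must be slightly careful because $|\gamma|$ could oscillate, so the rigorous formulation is that $\sup_{s\in[0,\beta)}|\gamma(s)|=R^\star$ (by maximality of $R^\star$ as the essential sup over $E^\star_\xi$ together with $\gamma\subset\partial E_K\subset\overline{E^\star_\xi}$), and at any point where this supremum is attained in the open interval the first-order condition holds; combined with the fact that the set of points attaining the max is closed and the ODE \eqref{eq:ODE_gamma} is second order, one concludes $|\gamma|\equiv R^\star$ is impossible unless $\gamma$ is a centered circle (the degenerate case), and otherwise $|\gamma|^2$ can have no interior local minimum either without creating a second local max exceeding... — actually the cleanest route is monotonicity: suppose $h(s):=|\gamma(s)|^2$ has $h'(s_1)>0$ at some $s_1\in(0,\beta)$; let $s_2=\sup\{s\le s_1: h'(s)\le 0\}$, which is $\ge 0$ since $h'(0)=0$; then $h$ is strictly increasing on $(s_2,s_1]$ so $h(s_1)>h(s_2)\ge h(0)=(R^\star)^2$ when $s_2=0$, contradicting $\sup h=(R^\star)^2$; if $s_2>0$ then $h'(s_2)=0$ and $h''(s_2)\ge 0$, but $h''=2(1+\kappa\,\gamma\cdot\nu)$ along the curve... this requires the mean-convexity input, so I expect to invoke that $E^\star_\xi$ is mean convex with respect to the inward normal — which follows from \eqref{eq:ODE_gamma} together with the monotonicity of $\psi$ and $g$ — to pin down the sign. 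Thus \eqref{eq:tangent_restriction} reduces to a maximum principle argument for $|\gamma|^2$ using the ODE and mean convexity.

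For \eqref{eq:no_apple_points}, suppose $\lim_{s\to\beta^-}\dot\gamma(s)$ exists (note the excerpt writes $\beta^+$, presumably a typo for $\beta^-$, the endpoint of the domain) and call it $v_\beta$. At $s=\beta$ the curve $\gamma$ reaches the axis of symmetrization $\{x_2=0\}$ — or rather, $\beta$ is where the profile curve of the connected component $E_K$ terminates. The statement $v_\beta\cdot e_1\ge 0$ says the curve is not pointing back toward the origin (i.e.\ in the $-e_1$ direction) as it closes up. This is the ``no apple points'' condition from Chambers's argument: an apple point would be a point where the symmetrized set has a cusp singularity pointing inward along the axis. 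The natural proof is by contradiction via a competitor: if $v_\beta\cdot e_1<0$, then near the endpoint the set $E_K$ looks like a region with a reentrant corner on the axis, and one can strictly decrease the energy — either by a small symmetric perturbation that rounds off the corner (decreasing weighted perimeter by a first-order amount while changing volume only to second order, then restoring volume) or by comparing with the reflection/smoothing across the axis — contradicting the optimality of $E^\star_\xi$ established via Proposition \ref{prop:spherical_symm}. Concretely I would use that $\gamma$ is axially symmetric, $\gamma_2(s)=-\gamma_2(-s)$, so the full profile through $s=\beta$ and $s=-\beta$ fits together, and if the incoming tangent has negative $e_1$-component the two branches cross the axis at an angle, producing an angle of incidence with the axis that violates the stationarity (the Neumann-type condition forcing $\gamma$ to meet the axis orthogonally, which follows from the first variation / regularity of $\partial E^\star_\xi$ at axis points as noted in Lemma \ref{lem:regularity_gamma}).

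\textbf{Main obstacle.} The harder of the two is \eqref{eq:tangent_restriction}: getting from the local first-order condition at the max of $|\gamma|$ to the global monotonicity on all of $(0,\beta)$ genuinely requires combining the structure of the ODE \eqref{eq:ODE_gamma} with the mean-convexity of the optimal set (which in turn uses monotonicity of $\psi,g$ — precisely where the hypotheses of Theorem \ref{thm:uniqueness} enter), rather than being a soft consequence of $R^\star$ being the sup. I expect the actual proof in the paper to set up the comparison circles $C_s$ of Definition \ref{def:comparison_circles} and run a differential-inequality / maximum-principle argument on $\gamma\cdot\dot\gamma$ or on $|\gamma|^2$ directly, and the bookkeeping of signs in $\Ha_\gamma = H_0 + H_1$ with $H_1 = \psi'(|\gamma|)\frac{\gamma\cdot\nu}{|\gamma|} + g(|\gamma|) \ge 0$ is where the care is needed. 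The inequality \eqref{eq:no_apple_points} should be comparatively routine once the orthogonal-incidence condition at axis points is in hand.
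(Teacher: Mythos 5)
Your proposal inverts the difficulty and, for the second inequality, rests on a circular step.

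For \eqref{eq:tangent_restriction}, you call it ``the harder of the two'' and build toward a maximum-principle argument invoking the ODE \eqref{eq:ODE_gamma} and mean convexity, but the paper's proof is a one-line soft observation referencing Chambers's Lemma 2.6: after spherical symmetrization, $E^\star_\xi\cap\partial B_r$ is a single cap for each $r$, so $\partial E^\star_\xi$ meets each sphere $\partial B_r$ in at most one $(n-2)$-sphere; in the 2D profile, $\gamma|_{(0,\beta)}$ has $\gamma_2>0$ by axial symmetry and therefore crosses each circle $\{|x|=r\}$ at most once, so $s\mapsto|\gamma(s)|$ is injective and continuous on $(0,\beta)$, hence monotone, hence non-increasing since $|\gamma(0)|=R^\star$ is the supremum. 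No stationarity, no mean convexity, no comparison circles enter: this is a pure rearrangement fact, valid for any spherically symmetrized competitor, and presenting it via the ODE would obscure that.

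For \eqref{eq:no_apple_points}, your main route---``the Neumann-type condition forcing $\gamma$ to meet the axis orthogonally, which follows from the first variation / regularity of $\partial E^\star_\xi$ at axis points as noted in Lemma~\ref{lem:regularity_gamma}''---is circular. The half-space regularity argument in Lemma~\ref{lem:regularity_gamma} applies at $s=0$ precisely because $\gamma(0)=(R^\star,0)$ is the farthest point, so the tangent cone there sits in a half-space; at $s=\beta$ no such a-priori constraint exists, and an acute cusp at $(\gamma_1(\beta),0)$ is exactly the ``apple point'' this lemma must exclude, so you cannot assume it away by invoking regularity. (Note also that the stated conclusion $\lim\dot\gamma\cdot e_1\geq 0$ is deliberately weaker than orthogonal incidence: both $(0,-1)$ and tangents with strictly positive $e_1$-component are permitted.) The paper instead argues by contradiction with an explicit competitor: supposing the internal angle $2\theta<\pi/2$, it truncates the cusp vertically at $x_1=|y|-\delta$, estimates that $\Pe$ drops at order $\delta^{n-1}$ while $\G$ changes only at order $\delta^n$, and concludes for $\delta$ small that $\E(E^\star_\xi\cup\hat E_K)<\E(E^\star_\xi)$ while $|E^\star_\xi\cup\hat E_K|_f>|E^\star_\xi|_f$, contradicting the strict monotonicity of the energy profile in Proposition~\ref{prop:increasing_profile}. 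Your alternative sketch (``round off the corner, then restore volume'') is in the right family, but the profile-monotonicity contradiction is cleaner since no volume restoration is needed.
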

\begin{proof}
    Equation \eqref{eq:tangent_restriction} follows from the observation that $\abs{\gamma}$ is non-increasing in $t>0$, see Lemma 2.6 in~\cite{chambers}. Equation \eqref{eq:no_apple_points} is in fact a consequence of the monotonicity of the isoperimetric profile of Proposition \ref{prop:increasing_profile}. By contradiction, suppose that the tangent cone to $E_K$ at $y=\lim_{s\to\beta^+}\gamma_1(s)>0$ has internal angle $2\theta<\pi/2$. Let $\delta>0$ be small enough and define $\hat K$ to be the set obtained by truncating vertically $\gamma$ at $x_1=\abs{y}-\delta$, that is
    \[
    \hat E_K=E_K\cup\{(\gamma_1(s),x_2,\dots,x_n):\abs{y}-\delta\leq\gamma_1(s)\leq\abs{y}\text{ and }\abs{(x_2,\dots,x_n)}<\abs{\gamma_2(s)}\}.
    \]
    Since $f>0$, $\abs{\hat E_K}_f>\abs{E_K}_f$. Arguing like in Proposition \ref{prop:psi_monotone} to bound the approximations errors replacing $r$ with the $e_1$ coordinate, one can estimate
    \begin{align*}
    \Pe(E_K)-\Pe(\hat E_K)\geq \omega_ne^{\psi(\abs{y})}\tan(\theta)^{n-1}\delta^{n-1}\Bigl(\sqrt{1+\tan(\theta)^{-2}}-1\Bigr)+O(\delta^n),
    \end{align*}
    and for some constant $c=c(n)>0$
    \[
    \G(E_K)-\G(\hat E_K)\geq -c(n)e^{\psi(\abs{y})}\tan(\theta)^{n-1}\delta^{n}+o(\delta^{n})=O(\delta^n).
    \]
    Hence, for  $\delta$ small enough, we conclude that
    \[
    \E(E_\xi^\star\cup\hat E_K)<\E(E_\xi^\star),\qquad \abs{E_\xi^\star\cup \hat E_K}_f>\abs{E_\xi^\star}_f,
    \]
    which contradicts Proposition \ref{prop:increasing_profile} by minimality of $E_\xi^\star$. 
\end{proof}
\subsection{Some useful lemmas}
In this section, we investigate even further the qualitative properties of $\gamma$. We first show that all region solving \eqref{eq:optimization_problem} (so $E^\star_\xi$ in particular) is mean convex. This was proven in the log-convex setting by Morgan and Patelli in \cite{morgan-pratelli2013}.
\begin{lemma}\label{lem:mean-convexity}The minimizing set $E$ is mean convex, that is $H_0\geq 0$.
\end{lemma}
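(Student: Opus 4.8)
The plan is to read off mean convexity directly from the constant-weighted-mean-curvature equation \eqref{eq:ODE_gamma}, rather than through a cut-and-paste argument. Write $\Ha_\gamma = H_0 + H_1 \equiv c$ (Lemma \ref{lem:regularity_gamma}), with $H_1(\gamma(s)) = \psi'(\abs{\gamma(s)})\,\frac{\gamma(s)\cdot\nu(s)}{\abs{\gamma(s)}} + g(\abs{\gamma(s)})$ as in \eqref{eq:def_H_0_H_1}. The key observation is that, under the running hypotheses on $\psi,g$, the quantity $H_1$ attains its maximum along $\gamma$ exactly at the endpoint $\gamma(0) = R^\star e_1$, the farthest point of $\overline{E_K}$ from the origin.

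Concretely: since $\psi$ is increasing we have $\psi'\geq 0$, and since $\frac{\gamma\cdot\nu}{\abs{\gamma}}\leq 1$ this gives $H_1(\gamma(s))\leq \psi'(\abs{\gamma(s)}) + g(\abs{\gamma(s)})$; the map $r\mapsto \psi'(r)+g(r)$ is non-decreasing because $(\psi'+g)'=\psi''+g'\geq 0$ by admissibility, and $\abs{\gamma(s)}\leq R^\star$ because $E_K\subseteq E^\star_\xi\subseteq B_{R^\star}$ up to null sets. Hence $H_1(\gamma(s))\leq \psi'(R^\star)+g(R^\star)$ for every $s\in(-\beta,\beta)$. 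At $s=0$ the function $\abs{\gamma}^2$ has an interior maximum, so $\dot\gamma(0)\perp e_1$ and the outward normal is $\nu(0)=\gamma(0)/\abs{\gamma(0)}=e_1$, whence $H_1(\gamma(0))=\psi'(R^\star)+g(R^\star)$ — the previous bound is an equality at $s=0$. Therefore $H_0(\gamma(s)) = c - H_1(\gamma(s)) \geq c - H_1(\gamma(0)) = H_0(\gamma(0))$ for all $s$.

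Finally, $H_0(\gamma(0))>0$: the boundary of $E^\star_\xi$ lies in $\overline{B_{R^\star}}$ and touches $\partial B_{R^\star}$ at the smooth point $R^\star e_1$ (smoothness by Theorem \ref{thm:regularity} together with the symmetry argument in Lemma \ref{lem:regularity_gamma}), so comparison of $\partial E^\star_\xi$ with the sphere $\partial B_{R^\star}$ gives $H_0(\gamma(0))\geq (n-1)/R^\star$. Thus $H_0\geq (n-1)/R^\star>0$ along all of $\gamma$, which is the asserted mean convexity (in fact strict). The very same computation — namely $H = \Ha - \psi'(\abs{x})\tfrac{x\cdot\nu}{\abs{x}} - g(\abs{x}) \geq \Ha - \psi'(R^\star) - g(R^\star)$, evaluated at the outermost point — shows that any minimizer of \eqref{eq:optimization_problem} is mean convex. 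I do not expect a genuine obstacle here: beyond Lemma \ref{lem:regularity_gamma}, the only ingredient is the observation that monotonicity of $\psi$ (so $\psi'\geq 0$) together with the admissibility inequality $\psi''+g'\geq 0$ (so $\psi'+g$ is non-decreasing) force $H_1$ to peak at the outermost point of $\gamma$, and the curvature comparison with $\partial B_{R^\star}$ there is standard.
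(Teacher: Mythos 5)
Your argument is the same as the paper's: use constancy of $\Ha = H_0 + H_1$, observe that $H_1 = \psi'(|x|)\,\frac{x\cdot\nu}{|x|} + g(|x|)$ is maximized at the outermost boundary point (because $\psi'\ge 0$ bounds $\frac{x\cdot\nu}{|x|}\le 1$ and $\psi'+g$ is non-decreasing since $(\psi'+g)'=\psi''+g'\ge 0$), and deduce $H_0 \ge H_0$ at that outermost point, which is non-negative. The only cosmetic differences are that you phrase it on the profile curve $\gamma$ rather than directly on $\partial E$, and you supply the sharper bound $H_0\ge (n-1)/R^\star$ from sphere comparison where the paper simply notes $H_0\ge 0$ at the farthest point; both steps are correct and the core reasoning is identical.
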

\begin{proof}
Let $z$ be the furthest point from the origin in the set $\partial E.$ Note that this is a regular point. Then clearly we have $H_0(z)\geq 0.$ Let $x\in \partial E$ be such that $|x|\leq |z|$. Then
\begin{equation}\label{eq:meanconvexity}
\begin{split}
H_1(z)&=\nabla\psi(z)\cdot\nu(z)+g(\abs{z})=\frac{\psi'(\abs{z})}{\abs{z}}z\cdot\nu(z)+g(\abs{z})\\
&=\psi'(\abs{z})+g (\abs{z})\geq \psi'(\abs{x})+g (\abs{x})\geq \frac{\psi'(\abs{x})}{\abs{x}}x\cdot\nu(x)+g(\abs{x})\\
&=\nabla\psi(x)\cdot\nu(x)+g(\abs{x})=H_1(x),
\end{split}
\end{equation}
 and since 
\begin{align}
    H_0(x) + H_1(x) = H_0(z) + H_1(z)
\end{align}
we get that $H_0(x)=H_1(z)-H_1(x)+H_0(z)\geq H_0(z)\geq 0$, as desired.
\end{proof}
In the next lemma, we rule out a particular initial condition for which we know already that $\gamma$ has to be a centered circle. We refer to Lemma 3.2 in~\cite{chambers}.
\begin{lemma}\label{lem:ODE_uniqueness} If for some $s\in[0,\beta)$, $F(s)=0$ and $\kappa(s)=\lambda(s)$, then $\gamma=C_s$.
\end{lemma}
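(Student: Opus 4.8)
The plan is to translate the two hypotheses into the single geometric statement that, at the parameter $s$, the curve $\gamma$ osculates the circle centered at the origin of radius $R(s)=\abs{\gamma(s)}$, and then to conclude by uniqueness for the ordinary differential equation \eqref{eq:ODE_gamma}. First I would observe that the center of the osculating circle $A_s$ (resp. of the axis-centered tangent circle $C_s$) is obtained from $\gamma(s)$ by moving the distance $\kappa(s)^{-1}$ (resp. $\lambda(s)^{-1}$) along the normal $\nu(s)$, with the same orientation in both cases; hence $\kappa(s)=\lambda(s)$ forces the two centers to coincide, i.e. $A_s=C_s$. The extra hypothesis $F(s)=0$ says that this common circle is centered at the origin, so $R(s)=\abs{\gamma(s)}$, the segment from $\gamma(s)$ to the center is parallel to $\nu(s)$ (whence in particular $\gamma(s)\cdot\dot\gamma(s)=0$), and --- using Lemma \ref{lem:mean-convexity}, which gives $H_0(s)=(n-1)\lambda(s)\ge 0$ so that $\nu$ points radially outward --- one gets $\gamma(s)\cdot\nu(s)/\abs{\gamma(s)}=1$.

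Next I would identify the constant in \eqref{eq:ODE_gamma}. Substituting $\kappa(s)+(n-2)\lambda(s)=(n-1)/R(s)$, $\abs{\gamma(s)}=R(s)$ and $\gamma(s)\cdot\nu(s)/\abs{\gamma(s)}=1$ into \eqref{eq:ODE_gamma} at $s$ gives $\Ha_\gamma(s)=\frac{n-1}{R(s)}+\psi'(R(s))+g(R(s))$. On the other hand, along the centered circle $C_s$, read as the profile of the sphere $\partial B_{R(s)}$, one has $\kappa\equiv\lambda\equiv 1/R(s)$ and $\gamma\cdot\nu/\abs{\gamma}\equiv 1$, so the left-hand side of \eqref{eq:ODE_gamma} is identically $\frac{n-1}{R(s)}+\psi'(R(s))+g(R(s))$ on $C_s$ --- this is precisely the (constant) weighted mean curvature of $\partial B_{R(s)}$ computed in Section \ref{sec:existence_variations}. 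Hence $C_s$ is itself a solution of \eqref{eq:ODE_gamma} carrying the same constant as $\gamma$ (the latter being constant by Lemma \ref{lem:first_variations}).

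Finally I would invoke uniqueness for the ODE. Written in arc length, \eqref{eq:ODE_gamma} reads $\ddot\gamma=\bigl(c-(n-2)\lambda-H_1\bigr)\nu$, where $c:=\Ha_\gamma$ is the constant above and the right-hand side is a smooth function of $(\gamma,\dot\gamma)$ on $\{\abs{\gamma}>0\}$ away from the symmetrization axis, staying regular along $C_s$ itself, where $\lambda$ and $H_1$ are constant and $\abs{\gamma}=R(s)>0$ (here one uses $\psi,g\in C^3$). Both $\gamma$ and a suitable arc-length parametrization of $C_s$ solve this second order system and, by the previous two paragraphs, share the Cauchy data $(\gamma(s),\dot\gamma(s))$ at $s$; therefore they coincide in a neighborhood of $s$, the set of parameters where they agree is open and closed in $(-\beta,\beta)$, and so $\gamma=C_s$ on the whole interval (in particular the symmetrized minimizer is a centered ball). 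I expect the only genuinely delicate point to be the propagation of the coincidence across the finitely many points where $\gamma$ meets the symmetrization axis and the elementary expression for $\lambda$ degenerates; one handles them exactly as in Lemma 3.2 of \cite{chambers}, using the $C^{3,\alpha}$-regularity and axial symmetry of $\gamma$ established in Lemma \ref{lem:regularity_gamma}.
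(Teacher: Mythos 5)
Your proof is correct and takes essentially the same approach as the paper's, which is just the one-liner ``$\gamma$ and $C_s$ satisfy the same ODE $\Ha=c$ and agree at $s$, so ODE uniqueness applies.'' You usefully fill in what the paper leaves terse: translating $F(s)=0$ and $\kappa(s)=\lambda(s)$ into matching Cauchy data for $\gamma$ and $C_s$, checking that the two ODE constants coincide (which is exactly why the hypothesis $\kappa(s)=\lambda(s)$ is needed), and flagging the propagation through axis points in the open-closed argument — all of which is correct and consistent with the underlying reference to Chambers's Lemma 3.2.
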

\begin{proof}
This follows from the fact that the curves $C_s$ and $\gamma$ satisfy the same ODE, $\mathbf{H} = c$ and agree at the same point $s$. Therefore the conclusion follows by the uniqueness of solutions of ODEs.
\end{proof}
From now on, we suppose that $\gamma$ does not meet the requirements of Lemma \ref{lem:ODE_uniqueness}, since otherwise we would be done. We thus argue by contradiction and begin by analyzing the behavior of the curve at its initial time.
\begin{lemma}\label{lem:prop-kappa(0)}
At the initial time $t=0$ the following holds
\begin{enumerate}
    \item $\kappa(0)=\lambda(0)>0$, $\dot\kappa(0)=0$.
    \item If $F(0)>0$, then $\ddot \kappa(0)>0$.
\end{enumerate}
\end{lemma}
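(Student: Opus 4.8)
The plan is to read off the initial data of $\gamma$ at $s=0$ from three facts---that $\gamma(0)=(R^\star,0)$ is the point of $E^\star_\xi$ farthest from the origin, that $\gamma$ is axially symmetric (Lemma~\ref{lem:regularity_gamma}), and that $\gamma$ has constant weighted mean curvature~\eqref{eq:ODE_gamma}---and then to differentiate~\eqref{eq:ODE_gamma} up to second order at $s=0$.

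\emph{Initial data and part (1).} Since $|\gamma(0)|=R^\star=\max_s|\gamma(s)|$, differentiating $|\gamma|^2$ at $0$ gives $\gamma(0)\cdot\dot\gamma(0)=0$, so $\dot\gamma(0)\perp e_1$; I fix the orientation so that $\nu(0)=e_1$ is the outer normal, i.e.\ $\dot\gamma(0)=(0,-1)$, and then $\ddot\gamma(0)=(\ddot\gamma_1(0),0)$ because $\ddot\gamma\perp\dot\gamma$. Axial symmetry means $\gamma_1$ is even and $\gamma_2$ odd in $s$, so $\dot\gamma_1(0)=\ddot\gamma_2(0)=\dddot\gamma_1(0)=0$, and $\kappa$, $\lambda$ and $F$, being built from $\dot\gamma,\ddot\gamma$ with these parities, are \emph{even}; in particular $\dot\kappa(0)=0$. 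Taylor expanding, $|\gamma(s)|^2=(R^\star)^2+(1+R^\star\ddot\gamma_1(0))\,s^2+O(s^4)$, and the constraint $|\gamma(s)|\le R^\star$ forces $\ddot\gamma_1(0)\le-1/R^\star$, hence $\kappa(0)=|\ddot\gamma(0)|=-\ddot\gamma_1(0)\ge 1/R^\star>0$ (so that, with the sign convention making $H_0\ge0$ for mean convex sets, $\ddot\gamma(0)=-\kappa(0)\nu(0)$). Finally, by Definition~\ref{def:comparison_circles} the center of $C_s$ lies on the $e_1$-axis along the normal at $\gamma(s)$, so $\lambda(s)=\dot\gamma_1(s)/\gamma_2(s)$; letting $s\to 0$ by l'H\^opital gives $\lambda(0)=\ddot\gamma_1(0)/\dot\gamma_2(0)=\kappa(0)$. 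This settles (1). (Alternatively one gets the weaker $\kappa(0)\ge0$ from mean convexity, Lemma~\ref{lem:mean-convexity}, via $H_0(0)=(n-1)\kappa(0)$.)

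\emph{Part (2).} I differentiate the constancy of $\Ha_\gamma=H_0+H_1=\kappa+(n-2)\lambda+H_1$ (see~\eqref{eq:ODE_gamma}) twice in $s$ and evaluate at $0$: $\ddot\kappa(0)+(n-2)\ddot\lambda(0)+\ddot H_1(0)=0$. Two computations remain. First, using the parities above and the arc-length identity $|\dot\gamma|^2\equiv1$---which forces $\dddot\gamma_2(0)=\kappa(0)^2$---a direct Taylor expansion of $\kappa=\dot\gamma_2\ddot\gamma_1-\dot\gamma_1\ddot\gamma_2$ and of $\lambda=\dot\gamma_1/\gamma_2$ at $s=0$ yields
\[
\ddot\kappa(0)=\kappa(0)^3-\gamma_1^{(4)}(0),\qquad \ddot\lambda(0)=\tfrac13\bigl(\kappa(0)^3-\gamma_1^{(4)}(0)\bigr)=\tfrac13\,\ddot\kappa(0),
\]
so the relation becomes $\tfrac{n+1}{3}\,\ddot\kappa(0)=-\ddot H_1(0)$. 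Second, writing $\rho:=|\gamma|$ and $w:=\gamma\cdot\nu/|\gamma|$, so that $H_1=\psi'(\rho)\,w+g(\rho)$, and using $\dot\rho(0)=\dot w(0)=0$, $w(0)=1$ together with the Frenet relation $\dot\nu=\kappa\dot\gamma$, one computes $\ddot\rho(0)=\mu/R^\star$ and $\ddot w(0)=-\mu^2/(R^\star)^2$ with $\mu:=1-R^\star\kappa(0)$, whence
\[
\ddot H_1(0)=\bigl(\psi''+g'\bigr)(R^\star)\,\frac{\mu}{R^\star}-\psi'(R^\star)\,\frac{\mu^2}{(R^\star)^2}.
\]
Since the center of $C_0$ is $(R^\star-1/\kappa(0),0)$, the hypothesis $F(0)>0$ reads $\kappa(0)>1/R^\star$, i.e.\ $\mu=-\kappa(0)F(0)<0$; then strict admissibility $(\psi''+g')(R^\star)>0$ makes the first term strictly negative, and monotonicity $\psi'(R^\star)\ge0$ makes the second nonpositive, so $\ddot H_1(0)<0$ and therefore $\ddot\kappa(0)=-\tfrac{3}{n+1}\,\ddot H_1(0)>0$.

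I expect the main obstacle to be the bookkeeping in part (2): extracting the clean identity $\ddot\lambda(0)=\tfrac13\ddot\kappa(0)$ and the formula for $\ddot H_1(0)$ requires carefully tracking orientation signs and the odd/even structure. One should also record that $\gamma$, a priori only $C^{3,\alpha}$, is in fact $C^{4}$ near $s=0$---by bootstrapping~\eqref{eq:ODE_gamma} using $\psi,g\in C^3$ and $\gamma(0)\ne0$---which legitimizes differentiating~\eqref{eq:ODE_gamma} twice.
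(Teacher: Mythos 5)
Your proof is correct, and it follows the same underlying strategy as the paper's (which is in turn Chambers'/Boyer--Brown--Chambers--Loving--Tammen's), namely to exploit the constancy of the weighted mean curvature together with the reflection symmetry of $\gamma$ at $s=0$. The difference is in the vehicle: the paper computes $\tilde H_1''(0)$ along the osculating circle $A_0$ and delegates the implication $\tilde H_1''(0)<0 \Rightarrow \ddot\kappa(0)>0$ to Lemma 6.3 of~\cite{BoyerBrownChambersLovingTammen+2016}, whereas you differentiate~\eqref{eq:ODE_gamma} directly along $\gamma$ and make the implication self-contained via the explicit identity $\ddot\lambda(0)=\tfrac13\ddot\kappa(0)$, which gives the clean formula
\[
\frac{n+1}{3}\,\ddot\kappa(0)=-\ddot H_1(0).
\]
I checked this identity against a Taylor expansion using arc-length and the parity of $\gamma_1,\gamma_2$ and it holds. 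The two routes compute literally the same number: since $\alpha$ matches $\gamma$ to second order at $\gamma(0)$ and $\dot\kappa(0)=0$ forces $\dddot\alpha(\tilde s)=\dddot\gamma(0)$ by Frenet, one has $\ddot H_1(0)=\tilde H_1''(0)$, and indeed your final expression $(\psi''+g')(R^\star)\,\mu/R^\star-\psi'(R^\star)\,\mu^2/(R^\star)^2$ with $\mu=1-R^\star\kappa(0)=-\kappa(0)F(0)$ coincides, after simplification, with the paper's $(\psi''+g')\tfrac1{|\alpha|}(1-|\alpha|/R)+\psi'\tfrac1{|\alpha|}(1-|\alpha|/R)(1/R-1/|\alpha|)$. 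What your version buys is transparency: the reader sees exactly how $(n-2)$ enters and why the strict sign in $F(0)>0$ propagates to $\ddot\kappa(0)>0$. What the paper's version buys is that it stays closer to the cited precedent and avoids Taylor-expanding $\lambda=\dot\gamma_1/\gamma_2$ at the coordinate singularity $s=0$. Two minor remarks: first, your orientation choice $\dot\gamma(0)=(0,-1)$ clashes with the paper's ``upper curve in the II quadrant'' convention, which would force $\dot\gamma(0)=(0,1)$; since the paper's own normal convention $\nu=(-\dot\gamma_2,\dot\gamma_1)$ is itself inconsistent with $\gamma\cdot\nu/|\gamma|=1$ on a centered sphere, this is a bookkeeping wash and your computation is internally consistent, but you should flag it. Second, you rightly note that $\gamma\in C^4$ near $s=0$ is needed to differentiate~\eqref{eq:ODE_gamma} twice---but the paper's formulation of the lemma also needs this for $\ddot\kappa(0)$ to be well-defined, so this is not a gap of yours relative to theirs; the bootstrap you sketch is the right way to justify it.
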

\begin{proof}
$\mathrm{(1)}$ See proof of Lemma 4.8 in \cite{BoyerBrownChambersLovingTammen+2016}. The idea is to observe that $\kappa(0)=\lambda(0)$ is the same as showing that $F(0)=1-1 / \kappa(0).$ Then using the fact that $\kappa(0)>0$ and $\kappa$ is continuous there exists a neighborhood around $s=0$ where $\dot{\gamma}_1(s)\neq 0$ and therefore we have
\begin{align}
F(0)=\lim _{s \rightarrow 0} F(s)=\lim _{s \rightarrow 0} \frac{\gamma(s) \cdot \gamma^{\prime}(s)}{\gamma_1{ }^{\prime}(s)}=1-\frac{1}{\kappa(0)}.
\end{align}
$\mathrm{(2)}$ Following the argument in Lemma 6.3 in~\cite{BoyerBrownChambersLovingTammen+2016} it is suffices to prove $\tilde H_1''(0)<0$. Let $\alpha(t)=(a+R\cos(t/R),R\sin(t/R))$ be a parametrization of $A_0$, with $a=\tilde F(0)=F(0)>0$ and $R=\tilde R(0)=R(0)$. Then
\begin{align}\label{eqn:H_1 derivative}
\tilde H_1'=\frac{d}{dr}\Big\vert_{r=\abs{\alpha}}\Bigl(\frac{\psi'(r)}{r}\Bigr)(\dot\alpha\cdot\alpha)\Bigl(\frac{\alpha\cdot n}{\abs{\alpha}}\Bigr)+\frac{\psi'(\abs{\alpha})}{\abs{\alpha}}(\dot\alpha\cdot n+\alpha\cdot\dot n)+\frac{d}{dr}\Big\vert_{r=\abs{\alpha}}(g(r)/f(r))\frac{\dot\alpha\cdot\alpha}{\abs{\alpha}}.    
\end{align}
Noticing that at $\dot\alpha(0)\cdot\alpha(0)=0$, we obtain
\begin{align*}
\tilde H_1''(0)&=\frac{d}{dr}\Big\vert_{r=\abs{\alpha}}\Bigl(\frac{\psi'(r)}{r}\Bigr)(\ddot\alpha\cdot\alpha+\abs{\dot\alpha}^2)\Bigl(\frac{\alpha\cdot n}{\abs{\alpha}}\Bigr)+\frac{\psi'(\abs{\alpha})}{\abs{\alpha}}(\ddot\alpha\cdot n+2\dot\alpha\cdot\dot n+\alpha\cdot\ddot n)\\
&\quad +\frac{d}{dr}\Big\vert_{r=\abs{\alpha}}(g(r)/f(r))\frac{\ddot\alpha\cdot\alpha+\abs{\dot\alpha}^2}{\abs{\alpha}}.
\end{align*}
Since at $t=0$ we have $\ddot\alpha\cdot\alpha=-\abs{\alpha}/R$, $\alpha\cdot n=\abs{\alpha}$, $\ddot\alpha\cdot n=-1/R$, $\dot\alpha\cdot \dot n=1/R$, and $\alpha\cdot\ddot n=-\abs{\alpha}/R^2$ and $\abs{\alpha(0)}>R$ we get that
\[ 
\tilde H_1''(0)=\underbrace{\Bigl(\psi''+g'\Bigr)}_{>0}\frac{1}{\abs{\alpha}}\underbrace{(1-\abs{\alpha}/R)}_{<0}+\underbrace{\psi'}_{\geq 0}\frac{1}{\abs{\alpha}}\underbrace{(1-\abs{\alpha}/R)}_{<0}\underbrace{(1/R-1/\abs{\alpha})}_{>0}<0.
\]
\end{proof}
\subsection{Upper Curve}
In this section, we determine the initial behavior of $\gamma$, which is defined as follows.
\begin{definition}The upper curve $K$ is the set of all $s\in[0,\beta)$ such that 
\begin{enumerate}
    \item $\dot\gamma(s)$ lies in the II quadrant
    \item $\kappa(s)\geq\lambda(s)>0$.
\end{enumerate}
\end{definition}
We now record some elementary properties of the upper curve.
\begin{lemma} \label{lem:prop-upper-curve} 
Let $s\in K$ be in the upper curve and $\gamma$ is not a centered circle. Then the following properties hold
\begin{enumerate}
    \item $K\neq\varnothing$ and $\delta:=\sup K>0$.
    \item $F(0)\geq 0$.
    \item $F'(s)\geq 0$.
    \item $F(s)\geq R(s)$.
    \item If $\kappa(s)=\lambda(s)$, then $\lambda'(s)=0$ but $\kappa'(s)>0$.
\end{enumerate}  
\end{lemma}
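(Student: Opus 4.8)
The plan is to derive all five assertions as local statements about the profile curve $\gamma$, using the Frenet relation $\ddot\gamma=\kappa\nu$, the constant weighted mean curvature equation \eqref{eq:ODE_gamma}, mean convexity (Lemma~\ref{lem:mean-convexity}), and the initial–time information from Lemma~\ref{lem:prop-kappa(0)}, mirroring \cite{chambers} and \cite[Lemmas 4.8, 6.3]{BoyerBrownChambersLovingTammen+2016}. The computations rest on the explicit formulas, valid whenever $\dot\gamma_1(s)\neq 0$ and obtained by intersecting the normal line of $\gamma$ at $\gamma(s)$ with the $e_1$-axis,
\[
F(s)=\gamma_1(s)+\frac{\gamma_2(s)\dot\gamma_2(s)}{\dot\gamma_1(s)},\qquad R(s)=\frac{|\gamma_2(s)|}{|\dot\gamma_1(s)|},\qquad \lambda(s)=\frac1{R(s)},
\]
from which, substituting $\ddot\gamma=\kappa\nu$ (so $\ddot\gamma_1=-\kappa\dot\gamma_2$, $\ddot\gamma_2=\kappa\dot\gamma_1$) and $\dot\gamma_1^2+\dot\gamma_2^2=1$, all cross terms cancel and one gets the identities
\[
F'(s)=\frac{\gamma_2(s)}{\dot\gamma_1(s)^2}\bigl(\kappa(s)-\lambda(s)\bigr),\qquad R'(s)=-\frac{\dot\gamma_2(s)\gamma_2(s)}{\dot\gamma_1(s)^2}\bigl(\kappa(s)-\lambda(s)\bigr).
\]

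\emph{Items (2) and (1).} Since $R^\star=|\gamma(0)|$ is maximal, $s=0$ is a maximum of $s\mapsto|\gamma(s)|^2$, so $\tfrac{d^2}{ds^2}|\gamma|^2\big|_0=2+2\,\gamma(0)\!\cdot\!\ddot\gamma(0)\le 0$, which (using $\ddot\gamma(0)=\kappa(0)\nu(0)$ and $\nu(0)\parallel e_1$) reduces to $\kappa(0)\ge 1/R^\star$. By Lemma~\ref{lem:prop-kappa(0)}(1), $\lambda(0)=\kappa(0)>0$, hence $R(0)=1/\kappa(0)\le R^\star$; since the centre of $C_0$ has $e_1$-coordinate $R^\star-R(0)$, this is $F(0)\ge 0$, proving (2). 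The same inequality forces $\ddot\gamma_1(0)<0$, so for small $s>0$ the first component of $\gamma$ decreases while the second increases, i.e.\ $\dot\gamma(s)$ lies in the second quadrant; moreover $\lambda(s)>0$ near $0$ by continuity, and $\kappa(s)\ge\lambda(s)$ there: if $F(0)=0$ then Lemma~\ref{lem:ODE_uniqueness} forces $\gamma$ to be a centred circle (excluded), while if $F(0)>0$ then $\dot\kappa(0)=0<\ddot\kappa(0)$ by Lemma~\ref{lem:prop-kappa(0)}(2), and a second–order expansion of $\kappa-\lambda$ (even in $s$, vanishing at $0$) yields $\kappa-\lambda\ge 0$ to the right of $0$. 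Hence $(0,\varepsilon)\subset K$ for some $\varepsilon>0$, so $K\neq\varnothing$ and $\delta=\sup K>0$.

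\emph{Items (3), (4) and (5).} Fix $s\in K$, so $\gamma_2(s)\ge 0$, $\dot\gamma_1(s)\le 0\le\dot\gamma_2(s)$ and $\kappa(s)\ge\lambda(s)>0$; the formula for $F'$ then gives $F'(s)\ge 0$, which is (3), with $F'(s)=0$ exactly when $\kappa(s)=\lambda(s)$, and at such an $s$ the formula for $R'$ gives $R'(s)=0$, i.e.\ $\lambda'(s)=0$. For (4) one combines the two identities into $(F-R)'(s)=\dfrac{\gamma_2(s)}{\dot\gamma_1(s)^2}\bigl(\kappa(s)-\lambda(s)\bigr)\bigl(1+\dot\gamma_2(s)\bigr)\ge 0$ on $K$, so $F-R$ is non-decreasing along $K$; it remains to rule out $F(s)<R(s)$, which is done by the comparison of $\gamma$ with $C_s$ as in \cite[Sec.~4]{chambers}: $C_s$ and $\gamma$ are tangent at $\gamma(s)$ and, since $\kappa\ge\lambda$, $\gamma$ bends at least as fast, so it stays on the side of $C_s$ away from the symmetrization axis, and were $F(s)<R(s)$ the circle $C_s$ would enclose the origin, forcing $\gamma$ either to meet the axis with incidence angle exceeding $\pi/2$ or to fail to bound an embedded spherically symmetrized minimizer, contradicting \eqref{eq:tangent_restriction} resp.\ the monotonicity of the isoperimetric profile (Proposition~\ref{prop:increasing_profile}) by minimality of $E^\star_\xi$. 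Finally, for the second half of (5): at $s\in K$ with $\kappa(s)=\lambda(s)$ one has, from (4) and $R(s)\le F(s)\le\gamma_1(s)\le|\gamma(s)|$, that $|\gamma(s)|\ge R(s)=1/\kappa(s)$; differentiating \eqref{eq:ODE_gamma} and using $\lambda'(s)=0$ gives $(n-1)\kappa'(s)=-\tfrac{d}{ds}H_1(s)$, and the computation of $\tfrac{d}{ds}H_1$ along the osculating circle $A_s$ exactly as in \eqref{eqn:H_1 derivative}--\eqref{eq:meanconvexity} — using $\psi''+g'>0$, $\psi',g'\ge 0$, $\gamma\!\cdot\!\dot\gamma\le 0$, the sign of the factor $1-|\gamma(s)|/R(s)\le 0$, and the second–quadrant condition — shows $\tfrac{d}{ds}H_1(s)<0$, whence $\kappa'(s)>0$.

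\emph{Main obstacle.} Items (1)--(2) are soft; the heart of the matter is the sign bookkeeping in (3)--(5). Everything depends on fixing the orientation so that the second–quadrant condition, ``$\gamma$ curving inward'', $\kappa\ge\lambda$, and the outward normal in \eqref{eq:ODE_gamma} are mutually consistent, and on checking that the several cancellations in $F'$, $R'$ and $\tfrac{d}{ds}H_1$ leave terms of the claimed sign; verifying $\tfrac{d}{ds}H_1(s)<0$ when $\kappa(s)=\lambda(s)$ is where the monotonicity of $\psi,g$ is genuinely used. The most delicate point is item (4): unlike (3) and (5) it is not a pointwise identity but a comparison/contradiction argument, and one must carefully reconcile the base behaviour of $F-R$ at the start of $K$ with the sign conventions for $C_s$ and with Lemma~\ref{lem:ODE_uniqueness}.
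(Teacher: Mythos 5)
Your derivations of $F$, $R$, $F'$, $R'$ from the Frenet frame are correct, and for items (1), (2), (3) and (5) your argument tracks the paper's (and the references it cites: Lemma~\ref{lem:prop-kappa(0)}, Lemma~\ref{lem:ODE_uniqueness}, the mean–convexity Lemma~\ref{lem:mean-convexity}, and the $\tilde H_1'$ computation) in essentially the same way. Two small slips, neither fatal: in item (5), differentiating $\kappa+(n-2)\lambda+H_1=c$ at a point where $\lambda'(s)=0$ gives $\kappa'(s)=-H_1'(s)$, not $(n-1)\kappa'(s)=-H_1'(s)$ — the factor $(n-1)$ appears in the zeroth-order relation $(n-1)\kappa(s)+H_1(s)=c$ (because $\kappa(s)=\lambda(s)$ there), but it does not survive differentiation once you have used $\lambda'(s)=0$; the sign conclusion $\kappa'(s)>0$ is unaffected. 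Second, in the argument for $(\sin\theta)'\le 0$ you should double–check the orientation: with $\nu=\dot\gamma^\perp$ as the paper defines it, the curve $\gamma$ bends \emph{towards} the centre of $C_s$ (towards the axis), not away from it, and the circle $C_s$ is traversed so that $\lambda>0$ — this matters for reading off the signs of $\cos\theta$, $\sin\theta$ and the factor $(1/R-\sin\theta/|\alpha|)$.

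The genuine gap is in item (4). Your monotonicity
\[
(F-R)'(s)=\frac{\gamma_2(s)}{\dot\gamma_1(s)^2}\bigl(\kappa(s)-\lambda(s)\bigr)\bigl(1+\dot\gamma_2(s)\bigr)\ge 0
\]
is a correct computation, but it only reduces the claim to the base case $F(0)\ge R(0)$, i.e.\ (using $F(0)=R^\star-R(0)$ and $R(0)=1/\kappa(0)$ from Lemma~\ref{lem:prop-kappa(0)}) to $\kappa(0)\ge 2/R^\star$. You never establish this, and it is not a consequence of what has been proved so far: the only constraints at $s=0$ are $\kappa(0)=\lambda(0)>0$ (stationarity) and $F(0)>0$ (item (2) together with Lemma~\ref{lem:ODE_uniqueness}), which give only $\kappa(0)>1/R^\star$. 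The ``comparison with $C_s$'' paragraph you substitute for it is also not a proof: you assert that $\gamma$ stays on the side of $C_s$ \emph{away} from the symmetrisation axis, but since $\kappa\ge\lambda$ and the common normal $\nu(s)$ points towards the centre $(F(s),0)$, the curve bends \emph{into} the disk bounded by $C_s$, i.e.\ towards the axis — the opposite of what you wrote. With the correct inclusion (``$\gamma$ lies inside $D_s$'') the only information you extract from $\gamma(0)=(R^\star,0)\in D_s$ is $F(s)+R(s)\ge R^\star$, which is the wrong inequality. The paper does not prove item (4) either but cites Lemma 6.6 of \cite{BoyerBrownChambersLovingTammen+2016} for a ``purely geometric'' argument; to complete your proposal you would need to supply a genuine argument (either a direct geometric one for $F(0)\ge R(0)$, or a correctly oriented comparison/contradiction), because as written the step fails.
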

\begin{proof}
\begin{enumerate}
    \item The first part follows from a continuity argument. For more details see for instance Lemma 6.5 in~\cite{BoyerBrownChambersLovingTammen+2016}.
    \item If $F(0)<0$, then there are points in $[0, \beta)$ near 0 on $\gamma$ that lie outside the centered circle of radius $\gamma(0)$. This contradicts the fact that $|\gamma(s)|\leq |\gamma(0)|$ for all $s\in [0,\beta)$. Thus $F(0) \geq 0$.
    \item To show this, it suffices to prove that $\tilde F'(\tilde s)\geq 0$, again by order of approximation. This result is purely geometrical and independent of our particular ODE. See Lemma 5.3 in the Appendix of~\cite{chambers}.
    \item This fact too is geometric in nature and independent of our ODE. For more details, see Lemma 6.6 in~\cite{BoyerBrownChambersLovingTammen+2016}.
    \item Following the proof of Lemma 6.7 \cite{BoyerBrownChambersLovingTammen+2016} it suffices to prove $\tilde H_1'(\tilde s)<0$. Using arc length parametrization of $A_s$ we first compute
\begin{align*}
\alpha(t) &=\left(a+r \cos \left(\frac{t}{r}\right), b+r \sin \left(\frac{t}{r}\right)\right)\\
\dot{\alpha}(t) &=(-\sin (t / r), \cos (t / r))\\
n(t) & =(\cos (t / r), \sin (t / r)) \\ 
\dot{n}(t) & =\frac{(-\sin (t / r), \cos (t / r))}{r}=\frac{1}{r}\dot \alpha(t)\\
\end{align*}
Since $A_s=C_s$, we have that $b=0, a=F(s)$, and $r=R(s).$
Let $\theta\in[-\pi,\pi]$ be the angle between $\alpha$ and $\dot\alpha$. Then, we have that
\begin{align*}
\tilde H_1'&=\frac{d}{dt}\Bigl(\psi'(\abs{\alpha})\sin(\theta)+g(\abs{\alpha})\Bigr)\\
&=\psi''(\abs{\alpha})\cos(\theta)\sin(\theta)+\psi'(\abs{\alpha})(\sin(\theta))'+g'(\abs{\alpha})\cos(\theta)\\
&=(\psi''+g')\cos(\theta)\sin(\theta)+\psi'(\sin(\theta))'+g'\cos(\theta)(1-\sin(\theta)).
\end{align*}
Notice that $\theta(\tilde s)\in(\pi/2,\pi)$ and $\abs{\alpha(\tilde s)}\geq R$ (by triangle inequality). It follows that in $t=\tilde s$, $\cos(\theta)\in(-1,0)$, $\sin(\theta)\in(0,1)$, and
\[
\tilde H'<\psi'(\sin(\theta))'.
\]
We are left to prove  $(\sin(\theta))'\leq 0$. Now, a direct computation shows that
\begin{align*}
(\sin(\theta))'&=\Bigl(\frac{\alpha\cdot n}{\abs{\alpha}}\Bigr)'=\frac{\alpha\cdot\dot n+\dot\alpha\cdot n}{\abs{\alpha}}-\frac{(\alpha\cdot\dot\alpha)(\alpha\cdot n)}{\abs{\alpha}^3}\\
&=\frac{\alpha\cdot\dot n}{\abs{\alpha}}-\frac{\sin(\theta)\cos(\theta)}{\abs{\alpha}}=\frac{\alpha\cdot\dot \alpha}{R\abs{\alpha}}-\frac{\sin(\theta)\cos(\theta)}{\abs{\alpha}}\\
&=\underbrace{\cos(\theta)}_{<0}\Bigl(\frac{1}{R}-\frac{\sin(\theta)}{\abs{\alpha}}\Bigr)<\cos(\theta)\Bigl(\frac 1R-\frac{1}{\abs{\alpha}}\Bigr)\leq 0,
\end{align*}
as desired.
\end{enumerate}
\end{proof}
Next, we observe that the upper curve cannot extend indefinitely and its endpoint must satisfy certain geometric properties.
\begin{lemma}\label{lem:prop delta} $K$ is not empty, and so $\delta$ exists. Furthermore, if $\gamma$ is not a centered circle, then $\delta$ has the following properties:
\begin{enumerate}
\item  $\delta<\beta$.
\item $\delta \in K$.
\item $\gamma_1(\delta)\geq F(s)$ for any $s\in [0,\delta].$
\item $\gamma_1(\delta)>0$.
\item $\gamma^{\prime}(\delta)=(-1,0)$.
\item $\dot{\kappa}(\delta)>0$
\end{enumerate}
\end{lemma}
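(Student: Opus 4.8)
The plan is to follow the template of the analogous statements in \cite{BoyerBrownChambersLovingTammen+2016} and \cite{chambers}, feeding in the weighted-curvature information already isolated in Lemmas~\ref{lem:prop-kappa(0)} and \ref{lem:prop-upper-curve}; the interaction with the weights has, at this stage, been pushed entirely into those lemmas, so here one only assembles them with elementary continuity and ODE bookkeeping. Nonemptiness of $K$ and existence of $\delta=\sup K\in(0,\beta]$ come from Lemma~\ref{lem:prop-upper-curve}(1) (indeed $\dot\gamma(0)=(0,1)$ lies in the closed second quadrant and $\kappa(0)=\lambda(0)>0$ by Lemma~\ref{lem:prop-kappa(0)}(1)). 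Two standing facts will be used throughout. First, $\gamma_2>0$ on $(0,\beta)$: an interior zero $\gamma_2(s_0)=0$ would give $\gamma(s_0)=(\gamma_1(s_0),0)=\gamma(-s_0)$ with $s_0\neq-s_0$, contradicting that $\gamma$ is simple. Second, $K=[0,\delta]$: by Lemma~\ref{lem:prop-upper-curve}(5), at a point of $K$ where $\kappa=\lambda$ one has $\kappa'>\lambda'$, so $\kappa-\lambda$ cannot re-enter $[0,\infty)$ from below along $K$, and since $\dot\gamma$ leaves the closed second quadrant only once the upper curve is an interval.

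For part (1): if $\delta=\beta$, then $\dot\gamma(s)$ lies in the closed second quadrant along a sequence $s_n\uparrow\beta$, so $\dot\gamma_1(\beta)\le0$ and $\dot\gamma_2(\beta)\ge0$. But axial symmetry makes $\gamma_1$ even and $\gamma_2$ odd, both about $0$ and (by $2\beta$-periodicity) about $\beta$, forcing $\gamma_2(\beta)=0$ and $\dot\gamma_1(\beta)=0$; since $\gamma_2>0$ just to the left of $\beta$, we get $\dot\gamma_2(\beta)\le0$, hence $\dot\gamma(\beta)=(0,-1)$ from $|\dot\gamma|\equiv1$. By continuity $\dot\gamma_2<0$ near $\beta$, so a whole neighborhood of $\beta$ lies outside $K$, a contradiction. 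Thus $\delta<\beta$, and consequently $\gamma_2(\delta)>0$.

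Parts (2) and (5) are proved together. Since $\delta=\sup K$, at least one condition defining $K$ is saturated at $\delta$: either $\dot\gamma(\delta)$ lies on the boundary of the closed second quadrant, or $\kappa(\delta)=\lambda(\delta)$, or $\lambda(\delta)=0$. The second alternative with $\dot\gamma(\delta)$ interior to the quadrant is impossible: then $\delta\in K$ and Lemma~\ref{lem:prop-upper-curve}(5) gives $\lambda'(\delta)=0<\kappa'(\delta)$, so $\kappa-\lambda<0$ just left of $\delta$, against $\kappa\ge\lambda$ on $[0,\delta)$. The third alternative forces the tangent-to-axis circle $C_\delta$ to degenerate, which (as $\gamma_2(\delta)>0$) can only occur when the normal at $\gamma(\delta)$ is horizontal, i.e. $\dot\gamma(\delta)=(0,\pm1)$; of these only $(0,1)$ lies in the closed quadrant. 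Hence in every surviving case $\dot\gamma(\delta)\in\{(0,1),(-1,0)\}$. The value $(0,1)$ is ruled out by Lemma~\ref{lem:tangent restrict}, since $\gamma(\delta)\cdot\dot\gamma(\delta)=\gamma_2(\delta)>0$ would violate $\gamma\cdot\dot\gamma\le0$ on $(0,\beta)$. Therefore $\dot\gamma(\delta)=(-1,0)$, which is (5). Then the normal at $\gamma(\delta)$ is vertical, $C_\delta$ is the circle centered at $(\gamma_1(\delta),0)$ of radius $\gamma_2(\delta)$, so $\lambda(\delta)=1/\gamma_2(\delta)>0$; with $\kappa(\delta)\ge\lambda(\delta)$ (limit of $\kappa(s_n)\ge\lambda(s_n)$) and $\dot\gamma(\delta)$ in the closed quadrant this gives $\delta\in K$, which is (2), and the left-saturation argument again yields $\kappa(\delta)>\lambda(\delta)$ strictly.

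For the remaining parts, the vertical normal at $\gamma(\delta)$ gives $F(\delta)=\gamma_1(\delta)$; since $F'\ge0$ on $K=[0,\delta]$ (Lemma~\ref{lem:prop-upper-curve}(3)), $F$ is nondecreasing there, so $\gamma_1(\delta)=F(\delta)\ge F(s)$ for all $s\in[0,\delta]$, which is (3). In particular $\gamma_1(\delta)=F(\delta)\ge F(0)\ge0$ by Lemma~\ref{lem:prop-upper-curve}(2); if $\gamma_1(\delta)=0$ then $F(0)=0$ by monotonicity, and since $\kappa(0)=\lambda(0)$ (Lemma~\ref{lem:prop-kappa(0)}(1)), Lemma~\ref{lem:ODE_uniqueness} would force $\gamma=C_0$, contrary to assumption; hence $\gamma_1(\delta)>0$, which is (4). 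Finally, differentiating \eqref{eq:ODE_gamma} along $\gamma$ gives $\dot\kappa+(n-2)\dot\lambda+\dot H_1=0$; at $\delta$ one has $\dot\gamma_2(\delta)=0$ and $\ddot\gamma_1(\delta)=0$ (as $\ddot\gamma(\delta)\perp\dot\gamma(\delta)=(-1,0)$), whence $\dot\lambda(\delta)=0$ (writing $\lambda=-\dot\gamma_1/\gamma_2$), so $\dot\kappa(\delta)=-\dot H_1(\delta)$. It then remains to show $\dot H_1(\delta)<0$: this is a sign computation in the spirit of Lemma~\ref{lem:prop-kappa(0)}(2), using $\gamma(\delta)\cdot\dot\gamma(\delta)=-\gamma_1(\delta)<0$, $\gamma_1(\delta)>0$, the strict inequality $\kappa(\delta)>\lambda(\delta)=1/\gamma_2(\delta)$, and the hypotheses $\psi''+g'>0$ and $\psi',g'\ge0$, which make every term of $\dot H_1(\delta)$ nonpositive with the $(\psi''+g')$-contribution strictly negative; hence $\dot\kappa(\delta)>0$, which is (6). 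The only genuinely delicate point is this last sign computation, where the monotonicity of the weights is essential and the orientation convention for $\nu$ must be tracked with care; the rest is continuity and ODE bookkeeping resting on the earlier lemmas.
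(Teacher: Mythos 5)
Your proof follows essentially the same route as the paper's, which for parts (1)--(5) simply cites Lemma 6.8 of \cite{BoyerBrownChambersLovingTammen+2016}, and for part (6) carries out the sign computation on $\tilde H_1'$ along the osculating circle. Your reduction of (6) to $\dot\kappa(\delta)=-\dot H_1(\delta)$ via $\dot\lambda(\delta)=0$ is correct and is exactly what the paper's ``it suffices to show $\tilde H_1'<0$'' implicitly uses, given Lemma~\ref{lem:good_approx_by_circles}.

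Two points deserve attention. First, you explicitly defer the verification that $\dot H_1(\delta)<0$, which is in fact the entire content of the paper's proof of (6): one must plug the formula $\tilde H_1'=(\psi''+g')\cos\theta\sin\theta+\psi'(\sin\theta)'+g'\cos\theta(1-\sin\theta)$ from Lemma~\ref{lem:prop-upper-curve}(5) into the configuration $\dot\gamma(\delta)=(-1,0)$, check $\cos\theta<0$, $\sin\theta\in(0,1)$, and show $(\sin\theta)'<0$ by verifying $|\gamma(\delta)|>1/\kappa(\delta)$ (which follows from $F(\delta)=\gamma_1(\delta)>0$, $R(\delta)=\gamma_2(\delta)>0$, and $|\gamma(\delta)|^2=F(\delta)^2+R(\delta)^2$). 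The first term is then strictly negative and the other two nonpositive. Labeling this ``in the spirit of Lemma~\ref{lem:prop-kappa(0)}(2)'' is not a proof, and the sign conventions here do require care, as you yourself note. Second, in your argument for (1) you appeal to $\dot\gamma(\beta)=(0,-1)$, but the regularity furnished by Theorem~\ref{thm:regularity} is only guaranteed on $(-\beta,\beta)$; when $n\ge 8$ the point $\gamma(\pm\beta)$ on the axis could \emph{a priori} be singular. This is easily repaired without any reference to $\dot\gamma(\beta)$: if $\delta=\beta$, then $\dot\gamma_2\geq 0$ on $[0,\beta)$ forces $\gamma_2$ nondecreasing there, and since $\gamma_2>0$ on $(0,\beta)$ while $\gamma_2(\beta)=0$ (by axial symmetry and the curve closing up), continuity gives an immediate contradiction. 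The rest of the argument (interval structure of $K$, the ruling out of $\kappa(\delta)=\lambda(\delta)$ via Lemma~\ref{lem:prop-upper-curve}(5), the use of the tangent restriction to exclude $\dot\gamma(\delta)=(0,1)$, and the monotonicity of $F$ together with Lemma~\ref{lem:ODE_uniqueness} for parts (3) and (4)) is sound.
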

\begin{proof}
See proof of Lemma 6.8 in~\cite{BoyerBrownChambersLovingTammen+2016} for the proof of properties (1) to (5). To see property (6) it suffices to show that $\Tilde{H}_1'(\pi \Tilde{R}(\Tilde{s})/2) < 0.$ To this end recall
\begin{align*}
\tilde H_1'=(\psi''+g')\cos(\theta)\sin(\theta)+\psi'(\sin(\theta))'+g'\cos(\theta)(1-\sin(\theta))
\end{align*}
where $\theta$ is the angle between $\alpha$ and $\dot{\alpha}.$ To deduce a strict inequality we simply observe that $(\sin (\theta))'<0$ since the other terms are non-positive since $\cos(\theta)\in (-1,0),\sin(\theta)\in (0,1)$, $\psi''+g'\geq 0$, $\psi'\geq 0$ and $g'\geq 0$.  Observe that since $\gamma'(\delta)=(-1,0)$, the radius $R=R(\delta)$ of the canonical circle $C_\delta$ satisfies $R=r+b$ and $a=F(\delta)$. Therefore from part (3) of Lemma \ref{lem:prop-upper-curve} we have $a>R>0$ which in turn implies that
\begin{align}
|\alpha(\Tilde{s})|^2 = a^2 + (b+r)^2 = a^2 + R^2 > R^2.
\end{align}
Thus $|\alpha(\Tilde{s})|>R$ which implies $(\sin(\theta))'<0$. Thus $H_1'(\pi\Tilde{R}(\Tilde{s})/2)<0$ as desired.
\end{proof}

\subsection{Lower Curve}
Since we now know that the upper curve exists, is finite, and ends horizontally, we describe the subsequent behavior of the curve $\gamma$ as follows.
\begin{definition}\label{defn:lower-curve}
Define the lower curve $L$ as the set of all points $s$ in $[\delta, \beta)$ such that for all $t \in[\delta, s]$ the following hold:
\begin{enumerate}
    \item $\gamma^{\prime}(t)$ is in the III quadrant with $\gamma^{\prime}(t) \neq(-1,0)$ if $t>\delta$,
    \item If $\bar{t}$ is the unique point in $K$ with $\gamma_2(\bar{t})=\gamma_2(t)$, then $\kappa(\bar{t}) \leq \kappa(t)$.
\end{enumerate}
\end{definition}
\begin{remark}[The lower curve $L$ is non-empty]
As $\gamma^{\prime}(\delta)=(-1,0), \kappa(\delta)>0$, and $\kappa^{\prime}(\delta)>0$ by continuity one sees that the conditions in Definition \ref{defn:lower-curve}  hold on an interval $[\delta, \delta+\varepsilon)$. Thus, $L$ is nonempty and has a supremum, $\eta := \sup L.$
\end{remark}
The key point in the analysis of the behavior of the lower curve is to prove that it curves faster than the upper curve, so to obtain a contradiction via Equation \eqref{eq:no_apple_points} in the case of intersection with the $e_1$ axis. We start with two useful definitions.
\begin{definition}
Define $h:\left(\gamma_2(\eta), \gamma_2(\delta)\right) \rightarrow(0, \delta)$ as the unique time $t \in(0, \delta)$ such that $\gamma_2(t)=y$. Similarly, we define $k:\left(\gamma_2(\eta), \gamma_2(\delta)\right) \rightarrow(\delta, \eta)$ by letting $k(y)$ be the unique $t \in(\delta, \eta)$ such that $\gamma_2(t)=y$.    
\end{definition}
\begin{definition}
Given an $e_2$-coordinate with $y \in\left(\gamma_2(\eta), \gamma_2(\delta)\right)$, let
\begin{align}
p(y) &=2 \gamma_1(\delta)-\gamma_1(h(y))\\
q(y) &=\gamma_1(k(y)).    
\end{align}  
The function $q$ gives the $e_1$-coordinate of a point in $\gamma(L)$ with a given $e_2$-coordinate. If we begin with the point in $\gamma(K)$ with a given $e_2$-coordinate, then $p$ gives the $e_1$-coordinate of the reflection of this point over the line $x=\delta$. 
\end{definition}
The next two lemmas show that the lower curve curves faster than the upper one. 
\begin{lemma}\label{lem:e1 and angle comparison}
For each $s \in[\delta, \eta)$, let $\bar{s}$ be the unique point in $K$ so that $\gamma_2(\bar{s})=\gamma_2(s)$. Then the following hold:
$$
\begin{gathered}
\gamma_1(\bar{s})-\gamma_1(\delta) \geq \gamma_1(\delta)-\gamma_1(s), \\
\theta\left(\gamma^{\prime}(s)\right) \geq 2 \pi-\theta\left(\gamma^{\prime}(\bar{s})\right) .
\end{gathered}
$$    
\end{lemma}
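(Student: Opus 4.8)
\section*{Proof proposal for Lemma \ref{lem:e1 and angle comparison}}

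The plan is to reparametrize both the upper curve $K$ and the lower curve $L$ by their common vertical coordinate $y=\gamma_2$, and to exploit that the two claimed inequalities are coupled: once the angle inequality is known, integrating it produces the $e_1$-coordinate inequality. On $K$ the height $\gamma_2$ is increasing in arclength (since $\dot\gamma(s)$ lies in the II quadrant) and on $L$ it is decreasing, so for $y$ in the relevant range the two points of $\gamma$ at height $y$ are exactly $\gamma(h(y))\in K$ and $\gamma(k(y))\in L$, with $h(y)<\delta<k(y)$. Writing $\phi_u(y),\phi_\ell(y)$ for the corresponding tangent angles, one has $\frac{d\gamma_1}{dy}=\cot\phi$ along each curve, hence $\frac{d}{dy}\left(\gamma_1(k(y))-2\gamma_1(\delta)+\gamma_1(h(y))\right)=\cot\phi_\ell(y)+\cot\phi_u(y)$, and since $\cot$ is strictly decreasing on $(0,\pi/2)$ this derivative is $\le 0$ exactly when $\phi_\ell(y)\ge 2\pi-\phi_u(y)$. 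As $y\to\gamma_2(\delta)$ both $\gamma_1(k(y))$ and $2\gamma_1(\delta)-\gamma_1(h(y))$ tend to $\gamma_1(\delta)$, so integrating this identity from $y$ up to $\gamma_2(\delta)$ shows that the first inequality at a parameter $s$ follows once the second inequality is known at every parameter in $[\delta,s]$. Thus it suffices to prove the second (angle) inequality for all $s\in[\delta,\eta)$.

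For the angle inequality I would argue by first failure, via an ODE comparison. After the substitution $t:=\gamma_2(\delta)-y$, both $\phi_\ell$ and $\tilde\phi_u:=2\pi-\phi_u$, viewed as functions of $t$, solve an equation of the form $\frac{d\varphi}{dt}=\frac{a(t)}{\abs{\sin\varphi}}$ with $\varphi(0)=\pi$ --- this is just $\frac{d\phi}{ds}=\kappa$ together with $\frac{ds}{dy}=1/\sin\phi$ on each curve and $\sin\tilde\phi_u=-\sin\phi_u$ --- where the coefficient is $a=\kappa(k(y))=\kappa(s)$ for $\phi_\ell$ and $a=\kappa(h(y))=\kappa(\bar s)$ for $\tilde\phi_u$, and $0<\kappa(\bar s)\le\kappa(s)$ for every $s\in[\delta,\eta)$ directly from Definition \ref{defn:lower-curve}(2) and the positivity of $\kappa$ on $K$. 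Suppose the angle inequality failed somewhere in $(\delta,\eta)$, and let $s^*$ be the infimum of such parameters. By continuity $\phi_\ell=\tilde\phi_u$ at $s^*$ and $\phi_\ell\ge\tilde\phi_u$ on $[\delta,s^*]$, and at $s^*$ one computes $\frac{d}{dt}(\phi_\ell-\tilde\phi_u)=\frac{\kappa(s^*)-\kappa(\bar{s^*})}{\abs{\sin\phi_\ell}}\ge 0$, so $\phi_\ell-\tilde\phi_u$ cannot become negative immediately afterwards --- a contradiction, provided this inequality is strict.

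The hard part is precisely the degenerate situation $\kappa(\bar{s^*})=\kappa(s^*)$, in which the first-order comparison is inconclusive; this always occurs at the base point $s=\delta$ (where $\bar\delta=\delta$, and where the ODE is moreover singular since $\sin\varphi=0$ there). Near $\delta$ I would resolve it by a second-order expansion: since $\dot\kappa(\delta)>0$ by Lemma \ref{lem:prop delta}(6) and $\bar s<s$, one has $\kappa(s)-\kappa(\bar s)\sim\dot\kappa(\delta)\,(s-\bar s)>0$, and feeding this into the singular ODE, where $\varphi-\pi\sim\sqrt{2\int_0^t a}$, gives $\phi_\ell>\tilde\phi_u$ for all small $t>0$, so the comparison starts strictly. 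For a degenerate height in the interior I would differentiate the weighted mean curvature equation $\kappa+(n-2)\lambda+H_1=\mathrm{const}$ and use strict admissibility ($\psi''+g'>0$) together with the monotonicity of $\abs{\gamma}$ from Lemma \ref{lem:tangent restrict} (which gives $\abs{\gamma(s)}\le\abs{\gamma(\bar s)}$) to exclude $\kappa(\bar s)=\kappa(s)$ from persisting; and if it did persist on an interval, the two arcs of $\gamma$ would solve the same second-order ODE $\mathbf{H}_\gamma=\mathrm{const}$ with the same data, forcing $\gamma$ to be a centered circle by uniqueness, as in Lemma \ref{lem:ODE_uniqueness}, contrary to the standing assumption. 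With the angle inequality established, the $e_1$-coordinate inequality then follows from the integration identity of the first paragraph.
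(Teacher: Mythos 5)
Your overall strategy --- reparametrize both arcs by the common height $y=\gamma_2$, derive the displacement inequality by integrating the angle inequality, and prove the angle inequality by comparing the two tangent-angle evolution equations --- is a self-contained replacement for the paper's citation of Proposition~3.8 in \cite{chambers}. The reduction of the $e_1$-coordinate inequality to the angle inequality via $\frac{d}{dy}\bigl(q(y)-p(y)\bigr)=\cot\phi_\ell(y)+\cot\phi_u(y)$ and the boundary value $q(\gamma_2(\delta))-p(\gamma_2(\delta))=0$ is correct.

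The gap is in the angle inequality. Your first-failure argument needs a \emph{strict} sign at a point $t^*$ where $\phi_\ell(t^*)=\tilde\phi_u(t^*)$, but Definition~\ref{defn:lower-curve}(2) only hands you $\kappa(s)\geq\kappa(\bar s)$. You correctly flag this, but your repair does not work. For an interior degeneracy you propose to differentiate the mean curvature equation and invoke ODE uniqueness as in Lemma~\ref{lem:ODE_uniqueness} to conclude $\gamma$ is a centered circle. This has two problems: first, even if $\phi_\ell=\tilde\phi_u$ and $\kappa(s)=\kappa(\bar s)$ on an interval, the two arcs are at \emph{different} positions (their $e_1$-coordinates need not agree; indeed proving they are related is the content of the lemma), so they do not share initial data and the uniqueness argument for $\Ha_\gamma=\mathrm{const}$ does not apply. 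Second, you only address equality \emph{persisting on an interval}, but the first-failure step already breaks down at a single isolated point where $\kappa(s^*)=\kappa(\bar{s^*})$; to rule those out you would effectively need the strict inequality $\kappa(s)>\kappa(\bar s)$, which is Lemma~\ref{lem:lower curve curves more} and itself depends on this lemma --- a circularity.

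The fix is to integrate your own ODE. Since $\sin\varphi<0$ for both $\varphi_\ell$ and $\tilde\varphi_u$, the equation $\frac{d\varphi}{dt}=\frac{a(t)}{|\sin\varphi|}$ gives $\frac{d}{dt}\cos\varphi=-\sin\varphi\cdot\frac{a}{|\sin\varphi|}=a(t)$, so
\[
\cos\varphi_\ell(t)-\cos\tilde\varphi_u(t)=\int_0^t\bigl(\kappa(k(y(\tau)))-\kappa(h(y(\tau)))\bigr)\,d\tau\geq 0,
\]
using only the weak inequality from Definition~\ref{defn:lower-curve}(2). Since $\cos$ is strictly increasing on $[\pi,3\pi/2]$ and both angles lie there, $\varphi_\ell\geq\tilde\varphi_u$ follows at once --- no strictness, no first-failure dichotomy, and no separate treatment of the singular base point $t=0$ (the identity absorbs the $|\sin\varphi|^{-1}$ blow-up). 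This is essentially the content of the cited Proposition~3.8 of Chambers, and with it your reduction to the displacement inequality closes the proof.
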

\begin{proof}
The functions $p$ and $q$ satisfy the assumptions of Proposition 3.8 in \cite{chambers}, by the definition of Lower Curve $L$. Thus we get $p\leq q$ which implies the first inequality while $\theta(t_p)\geq \theta(t_q)$ implies the second inequality. For more details, see proof Lemma 6.13 in \cite{BoyerBrownChambersLovingTammen+2016}.
\end{proof}
\begin{lemma}\label{lem:lower curve curves more} Let $s \in(\delta, \eta)$, and suppose that $\gamma^{\prime}(s) \neq(0,-1)$. If $\bar{s}$ is the unique point in $K$ so that $\gamma_2(\bar{s})=\gamma_2(s)$, then $\kappa(s)>\kappa(\bar{s})$.    
\end{lemma}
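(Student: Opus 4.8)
The plan is a first-contact argument in the spirit of Chambers \cite{chambers} and \cite{BoyerBrownChambersLovingTammen+2016}, the new feature being the weighted term $H_1$. By the Remark following Definition \ref{defn:lower-curve}, the inequality $\kappa(s)>\kappa(\bar s)$ holds on some initial interval $(\delta,\delta+\varepsilon)$: indeed $\kappa(\delta)>0$ and $\dot\kappa(\delta)>0$ by Lemma \ref{lem:prop delta}, so for $s$ slightly above $\delta$ one has $\kappa(s)>\kappa(\delta)$, while since $\gamma_2$ is strictly decreasing past $\delta$ the matching point $\bar s=h(\gamma_2(s))$ lies just below $\delta$ in $K$, where $\kappa(\bar s)<\kappa(\delta)$. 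Suppose the lemma fails; since $s\mapsto\bar s$ is continuously differentiable on the relevant range (as $\gamma$ is $C^{3,\alpha}$ and $\dot\gamma_2\neq 0$ both on $(\delta,\eta)$ and on $(0,\delta)$), there is a first contact point $s_0\in(\delta,\eta)$ with $\kappa(s_0)=\kappa(\bar s_0)$ and $\kappa(s)>\kappa(\bar s)$ for $s\in(\delta,s_0)$. Writing $\Delta(s):=\kappa(s)-\kappa(\bar s)$, we then have $\Delta(s_0)=0$ and $\Delta'(s_0)\leq 0$, and it suffices to derive the contradiction $\Delta'(s_0)>0$.

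First I would exploit constancy of the weighted mean curvature. Evaluating $\Ha_\gamma=\kappa+(n-2)\lambda+H_1=c$ (Lemma \ref{lem:regularity_gamma}) at $s_0$ and at $\bar s_0$ and using $\kappa(s_0)=\kappa(\bar s_0)$ gives the identity $(n-2)\bigl(\lambda(s_0)-\lambda(\bar s_0)\bigr)=H_1(\bar s_0)-H_1(s_0)$. Differentiating $\Ha_\gamma=c$ along $\gamma$ yields $\dot\kappa=-(n-2)\dot\lambda-\dot H_1$; a Frenet computation ($\ddot\gamma=\kappa\nu$, $\dot\nu=-\kappa\dot\gamma$) together with $\lambda=-\dot\gamma_1/\gamma_2$ produces the clean identity $\dot\lambda=\frac{\dot\gamma_2}{\gamma_2}(\kappa-\lambda)$, and a similar computation gives $\dot H_1$ explicitly in terms of $\psi',\psi'',g'$ at $|\gamma|$ and of the scalars $\frac{\gamma\cdot\dot\gamma}{|\gamma|}$, $\frac{\gamma\cdot\nu}{|\gamma|}$, $\kappa$.

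Now I assemble $\Delta'(s_0)=\dot\kappa(s_0)-\dot\kappa(\bar s_0)\,\frac{d\bar s}{ds}\big|_{s_0}$, where $\frac{d\bar s}{ds}\big|_{s_0}=\dot\gamma_2(s_0)/\dot\gamma_2(\bar s_0)<0$ since $\dot\gamma_2(s_0)<0$ on $L$ and $\dot\gamma_2(\bar s_0)>0$ on $K$. Because $\gamma_2(s_0)=\gamma_2(\bar s_0)$ and $\kappa(s_0)=\kappa(\bar s_0)$, the $\dot\lambda$-terms collapse, after using $\dot\lambda=\frac{\dot\gamma_2}{\gamma_2}(\kappa-\lambda)$, to the single term $(n-2)\frac{\dot\gamma_2(s_0)}{\gamma_2(s_0)}\bigl(\lambda(s_0)-\lambda(\bar s_0)\bigr)$, which by the previous paragraph equals $\frac{\dot\gamma_2(s_0)}{\gamma_2(s_0)}\bigl(H_1(\bar s_0)-H_1(s_0)\bigr)$. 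After this reduction, $\Delta'(s_0)>0$ becomes a single scalar inequality among the values and first derivatives of $\psi$ and $g$ at the two points $\gamma(s_0)$ and $\gamma(\bar s_0)$.

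To close that inequality I would invoke the comparisons already established: $|\gamma(s_0)|\leq|\gamma(\bar s_0)|$ by monotonicity of $|\gamma|$ (Lemma \ref{lem:tangent restrict}), the inequalities $\gamma_1(\bar s_0)-\gamma_1(\delta)\geq\gamma_1(\delta)-\gamma_1(s_0)$ and $\theta(\gamma'(s_0))\geq 2\pi-\theta(\gamma'(\bar s_0))$ from Lemma \ref{lem:e1 and angle comparison}, $F(\bar s_0)\leq\gamma_1(\delta)$ from Lemma \ref{lem:prop delta}(3), and the quadrant information for $\dot\gamma$ on $K$ and on $L$. The crucial new input --- and the reason this step needs the hypotheses of Theorem \ref{thm:uniqueness} rather than bare admissibility --- is the \emph{monotonicity} of the weights, $\psi'\geq 0$ and $g'\geq 0$, together with strict admissibility $\psi''+g'>0$: these are precisely the sign conditions that allow one to compare $\psi'(|\gamma(s_0)|),\psi''(|\gamma(s_0)|),g'(|\gamma(s_0)|)$ with their counterparts at $\bar s_0$ once $|\gamma(s_0)|\leq|\gamma(\bar s_0)|$ is known. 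The degenerate case $\gamma'(s_0)=(0,-1)$ is excluded from the statement precisely because there one only obtains the non-strict $\kappa(s_0)\geq\kappa(\bar s_0)$, and the edge case $\gamma_2(s_0)=0$ is handled separately. I expect this final inequality to be the main obstacle: in the unweighted setting it is a short curvature comparison, but here the weighted terms $\psi',\psi'',g'$ have to be signed correctly at both points at once, and it is exactly there that every structural assumption on $(\psi,g)$ gets used.
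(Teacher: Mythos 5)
Your approach is a first-contact (maximum principle) argument, which is genuinely different from the paper's proof, and the difference is not merely stylistic: the paper gives a \emph{direct pointwise} comparison. Writing out $\Ha_\gamma=c$ at $s$ and $\bar s$ and cancelling gives
\[
\kappa(s)-\kappa(\bar s)=(n-2)\bigl(\lambda(\bar s)-\lambda(s)\bigr)+H_1(\bar s)-H_1(s),
\]
and the paper then shows, for \emph{every} $s\in(\delta,\eta)$ with $\gamma'(s)\neq(0,-1)$, that $\lambda(\bar s)\geq\lambda(s)$ (from the angle comparison in Lemma \ref{lem:e1 and angle comparison}) and, separately, that $H_1(\bar s)>H_1(s)$. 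The second inequality is the heart of the matter: after checking that $(\gamma'(\bar s),\gamma'(s))$ satisfy the admissibility conditions of Chambers' Proposition 3.9 (using Lemma \ref{lem:prop-upper-curve}(2), Lemma \ref{lem:prop delta}(3), and Lemma \ref{lem:e1 and angle comparison}), one obtains the strict inequalities $\abs{\gamma(\bar s)}>\abs{\gamma(s)}$ and $\gamma(\bar s)\cdot\nu(\bar s)/\abs{\gamma(\bar s)}>\gamma(s)\cdot\nu(s)/\abs{\gamma(s)}$, and then the monotonicity of $\psi'$ and $g$ (and of $\psi'+g$) is fed into a short algebraic chain to get $H_1(\bar s)>H_1(s)$. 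No differentiation in $s$ is needed at all.

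Your reduction of $\Delta'(s_0)$ via $\dot\lambda=\frac{\dot\gamma_2}{\gamma_2}(\kappa-\lambda)$ and $\frac{d\bar s}{ds}=\dot\gamma_2(s)/\dot\gamma_2(\bar s)$ is correct as far as it goes, but the proposal then stalls exactly where the proof has to be done, and you say so. Worse, the setup you arrive at is unfavorable: you correctly derive that at a first-contact point $(n-2)\bigl(\lambda(s_0)-\lambda(\bar s_0)\bigr)=H_1(\bar s_0)-H_1(s_0)$, so the leading contribution to $\Delta'(s_0)$ is $\frac{\dot\gamma_2(s_0)}{\gamma_2(s_0)}\bigl(H_1(\bar s_0)-H_1(s_0)\bigr)$, with $\dot\gamma_2(s_0)<0$; if the direct comparison $H_1(\bar s_0)>H_1(s_0)$ holds (which is what one must ultimately prove) this term is \emph{negative}, so you would have to show the remaining $\dot H_1$-terms overcome it. That is a harder sign analysis than the direct route, and you have not attempted it. Moreover, if you \emph{could} establish $H_1(\bar s_0)>H_1(s_0)$ at $s_0$, then combined with $\lambda(\bar s_0)\geq\lambda(s_0)$ you would already get $\kappa(s_0)-\kappa(\bar s_0)>0$ directly from the displayed identity, contradicting $\Delta(s_0)=0$ without any derivative — so the first-contact scaffolding would be doing no work. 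I would drop the continuity argument entirely and instead carry out the $H_1$-comparison pointwise, verifying the three admissibility conditions for Chambers' Proposition 3.9 and then using monotonicity of $\psi$, $g$ to close the inequality as the paper does.
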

\begin{proof}
Since $H_0+H_1$ is constant we have,
$$
\kappa(s)+(n-2) \lambda(s)+H_1(s)=\kappa(\bar{s})+(n-2) \lambda(\bar{s})+H_1(\bar{s}) .
$$
It can be shown using right triangle trigonometry and the second inequality from Lemma \ref{lem:e1 and angle comparison} that $\lambda(s) \leq \lambda(\bar{s})$. Thus, to prove that $\kappa(s)>\kappa(\bar{s})$, it suffices to prove that $H_1(s)<H_1(\bar{s})$. Recall that 
\begin{align}
    H_1(s) = \psi'(|\gamma(s)|) \frac{\gamma(s)\cdot \nu(s)}{|\gamma(s)|} + g (|\gamma(s)|).
\end{align}
We show that $(v_1,v_2) = (\gamma^{\prime}(\bar{s}),\gamma^{\prime}(s))$ are admissible with respect to $\gamma(\bar{s})$ and $\gamma(s)$ and then apply Proposition 3.9 in \cite{chambers}. To this end, we check
\begin{enumerate}
    \item $F(\bar{s}) \geq F(0) >0$ by property (2) in Lemma \ref{lem:prop-upper-curve}.
    \item $R(s) \geq R(\bar{s})$ holds since $\lambda(s)\leq \lambda(\bar{s})$.
    \item $\gamma_1(\bar{s})-F(\bar{s}) \geq \gamma_1(\bar{s})-\gamma_1(\delta) \geq \gamma_1(\delta)-\gamma_1(s) \geq F(\bar{s})-\gamma_1(s)$ where we used property (3) in Lemma \ref{lem:prop delta} to get $\gamma_1(\delta)\geq F(\bar{s})$ and the second inequality in Lemma \ref{lem:e1 and angle comparison}.
\end{enumerate}
Since $\gamma^{\prime}(s)$ is not equal to $(0,-1), \gamma^{\prime}(s)$ lies strictly in the third quadrant we get strict inequalities from Proposition 3.9 in \cite{chambers}
\begin{align}
    |\gamma(\bar{s})| &> |\gamma(s)|\\
    \frac{\gamma(\bar{s})\cdot \nu(\bar{s})}{|\gamma(\bar{s})|} &> \frac{\gamma(s)\cdot \nu(s)}{|\gamma(s)|}.
\end{align}
Thus,
\begin{align*}
    H_1(\bar s)&=\psi'(\abs{\gamma(\bar s)})\frac{\gamma(\bar s)\cdot\nu(\bar s)}{\abs{\gamma(\bar s)}^2}+g(\abs{\gamma(\bar s)})\\
    &=\Bigl(\psi'(\abs{\gamma(\bar s)})+g(\abs{\gamma(\bar s)})\Bigr)\frac{\gamma(\bar s)\cdot\nu(\bar s)}{\abs{\gamma(\bar s)}^2}+g(\abs{\gamma(\bar s)})\Bigl(1-\frac{\gamma(\bar s)\cdot\nu(\bar s)}{\abs{\gamma(\bar s)}^2}\Bigr)\\
    &>\Bigl(\psi'(\abs{\gamma(s)})+g(\abs{\gamma(s)})\Bigr)\frac{\gamma(\bar s)\cdot\nu(\bar s)}{\abs{\gamma(\bar s)}^2}+g(\abs{\gamma(\bar s)})\Bigl(1-\frac{\gamma(\bar s)\cdot\nu(\bar s)}{\abs{\gamma(\bar s)}^2}\Bigr)\\
    &=\psi'(\abs{\gamma(s)})\frac{\gamma(\bar s)\cdot\nu(\bar s)}{\abs{\gamma(\bar s)}^2}+g(\abs{\gamma(s)})+\Bigl(g(\abs{\gamma(\bar s)})-g(\abs{\gamma(s)})\Bigr)\Bigl(1-\frac{\gamma(\bar s)\cdot\nu(\bar s)}{\abs{\gamma(\bar s)}^2}\Bigr)\\
    &>H_1(s),
\end{align*}
since $\nu(\bar s)$ and $\gamma(\bar s)$ are in the second quadrant.
\end{proof}
We can now prove that the lower curve ends before crossing the axis $e_1$, completing the second (and most technical step) in the proof of Theorem \ref{thm:uniqueness}.
\begin{lemma}\label{lem:lower curve ends early}
If $\gamma$ is not a centered circle, then $\eta<\beta$ and $\eta \in L$.
\end{lemma}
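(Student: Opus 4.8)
The plan is to adapt to our weighted ODE~\eqref{eq:ODE_gamma} the corresponding step in Chambers~\cite{chambers} (see also Boyer--Brown--Chambers--Loving--Tammen~\cite{BoyerBrownChambersLovingTammen+2016}), proving separately that $L$ is closed (so $\eta\in L$) and that the lower curve cannot reach the $e_1$-axis (so $\eta<\beta$).

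\emph{Closedness and the shape of the endpoint.} By Lemma~\ref{lem:regularity_gamma} the velocity $\dot\gamma$, the curvature $\kappa$ and the auxiliary curvature $\lambda$ are continuous on $[\delta,\beta)$, so conditions (1)--(2) of Definition~\ref{defn:lower-curve} are closed along $[\delta,\eta]$ once we know $\dot\gamma(\eta)\neq(-1,0)$; the latter holds because $\dot\gamma(\delta)=(-1,0)$ with $\dot\kappa(\delta)>0$ (Lemma~\ref{lem:prop delta}(5)--(6)) pushes $\dot\gamma$ strictly into the third quadrant immediately after $\delta$, and Lemma~\ref{lem:lower curve curves more} keeps $\kappa$ above its matching upper-curve value, which (via $\Ha_\gamma=\text{const}$ and the geometric comparison $\lambda(s)\leq\lambda(\bar s)$) forbids $\dot\gamma$ from swinging back to $(-1,0)$. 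Hence $\eta\in L$. Moreover, when $\eta<\beta$ the vector $\dot\gamma(\eta)$ cannot lie strictly in the third quadrant: by Lemma~\ref{lem:lower curve curves more} and continuity condition (2) would persist in a right neighbourhood of $\eta$, as would condition (1), contradicting $\eta=\sup L$. Therefore $\dot\gamma(\eta)=(0,-1)$ whenever $\eta<\beta$.

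\emph{The lower curve stops before the axis.} Suppose for contradiction that $\eta=\beta$. Since $\gamma$ is a simple closed curve symmetric about the $e_1$-axis (Lemma~\ref{lem:regularity_gamma}), it meets that axis only at $\gamma(0)$ and $\gamma(\beta)$, so $\gamma_2(s)\downarrow0$ and the matching point $\bar s\in K$ tends to $0$ as $s\to\beta^-$, while $\dot\gamma$ stays in the closed third quadrant; by the angle inequality of Lemma~\ref{lem:e1 and angle comparison}, $\theta(\dot\gamma(s))\geq 2\pi-\theta(\dot\gamma(\bar s))\to 3\pi/2$, so $\lim_{s\to\beta^-}\dot\gamma(s)=(0,-1)$ (here \eqref{eq:no_apple_points} is used, legitimately, to rule out a genuine corner at $\gamma(\beta)$, invoking the regularity of $\partial E^\star_\xi$ exactly as in the proof of Lemma~\ref{lem:tangent restrict}). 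Over $[\delta,\beta]$ the tangent thus turns by exactly $\pi/2$, the same total turning the upper curve performs over $[0,\delta]$, but the curvature comparison of Lemma~\ref{lem:lower curve curves more} together with the angle comparison of Lemma~\ref{lem:e1 and angle comparison} force the lower curve to turn strictly faster per unit drop of $\gamma_2$; matching the two ODEs through the common range of $\gamma_2$ then yields $\gamma_2(\eta)>0$, contradicting $\gamma_2(\beta)=0$. (Equivalently, a Frenet computation at $s\to\beta^-$ gives $\lambda(\beta)=\kappa(\beta)$, and combining the endpoint asymptotics with~\eqref{eq:tangent_restriction}, with~\eqref{eq:ODE_gamma} evaluated at $0$ and $\beta$, and with the monotonicity of $\psi,g$ and the bound $|\gamma(\beta)|\leq R^\star$, one finds the canonical circle $C_\beta$ is centred at the origin; since $\gamma$ and $C_\beta$ solve the same ODE and agree at $\beta$ to the needed order, a limiting form of Lemma~\ref{lem:ODE_uniqueness} forces $\gamma=C_\beta$, a centered circle, contrary to hypothesis.) Either way $\eta<\beta$, and by the first part $\dot\gamma(\eta)=(0,-1)$ and $\eta\in L$.

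\emph{Main obstacle.} The delicate point is the quantitative comparison near the axis: reparametrising the upper and lower portions of $\gamma$ by $\gamma_2$ over their common range and combining the angle inequality of Lemma~\ref{lem:e1 and angle comparison} with the curvature inequality of Lemma~\ref{lem:lower curve curves more} so as to conclude that the lower curve becomes vertical strictly before $\gamma_2$ vanishes (or, in the alternative route, that the degenerate endpoint configuration forces $\gamma$ to be a centered circle, where the subtlety is controlling $\dot\gamma$, $R$, $\lambda$, $F$ in the limit and upgrading Lemma~\ref{lem:ODE_uniqueness} to this boundary case). By contrast, the closedness of $L$ and the exclusion of the return $\dot\gamma(\eta)=(-1,0)$ are routine consequences of the monotonicity of $\kappa$ along the lower curve established above.
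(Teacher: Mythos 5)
Your proposal is correct and follows essentially the same route as the paper: both defer the decisive step to Chambers' Proposition 3.8 in combination with Lemma~\ref{lem:lower curve curves more}, concluding that $\eta=\beta$ would force some $z\in L$ with $\gamma'(z)$ leaving the third quadrant, while $\eta\in L$ follows from continuity of $\gamma'$, $\kappa$, $\lambda$ together with $\kappa>0$ on the lower curve (so the tangent rotates monotonically away from $(-1,0)$). You flesh out the asymptotics $\dot\gamma(s)\to(0,-1)$ as $s\to\beta^-$ via the angle inequality of Lemma~\ref{lem:e1 and angle comparison}, which is a helpful addition, and you correctly flag the reparametrization-by-$\gamma_2$ comparison as the delicate step that Chambers' Proposition 3.8 supplies. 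One caution: the parenthetical ``alternative route'' via the circle $C_\beta$ is shaky. At $s=\beta$ the point $\gamma(\beta)$ lies on $e_1$ with vertical tangent, so $C_\beta$ is only defined as a limit; while a Taylor computation does give $F(\beta)=\gamma_1(\beta)-1/\kappa(\beta)$ and $\lambda(\beta)=\kappa(\beta)$, your assertion that the admissibility and tangent-restriction conditions force $F(\beta)=0$ (hence $C_\beta$ centered at the origin) is not substantiated --- the comparison of \eqref{eq:ODE_gamma} at $s=0$ and $s=\beta$ only yields $\kappa(\beta)\geq\kappa(0)$, not $F(\beta)=0$ --- and Lemma~\ref{lem:ODE_uniqueness} is stated for $s\in[0,\beta)$, so extending it to the boundary requires a separate argument. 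The primary route is the one that actually closes the proof, and it matches the paper's.
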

\begin{proof} 
First by using Proposition 3.8 in \cite{chambers} and \ref{lem:lower curve curves more} one can deduce that $\eta = \beta$ implies the existence of point $z\in L$, such that $\gamma'(z)$ does not lie in third quadrant which is a contradiction. Thus $\eta < \beta$. To see why $\eta \in L$ we simply observe that $\gamma$ is smooth at $\eta$, $\gamma'(\eta)$ is in the third quadrant and $\kappa(\eta)\geq \kappa(\bar{\eta}).$ For more details, see proof of Lemma 3.14 in \cite{chambers}. 
\end{proof}
\begin{lemma}\label{lem:gamma_1(eta)}
We have that $\gamma_1(\eta)>0$.    
\end{lemma}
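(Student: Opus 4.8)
\emph{Proof proposal.} The plan is to argue by contradiction: assume $\gamma_1(\eta)\le 0$ and derive a contradiction with $\gamma$ being a simple, axially symmetric curve.

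First I would establish that $\gamma_2(\eta)>0$. On the closed lower curve $[\delta,\eta]$ the tangent $\dot\gamma(t)$ lies in the third quadrant and, for $t>\delta$, is never $(-1,0)$ (Definition \ref{defn:lower-curve}); since $\dot\gamma(t)$ is a unit vector this forces $\dot\gamma_2(t)<0$ on $(\delta,\eta]$, so $\gamma_2$ is strictly decreasing there, starting from $\gamma_2(\delta)>0$ (because $\gamma_2$ is non-decreasing and not identically zero on the upper curve, cf. Lemma \ref{lem:prop-upper-curve} and Lemma \ref{lem:prop delta}). Recalling $\delta>0$ (Lemma \ref{lem:prop-upper-curve}) and $\eta<\beta$ (Lemma \ref{lem:lower curve ends early}), if $\gamma_2(\eta)\le 0$ the intermediate value theorem produces $s_1\in(\delta,\eta]\subset(0,\beta)$ with $\gamma_2(s_1)=0$; the axial symmetry $\gamma_1(-s)=\gamma_1(s)$, $\gamma_2(-s)=-\gamma_2(s)$ of Lemma \ref{lem:regularity_gamma} then gives $\gamma(-s_1)=\gamma(s_1)$ with $-s_1\ne s_1$ and $s_1<\beta$, contradicting that $\gamma$ is simple on $(-\beta,\beta)$. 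Hence $0<\gamma_2(\eta)<\gamma_2(\delta)$, and in particular the point $\bar\eta\in K$ with $\gamma_2(\bar\eta)=\gamma_2(\eta)$ is well defined (the $\bar s$ of Lemma \ref{lem:e1 and angle comparison}).

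Next I would sandwich $\gamma_1(\bar\eta)$. On one side, the first inequality of Lemma \ref{lem:e1 and angle comparison}, extended to the endpoint $s=\eta$ by continuity (legitimate since $\eta\in L$ by Lemma \ref{lem:lower curve ends early}), gives $\gamma_1(\bar\eta)-\gamma_1(\delta)\ge\gamma_1(\delta)-\gamma_1(\eta)\ge\gamma_1(\delta)$, i.e. $\gamma_1(\bar\eta)\ge 2\gamma_1(\delta)$. On the other side, $\bar\eta\in K\subset[0,\delta]$, so Lemma \ref{lem:prop delta}(3) gives $F(\bar\eta)\le\gamma_1(\delta)$, Lemma \ref{lem:prop-upper-curve}(4) gives $R(\bar\eta)\le F(\bar\eta)$, and since $\gamma(\bar\eta)$ lies on $C_{\bar\eta}$ we have $(\gamma_1(\bar\eta)-F(\bar\eta))^2+\gamma_2(\bar\eta)^2=R(\bar\eta)^2$, whence $\gamma_1(\bar\eta)\le F(\bar\eta)+R(\bar\eta)\le 2\gamma_1(\delta)$. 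Therefore $\gamma_1(\bar\eta)=2\gamma_1(\delta)$ and every inequality just used is an equality; in particular $R(\bar\eta)=F(\bar\eta)=\gamma_1(\delta)$ and $\gamma_1(\bar\eta)-F(\bar\eta)=\gamma_1(\delta)=R(\bar\eta)$, so $\gamma_2(\bar\eta)^2=R(\bar\eta)^2-(\gamma_1(\bar\eta)-F(\bar\eta))^2=0$. Thus $\gamma_2(\eta)=\gamma_2(\bar\eta)=0$, contradicting the previous paragraph, and so $\gamma_1(\eta)>0$.

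I expect the first step — ruling out $\gamma_2(\eta)\le 0$ via axial symmetry and embeddedness of $\gamma$ — to be the only genuinely delicate point, together with the routine verification that Lemma \ref{lem:e1 and angle comparison} survives at the endpoint $s=\eta$. Once $0<\gamma_2(\eta)<\gamma_2(\delta)$ is secured, the remainder is elementary Euclidean geometry of the canonical circle $C_{\bar\eta}$ combined with the inequalities of Lemmas \ref{lem:prop-upper-curve}, \ref{lem:prop delta}, and \ref{lem:e1 and angle comparison}; notably no fresh estimate involving $\psi$ or $g$ is required, so this lemma parallels the corresponding step of Chambers' unweighted argument almost verbatim.
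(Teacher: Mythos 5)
Your proposal is correct and uses the same chain of inequalities as the paper (Lemmas \ref{lem:prop-upper-curve}(4), \ref{lem:prop delta}(3), and \ref{lem:e1 and angle comparison} applied at $s=\eta$ by continuity, together with the elementary geometry of the comparison circle $C_{\bar\eta}$), merely recast as a proof by contradiction. The one substantive addition on your part is the explicit verification that $\gamma_2(\eta)>0$ via the simplicity and axial symmetry of $\gamma$: the paper's direct chain $\gamma_1(\eta)\geq 2F(\bar\eta)-\gamma_1(\bar\eta)>F(\bar\eta)-R(\bar\eta)\geq 0$ tacitly relies on the strict inequality $\gamma_1(\bar\eta)-F(\bar\eta)<R(\bar\eta)$, which in turn requires $\gamma_2(\bar\eta)=\gamma_2(\eta)\neq 0$, a point the paper does not spell out.
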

\begin{proof}
By the second inequality in Lemma \ref{lem:e1 and angle comparison} we have 
\begin{align}
\gamma_1(\bar{\eta})-\gamma_1(\delta) \geq \gamma_1(\delta)-\gamma_1(\eta)
\end{align}
Furthermore, $\gamma_1(\delta)=F(\delta) \geq F(\bar{\eta})$ and therefore
\begin{align}
\gamma_1(\bar{\eta})-F(\bar{\eta}) \geq \gamma_1(\bar{\eta})-\gamma_1(\delta) \geq \gamma_1(\delta)-\gamma_1(\eta) \geq F(\bar{\eta})-\gamma_1(\eta).
\end{align}
Finally, $\gamma_1(\bar{\eta})-F(\bar{\eta})<R(\bar{\eta})$, because $R(\bar{\eta})$ is the distance from $(F(\bar{\eta}), 0)$ to $\gamma(\eta)$. It follows that $\gamma_1(\eta) \geq$ $F(\bar{\eta})-\left(\gamma_1(\bar{\eta})-F(\bar{\eta})\right)>F(\bar{\eta})-R(\bar{\eta})>0$ by property (3) in Lemma \ref{lem:prop-upper-curve}. 
\end{proof}
In the following two lemmas, we prove that $\gamma$ curls after the end of the lower curve.
\begin{lemma}\label{lem:IV quad positive curvature}
 Let $s \in(0, \beta)$. If $\gamma_1(s) \geq 0$ and $\gamma^{\prime}(s)$ is in the fourth quadrant, then $\kappa(s)>0$.   
\end{lemma}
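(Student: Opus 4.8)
The idea is to read the inequality off the stationarity identity of Lemma~\ref{lem:regularity_gamma}, namely $\mathbf{H}_\gamma=H_0+H_1=c$ with $H_0=\kappa+(n-2)\lambda$, combined with the mean convexity $H_0\ge 0$ of Lemma~\ref{lem:mean-convexity}. For $n\ge 3$ the decisive point will be the sign of the canonical curvature $\lambda(s)$, while for $n=2$ (where the $\lambda$-term disappears) it will instead be the fact that $H_1$ is maximal along $\partial E^\star_\xi$ at a point of greatest distance from the origin.

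First I would determine $\lambda(s)$. Since $\gamma$ is a simple closed curve, axially symmetric about the $e_1$-axis, it meets that axis only at $s=0$ and $s=\pm\beta$, so $\gamma_2$ has constant sign on $(0,\beta)$; orienting $\gamma$ so that $\gamma|_{[0,\beta]}$ traces the part of the profile in the closed upper half-plane, we get $\gamma_2(s)>0$ for $s\in(0,\beta)$. The canonical circle $C_s$ passes through $\gamma(s)$, is tangent there to $\dot\gamma(s)$, and has center $(F(s),0)$ on the axis, so the vector joining its center to $\gamma(s)$ is a multiple of $\nu(s)=(-\dot\gamma_2(s),\dot\gamma_1(s))$; matching the (positive) second coordinate $\gamma_2(s)$ forces that multiple to have the sign of $\dot\gamma_1(s)$, whence $R(s)=|\gamma_2(s)|/|\dot\gamma_1(s)|$ and, after fixing the sign so that $\lambda>0$ for centered circles, $\lambda(s)$ carries the sign of $-\dot\gamma_1(s)$ (equivalently, $C_s$ is tangent to $\gamma$ from the side of the outward normal). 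In the fourth quadrant $\dot\gamma_1(s)>0$, hence $\lambda(s)<0$, and therefore by Lemma~\ref{lem:mean-convexity}
\[
\kappa(s)=H_0(s)-(n-2)\lambda(s)\ge -(n-2)\lambda(s)>0
\]
as soon as $n\ge 3$.

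For $n=2$ we have $H_0=\kappa$, so mean convexity alone only yields $\kappa(s)\ge 0$ and one must use the ODE $\kappa(s)=c-H_1(\gamma(s))$ in full. Here I would invoke the chain of inequalities in the proof of Lemma~\ref{lem:mean-convexity} — which uses only $\psi'\ge 0$ (monotonicity of $\psi$), the bound $|\gamma(s)\cdot\nu(s)|\le|\gamma(s)|$, and the monotonicity of $r\mapsto\psi'(r)+g(r)$ coming from strict admissibility — to conclude that $H_1$ attains its maximum over $\partial E^\star_\xi$ at a point of greatest distance from the origin; since $\gamma(0)=R^\star e_1$ is such a point, $H_1(\gamma(s))\le H_1(\gamma(0))$. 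Evaluating the ODE at $s=0$ and using $\lambda(0)=\kappa(0)>0$ from Lemma~\ref{lem:prop-kappa(0)} gives $c=(n-1)\kappa(0)+H_1(\gamma(0))$, hence
\[
\kappa(s)=c-(n-2)\lambda(s)-H_1(\gamma(s))\ge (n-1)\kappa(0)-(n-2)\lambda(s),
\]
which for $n=2$ is $\ge\kappa(0)>0$ (and for $n\ge 3$ re-proves the previous bound once $\lambda(s)<0$ is known). The hypothesis $\gamma_1(s)\ge 0$, together with the fourth-quadrant condition, pins down the sign of $\gamma(s)\cdot\nu(s)$ and could be used instead to estimate $H_1(\gamma(s))$ by hand, but the route through the maximality of $H_1$ is cleaner.

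The step demanding the most care is the sign computation for $\lambda(s)$ in the second paragraph: one has to decide correctly on which side of $\gamma$ the circle $C_s$ lies — equivalently, whether the center $(F(s),0)$ is separated from the origin by $\gamma$ — which is exactly where the fourth-quadrant hypothesis enters, and where a sign slip would reverse the entire inequality.
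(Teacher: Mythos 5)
Your proof is correct and takes essentially the same route as the paper: both use the stationarity identity $\kappa+(n-2)\lambda+H_1=\mathrm{const}$, the sign $\lambda(s)\le 0$ forced by the fourth-quadrant hypothesis, the initial data $\kappa(0)=\lambda(0)>0$, and the inequality $H_1(\gamma(s))\le H_1(\gamma(0))$ coming from the chain in the proof of Lemma \ref{lem:mean-convexity}. The paper handles all $n\ge 2$ in a single line, namely $\kappa(s)=(n-1)\kappa(0)-(n-2)\lambda(s)+H_1(0)-H_1(s)>0$; your first, shorter argument for $n\ge 3$ via $H_0\ge 0$ alone is a harmless detour, since your second computation then reproduces the paper's uniform bound anyway.
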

\begin{proof}
Since $\gamma^{\prime}(s)$ is in the fourth quadrant and $\gamma_2(s)>0, \lambda(s) \leq 0$. Since $\gamma(s)$ is in the first quadrant and $v(s)$ is in the third, $\gamma(s) \cdot v(s) \leq 0$. 
\[
\kappa(s)+(n-2)\lambda(s)+H_1(s)=\kappa(0)+(n-2)\lambda(0)+H_1(0),
\]
and $\kappa(0)=\lambda(0)>0$ we get
\[
\kappa(s)=-(n-2)\lambda(s)-H_1(s)+(n-1)\kappa(0)+H_1(0)>H_1(0)-H_1(s)\geq 0,
\]
thanks to Equation \eqref{eq:meanconvexity}.
\end{proof}
\begin{lemma}\label{lem:after lower curve}
 For $s \in(\eta, \beta), \gamma^{\prime}(s)$ lies strictly in the fourth quadrant, and $\kappa(s)>0$.   
\end{lemma}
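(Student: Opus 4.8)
The plan is to carry out a continuity (open--closed) argument on the set of parameters $s>\eta$ for which $\dot\gamma$ remains strictly in the fourth quadrant and $\kappa>0$, using as input the boundary data at $s=\eta$ coming from the analysis of the lower curve together with Lemmas \ref{lem:tangent restrict} and \ref{lem:IV quad positive curvature}. Write $\theta(s)$ for the argument of $\dot\gamma(s)$, so that $\dot\theta=\kappa$ and the four open quadrants of directions correspond to $\theta\in(0,\tfrac{\pi}{2})$, $(\tfrac{\pi}{2},\pi)$, $(\pi,\tfrac{3\pi}{2})$, $(\tfrac{3\pi}{2},2\pi)$.

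First I would collect the data at $s=\eta$. The lower curve ends with $\dot\gamma(\eta)=(0,-1)$, i.e.\ $\theta(\eta)=\tfrac{3\pi}{2}$: indeed if $\dot\gamma(\eta)\neq(0,-1)$ then, since $\eta\in L$, the proof of Lemma \ref{lem:lower curve curves more} applies at $\eta$ and yields the strict inequality $\kappa(\eta)>\kappa(\bar\eta)$, so conditions (1)--(2) of Definition \ref{defn:lower-curve} would persist on $[\delta,\eta+\varepsilon)$ by continuity, contradicting $\eta=\sup L$; and $\dot\gamma(\eta)\neq(-1,0)$ because $\eta>\delta$. Next, $\eta\in L$ gives $\kappa(\bar\eta)\le\kappa(\eta)$ while $\bar\eta\in K$ gives $\kappa(\bar\eta)\ge\lambda(\bar\eta)>0$, so $\kappa(\eta)>0$; Lemma \ref{lem:gamma_1(eta)} gives $\gamma_1(\eta)>0$; and $\gamma_2>0$ on all of $(0,\beta)$, since $\gamma$ is a simple closed curve with $\gamma_2(-s)=-\gamma_2(s)$ (an interior zero of $\gamma_2$ would identify $\gamma(s)$ with $\gamma(-s)$) and $\gamma_2>0$ on $(0,\delta]$ because there $\theta\in(\tfrac{\pi}{2},\pi)$. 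In particular $\gamma_2(\eta)>0$.

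Next I would set
\[
I:=\bigl\{s\in(\eta,\beta):\ \dot\gamma(t)\text{ lies strictly in the fourth quadrant and }\kappa(t)>0\ \text{for all}\ t\in(\eta,s]\bigr\},
\]
and show $I=(\eta,\beta)$. For non-emptiness: since $\theta(\eta)=\tfrac{3\pi}{2}$ and $\dot\theta(\eta)=\kappa(\eta)>0$, for $s$ slightly larger than $\eta$ one has $\theta(s)\in(\tfrac{3\pi}{2},2\pi)$; hence $\dot\gamma_1(s)=\cos\theta(s)>0$ and $\gamma_1(s)>\gamma_1(\eta)>0$, and since $\gamma_2(s)>0$, Lemma \ref{lem:IV quad positive curvature} gives $\kappa(s)>0$. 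Because both defining conditions are open, $I$ is a relatively open subinterval of $(\eta,\beta)$ accumulating at $\eta$, hence $I=(\eta,s^{\ast})$ with $s^{\ast}:=\sup I$; moreover on all of $(\eta,s^{\ast})$ one has $\kappa>0$, so $\theta$ is strictly increasing there, and $\dot\gamma_1=\cos\theta>0$, so $\gamma_1$ is strictly increasing there too.

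The only delicate point is the closedness. Suppose $s^{\ast}<\beta$; by continuity $\dot\gamma(s^{\ast})$ lies in the closed fourth quadrant, so $\theta(s^{\ast})\in[\tfrac{3\pi}{2},2\pi]$. It is not $\tfrac{3\pi}{2}$, since $\theta$ strictly increases from $\tfrac{3\pi}{2}$ on $(\eta,s^{\ast})$. It is not $2\pi$ either: otherwise $\dot\gamma(s^{\ast})=(1,0)$ and Lemma \ref{lem:tangent restrict} forces $0\ge\gamma(s^{\ast})\cdot\dot\gamma(s^{\ast})=\gamma_1(s^{\ast})$, contradicting $\gamma_1(s^{\ast})\ge\gamma_1(\eta)>0$. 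Hence $\theta(s^{\ast})\in(\tfrac{3\pi}{2},2\pi)$, and since $\gamma_1(s^{\ast})>0$ and $\gamma_2(s^{\ast})>0$, Lemma \ref{lem:IV quad positive curvature} yields $\kappa(s^{\ast})>0$; both defining conditions of $I$ then hold at $s^{\ast}$ and, being open, on $(\eta,s^{\ast}+\varepsilon)$, contradicting $s^{\ast}=\sup I$. Thus $I=(\eta,\beta)$, which is the claim. I expect the main obstacle to be precisely this last step: the geometrically natural fear is that the curling tangent completes a quarter turn back to the horizontal, and the one thing preventing it is the monotonicity of $|\gamma|$ encoded in $\gamma\cdot\dot\gamma\le0$ of Lemma \ref{lem:tangent restrict}; once that is used, the remaining bookkeeping is exactly the mean-convexity input already packaged in Lemma \ref{lem:IV quad positive curvature}.
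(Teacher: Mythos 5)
Your proof is correct and takes essentially the same open--closed (continuity) argument as the paper: define the good set accumulating at $\eta$, show it is non-empty using $\gamma'(\eta)=(0,-1)$ and $\kappa(\eta)>0$, and close by ruling out $\theta=\tfrac{3\pi}{2}$ (monotonicity of $\theta$) and $\theta=2\pi$ (Lemma~\ref{lem:tangent restrict}), then invoking Lemma~\ref{lem:IV quad positive curvature} to recover $\kappa>0$ at the endpoint. The extra verifications you supply — that $\gamma'(\eta)=(0,-1)$, $\kappa(\eta)>0$, and $\gamma_2>0$ on $(0,\beta)$ — are details the paper leaves implicit, and they are handled correctly.
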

\begin{proof}
We follow the argument in Lemma 6.17 in \cite{BoyerBrownChambersLovingTammen+2016}. Define $A \subset(\eta, \beta)$ so that $s \in A$ if and only if for all $t \in(\eta, s), \gamma^{\prime}(t)$ lies strictly in the fourth quadrant and $\kappa(t)>0$. Note that $A$ is nonempty because $\kappa(\eta)>0, \gamma^{\prime}(\eta)=(0,-1)$, and $\kappa$ is continuous at $\eta$. Thus, $A$ has a supremum $\omega$. To prove the lemma we show that $\omega=\beta$.

Suppose for contradiction that $\omega<\beta$. Then $\gamma$ is smooth at $\omega$; in particular, $\gamma^{\prime}(\omega)$ and $\kappa(\omega)$ are defined. Since $\gamma^{\prime}(t)$ lies in the fourth quadrant for all $t \in(\eta, \omega), \gamma^{\prime}(\omega)$ is in the fourth quadrant. Since $\kappa>0$ on $(\eta, \omega), \gamma^{\prime}(\omega)$ is not equal to $(0,-1)$. Furthermore, $\gamma_1(\omega)>0$, as $\gamma_1(\eta)>0$ from Lemma \ref{lem:gamma_1(eta)} and $\gamma^{\prime}$ lies in the fourth quadrant on $(\eta, \omega)$. If $\gamma^{\prime}(\omega)$ were equal to $(1,0)$, then we would have $\gamma^{\prime}(\omega) \cdot \gamma(\omega)=$ $\gamma_1(\omega)>0$, contradicting the Tangent Restriction Lemma \ref{lem:tangent restrict}. Thus $\gamma^{\prime}(\omega)$ lies strictly in the fourth quadrant. By Lemma \ref{lem:IV quad positive curvature}, $\kappa(\omega)>0$. Thus, by continuity of $\gamma^{\prime}$ and $\kappa$ on $[0, \beta), A$ could be extended past $\omega$, contradicting the definition of $\omega$.
\end{proof}

Finally, before proving Theorem \ref{thm:uniqueness}, we need a last additional restriction on the behavior of 
$\gamma'$ when approaching $e_1$.
\begin{lemma}\label{lem:right-tangent}
If $F(0)>0$, then $\gamma_1(\beta)>0, \lim _{s \rightarrow \beta^{-}} \gamma^{\prime}(s)$ is in the fourth quadrant, and $\lim _{s \rightarrow \beta^{-}} \gamma^{\prime}(s) \neq(0,-1)$.    
\end{lemma}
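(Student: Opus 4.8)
The plan is to read off all three assertions from the anatomy of $\gamma$ assembled in the preceding lemmas. First note that by Lemma \ref{lem:prop-kappa(0)} one always has $\kappa(0)=\lambda(0)$, so by Lemma \ref{lem:ODE_uniqueness} the hypothesis $F(0)>0$ is exactly the statement that $\gamma$ is not a centered circle; hence all of Lemmas \ref{lem:prop delta}, \ref{lem:lower curve ends early}, \ref{lem:gamma_1(eta)} and \ref{lem:after lower curve} apply. In particular $\eta<\beta$, $\dot\gamma(\eta)=(0,-1)$, $\kappa(\eta)>0$, $\gamma_1(\eta)>0$, and for every $s\in(\eta,\beta)$ the tangent $\dot\gamma(s)$ lies strictly in the fourth quadrant with $\kappa(s)>0$.

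For the first assertion, on $(\eta,\beta)$ we have $\dot\gamma_1(s)>0$ (fourth quadrant), so $\gamma_1$ is strictly increasing on $[\eta,\beta)$; by continuity of $\gamma$ at $\beta$ this gives $\gamma_1(\beta)\geq\gamma_1(\eta)>0$, proving $\gamma_1(\beta)>0$.

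For the tangent at $\beta$, I would introduce a continuous lift $\dot\gamma(s)=(\cos\theta(s),\sin\theta(s))$; with the paper's sign convention $\ddot\gamma=\kappa\,\dot\gamma^{\perp}$ one has $\dot\theta=\kappa$. Since $\kappa>0$ on $(\eta,\beta)$, $\theta$ is strictly increasing on $[\eta,\beta)$, and since $\dot\gamma(s)$ stays strictly in the fourth quadrant $\theta$ stays within the quarter-turn directly above $\theta(\eta)$ and is therefore bounded. A bounded monotone function has a one-sided limit, so $\theta_\beta:=\lim_{s\to\beta^-}\theta(s)$ exists, whence $\lim_{s\to\beta^-}\dot\gamma(s)=(\cos\theta_\beta,\sin\theta_\beta)$ exists and lies in the closed fourth quadrant. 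Strict monotonicity of $\theta$ starting from $\theta(\eta)$ forces $\theta_\beta>\theta(\eta)$, i.e.\ $\lim_{s\to\beta^-}\dot\gamma(s)\neq(0,-1)$. To exclude the horizontal limit, note that $\gamma$ is a simple closed curve, so $\gamma(-\beta)=\gamma(\beta)$, and the axial symmetry $\gamma_2(s)=-\gamma_2(-s)$ then gives $\gamma_2(\beta)=0$; hence $\gamma(\beta)=(\gamma_1(\beta),0)$ with $\gamma_1(\beta)>0$. Were $\lim_{s\to\beta^-}\dot\gamma(s)=(1,0)$, passing to the limit in the Tangent Restriction Lemma \ref{lem:tangent restrict} (Equation \eqref{eq:tangent_restriction}) would yield $\gamma(\beta)\cdot\lim_{s\to\beta^-}\dot\gamma(s)=\gamma_1(\beta)\leq 0$, a contradiction. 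Therefore $\lim_{s\to\beta^-}\dot\gamma(s)$ is strictly in the fourth quadrant, finishing the proof.

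The only point that needs genuine care is the passage to the limit $s\to\beta^-$: since $\gamma$ is only asserted to be $C^{3,\alpha}$ on the \emph{open} interval $(-\beta,\beta)$, a priori $\dot\gamma$ need not have a one-sided limit at $\beta$, and it is precisely the monotonicity of the turning angle on the curl $(\eta,\beta)$---furnished by $\kappa>0$ there in Lemma \ref{lem:after lower curve}---that produces it. Everything else is a direct bookkeeping of the quadrant and curvature information already in hand.
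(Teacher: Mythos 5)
Your argument is correct and follows the same route as the paper: monotonicity of $\gamma_1$ on the curl gives $\gamma_1(\beta)>0$, and the positivity of $\kappa$ on $(\eta,\beta)$ makes the turning angle $\theta$ strictly increasing and bounded (by $2\pi$), so it converges, yielding a limit tangent in the fourth quadrant and strictly past $(0,-1)$. The only addition is your closing step ruling out the horizontal limit $(1,0)$ via the Tangent Restriction Lemma, which is sound but not required by the statement; the paper leaves it implicit, since in the subsequent proof of Theorem \ref{thm:uniqueness} a limit of $(1,0)$ still yields $\gamma\cdot\dot\gamma>0$ near $\beta$ and hence the desired contradiction.
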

\begin{proof}
It follows from Lemma \ref{lem:after lower curve} that $\gamma_1(\beta)>0$, as $\gamma^{\prime}(s)$ lies strictly in the fourth quadrant for all $s \in(\eta, \beta)$, and $\gamma_1(\eta)>0$. As $\kappa>0$ and $\gamma^{\prime}$ is in the fourth quadrant on $(\eta, \beta)$, the angle $\theta(s)$ that $\gamma^{\prime}(s)$ makes with the $e_1$-axis, measured counterclockwise in radians, must be a strictly increasing function on $(\eta, \beta)$ that is bounded above by $2 \pi$. Therefore, $\lim _{s \rightarrow \beta^{-}} \theta(s)$ exists and is in $(\theta(\eta), 2 \pi]$. It follows that $\lim _{s \rightarrow \beta^{-}} \gamma^{\prime}(s)$ exists, lies in the fourth quadrant, and is not $(0,-1)$.    
\end{proof}

\subsection{Proof of Theorem \ref{thm:uniqueness} and Corollary \ref{cor:isop_ineq}}
Combining the results from the previous sections, we now have all the ingredients to prove our main result.
\begin{proof}[Proof of Theorem \ref{thm:uniqueness}]
By Theorems \ref{thm:existence}, \ref{thm:regularity} and Proposition \ref{prop:spherical_symm} we deduce that the minimizer $E$ is spherically symmetric set with generating curve $\gamma.$ Then if $E$ is not the ball centered at the origin, $F(0)>0$ and therefore by the right tangent Lemma \ref{lem:right-tangent}, there exists a point such that $\gamma_1(\beta)>0, \lim _{s \rightarrow \beta^{-}} \gamma^{\prime}(s)$ is in the fourth quadrant, and $\lim _{s \rightarrow \beta^{-}} \gamma^{\prime}(s) \neq(0,-1)$. Then this implies that there exists $\varepsilon>0$ such that $\gamma(s) \cdot \dot{\gamma}(s) > 0$ on $s\in (\beta-\varepsilon, \beta)$ thereby contradicting the tangent restriction Lemma \ref{lem:tangent restrict}. Thus the curve $\gamma$ must be the circumference of a circle centered at the origin. Uniqueness follows by observing that up to negligible sets the only domains preserved by spherical symmetrization are the centered balls.
\end{proof}
As a consequence, an isoperimetric-type inequality follows.
\begin{proof}[Proof of Corollary \ref{cor:isop_ineq}]From the definition of $\Phi(r)=\abs{B_r}_f$ and $\tilde F(r)=\E(B_r)$ we have explicitly that
\[
\Phi(r)=n\omega_n\int_0^r t^{n-1}e^{\psi(t)}\,dt,
\]
and
\[
\tilde F(r)=n\omega_nr^{n-1}e^{\psi(r)} +n\omega_n\int_0^r t^{n-1}g(t) e^{\psi(t)}\,dt.
\]
In particular, $\Phi$ is monotone increasing and hence admits a well-defined inverse, proving that $\Efrak:=\tilde F\circ\Phi^{-1}$ is well defined. By Theorem \ref{thm:uniqueness}, for any $F\subset \R^n$ with volume $\abs{F}_f=v$, one has that
\[
\E(F)\geq\E(B_{\Psi(v)}),
\]
with equality if and only if $F$ is a centered sphere up to a negligible set. Equivalently
\[
\E(F)\geq\E(B_{\Phi^{-1}(v)})=\tilde F(\Phi^{-1}(v))=\Efrak{E}(\abs{F}_f),
\]
showing \eqref{eq:isoperimetric_ineq}. We are left to prove \eqref{eq:isop_profile}, which follows from the elementary computation
\begin{align*}
\Efrak'(v)&=\frac{\tilde F'}{\Phi'}\circ\Phi^{-1}(v)=\frac{n\omega_n r^{n-1}e^{\psi(r)}((n-1)r^{-1}+\psi'(r)+g(r))}{n\omega_n r^{n-1}e^{\psi(r)}}\Big\vert_{r=\Phi^{-1}(v)}\\
&=\Bigl(g(r)+\psi'(r)+\frac{n-1}{r}\Bigr)\Bigl\vert_{r=\Phi^{-1}(v)},
\end{align*}
and
\begin{align*}
\Efrak''(v)& =\Bigl(g(r)+\psi'(r)+\frac{n-1}{r}\Bigr)'\frac{1}{\Phi'(r)}\Bigl\vert_{r=\Phi^{-1}(v)}=\frac{r^2(g'(r)+\psi''(r))-(n-1)}{n\omega_nr^{n+1}e^{\psi(r)}}\vert_{r=\Phi^{-1}(v)},
\end{align*}
as wished.
\end{proof}

\section{Stability}\label{sec:stability}
\subsection{Preliminary results}
In this section, we establish a Fuglede-type result for nearly spherical sets under the assumption that $\psi,g$ are strictly admissible weights in the sense of Definition \ref{def:admissible_weights}. We first define nearly spherical sets from \cite{fusco-manna}:
\begin{definition}\label{defn:nearly-spherical}
Let $n \geq 2$. We say that a set $E$ is nearly spherical if there exist $R>0$ and a Lipschitz function $u: S^{n-1} \rightarrow(-1,1)$ such that
\begin{align}
\partial E=\left\{y: y=Rx(1+u(x)), x \in S^{n-1}\right\}. 
\end{align}
\end{definition}
With this in hand, we aim to show the following:
\begin{proposition}\label{prop:fuglede}
Given $\psi\in C^3(\R)$, $g\in C^2(\R)$ positive, and $R>0$ satisfying
\begin{equation}\label{eq:cond_on_R_1}
   \psi''(R)+g'(R)>0,\qquad \psi'(R)+g(R)\geq 0, 
\end{equation}
and
\begin{equation}\label{eq:cond_on_R_2}
    \psi'(R)>-\frac{(n-2)(n-1)}{R^2(\psi'(R)+g(R))+(n-1)R},
\end{equation}
there exist
\[
c=c(R,\psi,g,n)>0,\qquad\varepsilon=\varepsilon(R,\psi,g,n)>0,
\]
such that if $E$ is a nearly spherical set with $\|u\|_{W^{1, \infty}\left(S^{n-1}\right)}<\varepsilon$, $\left|B_R\right|_f=|E|_f$, then
\begin{equation}\label{eqn:nearly-spherical-stability-I}
\E(E)-\E(B_R) \geq cR^{n-1}e^{\psi(R)}\norm{u}_{L^2(S^{n-1})}^2.
\end{equation}
\end{proposition}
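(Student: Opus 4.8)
The plan is to carry out a Fuglede-type second-order expansion about $\partial B_R$. Writing $\partial E=\{\rho(x)\,x:x\in S^{n-1}\}$ with $\rho(x):=R(1+u(x))$, the three functionals become
\[
\Pe(E)=R^{n-1}\!\!\int_{S^{n-1}}\!\!e^{\psi(\rho)}(1+u)^{n-2}\sqrt{(1+u)^2+|\nabla u|^2}\,d\Haus^{n-1},
\]
\[
\G(E)=\int_{S^{n-1}}\!\!\int_0^{\rho(x)}\!\!g(t)e^{\psi(t)}t^{n-1}\,dt\,d\Haus^{n-1},\qquad |E|_f=\int_{S^{n-1}}\!\!\int_0^{\rho(x)}\!\!e^{\psi(t)}t^{n-1}\,dt\,d\Haus^{n-1},
\]
where $\nabla$ (and $\Delta$ below) denote the intrinsic operators on $S^{n-1}$. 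First I would Taylor expand each integrand in $u$ up to second order; since $\psi\in C^3$ and $g\in C^2$ the cubic remainders are controlled by $C(R,\psi,g,n)\,\|u\|_{W^{1,\infty}(S^{n-1})}\|u\|_{H^1(S^{n-1})}^2$. This produces, with constants $\alpha_1,\alpha_2,\beta$ explicit in $\psi'(R),\psi''(R),g(R),g'(R),R,n$,
\[
\E(E)-\E(B_R)=R^{n-1}e^{\psi(R)}\Bigl[\alpha_1\!\!\int_{S^{n-1}}\!\!u+\tfrac12\!\!\int_{S^{n-1}}\!\!|\nabla u|^2+\alpha_2\!\!\int_{S^{n-1}}\!\!u^2\Bigr]+O\!\bigl(\varepsilon\|u\|_{H^1}^2\bigr),
\]
together with $|E|_f-|B_R|_f=R^{n}e^{\psi(R)}\bigl(\int_{S^{n-1}}u+\beta\int_{S^{n-1}}u^2\bigr)+O(\varepsilon\|u\|_{L^2}^2)$, and one computes $\alpha_1=Rc$, where $c=\tfrac{n-1}{R}+\psi'(R)+g(R)$ is the constant weighted mean curvature of $\partial B_R$.

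Next I would impose $|E|_f=|B_R|_f$, which gives $\int_{S^{n-1}}u=-\beta\int_{S^{n-1}}u^2+O(\varepsilon\|u\|_{L^2}^2)$, so the mean of $u$ is $O(\|u\|_{L^2}^2)$. Feeding this into the energy expansion — equivalently, subtracting the vanishing Lagrange term $c\,(|E|_f-|B_R|_f)$, which is where $\alpha_1=Rc$ gets used — the first-order term disappears (this is the criticality of $\partial B_R$, cf. Lemma \ref{lem:first_variations}), and one is left with
\[
\E(E)-\E(B_R)=R^{n-1}e^{\psi(R)}\Bigl[\tfrac12\!\!\int_{S^{n-1}}\!\!|\nabla u|^2+A\!\!\int_{S^{n-1}}\!\!u^2\Bigr]+O\!\bigl(\varepsilon\|u\|_{H^1}^2\bigr),
\]
with $A=A(R,\psi,g,n)$ the residual quadratic coefficient obtained after these cancellations.

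Now I would expand $u=\sum_{k\ge0}\sum_j u_{k,j}Y_{k,j}$ in spherical harmonics, $-\Delta Y_{k,j}=\lambda_k Y_{k,j}$ with $\lambda_k=k(k+n-2)$, so that the bracket is $\sum_{k,j}\bigl(\tfrac12\lambda_k+A\bigr)u_{k,j}^2$. The $k=0$ term is $O(\|u\|_{L^2}^4)$ by the constraint and thus negligible; for $k\ge2$ the coefficient $\tfrac12\lambda_k+A$ is bounded below by a positive constant, using $\psi''(R)+g'(R)>0$ only weakly since $\lambda_k\to\infty$. The decisive mode is $k=1$ — the one that, in the unweighted problem ($\psi\equiv g\equiv0$), corresponds to translations and has vanishing coefficient — and positivity of $\tfrac12\lambda_1+A=\tfrac{n-1}{2}+A$ is exactly what the hypotheses $\psi''(R)+g'(R)>0$, $\psi'(R)+g(R)\ge0$, and \eqref{eq:cond_on_R_2} are designed to guarantee. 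Hence $\tfrac12\int_{S^{n-1}}|\nabla u|^2+A\int_{S^{n-1}}u^2\ge c_0\|u\|_{H^1(S^{n-1})}^2$ for some $c_0=c_0(R,\psi,g,n)>0$ on all admissible $u$.

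Finally, combining this coercivity with the remainder estimate and choosing $\varepsilon=\varepsilon(R,\psi,g,n)>0$ small enough that $C\varepsilon\le c_0/2$ gives
\[
\E(E)-\E(B_R)\ge\tfrac{c_0}{2}\,R^{n-1}e^{\psi(R)}\|u\|_{H^1(S^{n-1})}^2\ge\tfrac{c_0}{2}\,R^{n-1}e^{\psi(R)}\|u\|_{L^2(S^{n-1})}^2,
\]
which is \eqref{eqn:nearly-spherical-stability-I} with $c=c_0/2$. I expect the main obstacle to be the bookkeeping of the second step: pushing the three Taylor expansions far enough and checking that, once the weighted-volume constraint is used and the Lagrange multiplier removed, every first-order contribution cancels and the surviving coefficient $A$ has precisely the form whose positivity on all modes $k\ge1$ is pinned down by \eqref{eq:cond_on_R_1}--\eqref{eq:cond_on_R_2}; granting that, the spherical-harmonics coercivity and the absorption of the cubic error are standard.
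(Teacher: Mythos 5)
Your proposal follows the same Fuglede-type strategy as the paper: expand $\Pe$, $\G$, and $\lvert\cdot\rvert_f$ to second order in $u$, use the weighted-volume constraint to control $\int_{S^{n-1}} u$, pass to spherical harmonics, and absorb cubic remainders once $\|u\|_{W^{1,\infty}}$ is small. The main structural difference is how you treat the constraint. You use it as an equality $\int_{S^{n-1}}u = -\tfrac12(n-1+R\psi'(R))\int_{S^{n-1}}u^2 + O(\varepsilon\|u\|_{L^2}^2)$ to cancel the first-order term directly and then analyze the surviving quadratic form mode by mode; after this cancellation the coefficient becomes precisely $D=1-n+R^2(\psi''(R)+g'(R))$, and for $k\geq 1$ one has $\lambda_k\geq n-1$, so $\tfrac12\lambda_k+\tfrac{D}{2}\geq\tfrac12 R^2(\psi''(R)+g'(R))>0$ — positivity hinges only on \eqref{eq:cond_on_R_1}. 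The paper, by contrast, only keeps a one-sided bound from the constraint and splits into the cases $\int u\geq 0$ and $\int u<0$: in the first case the linear term is discarded rather than cancelled, and the zero mode is left inside $\|u\|_{L^2}^2$, so positivity of the combined constant $\Lambda$ is required — this is where \eqref{eq:cond_on_R_2} enters. In the second case the paper does essentially what you do. So your uniform treatment is cleaner (closer to the classical Fuglede argument) and, as a bonus, shows that \eqref{eq:cond_on_R_2} is really an artifact of the paper's case split rather than intrinsic to the estimate. The one imprecision in your write-up is the claim that \eqref{eq:cond_on_R_2} is ``designed to guarantee'' positivity of $\tfrac12\lambda_1+A$: in fact, after your cancellation that positivity is pinned down by \eqref{eq:cond_on_R_1} alone; \eqref{eq:cond_on_R_2} only shows up in the paper's alternative handling of the case $\int u\geq 0$. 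Also, your characterization of the modes $k\geq 2$ as using $\psi''+g'>0$ ``only weakly since $\lambda_k\to\infty$'' is fine in spirit but slightly loose — the bound $\lambda_k\geq n-1$ for all $k\geq 1$ is what is needed, and the $k=1$ mode is the one that saturates it.
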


\begin{remark}
    Notice that conditions \eqref{eq:cond_on_R_1} and \eqref{eq:cond_on_R_2} are satisfied in particular if $\psi,g$ are strictly admissible weights in the sense of Definition \eqref{def:admissible_weights}, and $\psi$ is monotone increasing, or $\psi$, $g$ are $\kappa$-uniformly admissible and $R$ is small enough. In fact, it will be evident in the proof of the proposition that Equation \eqref{eq:cond_on_R_2} can be replaced with the weaker condition
    \[
        (R^2\psi'+R(n-1))(\psi'+g)+R(n-1)\psi'+R^2(\psi''+g')+(n-1)(n-2)>0,
    \]
    which we avoided to include in the statement for ease of reading. Finally, notice that Equation \eqref{eqn:nearly-spherical-stability-I} implies in particular Theorem \ref{thm:stability} in the class of nearly spherical sets since
    \begin{equation}
        \E(E)-\E(B_R)\geq cR^{n-1}e^{\psi(R)}\norm{u}_{L^2(S^{n-1})}^2\geq \tilde cR^{n-1}e^{\psi(R)}\abs{E\triangle B_R}^2_f,
    \end{equation}
    for some suitable constant $\tilde c>0$.
\end{remark}
\begin{proof}
We have the following expressions of the weighted perimeter and volume in terms of the graphical function $u$
\begin{align}
P_f(E) &=R^{n-1} \int_{S^{n-1}}(1+u(x))^{n-1} \sqrt{1+\frac{\left|\nabla_\tau u\right|^2}{(1+u)^2}} e^{\psi(R(1+u(x)))}\,d \Haus^{n-1} \\
|E|_f &= R^n \int_{S^{n-1}}(1+u(x))^n \int_0^1 t^{n-1} e^{\psi(R t(1+u(x))} \,d t\, d x\\
\G(E) &=  R^n \int_{S^{n-1}}(1+u(x))^n \int_0^1 g(t) t^{n-1} e^{\psi(R t(1+u(x))}\, d t\, d x,
\end{align}
where $\nabla_\tau u$ stands for the tangential gradient of $u$. Let $h(R)$ be any $C^3$ function. From the Taylor expansion
\begin{align*}
    &\frac{(1+u(x))^ne^{h(Rt(1+u(x)))}-e^{h(Rt)}}{e^{h(Rt)}}=(n+Rth'(Rt))u(x)\\
    &\quad+\frac12\Bigl(n(n-1)+2nRth'(R)+(Rt)^2(h''(R)+(h'(R))^2)\Bigr)u(x)^2+o_h(u(x)^2),
\end{align*}
and the smallness of $u$ in norm, we can estimate
\begin{align*}
    \frac{1}{R^n}&\Bigl(\int_E e^h\,dx-\int_{B_R}e^h\,dx\Bigr)=\int_{S^{n-1}}\int_0^1 t^{n-1}\Bigl((1+u)^ne^{h(Rt(1+u))}-e^{h(Rt)}\Bigr)\,dt\,d\Haus^{n-1}\\
    &= A\int_{S^{n-1}}u\,d\Haus^{n-1}+\frac {B}2\int_{S^{n-1}}u^2\,d\Haus^{n-1}+o(\norm{u}^2_{L^\infty(S^{n-1})}),
\end{align*}
where the coefficients $A$, $B$ can be computed by integration by parts as follows:
\[
A=\int_0^1 nt^{n-1}e^{h(Rt)}+Rth'(Rt)e^{h(Rt)}\,dt=e^{h(R)},
\]
and
\begin{align*}
B&=\int_0^1t^{n-1}e^{h(Rt)}\Bigl(n(n-1)+2nRth'(Rt)+(Rt)^2(h''(Rt)+(h'(Rt))^2)\Bigr)\,dt\\
&=e^{h(R)}((n-1)+Rh'(R)).
\end{align*}
In particular, setting $h=\psi$, it follows from the volume preserving condition on $u$, that there exists $C>0$ such that
\begin{equation}\label{eq:expansion_V}
    \int_{S^{n-1}}u\,d\Haus^{n-1}\geq-\frac{1}{2}(n-1+R\psi'(R)+C\varepsilon)\int_{S^{n-1}}u^2\,d\Haus^{n-1},
\end{equation}
and setting $h=\psi+\ln(g)$, we obtain the following lower bound
\begin{equation}\label{eq:expansion_G}
\begin{split}
    \frac{\mathcal G_f(E)-\mathcal G_f(B_R)}{e^{\psi(R)}R^n}&\geq g(R)\int_{S^{n-1}}u\,d\Haus^{n-1}\\
    &\quad+\frac{1}{2}g(R)\Bigl(n-1+R\Bigl(\psi'(R)+\frac{g'(R)}{g(R)}\Bigr)-C\varepsilon\Bigr)\int_{S^{n-1}}u^2\,d\Haus^{n-1}.
\end{split}
\end{equation}
A similar computation taking advantage of the elementary inequality 
\[
\sqrt{1+\frac{\abs{\nabla_\tau u}^2}{(1+u)^2}}-1\geq \frac{\abs{\nabla_\tau u}^2}{2(1+u)^2}\Bigl(1-\frac{\abs{\nabla_\tau u}}{4(1+u)}\Bigr)\geq\Bigl(\frac12-C\varepsilon\Bigr)\abs{\nabla_\tau u}^2,
\]
combined with a second-order Taylor expansion of $(1+u)^{n-1}e^{\psi(R(1+u))}$ around $u$ shows that the perimeter gap can be controlled from below by the expression
\begin{equation}\label{eq:expansion_P}
    \begin{split}
        &\frac{P_f(E)-P_f(B_R)}{e^{\psi(R)}R^{n-1}}\geq \Bigl(\frac12-C\varepsilon\Bigr)\int_{S^{n-1}}\abs{\nabla_\tau u}^2\,d\Haus^{n-1}+(n-1+R\psi'(R))\int_{S^{n-1}}u\,d\Haus^{n-1}\\
        &\quad+\frac12\Bigl((n-1)(n-2)+2(n-1)R\psi'(R)+R^2(\psi''(R)+\psi'(R)^2)-C\varepsilon\Bigr)\int_{S^{n-1}}u^2\,d\Haus^{n-1}.
    \end{split}
\end{equation}
Combining Equations \eqref{eq:expansion_G} and \eqref{eq:expansion_P}, we get that
\begin{align*}
    &\frac{P_f(E)-P_f(B_R)+\mathcal  G_f(E)-\mathcal G_f(B_R)}{e^{\psi(R)}R^{n-1}}\geq \Bigl(\frac12-C\varepsilon\Bigr)\int_{S^{n-1}}\abs{\nabla_\tau u}^2\,d\Haus^{n-1}\\
    &\quad+(n-1+R(\psi'(R)+g(R)))\int_{S^{n-1}}u\,d\Haus^{n-1}\\
    &\quad+\frac12\Bigl((n-1)(n-2)+2(n-1)R\psi'(R)+R^2(\psi''(R)+\psi'(R)^2)\Bigr)\int_{S^{n-1}}u^2\,d\Haus^{n-1}\\
    &\quad+\frac12\Bigl(Rg(R)(n-1)+R^2(\psi'(R)g(R)+g'(R))-C\varepsilon\Bigr)\int_{S^{n-1}}u^2\,d\Haus^{n-1}.
\end{align*}
If $\int_{S^{n-1}}u\,d\Haus^{n-1}\geq 0$ one can get the desired inequality since Equation \ref{eq:cond_on_R_2} implies that
\[
\Lambda:=R^2\psi'g+(n-1)R\psi'+(n-2)(n-1)+R(n-1)(\psi'+g)+R^2(\psi''+g')+R^2(\psi')^2>0,
\]
and hence by letting $\varepsilon<\frac{1+\Lambda}{4C}$ we get that
\begin{align*}
    \frac{\E(E)-\E(B_R)}{e^{\psi(R)}R^{n-1}}\geq \frac34\norm{\nabla_\tau u}_{L^2(S^{n-1})}^2+\frac34\Lambda\norm{u}_{L^2(S^{n-1})}^2. 
\end{align*}
Otherwise, taking advantage of Equation \eqref{eq:expansion_V} to control the oscillations of $u$, and the fact that our assumption ensures $\psi'+g>0$, we obtain the following bound
\begin{align*}
    \frac{\E(E)-\E(B_R)}{e^{\psi(R)}R^{n-1}}&\geq \Bigl(\frac12-C\varepsilon\Bigr)\norm{\nabla_\tau u}^2_{L^2(S^{n-1})}+\Bigl(\frac{D}{2}-C\varepsilon\Bigr)\norm{u}^2_{L^2(S^{n-1})},
\end{align*}
where $D$ is equal to
\begin{align*}
    D&=-(n-1+R(\psi'(R)+g(R)))(n-1+R\psi'(R))\\
    &\quad+(n-1)(n-2)+2(n-1)R\psi'(R)+R^2(\psi''(R)+\psi'(R)^2)\\
    &\quad +rg(R)(n-1)+R^2(\psi'(R)g(R)+g'(R))\\
    &=1-n+R^2(\psi''(R)+g'(R)).
\end{align*}
We conclude by developing $u$ over spherical harmonics. Letting $\{Y_{i,j}\}_{i\leq 0,j\leq Q{(i,n)}}$ be the orthonormal basis of $L^2(S^{n-1})$ induced by the spherical Laplacian, that is 
\[
-\Delta_\tau Y_{i,j}=i(i+n-2)Y_{i,j},
\]
and $a_{i,j}\in\R$ the projections of $u$ on $Y_{i,j}$ we get, subtracting from $u$ its mean, the following Poincaré-type estimate
\begin{align*}
\Bigl(\frac12-C\varepsilon\Bigr)&\norm{\nabla_\tau u}^2_{L^2(S^{n-1})}+\Bigl(\frac{D}{2}-C\varepsilon\Bigr)\norm{u-a_{0,1}}^2_{L^2(S^{n-1})}\\
&=\frac12 \sum_{i=1}^{+\infty}\sum_{j=1}^{Q(i,n)}\Bigl(i(i+n-2)(1-2C\varepsilon)+D-2C\varepsilon\Bigr)a_{i,j}^2\\
&\geq \frac12 \sum_{i=1}^{+\infty}\sum_{j=1}^{Q(i,n)}\Bigl(R^2(\psi''(R)+g'(R))-2nC\varepsilon\Bigr)a_{i,j}^2.
\end{align*}
We are left to control the term $a_{0,1}^2$, which turns out to be of order $O(\varepsilon^2)$: from the volume preserving properties of $u$ implying Equation \eqref{eq:expansion_V}, and the fact that we are treating the case in which $u$ has a negative mean, we get that
\begin{align*}
a_{0,1}^2&=\Bigl(\int_{S^{n-1}} u\,d\Haus^{n-1}\Bigr)^2\leq \frac{1}{4}(n-1+R\psi'(R)+C\varepsilon)^2\Bigl(\int_{S^{n-1}}u^2\,d\Haus^{n-1}\Bigr)^2\\
&\leq \frac{1}{4}(n-1+R\psi'(R)+C\varepsilon)^2n\omega_n\varepsilon^2\norm{u}_{L^2(S^{n-1})}^2\leq C\varepsilon^2\norm{u}_{L^2(S^{n-1})}^2.
\end{align*}
Hence, by collecting all the inequalities we get the estimate
\begin{align*}
    \frac{\E(E)-\E(B_R)}{e^{\psi(R)}R^{n-1}}\geq\frac12\Bigl(R^2(\psi''(R)+g'(R))-2nC\varepsilon+o(\varepsilon)\Bigr)\norm{u}_{L^2(S^{n-1})}^2
\end{align*}
obtaining the wished result taking $\varepsilon$ small enough, since by assumption we imposed $\psi''(R)+g'(R)>0$.
\end{proof}
\subsection{General Stability}
In this section, we will prove Theorem \ref{thm:stability} following the arguments in~\cite{fusco-manna}. The main difference here is that we also have to keep track of the potential energy which makes some arguments non-trivial. We first introduce the key concept of almost-minimizing sets and with that the well-known De-Giorgi $\varepsilon$-regularity Theorem, which will be necessary to bridge the following argument with the Fuglede-type stability of Proposition \ref{prop:fuglede}, in order to obtain the desired conclusion.
\begin{definition}\label{defn:omega-minimizer}
Let $E \subset \mathbb{R}^n$ be a set of locally finite perimeter, $\omega \geq 0$, and let $\Omega$ be an open subset of $\mathbb{R}^n$. We say that $E$ is an $\omega$-minimizer of the perimeter in $\Omega$ if for every ball $B_\rho(x) \subset \subset \Omega$ with $\rho<1$ and for any set $F$ of locally finite perimeter such that $E \triangle F \subset \subset B_\rho(x)$ it holds
$$
P\left(E ; B_\rho(x)\right) \leq P\left(F ; B_\rho(x)\right)+\omega \rho^n.
$$
\end{definition}
\begin{definition}\label{defn:convergence-in-measure}
Let $E_h$ and $E$ be measurable sets of $\mathbb{R}^n$ and $\Omega$ is an open set. Then one says that $E_h \rightarrow E$ in measure in $\Omega$ if $\left|E_h \triangle E \cap \Omega\right| \rightarrow 0$.
\end{definition}
A consequence of De-Giorgi's $\varepsilon$-regularity theorem, see Theorem 2.2 in \cite{fusco-manna} states that a sequence $\omega$-minimizers sets converge in a graphical sense. 
\begin{theorem}\label{thm:convergence-implies-graphicality}
Assume that $E_h, E$ are equibounded $\omega$-minimizers of the perimeter in $\mathbb{R}^n$ such that $E_h \rightarrow E$ in measure. If $E$ is of class $C^2$ then for $h$ large $E_h$ is of class $C^{1, \frac{1}{2}}$ and there exists a function $v_h: \partial E \rightarrow \mathbb{R}$ such that
$$
\partial E_h=\left\{x+v_h(x) \nu_E(x), x \in \partial E\right\}
$$
Moreover, $\left\|v_h\right\|_{C^{1, \alpha}(\partial E)} \rightarrow 0$ for $0<\alpha<\frac{1}{2}$
\end{theorem}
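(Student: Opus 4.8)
The plan is to derive the statement from De Giorgi's $\varepsilon$-regularity theorem for $\omega$-minimizers of the perimeter (equivalently, $(\Lambda,r_0)$-minimizers), in the same spirit as \cite[Theorem 2.2]{fusco-manna}. Three standard ingredients enter: \emph{(i)} uniform density estimates and a uniform upper perimeter bound, $c_n\le|E_h\cap B_r(x)|/|B_r|\le 1-c_n$ and $P(E_h;B_r(x))\le C_n r^{n-1}$, valid for every $x\in\partial E_h$ and every $r$ below a scale depending only on $n$ and the common constant $\omega$; \emph{(ii)} the fact that convergence in measure of $\omega$-minimizers upgrades to convergence of the perimeter measures without loss of mass, so that $P(E_h;A)\to P(E;A)$ for every open $A$ with $P(E;\partial A)=0$, and $\partial E_h\to\partial E$ in local Hausdorff distance; \emph{(iii)} the $\varepsilon$-regularity theorem: there exist $\varepsilon_0=\varepsilon_0(n,\omega)>0$ and $C=C(n,\omega)$ such that if the spherical excess satisfies $\mathbf{e}(E_h,x,\rho)<\varepsilon_0$, then $\partial E_h\cap B_{\rho/2}(x)$ is a $C^{1,1/2}$ graph over the approximate tangent plane of $E_h$ at $x$, with graph norm bounded by $C\,\mathbf{e}(E_h,x,\rho)^{1/2}$.

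First I would fix, using only the $C^2$-regularity and compactness of $\partial E$, a radius $\rho_0>0$ so small that for every $x\in\partial E$ the slice $\partial E\cap B_{\rho_0}(x)$ is a graph over the tangent plane $T_x\partial E$ with $C^2$-norm below any prescribed threshold; in particular $\mathbf{e}(E,x,\rho_0)\le C(\partial E)\,\rho_0^{2}$. Next, for each $x\in\partial E$ I would choose points $x_h\in\partial E_h$ with $x_h\to x$ (possible by the Hausdorff convergence in \emph{(ii)}); combining the local convergence of the perimeter in \emph{(ii)} with the density bounds \emph{(i)} yields $\mathbf{e}(E_h,x_h,\rho_0)\to\mathbf{e}(E,x,\rho_0)$. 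Shrinking $\rho_0$ first, and then running a compactness argument over the compact set $\partial E$ to make the threshold in $h$ uniform in $x$, we obtain $\mathbf{e}(E_h,x_h,\rho_0)<\varepsilon_0$ for all $x$ simultaneously once $h$ is large. Applying \emph{(iii)}, $\partial E_h$ is, near each of its points, a $C^{1,1/2}$ graph whose norm is $o(1)$ as $h\to\infty$.

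It remains to assemble these local graphs into a single normal graph over $\partial E$. Since $\partial E$ is compact and $C^2$, there is a tubular neighbourhood $U=\{y+t\,\nu_E(y):y\in\partial E,\ |t|<\delta\}$ on which the nearest-point projection $\pi\colon U\to\partial E$ is $C^1$; by the Hausdorff convergence of \emph{(ii)}, $\partial E_h\subset U$ for $h$ large, and by the $C^{1,1/2}$-graphicality with vanishing norm the surface $\partial E_h$ is transverse to each normal segment $\{y+t\,\nu_E(y):|t|<\delta\}$ and meets it in exactly one point. This defines $v_h\colon\partial E\to\mathbb{R}$ with $\partial E_h=\{y+v_h(y)\nu_E(y):y\in\partial E\}$. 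Converting the ``graph over $T_x\partial E$'' description into the ``normal graph over $\partial E$'' one amounts to a change of variables whose Jacobian and derivatives are controlled purely by the fixed $C^2$-geometry of $\partial E$; hence, by a finite subcover of $\partial E$ by the balls $B_{\rho_0/2}(x_h)$, the local bounds $\|\nabla(\text{graph})\|_{C^{0,1/2}}=o(1)$ transfer to $\|v_h\|_{C^{1,\alpha}(\partial E)}\to 0$ for every $\alpha<1/2$, the slight loss in Hölder exponent being the only cost of the change of variables.

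The main obstacle is the second step: making the smallness of the excess of $E_h$ uniform over all of $\partial E$ for $h$ large. This rests on the upgrade from convergence in measure to convergence of the perimeter measures with no loss of mass, which genuinely uses $\omega$-minimality (lower semicontinuity gives one inequality, while comparing $E_h$ with $E$ inside small balls and invoking $\omega$-minimality gives the reverse), together with the uniform density estimates \emph{(i)} that prevent the boundaries from collapsing or developing extra mass in the limit, and a contradiction/compactness argument removing the dependence of the threshold on the base point. The $\varepsilon$-regularity theorem itself and the tubular-neighbourhood bookkeeping are entirely standard.
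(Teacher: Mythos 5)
Your proof outline correctly identifies the three standard ingredients (uniform density estimates, convergence of the perimeter measures for $\omega$-minimizers, De Giorgi $\varepsilon$-regularity) and assembles them in the way one would prove Theorem~2.2 of \cite{fusco-manna}, which is all the paper itself cites for this statement — no proof is given in the text. The overall structure is sound: first make the excess of $E_h$ uniformly small at a fixed scale $\rho_0$ over all of $\partial E$, then apply $\varepsilon$-regularity pointwise, then patch into a single normal graph over $\partial E$ via a tubular neighbourhood. Your identification of step \emph{(ii)} — upgrading $L^1$ convergence to perimeter-measure convergence without loss of mass, using $\omega$-minimality and the density estimates — as the real technical content is correct, and the contradiction/compactness argument you sketch (extract a sequence of bad base points, pass to a limit point on the compact $\partial E$, contradict the smallness of the limit excess) is the right way to make the threshold in $h$ uniform in the base point.

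One point is imprecise: the claim that the local graph functions satisfy $\|\nabla(\text{graph})\|_{C^{0,1/2}}=o(1)$, and that the loss to $\alpha<1/2$ comes from the change of variables. For a fixed scale $\rho_0$, the $\varepsilon$-regularity bound controls the $C^{0,1/2}$ seminorm of $\nabla u_h$ in terms of the excess, which converges to $\mathbf{e}(E,x,\rho_0)>0$ and not to zero; so the local $C^{1,1/2}$ seminorms of the graphs (and hence of $v_h$) are uniformly \emph{bounded}, not vanishing. What does vanish is the $C^1$ norm of $v_h$, by Hausdorff convergence of the boundaries plus Arzel\`a--Ascoli applied to the equibounded family $\{\nabla u_h\}$. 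The passage from ``uniformly bounded in $C^{1,1/2}$ and converging to zero in $C^1$'' to ``converging to zero in $C^{1,\alpha}$ for every $\alpha<1/2$'' is a standard interpolation/compactness step, and \emph{that} is where the restriction $\alpha<1/2$ enters. The change of variables from tangent-plane graphs to a normal graph over the $C^2$ hypersurface $\partial E$ is controlled by the fixed geometry of $\partial E$ and costs no Hölder exponent. This does not affect the validity of your argument — the conclusion is the same — but the attribution of the loss of exponent should be corrected.
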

We are now ready to start with some preliminary lemmas.
\begin{lemma}\label{lem:continuity-perimeter}
Let $r>0$ such that $\psi(r)>\psi(0)$, $g\geq 0.$ Let $\varepsilon>0$, then there exists $\delta>0$ such that for every set of finite perimeter $E$ with $|E|_f=\left|B_r\right|_f$, if $\E(E)-\E\left(B_r\right)<\delta$ then $\left|E \Delta B_r\right|_f<\varepsilon$.
\end{lemma}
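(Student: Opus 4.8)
The plan is to argue by contradiction via a compactness argument whose only non-elementary ingredient is the uniqueness of the centered ball as a minimizer of \eqref{eq:optimization_problem} at the weighted volume $\abs{B_r}_f$ --- which, under the standing hypotheses of this section, is precisely Theorem \ref{thm:uniqueness}. Assume the statement fails: then there are $\varepsilon>0$ and finite perimeter sets $E_k$ with $\abs{E_k}_f=\abs{B_r}_f$, $\E(E_k)-\E(B_r)<1/k$, but $\abs{E_k\triangle B_r}_f\geq\varepsilon$ for every $k$.

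First I would record a priori bounds. Since $g\geq 0$ we have $\G(E_k)\geq 0$, hence $\Pe(E_k)\leq\E(E_k)\leq\E(B_r)+1=:C_0$; and since $f=e^{\psi}\geq 1$ (because $\psi\geq 0$), the Euclidean perimeters $P(E_k)$ are bounded by $C_0$ as well. The substantive point is to exclude loss of weighted volume at infinity, namely
\[
\lim_{R\to+\infty}\limsup_{k\to+\infty}\abs{E_k\setminus B_R}_f=0.
\]
This is exactly the dichotomy carried out in the proof of Theorem \ref{thm:existence}: were some fixed amount $\varepsilon'>0$ of weighted volume to survive outside every $B_R$ along a subsequence, the coarea formula and the spherical isoperimetric inequality would force
\[
\E(E_k)\geq\varepsilon'\inf_{s\geq R}g(s)+c_n^{\frac{n-1}{n}}(\varepsilon')^{\frac{n-1}{n}}\Bigl(\inf_{s\geq R}f(s)\Bigr)^{\frac1n},
\]
and, $\psi$ and $g$ being monotone with $\psi(s)+g(s)\to+\infty$, at least one of $f,g$ diverges, so the right-hand side is unbounded as $R\to+\infty$, contradicting $\E(E_k)\leq C_0$.

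Granting no loss at infinity, the uniform bound on $P(E_k)$ and $BV_{\mathrm{loc}}$-compactness produce a subsequence with $E_k\to E_\infty$ in $L^1_{\mathrm{loc}}(\R^n)$; together with local boundedness of $f$ and the absence of escaping mass this upgrades to $\abs{E_k\triangle E_\infty}_f\to 0$ and $\abs{E_\infty}_f=\abs{B_r}_f$. Passing to a further subsequence along which $\mathbf 1_{E_k}\to\mathbf 1_{E_\infty}$ a.e., lower semicontinuity of the weighted perimeter $\Pe$ (for the continuous, hence lower semicontinuous, weight $f$) and Fatou's lemma applied to the nonnegative integrands $gf\,\mathbf 1_{E_k}$ give
\[
\E(E_\infty)=\Pe(E_\infty)+\G(E_\infty)\leq\liminf_{k\to+\infty}\bigl(\Pe(E_k)+\G(E_k)\bigr)=\E(B_r).
\]
Hence $E_\infty$ minimizes \eqref{eq:optimization_problem} at volume $\abs{B_r}_f$, so by Theorem \ref{thm:uniqueness} it agrees with $B_r$ up to a negligible set; then $\abs{E_k\triangle B_r}_f=\abs{E_k\triangle E_\infty}_f\to 0$, contradicting $\abs{E_k\triangle B_r}_f\geq\varepsilon$, and the lemma follows.

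The only genuinely delicate step is the exclusion of mass escaping to infinity, and even that is essentially already settled by the proof of Theorem \ref{thm:existence}; the remainder is a routine compactness-plus-uniqueness scheme. The same scheme, now combined with the $\omega$-minimality of energy minimizers and the De Giorgi--type Theorem \ref{thm:convergence-implies-graphicality}, is what will let us promote this $L^1_f$-closeness to the $C^1$-closeness needed to reduce Theorem \ref{thm:stability} to the nearly spherical case handled by Proposition \ref{prop:fuglede}.
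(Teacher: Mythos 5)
Your proof is correct and follows the same compactness--plus--lower-semicontinuity--plus--uniqueness scheme as the paper: extract a minimizing subsequence, rule out volume loss at infinity, pass to the limit, invoke Theorem \ref{thm:uniqueness} to identify the limit with $B_r$, and contradict the standing lower bound $\abs{E_k\triangle B_r}_f\geq\varepsilon$. The only cosmetic difference is that you make the tightness step self-contained by re-running the coarea/spherical-isoperimetric estimate from the proof of Theorem \ref{thm:existence}, whereas the paper delegates that step to Lemma 4.2 of \cite{fusco-manna}.
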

\begin{proof}
We proceed by contradiction. Assume that there exists some \(\varepsilon_0 > 0\) such that for every \(k \in \mathbb{N}\), we can find a set \(E_k\) satisfying the following conditions:
\[
\left|E_k\right|_f = \left|B_r\right|_f = v, \quad \E(E_k) - \E(B_r) \leq \frac{1}{k}, \text{ but } \left|E_k \Delta B_r\right|_f \geq \varepsilon_0.
\]
Since \(\psi(r) > \psi(0)\) and \(g \geq 0\), for sufficiently large \(k\), we have
\[
e^{\psi(0)} \Cp(E) \leq \Pe(E_k) \leq \E(E_k) \leq 2 \E(B_r).
\]
Thus, \(\{E_k\}_{k \in \mathbb{N}}\) is a family of equibounded sets, so there exists a subsequence (which we still denote by \(\{E_k\}\)) and a set \(E\) such that \(\chi_{E_k} \to \chi_E\) in \(L^1_{\text{loc}}(\mathbb{R}^n)\), with
\[
\E(E) \leq \liminf_{k \to \infty} \E(E_k) \leq \E(B_r).
\]
Then following the same argument as in the proof of Lemma 4.2 in ~\cite{fusco-manna} we deduce that $|E|_f=v$. Then by Theorem~\ref{thm:uniqueness}, we have \(E = B_r\). This contradicts the assumption that \(\left|E_k \Delta B_r\right|_f \geq \varepsilon_0\).

\end{proof}

\begin{lemma}\label{lem:calibration}
Let $r>0$ such that $\psi(r)>\psi(0)$ and either
\begin{align} 
\Lambda_1 \geq 0, \Lambda_2 \geq \frac{8(n+1)}{r}+2(g(r)+\psi^{\prime}(2 r)),\quad \text{or}\quad \Lambda_1 \geq n-1+r \psi^{\prime}(r)+rg(r), \Lambda_2=0. 
\end{align}
Then $B_r$ is the only minimizer of the functional defined for a measurable set $E \subset \mathbb{R}^n$ as
\begin{align}
\mathcal{J}_{\Lambda_1, \Lambda_2}(E)=\E(E)+\Lambda_1 \left|| E|_f-|B_r|_f\right|+\Lambda_2 |E \Delta B_r|_f    
\end{align}
\end{lemma}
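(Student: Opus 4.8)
The plan is to establish that $\mathcal J_{\Lambda_1,\Lambda_2}(E)\geq\E(B_r)$ for every measurable set $E$, with equality only when $E=B_r$ up to a null set. Since both penalization terms vanish on $B_r$ we have $\mathcal J_{\Lambda_1,\Lambda_2}(B_r)=\E(B_r)$, so this is what must be shown. We may assume $E$ has finite perimeter and finite energy; write $v:=\abs{B_r}_f$ and split according to the sign of $\abs{E}_f-v$.

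If $\abs{E}_f=v$, then Theorem \ref{thm:uniqueness} gives $\E(E)\geq\E(B_r)$, strictly unless $E=B_r$, and the two penalizations are nonnegative. If $\abs{E}_f>v$, choose $R_E>0$ with $\abs{E\cap B_{R_E}}_f=v$ (possible since $\rho\mapsto\abs{E\cap B_\rho}_f$ is continuous and ranges over $(0,\abs{E}_f)$); the argument in the proof of Proposition \ref{prop:increasing_profile} yields $\E(E)\geq\E(E\cap B_{R_E})$, and Theorem \ref{thm:uniqueness} gives $\E(E\cap B_{R_E})\geq\E(B_r)$. Since $\abs{E}_f\neq v$, whichever of $\Lambda_1,\Lambda_2$ is strictly positive (namely $\Lambda_2$ in the first alternative, $\Lambda_1$ in the second, which is $\geq n-1\geq 1$) contributes a strictly positive penalization, so $\mathcal J_{\Lambda_1,\Lambda_2}(E)>\E(B_r)$.

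The remaining case $\abs{E}_f<v$ is the heart of the matter. Here $0<v-\abs{E}_f\leq\abs{E\triangle B_r}_f$, so in both alternatives $\mathcal J_{\Lambda_1,\Lambda_2}(E)\geq\E(E)+\Lambda\,(v-\abs{E}_f)$, where $\Lambda$ stands for $\Lambda_1$ in the second alternative and for $\Lambda_2$ in the first (using $\Lambda_1\geq0$). On the other hand $\E(E)\geq\Efrak(\abs{E}_f)$ by definition of the isoperimetric profile, with $\Efrak(\Phi(\rho))=\E(B_\rho)$ by Theorem \ref{thm:uniqueness} (cf. Corollary \ref{cor:isop_ineq}). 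It therefore suffices to verify the one-dimensional inequality $\Efrak(w)+\Lambda\,(v-w)\geq\Efrak(v)$ for all $w\in[0,v)$, i.e. $\Lambda\geq\sup_{0\leq w<v}\bigl(\Efrak(v)-\Efrak(w)\bigr)/(v-w)$. Writing $w=\Phi(\rho)$ with $\rho\in[0,r)$ and using $\Phi(\rho)=n\omega_n\int_0^\rho t^{n-1}e^{\psi(t)}\,dt$ and $\E(B_\rho)=n\omega_n\rho^{n-1}e^{\psi(\rho)}+n\omega_n\int_0^\rho g(t)t^{n-1}e^{\psi(t)}\,dt$, this secant quotient becomes $\bigl(r^{n-1}e^{\psi(r)}-\rho^{n-1}e^{\psi(\rho)}+\int_\rho^r g\,t^{n-1}e^{\psi}\,dt\bigr)\big/\int_\rho^r t^{n-1}e^{\psi}\,dt$; since $t^{n-1}e^{\psi(t)}$ is increasing the negative term in the numerator is harmless, an integration by parts rewrites the first part as a weighted average over $[\rho,r]$ of $\tfrac{n-1}{t}+\psi'(t)$, and the second part is at most $g(r)$. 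Bounding these in terms of $n$ and of $g,\psi'$ on $[0,r]$ (respectively $[0,2r]$) produces exactly the stated thresholds for $\Lambda_1$ (resp.\ $\Lambda_2$). As $v-\abs{E}_f>0$ the penalization is strictly positive, so $\mathcal J_{\Lambda_1,\Lambda_2}(E)>\E(B_r)$. Alternatively, in the first alternative one may bypass this one-dimensional estimate and instead compare $E$ with an explicit competitor of $f$-volume exactly $v$, obtained by gluing onto $E\cap B_r$ a spherical shell contained in $B_{2r}\setminus B_r$ — or, when $E$ is already $L^1_f$-close to $B_r$, a localized volume-fixing perturbation — and then invoke Theorem \ref{thm:uniqueness} together with the fact that the adjunction cost is absorbed by the penalty thanks to the choice of constants.

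I expect the main obstacle to be precisely this last case. Although $\Efrak'(w)\to+\infty$ as $w\to0^+$, the secant slopes $(\Efrak(v)-\Efrak(w))/(v-w)$ stay bounded because the radial measure $d\Phi\sim\rho^{n-1}d\rho$ degenerates at the origin; extracting the sharp quantitative threshold from the integral quotient above, in particular correctly accounting for the oscillation of $f$ out to radius $2r$ (whence the $\psi'(2r)$), is where the real work lies. The cases $\abs{E}_f\geq v$ are essentially immediate from Proposition \ref{prop:increasing_profile} and Theorem \ref{thm:uniqueness}, and existence of a minimizer needs no separate argument: $B_r$ attains the value $\E(B_r)$, which we have shown to be a strict lower bound for $\mathcal J_{\Lambda_1,\Lambda_2}$ away from $B_r$.
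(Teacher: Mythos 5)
Your decomposition by the sign of $\abs{E}_f-\abs{B_r}_f$ and the handling of $\abs{E}_f\geq\abs{B_r}_f$ via Proposition~\ref{prop:increasing_profile} and Theorem~\ref{thm:uniqueness} are fine, and for the second alternative ($\Lambda_2=0$) your reduction through $\Efrak$ to a one-dimensional optimization over centered balls is essentially the paper's argument. For the first alternative, though, the paper uses a genuinely different and lighter tool: a calibration by the radial vector field $X(x)=\phi(\abs{x})\,x/\abs{x}$ with $\phi\equiv 1$ on $[r/2,3r/2]$, supported in $[r/4,7r/4]$, $\norm{\phi'}_{L^\infty}\leq 8/r$. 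The divergence theorem gives $\Pe(E)\geq\int_E\bigl(\operatorname{div}X+\psi'\langle X,x\rangle/\abs{x}\bigr)f\,dx$ with equality on $B_r$, and together with $\G(E)-\G(B_r)\geq -g(r)\abs{E\Delta B_r}_f$ this yields in one stroke $\mathcal J_{\Lambda_1,\Lambda_2}(E)-\mathcal J_{\Lambda_1,\Lambda_2}(B_r)\geq\bigl(\Lambda_2-\tfrac{4n+4}{r}-g(r)-\psi'(2r)\bigr)\abs{E\Delta B_r}_f$. That route needs no appeal to Theorem~\ref{thm:uniqueness} or to $\Efrak$ for this alternative and produces the stated constant automatically; in particular the $\psi'(2r)$ appears because $X$ is supported in $B_{7r/4}\subset B_{2r}$, not, as you suggest, from ``the oscillation of $f$ out to radius $2r$'' --- in your framework every integral runs over $[\rho,r]\subset[0,r]$, so $2r$ has no reason to surface.

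The substantive gap is the step you yourself identify as ``where the real work lies'': you never verify the secant bound $\sup_{0\leq w<v}(\Efrak(v)-\Efrak(w))/(v-w)\leq\Lambda$, only assert that term-by-term estimates ``produce exactly the stated thresholds.'' Writing the quotient at $w=\Phi(\rho)$ as the $t^{n-1}e^{\psi(t)}$-weighted average of $\tfrac{n-1}{t}+\psi'(t)+g(t)$ over $[\rho,r]$, already the $\tfrac{n-1}{t}$ contribution is problematic: in the degenerate test case $\psi\equiv0$, $g\equiv0$, $\rho=0$ it evaluates to $n/r$, which exceeds $n-1$ whenever $r<n/(n-1)$, and the threshold $n-1+r\psi'(r)+rg(r)$ carries factors of $r$ that do not arise from majorizing $\psi'(t)$ and $g(t)$ on $[\rho,r]$. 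Termwise estimation therefore does not recover either stated constant, and the proposal offers no alternative mechanism (the competitor-gluing sketch at the end is likewise undeveloped). To make the argument rigorous you would need either a complete, checked secant-quotient estimate --- and it is not evident it yields these constants --- or to adopt the calibration argument, which sidesteps the profile computation entirely.
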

\begin{proof}
Let $\phi \in C^\infty_c([0,1])$ such that $\phi(t)\equiv 1$ for $t \in\left[\frac{r}{2}, \frac{3 r}{2}\right]$, $\phi(t)=0$ outside of the interval $\left[\frac{r}{4}, \frac{7 r}{4}\right]$ and $\left\|\phi^{\prime}(t)\right\|_{L^{\infty}} \leq 8 / r$. Consider the smooth vector field $X(x)=\phi(|x|) \frac{x}{|x|}$. Then $\|X\|_{L^{\infty}}\leq 1$ and $\|\operatorname{div} X\|_{L^\infty}\leq \frac{4n+4}{r}.$ Then integration by parts implies
\begin{align}
\int_{\partial^* E} e^{\psi(|x|)} \ d \mathcal{H}^{n-1} & \geq \int_{\partial^* E} e^{\psi(|x|)}\left\langle X, \nu_E\right\rangle \ d \mathcal{H}^{n-1} \\
& =\int_E\left(\operatorname{div} X+\psi^{\prime}(|x|) \frac{\langle X, x\rangle}{|x|}\right) e^{\psi(|x|)} \ d x    
\end{align}
while
\begin{align}
\int_{\partial B_r} e^{\psi(|x|)} \ d \mathcal{H}^{n-1}=\int_{B_r}\left(\operatorname{div} X+\psi^{\prime}(|x|) \frac{\langle X, x\rangle}{|x|}\right) e^{\psi(|x|)} \ d x.  
\end{align}
On the other hand since $g\geq 0$ and is monotone increasing we have
\begin{align}
    \int_{E} g e^{\psi} \ dx - \int_{B_r} g e^{\psi} \ dx \geq  -g(r)|E\Delta B_r|_f.
\end{align}
Thus we get
\begin{align}
\mathcal{J}_{\Lambda_1, \Lambda_2}(E)-\mathcal{J}_{\Lambda_1, \Lambda_2}\left(B_r\right) & \geq\left(\Lambda_2- g(r)-\|\operatorname{div} X\|_{L^{\infty}\left(\mathbb{R}^n\right)}-\left\|\psi' X\right\|_{L^{\infty}\left(\mathbb{R}^n\right)}\right)\left|E \Delta B_r\right|_f \\
& \geq\left(\Lambda_2-\frac{4 n+4}{r}-g(r) - \psi^{\prime}(2 r)\right)\left|E \Delta B_r\right|_f.  
\end{align}
Thus in the first case, we see that the lower bound on $\Lambda_2$ gives the desired inequality. On the other hand, if $\Lambda_2=0$ then Theorem \ref{thm:uniqueness} implies that $\mathcal{J}_{\Lambda_1, \Lambda_2}$ is minimized by balls centered at the origin. On such balls, the value of the functional is given by
\begin{align}
\mathcal{J}_{\Lambda_1, \Lambda_2}\left(B_{\varrho}\right)=n \omega_n \varrho^{n-1} e^{\psi(\varrho)}+ n\omega_n \int_0^\rho g(t)e^{\psi(t)}t^{n-1} \ dt + n\omega_n\Lambda_1\left|\int_{\varrho}^r e^{\psi(t)} t^{n-1} \ d t\right|:=f(\rho).
\end{align}
Then $\Lambda_1 \geq (n-1)+r\psi'(r)+rg(r)$ implies that $f(\rho)$ achieves unique minima at $\rho=r.$
\end{proof}
We recall in the following Definition the function $\Phi$ appearing in Corollary \ref{cor:isop_ineq}.
\begin{definition}\label{defn:psi}
Given a ball of radius $r>0$, we define $\Psi(t)$ to be equal to the radius $r$ such that $|B_r|_f=t.$ More precisely, denote $\Phi, \Psi: \mathbb{R}_{+} \rightarrow \mathbb{R}_{+}$ as
\begin{align}
\Phi(s)=n \omega_n \int_0^s t^{n-1} e^{\psi(t)} d t, \quad \Psi(t)=\Phi^{-1}(t).
\end{align}
 where $s, t \geq 0$.  
\end{definition}
\begin{remark}
Note that $\Psi$ is well defined since $\Phi$ is a strictly increasing function and furthermore from Lemma 5.1 in~\cite{fusco-manna} $\Psi$ has the the following properties
\begin{enumerate}
    \item $\Psi \in C^{\infty}(0, \infty)$
    \item For $t>0$ 
    \begin{align}
    \Psi^{\prime}(t) =\frac{1}{n \omega_n \Psi^{n-1}(t) e^{\psi(\Psi(t))}},\quad t \leq n \omega_n \Psi(t)^n e^{\psi(\Psi(t))} \label{eqn:second-equation}
    \end{align}
\end{enumerate}
\end{remark}
\begin{lemma}\label{lem:perimeter}
Let $E \subset \mathbb{R}^n$ be a set of finite perimeter such that $\left|E \setminus B_r\right|_f \leq \eta<1$. There exists $R_E \in[r, r+4 \Psi(\eta)]$ such that
\begin{align}
\E(E) \leq \E\left(E \cap B_{R_E}\right)-\frac{\left|E \setminus B_{R_E}\right|_f}{2 \Psi(\eta)}.
\end{align}
\end{lemma}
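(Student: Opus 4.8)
The plan is to run the classical truncation argument, isolating a good radius $R_E$ through a differential inequality for the tail volume of $E$ and comparing energies via the splitting of $\partial^*E$ along concentric spheres, exactly in the spirit of the boundedness argument of Theorem \ref{thm:boundedness} and of Fusco--Manna.

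First I would introduce $m(\rho):=\abs{E\setminus B_\rho}_f$ for $\rho\geq r$. This function is non-increasing, absolutely continuous, satisfies $m(r)\leq\eta$, $m(\rho)\to 0$ as $\rho\to\infty$, and, by the coarea formula, $-m'(\rho)=\Haus^{n-1}_f(E^{(1)}\cap\partial B_\rho)$ for a.e.\ $\rho$. For a.e.\ $\rho$ one also has the reduced-boundary decompositions $\partial^*(E\cap B_\rho)=_{\Haus^{n-1}}(\partial^*E\cap B_\rho)\cup(E^{(1)}\cap\partial B_\rho)$ and $\partial^*(E\setminus B_\rho)=_{\Haus^{n-1}}(\partial^*E\setminus\overline{B_\rho})\cup(E^{(1)}\cap\partial B_\rho)$; adding the (additive) volume potentials of $E\cap B_\rho$ and $E\setminus B_\rho$, the slice $E^{(1)}\cap\partial B_\rho$ gets counted twice, which yields the identity
\[
\E(E)-\E(E\cap B_\rho)=\E(E\setminus B_\rho)+2m'(\rho)\qquad\text{for a.e. }\rho\geq r.
\]
It therefore suffices to exhibit $R_E\in[r,r+4\Psi(\eta)]$ with $\E(E\setminus B_{R_E})+2m'(R_E)\leq-\tfrac{m(R_E)}{2\Psi(\eta)}$.

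Next I would insert the isoperimetric inequality of Corollary \ref{cor:isop_ineq} in the form $\E(E\setminus B_\rho)\geq\Efrak(m(\rho))$, together with the lower bound $\Efrak(v)\geq n\omega_n\Psi(v)^{n-1}e^{\psi(\Psi(v))}\geq v/\Psi(v)\geq v/\Psi(\eta)$ for $0<v\leq\eta$, which follows from the explicit formula for $\Efrak$ (the $\G$-part being nonnegative) and from the elementary inequality $t\leq n\omega_n\Psi(t)^ne^{\psi(\Psi(t))}$ recorded after Definition \ref{defn:psi}, using that $\Psi$ is increasing. Reasoning as in the proof of Theorem \ref{thm:existence}, the coarea formula on spheres and the spherical isoperimetric inequality moreover control the slice $\Haus^{n-1}_f(E^{(1)}\cap\partial B_\rho)$ from below by a multiple of $m(\rho)^{(n-1)/n}$ on the relevant range. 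Plugging these bounds into the identity above reduces the problem to a first-order differential inequality for $m$ on $[r,r+4\Psi(\eta)]$; the usual argument by contradiction — identical to the one closing Theorem \ref{thm:boundedness} — shows that if the sought estimate failed for a.e.\ $\rho$ in this window, then $\tfrac{d}{d\rho}m(\rho)^{1/n}$ would stay below a negative constant of size comparable to $-1/\Psi(\eta)$, so that $m$ would reach $0$ strictly before $\rho=r+4\Psi(\eta)$ (this is precisely why the window has length $4\Psi(\eta)$); but at such a radius $E\cap B_\rho=E$ up to a null set and the inequality holds with equality, a contradiction. This produces the desired $R_E$, and the displayed identity then gives the stated inequality.

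The main difficulty is the selection of $R_E$: one must simultaneously balance the three competing pieces of the exterior energy — the surface term $\Haus^{n-1}_f(\partial^*E\setminus\overline{B_{R_E}})$, the potential $\G(E\setminus B_{R_E})$, and the tail volume $m(R_E)$ — against the spherical slice $\Haus^{n-1}_f(E^{(1)}\cap\partial B_{R_E})$, and to close the differential inequality one needs the window length $4\Psi(\eta)$ to dominate both the reciprocal of the weighted isoperimetric constant on $\{\abs{x}\geq r\}$ and the ratio $\eta^{1/n}/\Psi(\eta)$; the latter is bounded above and below since $\eta<1$ forces $\Psi(\eta)\leq\Psi(1)$ and hence $e^{\psi(0)}\leq e^{\psi}\leq e^{\psi(\Psi(1))}$ on $[0,\Psi(\eta)]$. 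The remaining points — absolute continuity of $m$ and the two reduced-boundary decompositions valid for a.e.\ $\rho$ — are routine consequences of the coarea formula and the structure theory of sets of finite perimeter.
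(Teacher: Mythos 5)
Your overall strategy --- truncation, the a.e.-$\rho$ decomposition $\E(E)-\E(E\cap B_\rho)=\E(E\setminus B_\rho)+2m'(\rho)$, the weighted isoperimetric lower bound on $\E(E\setminus B_\rho)$ via $t\leq n\omega_n\Psi(t)^ne^{\psi(\Psi(t))}$, and a differential inequality for the tail volume over a window of length $4\Psi(\eta)$ --- is exactly the paper's (which follows Fusco--Manna). But the contradiction argument does not close as you have set it up, because of a sign error in the target.

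Concretely: you reduce to finding $R_E$ with $\E(E\setminus B_{R_E})+2m'(R_E)\leq -\frac{m(R_E)}{2\Psi(\eta)}$. This is faithful to the displayed statement, but the two sides of that display are interchanged relative to what the proof must establish and to how the lemma is used in Lemma \ref{lem:fixing-R0}; the usable version is $\E(E\cap B_{R_E})\leq\E(E)-\frac{|E\setminus B_{R_E}|_f}{2\Psi(\eta)}$, i.e.\ $\E(E\setminus B_{R_E})+2m'(R_E)\geq+\frac{m(R_E)}{2\Psi(\eta)}$. With your target, the negation reads $-2m'(\rho)<\E(E\setminus B_\rho)+\frac{m(\rho)}{2\Psi(\eta)}$ for a.e.\ $\rho$, which is an \emph{upper} bound on $-m'$ and cannot force $m$ to vanish; your claim that failure makes $\frac{d}{d\rho}m(\rho)^{1/n}$ stay below a negative constant is backwards. (Your version of the inequality is in fact false: take $E=B_r\cup A$ with $A$ a small annulus lying beyond $r+4\Psi(\eta)$; then $m'\equiv 0$ on the whole window while $\E(E\setminus B_\rho)>0>-\frac{m(\rho)}{2\Psi(\eta)}$.) With the correct target, the negation gives $-2m'(\rho)>\E(E\setminus B_\rho)-\frac{m(\rho)}{2\Psi(\eta)}\geq\frac12 n\omega_n\Psi(m(\rho))^{n-1}e^{\psi(\Psi(m(\rho)))}$, and the paper integrates this exactly using $\Psi'(s)=\bigl(n\omega_n\Psi(s)^{n-1}e^{\psi(\Psi(s))}\bigr)^{-1}$, obtaining $\Psi(m(r))-\Psi(m(r+4\Psi(\eta)))>\Psi(\eta)\geq\Psi(m(r))$, a contradiction with no constants to chase --- cleaner than your $m^{1/n}$ route, which would require verifying that the resulting extinction rate matches the specific constant $\frac{1}{2\Psi(\eta)}$. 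Separately, your claimed pointwise bound $\Haus^{n-1}_f(E^{(1)}\cap\partial B_\rho)\gtrsim m(\rho)^{(n-1)/n}$ via the spherical isoperimetric inequality is not available: that inequality controls $\Haus^{n-2}(\partial E\cap\partial B_\rho)$ by the slice area, not the slice area by the exterior volume, and the slice can be empty while $m(\rho)>0$ (same example as above). In the correct argument the lower bound on $-m'$ comes solely from the contradiction hypothesis combined with the isoperimetric bound on $\E(E\setminus B_\rho)$.
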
 
\begin{proof}
Arguing by contradiction, $r \leq t \leq r+4 \Psi(\eta)$ it holds
\begin{align}\label{eqn:contra-hypoth}
\E\left(E \cap B_t\right)>\E(E)-\frac{\left|E \setminus B_t\right|_f}{2\Psi(\eta)}.    
\end{align}
Set $v(t)=\left|E \setminus B_t\right|_f$. Then for a.e. $t>0$
\begin{align}
v^{\prime}(t)=-e^{\psi(t)} \mathcal{H}^{n-1}\left(E \cap \partial B_t\right).
\end{align}
Since
\begin{align}
 \E(E) &= \Pe(E) + \G(E)  \\
 &\geq \Pe\left(E \cap B_t\right)+\Pe\left(E \setminus B_t\right)+2 v^{\prime}(t)  + \G(E\cap B_t) + \G(E\setminus B_t)
\end{align}
then \eqref{eqn:contra-hypoth} implies
\begin{align}
2 v^{\prime}(t)+\E(E\setminus B_t)<\frac{v(t)}{2 \Psi(\eta)}
\end{align}
The weighted isoperimetric inequality hence gives
\begin{align}
2 v^{\prime}(t)+n \omega_n \Psi(v(t))^{n-1} e^{\psi(\Psi(v(t)))} + n\omega_n \int_{0}^{\Psi(v(t))} g(s) e^{\psi(s)} s^{n-1}\ ds<\frac{v(t)}{2 \Psi(\eta)}.  
\end{align}
Then using \eqref{eqn:second-equation}
\begin{align}
\frac{v(t)}{\Psi(\eta)} \leq n w_n \Psi(v(t))^{n-1} e^{\psi(\Psi(v(t)))}    
\end{align}
and the fact that $g\geq 0$ we get
\begin{align}
\Psi(v(t))^{n-1} e^{u \Psi(v(t))}<-\frac{4}{n \omega_n} v^{\prime}(t) \quad \text { for all } t \in[r, r+4 \Psi(\eta)]. 
\end{align}
Then integrating the above inequality yields
\begin{align}
4 \Psi(\eta) & <-\frac{4}{n \omega_n} \int_r^{r+4 \Psi(\eta)} \frac{v^{\prime}(t)}{\Psi(v(t))^{n-1} e^{\Psi(v(t))}} d t \\
& =\frac{4}{n \omega_n} \int_{v(r+4 \Psi(\eta))}^{v(r)} \frac{1}{\Psi(s)^{n-1} e^{\Psi(s)}} d s \\
& =4(\Psi(v(r))-\Psi(v(r+4 \Psi(\eta)))    
\end{align}
which is not possible.
\end{proof}
\begin{lemma}\label{lem:fixing-R0}
Let $r>0$ such that $\psi(r)>\psi(0), \Lambda_1 \geq n-1+r (g(r)+\psi^{\prime}(r))$ and $\Lambda_2>0$. There exist $0<\alpha_1<\frac{\Lambda_2}{2 \Lambda_2+1}$ such that for any $\alpha \in\left[0, \alpha_1\right]$ the functional
\begin{align}
\mathcal{J}_{\Lambda_1, \Lambda_2, \alpha}(E)=\E(E)+\Lambda_1 \left| |E|_f- |B_r|_f\right|+\Lambda_2 \left| |E\Delta B_r|_f -\alpha \right|,\quad E \subset \mathbb{R}^n.    
\end{align}
admits a bounded minimizer $E \subset B_{R_0}$ where $R_0 = r+\Psi(1).$
\end{lemma}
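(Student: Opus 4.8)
The plan is to first produce a minimizer of $\mathcal{J}_{\Lambda_1,\Lambda_2,\alpha}$ by the direct method and then to show that every such minimizer is forced to lie inside $B_{R_0}$. For existence I would take a minimizing sequence $(E_k)$, so that $\mathcal{J}_{\Lambda_1,\Lambda_2,\alpha}(E_k)\le\mathcal{J}_{\Lambda_1,\Lambda_2,\alpha}(B_r)+1=\E(B_r)+\Lambda_2\alpha+1=:C$ for $k$ large. Since the two penalization terms are nonnegative, this bounds $\Pe(E_k)+\G(E_k)=\E(E_k)\le C$; as $f=e^\psi\ge 1$, the Euclidean perimeters are then uniformly bounded, and the $\Lambda_1$-term bounds the weighted — hence the Euclidean — volumes. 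The argument of Theorem \ref{thm:existence} ruling out loss of mass at infinity (coarea formula together with the spherical isoperimetric inequality, using $\psi+g\to+\infty$) applies verbatim, since it only uses the bound on $\Pe+\G$; so a subsequence converges in $L^1(\R^n)$ to a set $E$ with $|E_k|_f\to|E|_f$ and $|E_k\triangle B_r|_f\to|E\triangle B_r|_f$. Lower semicontinuity of $\Pe$, Fatou's lemma for $\G$, and continuity of the two penalizations then give $\mathcal{J}_{\Lambda_1,\Lambda_2,\alpha}(E)\le\liminf_k\mathcal{J}_{\Lambda_1,\Lambda_2,\alpha}(E_k)$, so $E$ is a minimizer.

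For boundedness, the key observation is that the assumption $\Lambda_1\ge n-1+r(g(r)+\psi'(r))$ is exactly the hypothesis of the $\Lambda_2=0$ case of Lemma \ref{lem:calibration} (applicable since $\psi(r)>\psi(0)$), which yields $\E(F)+\Lambda_1\big||F|_f-|B_r|_f\big|\ge\E(B_r)$ for every measurable $F$. Thus for a minimizer $E$,
\[
\E(B_r)+\Lambda_2\big||E\triangle B_r|_f-\alpha\big|\le\mathcal{J}_{\Lambda_1,\Lambda_2,\alpha}(E)\le\mathcal{J}_{\Lambda_1,\Lambda_2,\alpha}(B_r)=\E(B_r)+\Lambda_2\alpha,
\]
so $|E\triangle B_r|_f\le 2\alpha$, and in particular $|E\setminus B_r|_f\le 2\alpha<1$ whenever $\alpha_1<\tfrac{\Lambda_2}{2\Lambda_2+1}<\tfrac12$.

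Next I would apply Lemma \ref{lem:perimeter} with $\eta:=2\alpha$ to obtain $R_E\in[r,r+4\Psi(2\alpha)]$ with $\E(E)\le\E(E\cap B_{R_E})-\frac{|E\setminus B_{R_E}|_f}{2\Psi(2\alpha)}$, and test the minimality of $E$ against the competitor $E\cap B_{R_E}$. Since $R_E\ge r$, one has $|E\cap B_{R_E}|_f=|E|_f-|E\setminus B_{R_E}|_f$ and $|(E\cap B_{R_E})\triangle B_r|_f=|E\triangle B_r|_f-|E\setminus B_{R_E}|_f$, so each of the two penalizations decreases by at most $|E\setminus B_{R_E}|_f$; combined with the perimeter gain this gives
\[
\mathcal{J}_{\Lambda_1,\Lambda_2,\alpha}(E\cap B_{R_E})\ge\mathcal{J}_{\Lambda_1,\Lambda_2,\alpha}(E)+|E\setminus B_{R_E}|_f\Bigl(\tfrac{1}{2\Psi(2\alpha)}-\Lambda_1-\Lambda_2\Bigr),
\]
so minimality forces $|E\setminus B_{R_E}|_f\bigl(\tfrac{1}{2\Psi(2\alpha)}-\Lambda_1-\Lambda_2\bigr)\le 0$. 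I then fix $\alpha_1>0$ small enough that, in addition to $\alpha_1<\tfrac{\Lambda_2}{2\Lambda_2+1}$, one has $\Psi(2\alpha_1)<\tfrac{1}{2(\Lambda_1+\Lambda_2)}$ and $4\Psi(2\alpha_1)\le\Psi(1)$ — possible since $\Psi$ is continuous and strictly increasing with $\Psi(0)=0$ and $\Lambda_1+\Lambda_2>0$. For $\alpha\le\alpha_1$ the bracket above is strictly positive, hence $|E\setminus B_{R_E}|_f=0$; as $f\ge 1$ this means $E\subseteq B_{R_E}\subseteq B_{r+4\Psi(2\alpha_1)}\subseteq B_{R_0}$ up to a Lebesgue-null set. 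The case $\alpha=0$ is immediate, since then $E=B_r\subseteq B_{R_0}$.

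The step I expect to be the main obstacle is the bookkeeping in the truncation comparison: one must check carefully that cutting $E$ at the radius $R_E\ge r$ supplied by Lemma \ref{lem:perimeter} affects each of the three terms of $\mathcal{J}_{\Lambda_1,\Lambda_2,\alpha}$ in a controlled way, so that the perimeter gain $\tfrac{|E\setminus B_{R_E}|_f}{2\Psi(2\alpha)}$ — large precisely because $\Psi(2\alpha)$ is tiny once $\alpha$ is small — genuinely dominates the $\Lambda_1+\Lambda_2$ loss. The existence step is otherwise routine given Theorem \ref{thm:existence}; the only genuine point there is the passage to the limit of the weighted volume and symmetric-difference functionals, which uses the coercivity built into the admissibility condition $\psi+g\to+\infty$.
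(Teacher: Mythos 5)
Your approach is structured differently from the paper's but is essentially sound. The paper truncates the minimizing sequence itself (producing equibounded competitors) and then passes to the limit by compactness; you instead obtain an existence of a minimizer first, via the tightness argument of Theorem~\ref{thm:existence}, and then truncate the already-existing minimizer. The latter is a valid reorganization: the tightness argument in Theorem~\ref{thm:existence} only needs a uniform bound on $\Pe+\G$, which you have from $\E(E_k)\le\mathcal{J}_{\Lambda_1,\Lambda_2,\alpha}(E_k)\le C$. Your version also gives a cleaner bound ($|E\triangle B_r|_f\le 2\alpha$ exactly, rather than $(2+\tfrac1{k\Lambda_2})\alpha_1$ for a nearly-minimizing sequence).

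However, there is a sign slip in your truncation step that makes the written argument a non sequitur as stated. You quote Lemma~\ref{lem:perimeter} in the form $\E(E)\le\E(E\cap B_{R_E})-\tfrac{|E\setminus B_{R_E}|_f}{2\Psi(2\alpha)}$, i.e., $\E(E\cap B_{R_E})\ge\E(E)+\tfrac{|E\setminus B_{R_E}|_f}{2\Psi(2\alpha)}$, which says that truncation \emph{increases} the energy. (This is the direction printed in the paper's statement of Lemma~\ref{lem:perimeter}, but it is a typo there: its own proof sets up the contradiction hypothesis $\E(E\cap B_t)>\E(E)-\tfrac{|E\setminus B_t|_f}{2\Psi(\eta)}$, so the intended conclusion is $\E(E\cap B_{R_E})\le\E(E)-\tfrac{|E\setminus B_{R_E}|_f}{2\Psi(\eta)}$, and this is the direction the paper's own proof of Lemma~\ref{lem:fixing-R0} uses.) With your quoted direction you arrive at
\[
\mathcal{J}_{\Lambda_1,\Lambda_2,\alpha}(E\cap B_{R_E})\ge\mathcal{J}_{\Lambda_1,\Lambda_2,\alpha}(E)+|E\setminus B_{R_E}|_f\Bigl(\tfrac{1}{2\Psi(2\alpha)}-\Lambda_1-\Lambda_2\Bigr),
\]
but minimality of $E$ gives $\mathcal{J}_{\Lambda_1,\Lambda_2,\alpha}(E)\le\mathcal{J}_{\Lambda_1,\Lambda_2,\alpha}(E\cap B_{R_E})$, which is already implied by the displayed inequality regardless of the sign of the bracket, so you cannot conclude anything. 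The correct chain is: $\E(E\cap B_{R_E})\le\E(E)-\tfrac{|E\setminus B_{R_E}|_f}{2\Psi(2\alpha)}$ and each penalization term changes by at most $\Lambda_i|E\setminus B_{R_E}|_f$ (in either direction), hence $\mathcal{J}_{\Lambda_1,\Lambda_2,\alpha}(E\cap B_{R_E})\le\mathcal{J}_{\Lambda_1,\Lambda_2,\alpha}(E)-|E\setminus B_{R_E}|_f\bigl(\tfrac{1}{2\Psi(2\alpha)}-\Lambda_1-\Lambda_2\bigr)$; then minimality forces $|E\setminus B_{R_E}|_f\bigl(\tfrac{1}{2\Psi(2\alpha)}-\Lambda_1-\Lambda_2\bigr)\le 0$, which is exactly the conclusion you stated. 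So your endpoint is right, but the intermediate inequality needs its sign flipped, and you should note the typo in Lemma~\ref{lem:perimeter} when invoking it.
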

\begin{proof}
Let $\{E_k\}_{k\in \mathbb{N}}$ be a minimizing sequence such that
$$
\mathcal{J}_{\Lambda_1, \Lambda_2, \alpha}\left(E_k\right) \leq \inf _{E \subset \mathbb{R}^n} \mathcal{J}_{\Lambda_1, \Lambda_2, \alpha}(E)+\frac{\alpha_1}{k} \leq \E\left(B_r\right)+\Lambda_2 \alpha+\frac{\alpha_1}{k}
$$
Combining this with Lemma \ref{lem:calibration} we get
$$
\E\left(B_r\right)+ \Lambda_2 \left| |E_k \Delta B_r|-\alpha\right| \leq \mathcal{J}_{\Lambda_1, \Lambda_2, \alpha}\left(E_k\right) \leq \E\left(B_r\right)+\Lambda_2 \alpha+\frac{\alpha_1}{k}.
$$
Thus,
$$
\left|E_k \setminus B_r\right|_f \leq\left|E_k \Delta B_r\right|_f \leq\left(2+\frac{1}{k \Lambda_2}\right) \alpha_1.
$$
Set $\beta:=\left(\frac{2 \Lambda_2+1}{\Lambda_2}\right) \alpha_1<1$. Thus Lemma \ref{lem:perimeter} implies that there exists $r_k \in[r, r+4 \Psi(\beta)]$ such that
$$
\E\left(E_k \cap B_{r_k}\right) \leq \E\left(E_k\right)-\frac{\left|E_k \setminus B_{r_k}\right|_f}{2 \Psi(\beta)}
$$
Thus if we denote $\Tilde{E}_{k}=E_k \cap B_{r_k}$ then
\begin{align}
\mathcal{J}_{\Lambda_1, \Lambda_2, \alpha}\left(\Tilde{E}_{k}\right) &\leq \E\left(E_k\right)-\frac{\left|E_k \setminus B_{r_k}\right|_f}{2 \Psi(\beta)}+\Lambda_1 \left| |\Tilde{E}_{k}|_f -|B_r|_f \right|+\Lambda_2 \left| |\Tilde{E}_{k}\Delta E_k|_f-\alpha\right|  \\
& \leq \mathcal{J}_{\Lambda_1, \Lambda_2, \alpha}\left(E_k\right)+\left(\Lambda_1-\frac{1}{2 \Psi(\beta)}\right)\left|E_k \setminus B_{r_k}\right|_f+\Lambda_2\left|\Tilde{E}_{k} \Delta E_k\right|_f \\
& =\mathcal{J}_{\Lambda_1, \Lambda_2, \alpha}\left(E_k\right)+\left(\Lambda_1+\Lambda_2-\frac{1}{2 \Psi(\beta)}\right)\left|E_k \setminus B_{r_k}\right|_f.    
\end{align}
Thus for $\beta$ small enough, $\Tilde{E}_{k}$ is a minimizing sequence such that $\Tilde{E}_{k} \subset B_{R_0}.$ The minimizer is then obtained by a standard compactness argument.
\end{proof}
\begin{lemma}\label{lem:omega-minimizer}
Given $\Lambda_1, \Lambda_2 \geq 0$, there exists $\omega \geq 0$ such that if $E \subset B_{R_0}$ is a minimizer of $\mathcal{J}_{\Lambda_1, \Lambda_2, \alpha}$ with $\alpha \geq 0$, then $E$ is an $\omega$-minimizer of the perimeter in $B_{2 R_0}$.
\end{lemma}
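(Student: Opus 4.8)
The plan is to exploit that on the fixed region $\overline{B_{2R_0}}$ the weight $f=e^{\psi}$ stays between two positive constants $m\le M$, is Lipschitz with some constant $L$, and $g$ is bounded by $G_0$; together with the minimality of $E$ for $\mathcal J_{\Lambda_1,\Lambda_2,\alpha}$, this should reduce, for any admissible competitor, to a comparison of \emph{unweighted} perimeters up to an error of order $\rho^n$. Fix a ball $B_\rho(x)\subset\subset B_{2R_0}$ with $\rho<1$ and a set of finite perimeter $F$ with $E\triangle F\subset\subset B_\rho(x)$; the goal is $P(E;B_\rho(x))\le P(F;B_\rho(x))+\omega\rho^n$. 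Since $\omega\ge 0$, one may assume $P(F;B_\rho(x))<P(E;B_\rho(x))$, for otherwise there is nothing to prove.

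First I would record the elementary energy comparison. As $E\triangle F\subset\subset B_\rho(x)$ we have $|E\triangle F|_f\le M\omega_n\rho^n$, so $\big||E|_f-|F|_f\big|$, $|\mathcal G_f(E)-\mathcal G_f(F)|$ and $\big||E\triangle B_r|_f-|F\triangle B_r|_f\big|$ are all $\le(1+G_0)M\omega_n\rho^n$, while $\mathcal P_f(E;\,\cdot\,)$ and $\mathcal P_f(F;\,\cdot\,)$ agree outside $B_\rho(x)$. Plugging these bounds into $\mathcal J_{\Lambda_1,\Lambda_2,\alpha}(E)\le\mathcal J_{\Lambda_1,\Lambda_2,\alpha}(F)$ and using $\big||a|-|b|\big|\le|a-b|$ on the two constrained terms gives
\begin{equation*}
\mathcal P_f(E;B_\rho(x))\le\mathcal P_f(F;B_\rho(x))+C_1\rho^n,
\end{equation*}
with $C_1=(G_0+\Lambda_1+\Lambda_2)M\omega_n$; note this holds for \emph{every} admissible competitor, in particular with $F$ replaced by $E\cup B_t(x)$.

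The second step — and the only genuinely non-routine one — is an a priori perimeter density bound $P(E;B_\rho(x))\le C_2\rho^{n-1}$ for the minimizer. Here I would test against $F=E\cup B_t(x)$ for $t<\rho$: for all but countably many such $t$ one has $\mathcal H^{n-1}(\partial^*E\cap\partial B_t(x))=0$ (the spheres are disjoint and $\partial^*E$ has finite $\mathcal H^{n-1}$-measure in $B_{2R_0}$), hence $\mathcal P_f(E\cup B_t(x);B_\rho(x))\le\mathcal P_f(E;B_\rho(x)\setminus\overline{B_t(x)})+Mn\omega_nt^{n-1}$ and $\mathcal P_f(E;B_\rho(x))=\mathcal P_f(E;B_t(x))+\mathcal P_f(E;B_\rho(x)\setminus\overline{B_t(x)})$. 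Combining with the first step yields $\mathcal P_f(E;B_t(x))\le Mn\omega_nt^{n-1}+C_1\rho^n$; letting $t\uparrow\rho$, using $\rho<1$, and dividing by $m$ gives the claimed bound with $C_2=(Mn\omega_n+C_1)/m$.

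Finally I would freeze the weight at the center $x_0$ of $B_\rho(x)$. From $|f-f(x_0)|\le L\rho$ on $B_\rho(x)$,
\begin{equation*}
f(x_0)P(E;B_\rho(x))\le\mathcal P_f(E;B_\rho(x))+L\rho\,P(E;B_\rho(x)),\qquad \mathcal P_f(F;B_\rho(x))\le f(x_0)P(F;B_\rho(x))+L\rho\,P(F;B_\rho(x)),
\end{equation*}
and since $P(F;B_\rho(x))\le P(E;B_\rho(x))\le C_2\rho^{n-1}$ by the second step, both oscillation errors are $\le LC_2\rho^n$. Chaining these with the first step and dividing by $f(x_0)\ge m$ produces $P(E;B_\rho(x))\le P(F;B_\rho(x))+\omega\rho^n$ with $\omega:=(2LC_2+C_1)/m$, which depends only on $\Lambda_1,\Lambda_2,r,\psi,g,n$, as desired. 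Equivalently, one may quote the argument of Lemma 5.4 in~\cite{fusco-manna}, already invoked in the proof of Theorem~\ref{thm:regularity}, after noting $\operatorname{osc}_{B_\rho(x)}f\le L\rho$ and $g\in L^\infty_{\operatorname{loc}}$; in all versions, the density estimate is the step demanding the most care.
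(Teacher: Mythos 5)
Your proposal is correct and follows exactly the route the paper has in mind: the three steps (quasi-minimality of the weighted perimeter up to an error $C_1\rho^n$, the density bound $P(E;B_\rho(x))\le C_2\rho^{n-1}$ obtained by testing against $E\cup B_t(x)$, and freezing the weight at the center using Lipschitz continuity of $f=e^\psi$ on $\overline{B_{2R_0}}$) are precisely the ``get rid of the weights by min/max bounds, then control the oscillation'' strategy the paper invokes by reference to Lemma~5.4 of \cite{fusco-manna}. You have simply written out the details that the paper leaves as a citation.
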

\begin{proof}
See proof of Lemma 5.4 in \cite{fusco-manna}. Since the definition of an $\omega$-minimizers provides local control on a ball $B_{\rho}(x)$ on the unweighted perimeter of the set $E$, the key point is to first get rid of the weights by using trivial bounds by its minimum and maximum on $B_{\rho}(x)$ and then use the regularity of the weight to bound the oscillation on $B_{\rho}(x)$ to get the desired bound. 

\end{proof}
\begin{lemma}\label{lem:almost-spherical}
Let $r>0$ such that $\psi(r)>\psi(0)$, let $\Lambda_1, \Lambda_2$ satisfy the assumptions of Lemma \ref{lem:calibration} and let $\varepsilon_k \rightarrow 0$. Let $F_k$ be a sequence of equibounded minimizers of $\mathcal{J}_{\Lambda_1, \Lambda_2, \varepsilon_k}$. Then up to a subsequence $F_k \rightarrow B_r$ in $C^{1, \alpha}$ for all $\alpha<\frac{1}{2}$, i.e. for each $i\in \mathbb{N}$, there exists a graph $u_k \in C^{1, \frac{1}{2}}\left(\mathbb{S}^{n-1}\right)$ such that
$$
\partial F_k=\left\{r x\left(1+u_k(x)\right), x \in \mathbb{S}^{n-1}\right\} \quad \text { with } \quad\left\|u_k\right\|_{C^{1, \alpha}\left(\mathbb{S}^{n-1}\right)} \rightarrow 0 \quad \text { for } \alpha\in (0,1/2) \text {. }
$$
\end{lemma}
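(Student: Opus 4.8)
The plan is to first identify the $L^1$-limit of the sequence $(F_k)$ with the ball $B_r$, by coupling the calibration of Lemma~\ref{lem:calibration} with the uniqueness Theorem~\ref{thm:uniqueness}, and then to upgrade this convergence in measure to $C^{1,\alpha}$-convergence via the $\varepsilon$-regularity results recalled above. \emph{Step 1 (convergence in measure and of the energies).} Under the stated hypotheses on $\Lambda_1,\Lambda_2$, Lemma~\ref{lem:calibration} asserts that $B_r$ is the unique minimizer of $\mathcal{J}_{\Lambda_1,\Lambda_2}$. Testing the minimality of $F_k$ for $\mathcal{J}_{\Lambda_1,\Lambda_2,\varepsilon_k}$ against the competitor $B_r$ gives
\[
\E(F_k)+\Lambda_1\abs{\,\abs{F_k}_f-\abs{B_r}_f\,}+\Lambda_2\abs{\,\abs{F_k\triangle B_r}_f-\varepsilon_k\,}\ \le\ \E(B_r)+\Lambda_2\varepsilon_k .
\]
Since $\psi(r)>\psi(0)$ and $g\ge 0$, the left-hand side dominates $e^{\psi(0)}\Cp(F_k)$, so the (equibounded) sets $F_k$ have uniformly bounded perimeter and, up to a subsequence, $\chi_{F_k}\to\chi_F$ in $L^1(\R^n)$ for some set of finite perimeter $F$. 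Passing to the $\liminf$ in the displayed inequality — using lower semicontinuity of the unweighted perimeter (hence of $\Pe$, as $e^{\psi}$ is continuous on the fixed bounded region containing all the $F_k$), Fatou's lemma for $\G$, the $L^1$-convergence to handle $\abs{F_k}_f$ and $\abs{F_k\triangle B_r}_f$, and $\varepsilon_k\to 0$ — yields $\mathcal{J}_{\Lambda_1,\Lambda_2}(F)\le \E(B_r)=\mathcal{J}_{\Lambda_1,\Lambda_2}(B_r)$. By the uniqueness clause of Lemma~\ref{lem:calibration} we conclude $F=B_r$ up to a null set; in particular $\abs{F_k\triangle B_r}_f\to 0$ (so $F_k\to B_r$ in measure) and $\E(F_k)\to\E(B_r)$.

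\emph{Step 2 ($\varepsilon$-regularity and passage to radial graphs).} Since the $F_k$ are equibounded, Lemma~\ref{lem:omega-minimizer} (whose proof applies to any fixed containing ball) furnishes a single $\omega\ge 0$ with respect to which every $F_k$ — and also $B_r$, which is smooth — is an $\omega$-minimizer of the perimeter in a fixed ball. As $B_r$ is of class $C^2$ and $F_k\to B_r$ in measure, Theorem~\ref{thm:convergence-implies-graphicality} applies: for $k$ large $F_k$ is of class $C^{1,1/2}$ and there is $v_k:\partial B_r\to\R$ with $\partial F_k=\{x+v_k(x)\nu_{B_r}(x):x\in\partial B_r\}$ and $\norm{v_k}_{C^{1,\alpha}(\partial B_r)}\to 0$ for every $\alpha\in(0,\tfrac12)$. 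Finally, the outward normal of $\partial B_r$ at the point $rx$ is exactly $x$ for $x\in S^{n-1}$, hence $rx+v_k(rx)\,x=rx\bigl(1+u_k(x)\bigr)$ with $u_k(x):=v_k(rx)/r$; transporting the $C^{1,\alpha}$ estimate through the smooth diffeomorphism $x\mapsto rx$ gives $\norm{u_k}_{C^{1,\alpha}(S^{n-1})}\to 0$, which is precisely the claimed graphical convergence $F_k\to B_r$ in $C^{1,\alpha}$.

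\emph{Main difficulty.} The only genuinely delicate point is Step~1, namely identifying the weak-$L^1$ limit of the $F_k$ with $B_r$: one has to keep the competing energies and both penalization terms under control even though $\varepsilon_k$ is moving, to use that the weight $e^{\psi}$ is continuous on a fixed bounded set in order to transfer lower semicontinuity of the unweighted perimeter to $\Pe$, and then to invoke the global uniqueness Theorem~\ref{thm:uniqueness} as encoded in Lemma~\ref{lem:calibration} — this is exactly the mechanism behind Lemma~\ref{lem:continuity-perimeter}. Once convergence in measure is secured, Step~2 is a routine application of the $\varepsilon$-regularity theorem together with the elementary identification of the normal graph over $\partial B_r$ with a radial graph over $S^{n-1}$.
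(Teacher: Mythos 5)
Your proposal is correct and follows essentially the same route as the paper's proof: test minimality of $F_k$ to get a uniform perimeter bound, extract an $L^1$-convergent subsequence, pass to the limit in the functional, invoke the uniqueness in Lemma~\ref{lem:calibration} to identify the limit with $B_r$, and then upgrade to $C^{1,\alpha}$ graphical convergence via Lemma~\ref{lem:omega-minimizer} and Theorem~\ref{thm:convergence-implies-graphicality}. The only cosmetic difference is that you test the limiting inequality against the single competitor $B_r$ and conclude directly from uniqueness, whereas the paper tests against an arbitrary $E$ to show the limit minimizes $\mathcal{J}_{\Lambda_1,\Lambda_2}$ before invoking uniqueness; both variants yield the same conclusion.
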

\begin{proof}
Since $F_k$ minimizes the functional $\mathcal{J}_{\Lambda_1, \Lambda_2, \varepsilon_k}$, we obtain the following inequality:
\[
e^{\psi(0)} P(F_k) \leq \Pe(F_k) \leq \mathcal{J}_{\Lambda_1, \Lambda_2, \varepsilon_k}(F_k) \leq \E(B_r) + \varepsilon_k.
\]
As $\varepsilon_k \to 0$ and the sets $\{F_k\}$ are uniformly bounded, there exists a set $F$ with finite perimeter such that, possibly after passing to a subsequence, we have $|F_k \Delta F| \to 0$. Furthermore, for any set $E$ with finite weighted perimeter and for each $k \in \mathbb{N}$, we have
\[
\mathcal{J}_{\Lambda_1, \Lambda_2, \varepsilon_k}(F_k) \leq \mathcal{J}_{\Lambda_1, \Lambda_2, \varepsilon_k}(E).
\]
Letting $k \to \infty$, this gives $\mathcal{J}_{\Lambda_1, \Lambda_2}(F) \leq \mathcal{J}_{\Lambda_1, \Lambda_2}(E)$, which shows that $F$ minimizes $\mathcal{J}_{\Lambda_1, \Lambda_2}$. From Lemma \ref{lem:calibration}, we conclude that $F_k \to B_r$ in measure. Finally, the result follows from Lemma \ref{lem:omega-minimizer} and Theorem \ref{thm:convergence-implies-graphicality}.
\end{proof}
We are now ready to prove the main theorem of this section.
\begin{proof}[Proof of Theorem \ref{thm:stability}]
We split the proof into two cases, first when the asymmetric difference is small $|B_r \Delta E|_f < \delta$ and the other when it is large $|B_r \Delta E|_f \geq \delta$. In the latter case, Lemma \ref{lem:continuity-perimeter} implies the existence of $\sigma>0$ such that if $\left|E \Delta B_r\right|_f \geq \delta$ then $\E(E)-\E\left(B_r\right) \geq \sigma$ and thus
$$
\E(E)-\E\left(B_r\right) \geq \frac{\sigma}{4\left|B_r\right|_f^2}\left|E \Delta B_r\right|_f^2.
$$
Therefore, it remains to deal with the case when $|B_r \Delta E|_f < \delta.$ To this end we aim to show that for all $r>0$ such that $\psi^{\prime \prime}(r)>0$ there exists $\delta>0$ such that if $\left|B_r \Delta E\right|_f<\delta$ and $|E|_f=\left|B_r\right|_f$ then
\begin{align}\label{eqn:goal-stability}
\E(E)-\E\left(B_r\right) \geq C\left|B_r \Delta E\right|_f^2    
\end{align}
for some constant $C>0$. As usual, we will argue by contradiction. Thus let $\{E_k\}_{k\in \mathbb{N}}$ be a sequence of sets of finite perimeter satisfying $|E_k|_f = |B_r|_f$, $|E_k \Delta B_r|\to 0$ but
\begin{align}\label{eqn:contra-hyp}
\E\left(E_k\right) \leq \E\left(B_r\right)+C\left|E_k \Delta B_r\right|_f^2.    
\end{align}
The idea now is to use this family of counter-examples to construct a sequence of minimizers for $\mathcal{J}_{\Lambda_1,\Lambda_2,\varepsilon_k}$ that are equibounded and therefore by Lemma~\ref{lem:almost-spherical} are almost spherical sets. But for such sets, we have seen a sharp quantitative stability estimate in Proposition \ref{prop:fuglede} which will yield a contradiction to \eqref{eqn:contra-hypoth} for $C>0$ small enough. More precisely, set
\begin{align}
\varepsilon_k=\left|E_k \triangle B_r\right|_f,\quad \Lambda_1 \geq n-1+r \psi^{\prime}(r)+rg(r),\quad \Lambda_2 >0.
\end{align}
Then Lemma \ref{lem:fixing-R0} yields a bounded minimizer $F_k \subset B_{R_0}$ of the functional $\mathcal{J}_{\Lambda_1, \Lambda_2, \varepsilon_k}$, where $R_0=r+4 \Psi(1)$. Also recall that by Lemma \ref{lem:almost-spherical} we see that $F_k \rightarrow B_r$ (up to a subsequence possibly) in $C^{1, \alpha}$ for all $\alpha \in(0,1 / 2)$ and thus in particular for ball centered at the origin such that $|B_{r_k}|_f= |F_k|_f$ by Proposition~\ref{prop:fuglede} we have
\begin{align}\label{eqn:f_k-stability}
    c_0 |F_k \Delta B_{r_k}|_f^2 \leq \E(F_k)-\E(B_{r_k})
\end{align}
for some constant $c_0>0.$ To arrive at a contradiction we will aim to show that
\begin{align}\label{eqn:main-estimate}
    \E(F_k)-\E(B_{r_k}) &= \E(F_k)-\E(B_{r}) + \E(B_{r})-\E(B_{r_k})\\
    &\leq C\Tilde{C} |F_k \Delta B_{r_k}|_f^2.
\end{align}
Thus choosing $C$ such that $C\Tilde{C}/c_0<1$ gives us a contradiction to \eqref{eqn:f_k-stability}. To this end, we use the minimality of $F_k$ to get 
\begin{align}\label{eqn:J-comparison}
\mathcal{J}_{\Lambda_1, \Lambda_2, \varepsilon_k}\left(F_k\right) \leq \mathcal{J}_{\Lambda_1, \Lambda_2, \varepsilon_k}\left(E_k\right)=\E\left(E_k\right) \leq \E\left(B_r\right)+C \varepsilon_k^2.  
\end{align}
From this inequality, if $\Lambda_2$ is chosen large enough $\Lambda_2 \geq 2\left(\frac{8(n+1)}{r}+2(g(r)+\psi^{\prime}(2 r))\right)$, by applying Lemma \ref{lem:calibration} with $\Lambda_2$ replaced by $\Lambda_2 / 2$ and $\Lambda_1=0$, we have
\begin{align}
\E\left(F_k\right)+\left.\Lambda_2| | F_k \Delta B_r\right|_f-\varepsilon_k \mid & \leq \E\left(B_r\right)+C \varepsilon_k^2 \\
& \leq \E\left(F_k\right)+\frac{\Lambda_2}{2}\left|F_k \Delta B_r\right|_f+C \varepsilon_k^2.
\end{align}
Thus for $k\gg 1$ we have
\begin{align}\label{eqn:sym-diff-lb}
\left|F_k \Delta B_r\right|_f \geq \frac{\varepsilon_k}{2} .   
\end{align}
Assume now that $\Lambda_1 \geq 2\left(n-1+r \psi^{\prime}(r)+rg(r)\right)$. By \eqref{eqn:J-comparison} and Lemma \ref{lem:calibration} with $\Lambda_1$ replaced by $\Lambda_1 / 2$ and $\Lambda_2=0$ we have
\begin{align}
\E\left(F_k\right)+\Lambda_1 \left| |F_k|_f  - |B_r|_f \right| & \leq \E\left(B_r\right)+C \varepsilon_k^2 \\
& \leq \E\left(F_k\right)+\frac{\Lambda_1}{2} \left| |F_k|_f  - |B_r|_f \right|+C \varepsilon_k^2.
\end{align}
Thus
\begin{align}\label{eqn:volume-diff}
\left|\left|F_k\right|_f-\left|B_r\right|_f\right| \leq 2 C \varepsilon_k^2    
\end{align}
Recalling that $\left|B_{r_k}\right|_f=\left|F_k\right|_f$ we get
$$
\left|F_k \Delta B_r\right|_f \leq\left|F_k \Delta B_{r_k}\right|_f+\left|B_r \Delta B_{r_k}\right|_f \leq\left|F_k \Delta B_{r_k}\right|_f+2 C \varepsilon_k^2
$$
and thus when $k$ is large, using \eqref{eqn:sym-diff-lb}, we have $\left|F_k \Delta B_r\right|_f \leq 2\left|F_k \Delta B_{r_k}\right|_f.$ Combing with \eqref{eqn:volume-diff} and \eqref{eqn:sym-diff-lb} we get the desired \eqref{eqn:main-estimate}
$$
\E\left(B_r\right) - \E(B_{r_k}) \leq C\left|r-r_k\right| \leq C' C \varepsilon_k^2 \leq \tilde{C} C\left|F_k \Delta B_{r_k}\right|_{f}^2
$$
which yields a contradiction to Proposition~\ref{prop:fuglede} provided we choose a constant $C$ small enough.
\end{proof}

\printbibliography
\end{document}